\documentclass[12pt,oneside]{amsart}

\pagestyle{plain}

\usepackage{geometry}                
\usepackage{graphicx}
\usepackage{amssymb}
\usepackage{color}
\usepackage{bbm, dsfont}
\usepackage[hidelinks]{hyperref}

\hypersetup{
	colorlinks=false,
	pdfborder={0 0 0},
	pdfborderstyle={/S/U/W 0},
}

\numberwithin{equation}{section}

\usepackage{verbatim}

\usepackage[dvipsnames]{xcolor}


\usepackage{ulem}
\newtheorem{theorem}{Theorem}[section]
\newtheorem{lemma}[theorem]{Lemma}
\newtheorem{corollary}[theorem]{Corollary}
\newtheorem{proposition}[theorem]{Proposition}

\theoremstyle{definition}
\newtheorem{definition}[theorem]{Definition}

\newtheorem{conjecture}{Conjecture}

\theoremstyle{remark}
\newtheorem{remark}[theorem]{Remark}
\newtheorem*{remark*}{Note}


\numberwithin{equation}{section}

\usepackage{amsmath}
\usepackage{mathrsfs}

\usepackage{verbatim}
\usepackage{amsmath,amssymb,amsthm}           
\usepackage{amssymb}            
\usepackage{amsfonts}            
\usepackage{mathrsfs}          
\usepackage{amsthm}
\usepackage{algorithm}  
\usepackage{algorithmicx}  
\usepackage{algpseudocode}  
\usepackage{mathtools}
\usepackage{commath}
\usepackage{physics}
\usepackage{bm}
\usepackage{graphicx}

\usepackage{float}
\usepackage{listings}
\usepackage{subfigure}
\usepackage{multirow}
\usepackage{color}
\usepackage{bbm}
\usepackage[export]{adjustbox}
\geometry{a4paper} 
\usepackage{enumerate}
\usepackage{bbm}

\usepackage{amsmath}
\usepackage{mathrsfs}

\newcommand{\RNum}[1]{\uppercase\expandafter{\romannumeral #1\relax}}

\setcounter{tocdepth}{1}

\newcommand{\specificthanks}[1]{\@fnsymbol{#1}}

\DeclareFontFamily{OML}{rsfs}{\skewchar\font'177}
\DeclareFontShape{OML}{rsfs}{m}{n}{ <5> <6> rsfs5 <7> <8> <9>
	rsfs7 <10> <10.95> <12> <14.4> <17.28> <20.74> <24.88> rsfs10 }{}
\DeclareMathAlphabet{\mathfs}{OML}{rsfs}{m}{n}

\newcounter{cnstcnt}
\newcommand{\cl}{%
	\refstepcounter{cnstcnt}%
	\ensuremath{c_{\thecnstcnt}}}
\newcommand{\cref}[1]{\ensuremath{c_{\ref*{#1}}}}

\newcounter{newcnstcnt}
\newcommand{\Cl}{%
	\refstepcounter{newcnstcnt}%
	\ensuremath{C_{\thenewcnstcnt}}}
\newcommand{\Cref}[1]{\ensuremath{C_{\ref*{#1}}}}


\newcommand{\overbar}[1]{\mkern 1.5mu\overline{\mkern-1.5mu#1\mkern-1.5mu}\mkern 1.5mu}


\DeclareFontFamily{U}{mathx}{}
\DeclareFontShape{U}{mathx}{m}{n}{<-> mathx10}{}
\DeclareSymbolFont{mathx}{U}{mathx}{m}{n}
\DeclareMathAccent{\widehat}{0}{mathx}{"70}
\DeclareMathAccent{\widecheck}{0}{mathx}{"71}

\begin{document}

	\title{Incipient infinite clusters and volume growth for Gaussian free fields and loop soups on metric graphs}
	
		\author{Zhenhao Cai$^1$}
		\address[Zhenhao Cai]{Faculty of Mathematics and Computer Science, Weizmann Institute of Science}
		\email{zhenhao.cai@weizmann.ac.il}
		\thanks{$^1$Faculty of Mathematics and Computer Science, Weizmann Institute of Science}

		\author{Jian Ding$^2$}
		\address[Jian Ding]{School of Mathematical Sciences, Peking University}
		\email{dingjian@math.pku.edu.cn}
		\thanks{$^2$School of Mathematical Sciences, Peking University. Partially supported by NSFC Key Program Project No.12231002 and by New Cornerstone Science Foundation through the XPLORER PRIZE}
	\maketitle
	%
	%
	
	 	\begin{abstract}
In this paper, we establish the existence and equivalence of four types of incipient infinite clusters (IICs) for the critical Gaussian free field (GFF) level-set and the critical loop soup on the metric graph $\widetilde{\mathbb{Z}}^d$ for all $d\ge 3$ except the critical dimension $d=6$. These IICs are defined as four limiting conditional probabilities, involving different conditionings and various ways of taking limits:
\begin{enumerate}

	\item  conditioned on $\{\bm{0}\xleftrightarrow{} \partial B(N)\}$	at criticality (where $\bm{0}$ is the origin of $\mathbb{Z}^d$, and $\partial B(N)$ is the boundary of the box $B(N)$ centered at $\bm{0}$ with side length $2N$), and letting $N\to \infty$;

	\item conditioned on $\{\bm{0}\xleftrightarrow{} \infty\}$	at super-criticality, and letting the parameter tend to the critical threshold;

	\item conditioned on $\{\bm{0}\xleftrightarrow{} x\}$ at criticality (where $x\in  \mathbb{Z}^d$), and letting $x\to \infty$;

	\item conditioned on the event that the capacity of the critical cluster containing $\bm{0}$ exceeds $T$, and letting $T\to \infty$.

\end{enumerate}
Our proof employs a robust framework of Basu and Sapozhinikov (2017) for constructing IICs as in (1) and (2) for Bernoulli percolation on $\mathbb{Z}^d$ in low dimensions (i.e., $3\le  d\le  5$), where a key hypothesis on the quasi-multiplicativity is proved in our companion paper.

We further show that conditioned on $\{\bm{0}\xleftrightarrow{} \partial B(N)\}$, the volume of the critical cluster containing $\bm{0}$ within $B(M)$ is typically of order $M^{(\frac{d}{2}+1)\land 4}$, as long as $N\gg M$. This phenomenon indicates that the critical cluster of the GFF or the loop soup exhibits self-similarity, which supports Werner's conjecture (2016) that such cluster has a scaling limit. Moreover, the exponent of $M^{(\frac{d}{2}+1)\land 4}$ matches the conjectured fractal dimension of the scaling limit proposed by Werner (2016).

	 	\end{abstract}

\section{Introduction}\label{section_intro}

Our main purpose in this paper is to study the incipient infinite cluster (IIC) of two related models on the metric graph $\widetilde{\mathbb{Z}}^d$: the Gaussian free field (GFF) and the loop soup. For clarity, we first review their definitions. For each unordered pair of adjacent points $x\sim y$ (i.e., $|x-y|_1=1$, where $|\cdot|_1$ is the $L^{1}$-norm) in the $d$-dimensional integer lattice $\mathbb{Z}^d$, consider a compact interval $I_{\{x,y\}}$ of length $d$, whose endpoints are identical to $x$ and $y$ respectively. The metric graph of $\mathbb{Z}^d$ is defined as the union of all these intervals, i.e., $\widetilde{\mathbb{Z}}^d:=\cup_{x\sim y\in \mathbb{Z}^d}I_{\{x,y\}}$. We assume $d \ge 3$ throughout this paper. The GFF on $\widetilde{\mathbb{Z}}^d$, denoted by $\{\widetilde{\phi}_v\}_{v\in \widetilde{\mathbb{Z}}^d}$, can be constructed in the following two steps. 
\begin{enumerate}
	\item  Let $\{\phi_x\}_{x\in \mathbb{Z}^d}$ be a discrete GFF on $\mathbb{Z}^d$,  i.e., a family of mean-zero Gaussian random variables whose covariance is given by
	\begin{equation}
		\mathbb{E}[\phi_x\phi_y]=G(x,y),\ \ \forall x,y\in \mathbb{Z}^d,
	\end{equation}
	where the Green's function $G(x,y)$ represents the expected number of visits to $y$ by a simple random walk starting from $x$.

	\item For each interval $I_{\{x,y\}}\subset \widetilde{\mathbb{Z}}^d$, the values of $\widetilde{\phi}_v$ for $v\in I_{\{x,y\}}$ are distributed as an independent Brownian bridge on $I_{\{x,y\}}$ with boundary values $\phi_x$ at $x$ and $\phi_y$ at $y$, generated by a Brownian motion with variance $2$ at time $1$.

\end{enumerate}
The other model called loop soup is a random collection of rooted loops (i.e., continuous paths on $\widetilde{\mathbb{Z}}^d$ which start and end at the same point). Precisely, the loop soup of intensity $\alpha>0$ (denoted by $\widetilde{\mathcal{L}}_{\alpha}$) is the Poisson point process with intensity measure $\alpha  \widetilde{\mu}$, where the loop measure $\widetilde{\mu}$ is defined as 
\begin{equation}\label{def_mu}
	 \widetilde{\mu}(\cdot) := \int_{v\in \widetilde{\mathbb{Z}}^d} \mathrm{d}m(v) \int_{0< t< \infty} t^{-1} \widetilde{q}_t(v,v)\widetilde{\mathbb{P}}^t_{v,v}(\cdot) \mathrm{d}t.
\end{equation}
Here $m(\cdot)$ is the Lebesgue measure on $\widetilde{\mathbb{Z}}^d$, $\widetilde{q}_t(v_1, v_2)$ denotes the transition density of the Brownian motion on $\widetilde{\mathbb{Z}}^d$ (whose precise definition is provided in Section \ref{section_notation}) from $v_1$ to  $v_2$ at time $t$ with respect to $m(\cdot)$, and $\widetilde{\mathbb{P}}^t_{v_1,v_2}(\cdot)$ is the law of the Brownian bridge on $\widetilde{\mathbb{Z}}^d$ with duration $t$ and transition density $\frac{\widetilde{q}_s(v_1,\cdot )\widetilde{q}_{t-s}(\cdot,v_2)}{\widetilde{q}_t(v_1, v_2)}$ for $0\le s\le t$. The isomorphism theorem \cite[Proposition 2.1]{lupu2016loop} establishes a powerful coupling between the GFF $\{\widetilde{\phi}_v\}_{v\in \widetilde{\mathbb{Z}}^d}$ and the critical loop soup $\widetilde{\mathcal{L}}_{1/2}$ (the criticality of the intensity $1/2$ for percolation was proved in \cite{chang2024percolation}) such that 
\begin{equation}
	\widehat{\mathcal{L}}_{1/2}^{v}= \tfrac{1}{2}\widetilde{\phi}_{v}^2, \ \ \forall v\in  \widetilde{\mathbb{Z}}^d,
\end{equation}
where $\widehat{\mathcal{L}}_{1/2}^{v}$ denotes the total local time at $v$ of all loops in $\widetilde{\mathcal{L}}_{1/2}$.  An immediate corollary is that the (critical) loop clusters (i.e., maximal connected subgraphs consisting of loops in $\widetilde{\mathcal{L}}_{1/2}$) have the same distribution as the GFF sign clusters (i.e., maximal connected subgraphs on which $\widetilde{\phi}_{\cdot}$ has the same sign). Furthermore, the sign of the GFF values on each sigh cluster is independent of others, and takes ``$+$'' (or ``$-$'') with probability $\frac{1}{2}$.

There has been a series of studies on the percolation of GFF level-sets $\widetilde{E}^{\ge h}:=\big\{v\in \widetilde{\mathbb{Z}}^d:\widetilde{\phi}_v\ge h\big\}$ for $h\in \mathbb{R}$. In \cite{lupu2016loop}, it was proved that for any $h<0$, $\widetilde{E}^{\ge h}$ a.s. percolates (i.e., includes an infinite connected component), while for any $h\ge 0$, $\widetilde{E}^{\ge h}$ a.s. does not percolate. At the critical level $\widetilde{h}_*=0$, the following formula for the two-point function (i.e., the probability that two points are connected by $\widetilde{E}^{\ge 0}$) was established in \cite[Proposition 5.2]{lupu2016loop}: for any $x\neq y\in \mathbb{Z}^d$, 
\begin{equation}\label{two_point}
	\mathbb{P}\big(x\xleftrightarrow{\widetilde{E}^{\ge 0}} y\big)= \pi^{-1}\arcsin\Big(\tfrac{G(x,y)}{\sqrt{G(x,x)G(y,y)}}\Big) \asymp |x-y|^{2-d},\end{equation}
where $A_1\xleftrightarrow{\mathcal{D}} A_2$ represents the event that there exists a path in $\mathcal{D}$ connecting $A_1$ and $A_2$, $f\asymp g$ means $cg\le f\le Cg$ for some constants $c$ and $C$ depending only on $d$, and $|\cdot |$ denotes the Euclidean norm. For brevity, we denote ``$\xleftrightarrow{\widetilde{E}^{\ge h}}$'' by ``$\xleftrightarrow{\ge h}$''. As a corollary of the absence of percolation at $\widetilde{h}_*=0$, the one-arm probability $\theta_d(N):=\mathbb{P}\big(\bm{0}\xleftrightarrow{\ge 0} \partial B(N) \big)$ converges to $0$ as $N\to \infty$, where $\bm{0}$ is the origin of $\mathbb{Z}^d$, $B(N):=[-N,N]^d\cap \mathbb{Z}^d$, and $\partial A:= \{x\in A: \exists y\in \mathbb{Z}^d\setminus A\ \text{such that}\ y\sim x\}$. Since then, the decay rate of $\theta_d(N)$ became a subject of extensive study due to its strong relation to the geometry of the critical GFF level-set $\widetilde{E}^{\ge 0}$. Since our paper focuses on results for $d\neq 6$, we will only provide precise mathematical statements for $d\neq 6$ when reviewing previous work (for $d=6$, it is usually the case that exponents were computed). Through a series of works \cite{ding2020percolation, cai2024high, drewitz2023arm,drewitz2024critical,cai2024one} (see also \cite{drewitz2023critical} for extensions to more general transient graphs), the exact order of $\theta_d(N)$ for all $d\ge 3$ except the critical dimension $d=6$, as well as the exponent of $\theta_6(N)$, has been determined. Specifically, it has been established that (see \cite{cai2024high} for $d>6$; see \cite{cai2024one} for $3\le d< 6$ and also see a concurrent work \cite{drewitz2024critical} for $d = 3$)
\begin{align}
	&\text{when}\ 3\le d<6,\ \  \theta_d(N) \asymp N^{-\frac{d}{2}+1},\label{one_arm_low} \\
	&\text{when}\ d>6,\ \ \ \ \ \ \ \ \theta_d(N) \asymp N^{-2}, \label{one_arm_high}
\end{align}
where $f\lesssim g$ means $f\le Cg$ for some constant $C$ depending only on $d$. Moreover, the crossing probability for an annulus, i.e., $\rho_d(n,N):=\mathbb{P}\big(B(n)\xleftrightarrow{\ge 0} \partial B(N) \big)$, was also studied in \cite{cai2024one} as a natural extension of the one-arm probability $\theta_d(N)$. Precisely, it was proved in \cite[Theorem 1.2]{cai2024one} that 
\begin{align}
	&\text{when}\ 3\le d<6,\  \  \rho_d(n,N) \asymp \big(\frac{n}{N}\big)^{\frac{d}{2}+1};\label{crossing_low} \\
	&\text{when}\ d>6,\ \  \ \ \ \ \ \  \rho_d(n,N) \lesssim  (n^{d-4}N^{-2})\land 1. \label{crossing_high}
\end{align}
Besides the diameter (as characterized by $\theta_d$), the volume of the cluster of $\widetilde{E}^{\ge 0}$ has also been studied. For any $h\in \mathbb{R}$ and $v\in \widetilde{\mathbb{Z}}^d$, let $\mathcal{C}^{\ge h}(v):=\{w\in \widetilde{\mathbb{Z}}^d:w \xleftrightarrow{\ge h} v \}$ denote the cluster of $\widetilde{E}^{\ge h}$ containing $v$. For any $A\subset \mathbb{Z}^d$, the volume of $A$ (denoted by $\mathrm{vol}(A)$) is defined to be the cardinality of $A\cap \mathbb{Z}^d$. For any $d\ge 3$ and $M\ge 1$, we denote $\nu_d(M):=\mathbb{P}\big(
\mathrm{vol}(\mathcal{C}^{\ge 0}(\bm{0}))\ge M \big)$. Referring to \cite{cai2024high,inpreparation,drewitz2024cluster}, it has been established that (see \cite{cai2024high} for $d>6$, and see \cite{inpreparation,drewitz2024cluster} for $3\le d< 6$)
\begin{align}
	&\text{when}\ 3\le d<6,\ \    \ \nu_d(M) \asymp M^{-\frac{d-2}{d+2}};\label{volume_low} \\
	&\text{when}\ d>6,\ \ \ \ \ \  \ \ \ \nu_d(M) \asymp M^{-\frac{1}{2}}.\label{volume_high}
\end{align}

For any $v\in \widetilde{\mathbb{Z}}^d$, let $\mathcal{C}(v)$ denote the loop cluster of $\widetilde{\mathcal{L}}_{1/2}$ containing $v$. By the isomorphism theorem and the symmetry of $\widetilde{\phi}_{\cdot}$, we know that the positive cluster $\mathcal{C}^{\ge 0}(v)$, given it is non-empty, has the same distribution as $\mathcal{C}(v)$. In light of this, we simply refer to both types of clusters as ``critical clusters''. Moreover, since $\theta_d$, $\rho_d$ and $\nu_d$ are exactly half of their analogues for $\mathcal{C}(\bm{0})$, we know that all the aforementioned bounds on $\theta_d$, $\rho_d$ and $\nu_d$ also hold for $\mathcal{C}(\bm{0})$, up to a factor of $2$.

\subsection{Incipient infinite cluster}
This paper is primarily motivated by the following insightful and important conjecture in \cite{werner2021clusters}. For any two functions $f$ and $g$ depending on the dimension $d$, we denote  
\begin{equation}
	(f \boxdot g)(d):= f(d)\cdot \mathbbm{1}_{d\le 6} + g(d)\cdot \mathbbm{1}_{d> 6}. 
\end{equation}
\begin{conjecture}\label{conj1}
	For any $3\le d\le 5$ and $d\ge 7$, the scaling limit of critical clusters exists and has the fractal dimension $(\frac{d}{2}+1)\boxdot 4$.
\end{conjecture}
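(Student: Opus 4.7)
The plan is to split the conjecture into two parts: (i) constructing the scaling limit as a random compact set equipped with a measure, and (ii) identifying its fractal dimension. Since the conjecture is only meaningful for macroscopic clusters, the natural setting is the IIC measure constructed in the paper (say, as the $N\to\infty$ limit of the $\{\bm{0}\xleftrightarrow{\ge 0}\partial B(N)\}$-conditioned measure). Rescaling space by $N$ and the counting measure by $N^{-((\frac{d}{2}+1)\boxdot 4)}$, the object of interest is the rescaled cluster-with-measure pair $(\mathcal{C}^{\ge 0}(\bm{0})/N,\mu_N)$, viewed as a random element of the product of the Hausdorff space of compact subsets of a large box with the space of finite Borel measures on that box equipped with the L\'evy--Prokhorov topology.

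\textbf{Tightness and fractal dimension.} Tightness of the set-valued component should follow from translation invariance together with the crossing estimates \eqref{crossing_low} and \eqref{crossing_high}, since the probability that the IIC cluster avoids any given mesoscopic ball is uniformly bounded away from $1$. Tightness of the measure component, and the fractal-dimension half of the conjecture, would follow from the volume bound $\mathrm{vol}(\mathcal{C}^{\ge 0}(\bm{0})\cap B(M))\asymp M^{(\frac{d}{2}+1)\land 4}$ that the paper establishes on the IIC: matching upper and lower moment bounds on $\mu_N(B(r))$ pass to any subsequential limit, and a Frostman-type argument then forces the Hausdorff dimension of the limit to equal $(\frac{d}{2}+1)\boxdot 4$ almost surely. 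At this point the dimension part of the conjecture is in hand.

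\textbf{Uniqueness of the limit, and the main obstacle.} The hard part is showing that all subsequential limits coincide, i.e., that the scaling limit itself exists in distribution. The Basu--Sapozhnikov framework used in the paper hinges on quasi-multiplicativity of the one-arm event and pins down the law only at the marked origin, not as a random closed set in the bulk. A natural route is to bootstrap from quasi-multiplicativity of multi-point crossing probabilities for configurations of the form $\{x_i\xleftrightarrow{\ge 0}\partial B(N)\ \forall i\le k\}$ and combine this with the isomorphism $\widehat{\mathcal{L}}_{1/2}^v=\tfrac{1}{2}\widetilde{\phi}_v^2$ to express the joint finite-dimensional marginals as continuum Gaussian quantities. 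For $d>6$ the mean-field picture strongly suggests the limit is an integrated super-Brownian excursion-type random measure of fractal dimension $4$, so a lace-expansion adaptation to GFF and loop-soup clusters would be the natural route. For $3\le d\le 5$, however, there is neither a mean-field crutch nor conformal invariance to fall back on, and identifying the subsequential limit would require a genuinely new intrinsic continuum description of the GFF cluster; this is where the conjecture remains open and where my plan must stop short of a complete proof.
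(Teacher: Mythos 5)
The statement you are attempting to prove is \emph{Conjecture~\ref{conj1}}, not a theorem. The paper explicitly attributes it to Werner and never proves it; the contribution of the paper is to provide evidence for it. Theorem~\ref{thm_iic} constructs and unifies the incipient infinite cluster measures, and Theorems~\ref{thm_1.2}--\ref{thm_1.4} establish that under the IIC measure the volume of $\mathscr{C}^{\ge 0}\cap B(M)$ is with high probability of order $M^{(\frac{d}{2}+1)\boxdot 4}$, which matches the fractal dimension that Werner's conjectured scaling limit ought to have. That is the strongest thing the paper claims on this front: a tightness-type statement consistent with self-similarity, ``which supports Conjecture~\ref{conj1}.'' There is no proof of existence of the scaling limit to compare you against.

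Your proposal correctly recognizes this, and your honest conclusion --- that the uniqueness of subsequential limits is genuinely open --- is exactly right, so in that sense there is no error of substance. That said, two of your intermediate steps are stated more confidently than they should be. First, ``tightness of the set-valued component follows from translation invariance and the crossing estimates'' is not automatic: (\ref{crossing_low})--(\ref{crossing_high}) bound the probability that the cluster crosses an annulus, which bounds the probability of large diameter but does not by itself control the geometry (e.g.\ density of the cluster at all mesoscopic scales simultaneously) needed for Hausdorff-metric tightness. Second, ``matching moment bounds on $\mu_N(B(r))$ pass to any subsequential limit, and a Frostman-type argument then forces the Hausdorff dimension'' is too quick: Theorem~\ref{thm_1.2} gives a two-sided bound on $\mathcal{V}_M^{\ge 0}$ that holds with probability $1-\epsilon$ at each fixed scale, but a Frostman argument for the Hausdorff dimension of the limit needs simultaneous control over all dyadic scales (or an energy estimate), which does not follow from the single-scale bound without additional work. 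These are fixable gaps in principle, but as written the dimension step is not ``in hand.'' The essential obstacle you name --- identifying the limit, especially for $3\le d\le 5$ where there is no mean-field or conformally invariant structure to anchor it --- is indeed where the problem stands.
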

A natural starting point for investigating the scaling limit is to understand the spatial properties of a critical cluster with a large diameter (note that those with small diameters will vanish in the scaling limit). In particular, to understand the microscopic structure of a large critical cluster, a classical perspective is to analyze the incipient infinite cluster, originally introduced by \cite{kesten1986incipient} in the context of Bernoulli percolation on two-dimensional graphs (e.g., $\mathbb{Z}^2$). Intuitively, the IIC can be thought of as the critical cluster that contains $\bm{0}$ and is conditioned to percolate. However, for a large family of percolation models (including Bernoulli percolation on $\mathbb{Z}^2$, GFF level-sets, loop soups, etc), the critical cluster containing $\bm{0}$ almost surely does not percolate. Consequently, directly imposing a conditioning of ``percolating'' at criticality is not feasible. Referring to \cite{kesten1986incipient}, for Bernoulli percolation there are two types of schemes to achieve this: 
\begin{enumerate}[(1)] 
	\item  Consider the probability measure conditioned on $\bm{0}$ being connected to $\partial B(N)$ at criticality. Then define the IIC as the limiting measure of this probability measure as $N\to \infty$.

	\item  Consider the probability measure conditioned on $\bm{0}$ being contained in an infinite cluster at super-criticality (which happens with positive probability). Then define the IIC as the limiting measure of this probability measure as the parameter decreases to the critical threshold.

\end{enumerate} 
In \cite{kesten1986incipient}, it was proved that these two limiting measures both exist and are equivalent for a large class of two-dimensional graphs (including $\mathbb{Z}^2$, the triangular lattice and the hexagonal lattice) where the critical Bernoulli percolation crosses any rectangular region with at least a positive probability depending only on the aspect ratio of the rectangle. In the context of GFF level-sets, these two types of IICs can be ``defined'' as follows (here the quotation mark indicates that the existence of the limit needs to be justified, and the same applies to (\ref{iic_type3}) and (\ref{iic_type4})):
\begin{equation}\label{iic_type1}
\mathbb{P}_{d,\mathrm{IIC}}^{(\mathrm{1})}(\cdot ):=	 \lim\limits_{N\to \infty} 	\mathbb{P}\big(\cdot\mid \bm{0}\xleftrightarrow{\ge 0}\partial B(N)\big),
\end{equation}
\begin{equation}\label{iic_type2}
\mathbb{P}_{d,\mathrm{IIC}}^{(\mathrm{2})}(\cdot ):=	\lim\limits_{h\uparrow 0} \mathbb{P}\big(\cdot\mid \bm{0}\xleftrightarrow{\ge h}\infty \big),
\end{equation}
where $\bm{0}\xleftrightarrow{\ge h}\infty$ denotes the event that $\bm{0}$ is contained in an infinite cluster of $\widetilde{E}^{\ge h}$.

In \cite{van2004incipient}, the following type of IIC was studied:
\begin{enumerate}[(3)]
	\item  Consider the probability measure conditioned on $\bm{0}$ being connected to a lattice point $x\in \mathbb{Z}^d$ at criticality. Then define the IIC as the limiting measure of this probability measure as $x\to \infty$.
	
\end{enumerate}
The existence of this limiting measure was established in \cite{van2004incipient} for Bernoulli percolation on $\mathbb{Z}^d$ with all sufficiently large $d$ (i.e., $d\ge 19$). This result was subsequently extended to $d\ge 11$ by \cite{fitzner2017mean}. See also \cite{hara1998incipient,hara2000scaling2,hara2000scaling1,heydenreich2014high,heydenreich2014random,kozma2009alexander,10.1214/ECP.v20-3570} for related results. In the setting of GFF level-sets, this type of IIC can be formulated as follows:
\begin{equation}\label{iic_type3}
	\mathbb{P}_{d,\mathrm{IIC}}^{(\mathrm{3})}(\cdot ):=	 \lim\limits_{x\to \infty} 	\mathbb{P}\big(\cdot\mid \bm{0}\xleftrightarrow{\ge 0}x\big).
\end{equation}
In \cite{van2004incipient}, another type of IIC concerning the susceptibility (i.e., the expected volume of the cluster containing $\bm{0}$ at sub-criticality) was also established (we note that the approach in this paper seems insufficient for handling this type).

As a useful tool in the analysis of $\theta_d$ (see \cite{ding2020percolation,drewitz2023arm,drewitz2023critical}), the capacity of the critical cluster has also been studied. Precisely, for any $A\subset \widetilde{\mathbb{Z}}^d$, the capacity of $A$ (denoted by $\mathrm{cap}(A)$) reflects the hitting probability of $A$ for a Brownian motion starting from a distant point (see the precise definition of $\mathrm{cap}(\cdot)$ in Section \ref{section_notation}). Referring to \cite[(3.6) and (3.7)]{drewitz2023critical}, it is known that for all $d\ge 3$, $\mathrm{cap}\big( \mathcal{C}^{\ge 0}(\bm{0}) \big)$ has density  
\begin{equation}
	 \big(2\pi t \sqrt{G(\bm{0},\bm{0}) [t-G(\bm{0},\bm{0})]}  \big)^{-1}\cdot \mathbbm{1}_{t>[G(\bm{0},\bm{0})]^{-1}}
\end{equation}
at $t$. Consequently, for all sufficiently large $T>0$, one has 
 \begin{equation}\label{iic_type4new}
 \mathbb{P}\big( \mathrm{cap}\big( \mathcal{C}^{\ge 0}(\bm{0}) \big)\ge T \big)=  \big(\pi \sqrt{G(\bm{0},\bm{0})} \big)^{-1} T^{-\frac{1}{2}} + O(T^{-\frac{3}{2}}).
\end{equation}
In this paper, we also consider the following formulation of the IIC with respect to the capacity:
\begin{enumerate}[(4)]
	\item Consider the probability measure conditioned on the event that the capacity of the critical cluster containing $\bm{0}$ exceeds $T$. Then define the IIC as the limiting measure of this probability measure as $T\to \infty$.
	\end{enumerate}
 Formally, we define this type of IIC for GFF level-sets as follows:
  \begin{equation}\label{iic_type4}
	\mathbb{P}_{d,\mathrm{IIC}}^{(\mathrm{4})}(\cdot ):=	 \lim\limits_{T\to \infty} 	\mathbb{P}\big(\cdot\mid  \mathrm{cap}\big(\mathcal{C}^{\ge 0}(\bm{0})\big)\ge T\big).
\end{equation}
Note that Type (4) IIC can be regarded as an analogue of Type (1) IIC obtained by replacing the diameter with the capacity. Certainly, it is natural to consider its counterpart for the volume. As shown in later sections, Type (1) IIC is established using powerful tools for analyzing connecting events, while Type (4) IIC relies on the known asymptotic of the decay rate of the capacity (see (\ref{iic_type4new}); note that up-to-constant bounds are insufficient). However, neither of these approaches is currently available for the volume, making it challenging to study the volume-based IIC.



We hereby clarify the precise sense in which the limiting measures presented in (\ref{iic_type1}), (\ref{iic_type2}), (\ref{iic_type3}) and (\ref{iic_type4}) converge. Let $\{\mathsf{A}_i\}_{i \in \mathcal{I}}$ be a family of events measurable with respect to $\{\widetilde{\phi}_v\}_{v\in \widetilde{\mathbb{Z}}^d}$, where the index set $\mathcal{I}$ is a subset of either $\mathbb{R}$ or $\widetilde{\mathbb{Z}}^d$. When referring to ``$\mathbb{P}_{\infty}(\cdot):=\lim\limits_{i\to i_\diamond}\mathbb{P}\big(\cdot \mid \mathsf{A}_i\big)$ exists'' (where $i_{\diamond}$ may be infinity), we mean that for any increasing cylinder event $\mathsf{F}$ (i.e., $\mathsf{F}:=\cap_{j=1}^{k}\{\widetilde{\phi}_{x_i}\ge h_i \}$ for some $k\in \mathbb{N}^+$, $\{x_i\}_{i=1}^{k}\subset \widetilde{\mathbb{Z}}^d$ and $\{h_i\}_{i=1}^{k}\subset \mathbb{R}$), the conditional probability $\mathbb{P}\big(\mathsf{F} \mid \mathsf{A}_i\big)$ converges as $i\to i_{\diamond}$. Given the value of $\mathbb{P}_{\infty}(\mathsf{F})$ for every increasing cylinder event $\mathsf{F}$, the measure $\mathbb{P}_{\infty}(\cdot)$ can be uniquely extended to a probability measure, which defines a random field on $\widetilde{\mathbb{Z}}^d$.

One of the main results in this paper demonstrates that the aforementioned four types of IICs exist and are equivalent for GFF level-sets.

\begin{theorem}\label{thm_iic}
	For any $d\ge 3$ with $d\neq 6$, the limiting measures in (\ref{iic_type1}), (\ref{iic_type2}), (\ref{iic_type3}) and (\ref{iic_type4}) exist and are equivalent. Moreover, under this limiting measure, denoted by $\mathbb{P}_{d,\mathrm{IIC}}(\cdot )$, the incipient infinite cluster $\mathscr{C}^{\ge 0}$ (i.e., the cluster containing $\bm{0}$ with non-negative values) is almost surely infinite and one-ended (i.e., for all $N>0$, $\mathscr{C}^{\ge 0}\setminus B(N)$ contains a unique infinite cluster). 
\end{theorem}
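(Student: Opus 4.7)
The plan is to follow the robust framework of Basu--Sapozhnikov, adapting it to the GFF on $\widetilde{\mathbb{Z}}^d$ and extending it to cover Types (3) and (4) in addition to Types (1) and (2). The central input is the quasi-multiplicativity statement (together with arm-separation) from our companion paper, which at each pair of scales $n\ll N$ decouples connections inside $B(n)$ from connections across the annulus $B(N)\setminus B(n)$ at essentially no cost; combined with the domain Markov property of the GFF and the one-arm/two-point/crossing asymptotics (\ref{one_arm_low})--(\ref{crossing_high}), this forms the backbone of every convergence statement.

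To construct $\mathbb{P}^{(1)}_{d,\mathrm{IIC}}$, fix an increasing cylinder event $\mathsf{F}$ supported in $B(n_0)$. For $n_0\ll n\ll N$ I would split $\{\bm{0}\xleftrightarrow{\ge 0}\partial B(N)\}$ through $\partial B(n)$ and use arm-separation plus the Markov property to show that $\mathbb{P}(\mathsf{F}\mid \bm{0}\xleftrightarrow{\ge 0}\partial B(N))$ equals $\mathbb{P}(\mathsf{F}\mid \bm{0}\xleftrightarrow{\ge 0}\partial B(n))$ up to an error vanishing as $n\to\infty$, uniformly in $N\ge 2n$, giving Cauchy convergence. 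Each of Types (2), (3), (4) is then reduced to Type (1) at a diverging scale, which simultaneously proves their existence and their equivalence with $\mathbb{P}^{(1)}_{d,\mathrm{IIC}}$. For Type (2), I would pick $L=L(h)\to\infty$ so that the supercritical one-arm probability $\mathbb{P}(\bm{0}\xleftrightarrow{\ge h}\infty)$ matches $\theta_d(L)$ up to constants, and then use a shift/sprinkling argument together with arm-separation to replace the supercritical conditioning by the critical conditioning on $\{\bm{0}\xleftrightarrow{\ge 0}\partial B(L)\}$. For Type (3), the two-point asymptotic (\ref{two_point}) and a two-point version of quasi-multiplicativity reduce conditioning on $\{\bm{0}\xleftrightarrow{\ge 0}x\}$ to conditioning on $\{\bm{0}\xleftrightarrow{\ge 0}\partial B(|x|/2)\}$. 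For Type (4), the \emph{precise} asymptotic (\ref{iic_type4new})---not just an up-to-constant bound---lets me disintegrate $\{\mathrm{cap}(\mathcal{C}^{\ge 0}(\bm{0}))\ge T\}$ along the cluster's diameter and identify a unique scale $R(T)\to\infty$ at which the conditioning again collapses to Type (1).

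For the geometric assertions, infiniteness is immediate from the Type (1) construction since $\bm{0}$ reaches $\partial B(N)$ for every $N$. For one-endedness I would prove a two-arm bound stating that the probability of two disjoint $\widetilde{E}^{\ge 0}$-arms from $\bm{0}$ to $\partial B(N)$ is $o(\theta_d(N))$; combined with the Type (1) construction, this forces a unique macroscopic arm across every annulus $B(N)\setminus B(M)$, and sending $N\to\infty$ with $M$ fixed yields uniqueness of the infinite component of $\mathscr{C}^{\ge 0}\setminus B(M)$. The two main obstacles will be (i) the Type (4) reduction, which requires sharply matching the $T^{-1/2}$ prefactor in (\ref{iic_type4new}) by controlling both tails of $\mathrm{cap}(\mathcal{C}^{\ge 0}(\bm{0}))$ conditional on its diameter, and (ii) the two-arm bound needed for one-endedness, which must be bootstrapped from the one-arm and crossing exponents by a gluing-cost argument since the two-arm exponent is not directly available in the literature.
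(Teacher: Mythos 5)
Your proposal correctly identifies the Basu--Sapozhnikov framework and quasi-multiplicativity from the companion paper as the backbone, and the mechanism you describe for Type (1) convergence (decoupling through an intermediate scale, uniformity in $N$) is in spirit what the paper accomplishes via Hopf's contraction theorem. However, there are two places where your plan diverges from the paper in ways that matter.

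First, the reduction structure for equivalence is genuinely different. You anchor on Type (1) and propose separate reductions: a sprinkling/scale-matching argument for Type (2), a two-point quasi-multiplicativity for Type (3), and a disintegration along diameter for Type (4). The paper instead proves a single \emph{uniform-in-$h$} convergence result (Proposition~\ref{lemma_converge_loop}) for admissible loop-soup-plus-interlacement events, simultaneously for all three conditionings $\mathbf{N}$, $\mathbf{x}$, $\mathbf{T}$. For $h>0$ the conditionings $\mathsf{H}_h^{\mathbf{N}}$ and $\mathsf{H}_h^{\mathbf{T}}$ converge to $\{\bm{0}\xleftrightarrow{[h]}\infty\}$, and Type (2) equivalence then follows simply from continuity of the limit as $h\downarrow 0$; no sprinkling-based scale matching is needed. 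Your Type (4) disintegration ``along the cluster's diameter'' is also suspect: conditioning on $\mathrm{cap}(\mathcal{C}^{\ge 0}(\bm 0))\ge T$ is not obviously equivalent (even approximately) to conditioning on reaching a unique scale $R(T)$, because the capacity constraint tilts the full cluster geometry, not just the outer radius. The paper's uniform approach sidesteps this by running the Hopf argument directly under the $\mathsf{H}_h^{\mathbf{T}}$ conditioning, using the sharp asymptotic (\ref{iic_type4new}) only to control the error from truncating the capacity inside a bounded box.

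Second, and more seriously, the one-endedness step in your proposal is a genuine gap. You propose a two-arm bound $o(\theta_d(N))$ for two disjoint $\widetilde{E}^{\ge 0}$-arms from $\bm 0$ to $\partial B(N)$, and acknowledge yourself that the two-arm exponent is not available. This is correct, and it is not a small obstacle: getting a strict two-arm improvement over $\theta_d(N)$ would require new work, in low dimensions likely comparable in difficulty to the one-arm estimate itself. The paper avoids this entirely. It argues by contradiction: if $\mathscr{C}^{\ge 0}$ were not one-ended under $\mathbb{P}_{d,\mathrm{IIC}}$, then by the Type (2) representation there would exist $h>0$, $N$, and boundary points $x_1\ne x_2$ such that the event $\mathsf{F}_{h,N}(x_1,x_2)$ of two distinct infinite sub-clusters of $\mathcal{C}^{\ge -h}(\bm 0)$ outside $B(N)$ has positive probability. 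Conditioning on bounded boundary values, using FKG and a finite-energy-type modification inside $B(N)$, one would then produce two distinct infinite clusters of $\widetilde{E}^{\ge -h}$ with positive probability, contradicting the a.s.\ uniqueness of the supercritical infinite cluster (\cite{gandolfi1992uniqueness}, via \cite[Lemma 5.3]{lupu2016loop}). This is both more elementary and completely avoids any arm-exponent input beyond the one-arm bound already in hand; you should adopt it rather than pursuing a two-arm estimate.
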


Our proof of Theorem \ref{thm_iic} is mainly inspired by \cite{basu2017kesten}. Precisely, in \cite{basu2017kesten}, a fundamental property called \textit{quasi-multiplicativity} was conjectured to hold for Bernoulli percolation on $\mathbb{Z}^d$ with $3\le d<6$, stating that the connecting probability between two general sets is of the same order as the product of the connecting probabilities between each set and the boundary of an intermediate box. Assuming quasi-multiplicativity holds for all $p\in [p_c,p_c+\delta]$ with some $\delta>0$ (where $p_c$ is the critical threshold), the existence and equivalence of Type (1) and Type (2) IICs were established in \cite{basu2017kesten}. In a companion paper \cite{inpreparation}, we proved quasi-multiplicativity for the critical levet-set $\widetilde{E}^{\ge 0}$ for all $d\ge 3$ except the critical dimension $d=6$, with correction factor $N^{6-d}$ when $d>6$; for $d=6$, similar bounds were also proved, although the upper and lower bounds differ
by a divergent factor of $N^{o(1)}$. Specifically, it was proved in \cite[Theorem 1.1]{inpreparation} that for any $d\ge 3$ with $d\neq 6$, there exist constants $C=C(d),c=c(d)>0$ such that for any $N\ge 1$, $A_1,D_1\subset  \widetilde{B}(cN^{1\boxdot \frac{2}{d-4}})$ and $A_2,D_2\subset [\widetilde{B}(CN^{1\boxdot \frac{d-4}{2}})]^c$,
	\begin{equation}\label{QM_ineq_1}
	\begin{split}
			\mathbb{P}^{D_1\cup D_2}\big( A_1\xleftrightarrow{\ge 0} A_2\big) 
			 \asymp N^{0 \boxdot (6-d)} \mathbb{P}^{D_1}\big( A_1 \xleftrightarrow{\ge 0} \partial B(N) \big) \mathbb{P}^{D_2}\big( A_2 \xleftrightarrow{\ge 0} \partial B(N) \big).	
			 \end{split}
	\end{equation}
	Here $\mathbb{P}^D(\cdot)$ denotes the law of $\{\widetilde{\phi}_v\}_{v\in \widetilde{\mathbb{Z}}^d}$ conditioned on $\cap_{v\in D} \{\widetilde{\phi}_v=0\}$, and $\widetilde{B}(\cdot )$ denotes the box of the metric graph defined by
	\begin{equation}\label{def_continuous_box}
			\widetilde{B}(M):=\bigcup_{y_1\sim y_2\in B(M): \{y_1,y_2\}\cap B(M-1)\neq \emptyset} I_{\{y_1,y_2\}}, \ \ \forall M\ge 1. 
		\end{equation}

Our proof of Theorem \ref{thm_iic} is based on (\ref{QM_ineq_1}) and on the proof framework in \cite{basu2017kesten}. Notably, in this proof we first establish the existence and equivalence of the analogues of (\ref{iic_type1}) and (\ref{iic_type2}) for the loop soup $\widetilde{\mathcal{L}}_{1/2}$, and then extend them to the case in $\widetilde{E}^{\ge 0}$ by using the isomorphism theorem. Moreover, since the distribution of the loop cluster $\mathcal{C}(\bm{0})$ conditioned on $\bm{0}$ being connected to $\partial B(N)$ is identical to that of $\mathcal{C}^{\ge 0}(\bm{0})$ under the same conditioning (by the isomorphism theorem), we know that the IIC of $\widetilde{\mathcal{L}}_{1/2}$ (denoted by $\mathscr{C}$) has the same distribution as $\mathscr{C}^{\ge 0}$.

\textbf{P.S.} The exclusion of $d=6$ in Theorem \ref{thm_iic} arises from the absence of a proof of quasi-multiplicativity for $d=6$ (probably involving a poly-logarithmic correction factor). As noted in \cite[Remark 1.3]{inpreparation}, this issue stems from the lack of an estimate for the order of $\theta_6(N)$. For clarity, in the remainder of this paper, we assume $d\neq 6$ unless otherwise specified. As a supplement, in Remark \ref{remark_extension} we will point out which results can be extended to $d=6$, albeit with unfavorable error terms.

\subsection{Volume growth}

The typical volume of a critical cluster with a large diameter, as a fundamental quantity, is closely related to $\theta_d$ and $\nu_d$. Specifically, for any $d\ge 3$ and $N\ge 1$, we denote $\widehat{\mathbb{P}}_{d,N}(\cdot):= \mathbb{P}(\cdot\mid \bm{0}\xleftrightarrow{\ge 0}\partial B(N))$. By putting (\ref{one_arm_low}), (\ref{one_arm_high}), (\ref{volume_low}) and (\ref{volume_high}) together, we obtain that for any $\lambda>1$,
\begin{equation}
	\begin{split}
		&\widehat{\mathbb{P}}_{d,N}\big( \mathrm{vol}(\mathcal{C}^{\ge 0})\ge \lambda N^{(\frac{d}{2}+1)\boxdot 4} \big) \\
		\le & [\theta_d(N)]^{-1} \nu_d(\lambda N^{(\frac{d}{2}+1)\boxdot 4})\lesssim \lambda^{-(\frac{d-2}{d+2} \boxdot \frac{1}{2})}\overset{\lambda \to \infty}{\to} 0,
	\end{split}
\end{equation}
which implies that the typical volume is at most of order $N^{(\frac{d}{2}+1)\boxdot 4}$. The following result shows that this is indeed the exact order of the typical volume. For brevity, we denote $\mathcal{V}_{M}^{\ge 0}:=\mathrm{vol}(\mathcal{C}^{\ge 0}(\bm{0})\cap B(M))$ for $M\ge 1$.

\begin{theorem}\label{thm_1.2}
For any $d\ge 3$ with $d\neq 6$ and for any $\epsilon>0$, there exist constants $\Cl\label{const_typical_volume_1}(d), \Cl\label{const_typical_volume_2}(d,\epsilon), \cl\label{const_typical_volume_3}(d,\epsilon)>0$ such that for any $M\ge 1$ and $N\ge \Cref{const_typical_volume_1}M$, 
\begin{equation}\label{intro_1.20}
	\widehat{\mathbb{P}}_{d,N}\big(\cref{const_typical_volume_3}M^{(\frac{d}{2}+1)\boxdot 4}  \le  \mathcal{V}_{M}^{\ge 0} \le \Cref{const_typical_volume_2}M^{(\frac{d}{2}+1)\boxdot 4} \big)\ge 1-\epsilon.
\end{equation}
\end{theorem}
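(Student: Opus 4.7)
The plan is a first-and-second-moment analysis of $\mathcal{V}_M^{\ge 0}$ under the conditional law $\widehat{\mathbb{P}}_{d,N}$, based on the quasi-multiplicativity \eqref{QM_ineq_1}. The upper bound in \eqref{intro_1.20} will come from Markov's inequality applied to a first-moment computation, and the lower bound from Paley--Zygmund applied to first and second moments, followed by an upgrade from positive probability to $1-\epsilon$.

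For the first moment, starting from
\[
\widehat{\mathbb{E}}_{d,N}\bigl[\mathcal{V}_M^{\ge 0}\bigr]=\theta_d(N)^{-1}\sum_{x\in B(M)}\mathbb{P}\bigl(\mathbf{0}\xleftrightarrow{\ge 0}x,\,\mathbf{0}\xleftrightarrow{\ge 0}\partial B(N)\bigr),
\]
I note that the summand equals $\mathbb{P}(\mathbf{0}\leftrightarrow x,\,x\leftrightarrow\partial B(N))$ (since $\mathbf{0}\leftrightarrow x$ places them in the same cluster) and apply \eqref{QM_ineq_1} at intermediate scales of order $|x|$ and $N$. This factors the probability as $\asymp\theta_d(N)\cdot\tau_{\mathrm{IIC}}(|x|)$, where the IIC two-point function $\tau_{\mathrm{IIC}}(r)\asymp r^{(\frac{d}{2}+1)\boxdot 4-d}$ (explicitly $r^{1-d/2}$ for $d<6$ and $r^{4-d}$ for $d>6$) arises from telescoping the one-arm factors $\theta_d$ from \eqref{QM_ineq_1} together with the correction $N^{0\boxdot(6-d)}$. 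Summing over $x\in B(M)$ against $r^{d-1}\,dr$ gives $\widehat{\mathbb{E}}_{d,N}[\mathcal{V}_M^{\ge 0}]\asymp M^{(\frac{d}{2}+1)\boxdot 4}$, and Markov's inequality yields the upper tail in \eqref{intro_1.20} for $\Cref{const_typical_volume_2}$ sufficiently large.

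For the second moment I use a tripod decomposition of the three-point probability $\mathbb{P}(\mathbf{0}\leftrightarrow x,\,\mathbf{0}\leftrightarrow y,\,\mathbf{0}\leftrightarrow\partial B(N))$: on this event there exists a branching vertex $z\in\mathcal{C}^{\ge 0}(\mathbf{0})$ from which three roughly disjoint arms reach $x$, $y$ and $\partial B(N)$, with a fourth arm returning to $\mathbf{0}$. Summing over dyadic scales of $|z|,\,|z-x|,\,|z-y|$ and applying \eqref{QM_ineq_1} to each arm--with the $\mathbf{0}$-to-$\partial B(N)$ segment through $z$ contributing $\theta_d(N)\cdot\tau_{\mathrm{IIC}}(|z|)$ rather than the smaller $\theta_d(N)\tau(z)$, exactly as in the first-moment step--gives
\[
\mathbb{P}(\mathbf{0}\leftrightarrow x,\,\mathbf{0}\leftrightarrow y,\,\mathbf{0}\leftrightarrow\partial B(N))\,\lesssim\,\theta_d(N)\sum_{z}\tau_{\mathrm{IIC}}(|z|)\,\tau(z-x)\,\tau(z-y).
\]
Summation over $x,y\in B(M)$ produces $\widehat{\mathbb{E}}_{d,N}[(\mathcal{V}_M^{\ge 0})^2]\lesssim M^{2((\frac{d}{2}+1)\boxdot 4)}$, so Paley--Zygmund yields $\widehat{\mathbb{P}}_{d,N}\bigl(\mathcal{V}_M^{\ge 0}\ge \cref{const_typical_volume_3}M^{(\frac{d}{2}+1)\boxdot 4}\bigr)\ge c_0$ for some $c_0>0$. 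To boost $c_0$ to $1-\epsilon$, I apply the same Paley--Zygmund bound at the scale of each of $K=K(\epsilon)$ disjoint dyadic annuli $B(M/2^{k-1})\setminus B(M/2^k)$, and use the loop-soup Poissonian structure--after conditioning on the backbone of $\mathcal{C}^{\ge 0}(\mathbf{0})$ from $\mathbf{0}$ to $\partial B(N)$--to argue that the contributions of distinct annuli are quasi-independent; a union bound across the $K$ annuli then delivers \eqref{intro_1.20}.

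The main obstacle is the tripod three-point bound, since the classical BK-type decoupling for Bernoulli percolation is not directly available for Gaussian fields. My plan is to execute the argument first on the loop-soup side, where the Poissonian structure of $\widetilde{\mathcal{L}}_{1/2}$ lets one decouple the three arms by conditioning on the loop that first realizes the branching vertex $z$ at each dyadic scale (the correction $N^{0\boxdot(6-d)}$ from \eqref{QM_ineq_1} is absorbed as an overall multiplicative constant); the isomorphism theorem then transfers the bound to $\widetilde{E}^{\ge 0}$, mirroring the strategy used in this paper to prove Theorem~\ref{thm_iic}. A secondary technical point is that the scale restriction $|x|\le cN^{1\boxdot 2/(d-4)}$ in \eqref{QM_ineq_1} forces, in the high-dimensional regime $d>6$, an iteration of the QM cascade across $O(\log N)$ intermediate scales; the correction $N^{0\boxdot(6-d)}$ telescopes cleanly through these iterations and combines with \eqref{one_arm_low}--\eqref{one_arm_high} to yield exactly the exponent $(\frac{d}{2}+1)\boxdot 4$, with no logarithmic loss provided $d\neq 6$.
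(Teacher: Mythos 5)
Your upper bound via Markov's inequality on the conditional first moment is exactly what the paper does in Section \ref{subsection_proof_thm1.2}. The lower bound departs entirely from the paper's argument, and both of its legs have genuine gaps.

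First, the tripod three-point estimate is not sharp enough in the low-dimensional regime $3 \le d \le 5$. Summing your bound $\mathbb{P}(\bm{0}\leftrightarrow x, y, \partial B(N))\lesssim\theta_d(N)\sum_z\tau_{\mathrm{IIC}}(|z|)\tau(z-x)\tau(z-y)$ over $x,y\in B(M)$ gives $\widehat{\mathbb{E}}_{d,N}[(\mathcal{V}_M^{\ge 0})^2]\lesssim M^{\frac{d}{2}+5}$ (and for $d=3$ the $z$-sum even diverges without an extra cutoff). Paley--Zygmund requires the second moment to be $\lesssim M^{d+2}$, but $\frac{d}{2}+5>d+2$ precisely when $d<6$, so the bound is off by a growing factor $M^{3-\frac{d}{2}}$ and the resulting Paley--Zygmund constant degenerates like $M^{\frac{d}{2}-3}\to 0$. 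The tree-graph inequality is tight only at and above $d_c=6$; below it one needs the sharper conditional connectivity bounds \eqref{final_addnew2.33}--\eqref{addnew2.34} from the companion paper, which is exactly what Lemma \ref{lemma_Q_BN} and the harmonic-average analysis of Section \ref{section3.1_lower} rely on instead.

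Second, the $1-\epsilon$ upgrade via dyadic annuli has a geometric mismatch, and the case $d\ge 7$ is not addressed at all. Paley--Zygmund at scale $M/2^k$ gives volume $\gtrsim (M/2^k)^{(\frac{d}{2}+1)\boxdot 4}$, which is already $\ll M^{(\frac{d}{2}+1)\boxdot 4}$ once $k$ is moderately large; letting $K\to\infty$ to force $(1-c_0)^K<\epsilon$ puts almost all of your annuli at scales too small for their success to give $\cref{const_typical_volume_3}M^{(\frac{d}{2}+1)\boxdot 4}$. The paper resolves this (for $3\le d\le 5$) by reusing the annular decomposition of Section \ref{section_exi_IIC} with the $\epsilon$-dependent gap sequence $n_i$ at $\lambda=C\epsilon^{-1}$, $K=\lfloor\ln^2(1/\epsilon)\rfloor$: every annulus then has inner radius $\gtrsim c_*(\epsilon)M$, so by Lemma \ref{finaluse_lemma_volume} each contributes $\gtrsim \cref{const_typical_volume_3}M^{(\frac{d}{2}+1)\boxdot 4}$ with a uniformly positive probability, and the sub-clusters are conditionally independent given the crossing pieces $\widehat{\mathfrak{C}}_k$ --- this is why the constants $\cref{const_typical_volume_3},\Cref{const_typical_volume_2}$ are $\epsilon$-dependent. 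For $d\ge 7$ that scheme fails outright whenever $N\asymp M$, because \eqref{QM_ineq_1} forces the annuli to widen with exponent $\frac{d-4}{2}>1$, leaving no room for the cascade inside $B(N)\setminus B(cM)$ (see the P.S.\ in Section \ref{subsection_proof_thm1.2}); the paper's high-dimensional lower bound is instead proved via Lemma \ref{lemma_prepare_highd} combined with an exploration-martingale, time-change, and reflection-principle argument, none of which appears in your plan.
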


Theorem \ref{thm_1.2} establishes a tightness property for the volume of the critical cluster under normalizations at different scales. This phenomenon suggests that the critical cluster should exhibit self-similarity (i.e., a similarity between the macroscopic and microscopic structures), which further supports Conjecture 1 on the scaling limit of the critical cluster.

 Provided with Theorem \ref{thm_1.2}, it remains unclear whether the scaling limit of critical clusters (assuming it exists) is deterministic or stochastic. The next result demonstrates an anti-concentration property for the volume $\mathcal{V}_{M}^{\ge 0}$ conditioned on $\{\bm{0}\xleftrightarrow{\ge 0} \partial B(N)\}$, thereby implying that the scaling limit (if exists) is stochastic.

 \begin{theorem}\label{thm_1.3}
	For any $d\ge 3$ with $d\neq 6$ and for any $\lambda\ge 1$, there exist constants $\Cl\label{const_volume_sharp1}(d),\cl\label{const_volume_sharp2}(d,\lambda)>0$ such that for any $M\ge 1$ and  $N\ge \Cref{const_volume_sharp1}M$, 
		\begin{equation}
		\widehat{\mathbb{P}}_{d,N}\big(\mathcal{V}_{M}^{\ge 0}\ge \lambda M^{(\frac{d}{2}+1)\boxdot 4}  \big)  \ge \cref{const_volume_sharp2}.
	\end{equation}
\end{theorem}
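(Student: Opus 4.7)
The plan is to combine the quasi-multiplicativity estimate~(\ref{QM_ineq_1}) with an explicit volume-amplification construction, and the argument splits naturally into a reduction step and a matched-scale construction step. Write $\alpha:=(\tfrac{d}{2}+1)\boxdot 4$. As a first step, I would apply~(\ref{QM_ineq_1}) to the decomposition ``interior part of $\mathcal{V}_M^{\ge 0}\ge \lambda M^\alpha$ inside $\widetilde{B}(C_0M)$'' plus ``crossing from $\partial B(C_0M)$ to $\partial B(N)$''. Because the exterior factor produced by~(\ref{QM_ineq_1}) is of order $\theta_d(N)/\theta_d(C_0M)$ and the correction factor $N^{0\boxdot(6-d)}$ appears identically in the joint event and in $\theta_d(N)$, the problem reduces to establishing a matched-scale lower bound
\[\widehat{\mathbb{P}}_{d,C_0M}\bigl(\mathcal{V}_M^{\ge 0}\ge \lambda M^\alpha\bigr)\ \ge\ c(\lambda)>0\]
uniformly in $M$, where $C_0=C_0(d)$ is a suitably large constant.

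\medskip

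To produce the matched-scale bound, I would construct a sub-event that forces many ``atypically dense'' sub-clusters to attach to the cluster of $\bm{0}$. Choose $K=K(\lambda)$ so that $c_1 K^{1-\alpha/d}\ge \lambda$ (possible since $\alpha<d$ in every dimension considered), tile $B(M)$ by $K$ sub-boxes $B_1,\dots,B_K$ of side-length $\ell:=cM/K^{1/d}$, let $x_i$ denote the center of $B_i$, and consider
\[\mathcal{E}\;:=\;\{\bm{0}\xleftrightarrow{\ge 0}\partial B(C_0M)\}\ \cap\ \bigcap_{i=1}^{K}\{\bm{0}\xleftrightarrow{\ge 0}x_i\}\ \cap\ \bigcap_{i=1}^{K}\bigl\{\mathrm{vol}\bigl(\mathcal{C}^{\ge 0}(x_i)\cap B_i\bigr)\ge c_2\ell^{\alpha}\bigr\}.\]
On $\mathcal{E}$ one has $\mathcal{V}_M^{\ge 0}\ge K\cdot c_2\ell^{\alpha}\ge \lambda M^{\alpha}$. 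To lower-bound $\mathbb{P}(\mathcal{E})$, I would iterate~(\ref{QM_ineq_1}) along a tree skeleton joining $\bm{0}$ to $x_1,\dots,x_K$ and then extending out to $\partial B(C_0M)$, use FKG for the GFF to combine the connection events with the local-volume events (which live in disjoint sub-boxes), and invoke Theorem~\ref{thm_1.2} at the scale $\ell$ to produce each local sub-cluster of volume $\ge c_2\ell^{\alpha}$ with uniformly positive conditional probability.

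\medskip

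The main obstacle is control of the $(K+1)$-arm event that $\bm{0}$ is simultaneously connected to $x_1,\dots,x_K$ and to $\partial B(C_0M)$: the companion paper~\cite{inpreparation} supplies~(\ref{QM_ineq_1}) as a two-point statement, and converting it into a multi-point quasi-multiplicativity lemma with an explicit dependence on $K$ is the delicate part of the argument. For $d>6$ this is fairly direct via a tree-graph-type bound, while for $3\le d<6$ the iterated application must track the one-arm exponent $\tfrac{d-2}{2}$ at each branching vertex of the skeleton, with the resulting $K$-dependence absorbed into $c(\lambda)$. Finally, the analogous statement for the loop soup $\widetilde{\mathcal{L}}_{1/2}$ follows from the isomorphism theorem, as in the proof of Theorem~\ref{thm_iic}.
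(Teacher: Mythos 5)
Your construction has a fundamental flaw that is already visible at $K=1$. The event $\mathcal{E}$ demands $\bm{0}\xleftrightarrow{\ge 0} x_i$ for fixed points $x_i$ at distance of order $M$ (since $K=K(\lambda)$ is constant in $M$ and $\ell\asymp M/K^{1/d}\asymp M$). But Theorem~\ref{thm_1.4} --- or equivalently Corollary~1.5(3), which is the IIC version --- says that conditioned on $\{\bm{0}\xleftrightarrow{\ge 0}\partial B(N)\}$ the probability of reaching a single prescribed point $y\in\partial B(M)$ is $\asymp M^{-[(\frac{d}{2}-1)\boxdot(d-4)]}$, which tends to $0$ as $M\to\infty$. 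This is a feature, not a bug: the critical/incipient cluster is a fractal of dimension $(\tfrac{d}{2}+1)\boxdot 4<d$, so it misses any given lattice point at macroscopic distance with probability tending to $1$. Requiring it to hit all $K$ centers simultaneously only makes things worse. Consequently $\widehat{\mathbb{P}}_{d,N}(\mathcal{E})\to 0$ as $M\to\infty$, and no amount of sharpening the multi-point quasi-multiplicativity lemma can rescue a bound of the form $\widehat{\mathbb{P}}_{d,N}(\mathcal{E})\ge c(\lambda)$. The obstacle you flag as ``delicate but surmountable'' (tracking the one-arm exponent at each branching vertex and absorbing the $K$-dependence into $c(\lambda)$) misidentifies the issue: the problematic decay is in $M$, not in $K$, and it appears already for a single point.

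A secondary issue is the reduction step. Inequality~(\ref{QM_ineq_1}) is a statement about connectivity probabilities between sets under a conditioned field; it does not directly factor the joint event ``$\mathcal{V}_M^{\ge 0}\ge\lambda M^\alpha$ and $\bm{0}\xleftrightarrow{\ge 0}\partial B(N)$'' into an interior volume term times an exterior one-arm term. The paper handles the decoupling differently --- by exploring $\mathcal{C}_m^{\ge 0}$ at a scale $m\asymp M$ and then using FKG together with a second-moment control event $\mathsf{Q}_b$ --- not by a blanket application of~(\ref{QM_ineq_1}).

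The mechanism the paper actually uses to generate an atypically large volume is quite different from tiling: it pays a \emph{uniformly} positive price $e^{-c\lambda^4}$ (independent of $M$) for the event $\{\widetilde{\phi}_{\bm{0}}\ge\lambda^2\}$, which by Lemma~\ref{newlemma3.1} multiplies all connection probabilities near $\bm{0}$ by $\lambda^2$ and hence amplifies the conditional expected volume. A Paley--Zygmund/second-moment argument on the resulting ``niceness'' event, combined with an FKG step to attach the outward arm to $\partial B(N)$, then delivers the claim. If you want to keep a tiling-style construction, you would at least need to replace $\{\bm{0}\xleftrightarrow{\ge 0}x_i\}$ by events of one-arm type (connection to whole boxes rather than fixed points), and you would still need a mechanism analogous to the large-field seed to make the expected volume exceed $\lambda M^{\alpha}$ rather than merely $\asymp M^{\alpha}$, since under $\widehat{\mathbb{P}}_{d,N}$ the typical volume is already of order $M^{\alpha}$ (Theorem~\ref{thm_1.2}) and the event $\{\mathcal{V}_M^{\ge 0}\ge\lambda M^{\alpha}\}$ is genuinely a deviation event whose probability must degrade as $\lambda$ grows.
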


 During the proof of Theorems \ref{thm_1.2} and \ref{thm_1.3}, we obtain the following estimate for the normalized two-point function, which is interesting on its own right. It gives the probability of connecting two points conditioned on one of them being contained in a large critical cluster.

\begin{theorem}\label{thm_1.4}
	For any $d\ge 3$ with $d\neq 6$, there exist constants $\Cl\label{const_iic_two_point1}(d),\Cl\label{const_iic_two_point2}(d), \cl\label{const_iic_two_point3}(d)>0$ such that for any $M\ge 1$, $y\in \partial B(M)$ and $N\ge \Cref{const_iic_two_point1}M$, 
\begin{equation}
\cref{const_iic_two_point3} M^{-[(\frac{d}{2}-1)\boxdot (d-4)]} \le 	\widehat{\mathbb{P}}_{d,N}\big( \bm{0}\xleftrightarrow{\ge 0} y \big) \le \Cref{const_iic_two_point2} M^{-[(\frac{d}{2}-1)\boxdot  (d-4)]}. 
\end{equation}
\end{theorem}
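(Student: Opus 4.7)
The plan is to establish the two-sided estimate
\begin{equation*}
\mathbb{P}\big(\bm{0}\xleftrightarrow{\ge 0} y,\ \bm{0}\xleftrightarrow{\ge 0}\partial B(N)\big)\asymp M^{0\boxdot(6-d)}\,\theta_d(M)\,\theta_d(N),
\end{equation*}
for $y\in\partial B(M)$ and $N\ge \Cref{const_iic_two_point1} M$, which I will denote by $(\star)$; Theorem~\ref{thm_1.4} then follows immediately by dividing $(\star)$ by $\theta_d(N)=\mathbb{P}(\bm{0}\xleftrightarrow{\ge 0}\partial B(N))$ and observing, from \eqref{one_arm_low}--\eqref{one_arm_high}, that $M^{0\boxdot(6-d)}\theta_d(M)=M^{-[(d/2-1)\boxdot(d-4)]}$. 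The principal tool is the quasi-multiplicativity \eqref{QM_ineq_1} from the companion paper, applied together with the Markov property of the GFF and the isomorphism theorem.

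For the lower bound in $(\star)$, I will combine two sub-events via FKG and a gluing step. Inside $\widetilde{B}(CM)$ for a suitable large constant $C$, construct jointly (a) a path realizing $\bm{0}\xleftrightarrow{\ge 0} y$, and (b) an arm from $\bm{0}$ to $\partial\widetilde{B}(CM)$ exiting through a prescribed constant-fraction region $U\subset\partial\widetilde{B}(CM)$; arm separation (a standard consequence of \eqref{QM_ineq_1}) shows that (a) and (b) jointly hold with probability $\asymp M^{0\boxdot(6-d)}\theta_d(M)^2\asymp \mathbb{P}(\bm{0}\leftrightarrow y)$, the last equivalence being the quasi-multiplicative form of the two-point function \eqref{two_point}. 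Outside $\widetilde{B}(CM)$, independently realize an arm from $U$ to $\partial B(N)$ with probability $\asymp \theta_d(N)/[M^{0\boxdot(6-d)}\theta_d(M)]$, obtained by applying \eqref{QM_ineq_1} with $A_1=\{\bm{0}\}$, $A_2=\partial B(N)$ at intermediate scale $\sim M$ (when $N$ is close to $M$ the arm is trivially of order one, still matching). Gluing the inner and outer configurations at $U$ via FKG yields the lower bound.

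For the upper bound in $(\star)$, I will use the strong Markov property to explore the cluster $\mathcal{C}^{\ge 0}(\bm{0})\cap\widetilde{B}(CM)$ and let $\Lambda:=\mathcal{C}^{\ge 0}(\bm{0})\cap\partial\widetilde{B}(CM)$ denote its exit boundary. On the joint event, $y$ lies in this explored cluster---contributing $\mathbb{P}(\bm{0}\leftrightarrow y)\asymp M^{0\boxdot(6-d)}\theta_d(M)^2$---and $\Lambda$ further connects to $\partial B(N)$ through the exterior GFF, which, conditionally on the exploration, is itself a GFF with explicit Dirichlet data on the explored interface. Applying \eqref{QM_ineq_1} with a conditioning set $D_1\cup D_2$ encoding this interface bounds the conditional exterior arm probability by $\lesssim \theta_d(N)/[M^{0\boxdot(6-d)}\theta_d(M)]$, uniformly over typical explorations; multiplying by the inner factor produces the upper bound in $(\star)$.

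The hard part will be the upper bound: the explored interface $\Lambda$ is random, irregular, and correlated with the GFF boundary data, so bounding the conditional exterior arm probability uniformly requires \eqref{QM_ineq_1} in its full generality (with nontrivial conditioning sets $D_1,D_2$), supplemented by arm-separation estimates from \cite{inpreparation} to rule out pathological interfaces. This is precisely the juncture where the robustness of the Basu--Sapozhnikov framework \cite{basu2017kesten} becomes decisive, and its adaptation to the GFF/loop soup setting via the isomorphism theorem is the key new ingredient.
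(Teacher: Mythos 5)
Your lower bound sketch tracks the paper's actual argument in broad strokes (explore a cluster near $\bm{0}$, then glue an outward arm via conditional FKG), but the gluing step as written is too crude. If $U$ is literally a fixed constant-fraction region of $\partial B(CM)$, the event $\{U\xleftrightarrow{\ge 0}\partial B(N)\}$ has probability $\asymp\rho_d(CM,N)\asymp (M/N)^{\frac{d}{2}+1}$ (for $3\le d\le 5$), which is a factor $(M/N)^{2}$ \emph{smaller} than the needed $\theta_d(N)/\theta_d(M)\asymp (M/N)^{\frac{d}{2}-1}$. The paper bridges exactly this gap by not gluing to a boolean arm event but to the harmonic average $\mathcal H^{\ge 0}_n$ of the explored cluster (Lemma~\ref{newlemma3.1}, definitions~\eqref{def_3.1}--\eqref{finaluse_def_3.1}): conditioned on $\mathcal F_{\mathcal C^{\ge 0}_n}$ and on the ``good'' event $\mathsf K_{a,b}$ where $\mathcal H^{\ge 0}_n\gtrsim n^{2-d}[\theta_d(n)]^{-1}$, the cluster connects onward to $\partial B(N)$ with probability $\gtrsim \theta_d(N)/\theta_d(M)$, not $\rho_d(M,N)$. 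A second-moment control on $\mathcal Q^{\ge 0}_n$ and a conditional Paley--Zygmund argument are needed to ensure the good event has probability $\gtrsim\theta_d(M)$. Without this capacity/harmonic-average weighting the lower bound comes out short.

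The upper bound is where your proposal has a structural problem. You propose to explore $\mathcal C^{\ge 0}(\bm 0)\cap\widetilde B(CM)$ and apply~\eqref{QM_ineq_1} to the exterior connection with $A_1=\Lambda$ at scale $\asymp M$ and $A_2=\partial B(N)$. But for $d\ge 7$, \eqref{QM_ineq_1} requires $A_1\subset\widetilde B(cN'^{\frac{2}{d-4}})$ and $A_2\subset[\widetilde B(CN'^{\frac{d-4}{2}})]^c$ for some intermediate $N'$, which forces a polynomial scale gap $|A_2|/|A_1|\gtrsim M^{\frac{(d-4)^2}{4}-1}$; already for $d=7$ this means $N\gtrsim M^{9/4}$, while Theorem~\ref{thm_1.4} only assumes $N\gtrsim M$. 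No amount of arm separation or control of ``pathological interfaces'' repairs this: the hypothesis of~\eqref{QM_ineq_1} simply fails in the allowed regime $N\asymp M$. The paper's actual upper-bound proof for $d\ge 7$ (Section~\ref{section3.2_upper}) abandons quasi-multiplicativity entirely and works on the loop-soup side: it splits on whether $\bm 0$ and $y$ are joined within $\mathcal B(N/2)$ or via a loop crossing $\partial\mathcal B(N/2)$, then applies a tree-expansion/BKR argument with explicit Green's-function summations to bound both pieces by $M^{4-d}N^{-2}$. For $3\le d\le 5$ the paper's upper bound is a one-line citation of~\eqref{addnew2.34} from the companion paper; your exploration argument would additionally need to control $\mathbb E\big[\mathbbm 1_{\bm 0\leftrightarrow y}\,\mathcal H^{\ge 0}_{CM}\big]$ uniformly, which is not a pointwise bound over ``typical'' interfaces but a genuinely weighted estimate, and this is precisely what~\eqref{addnew2.34} encodes. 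In short, the upper bound requires techniques outside the scope of~\eqref{QM_ineq_1}, and this is the key missing idea in your proposal.
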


 By taking the limits in Theorems \ref{thm_1.2}, \ref{thm_1.3} and \ref{thm_1.4} as $N\to \infty$, we immediately derive the following properties of the IIC $\mathscr{C}^{\ge 0}$ established in Theorem \ref{thm_iic}. For simplicity, we denote $\mathscr{V}_{M}^{\ge 0}:=\mathrm{vol}(\mathscr{C}^{\ge 0}\cap B(M))$ for $M\ge 1$.

 \begin{corollary}
 	For any $d\ge 3$ with $d\neq 6$, the following properties hold. 
 	\begin{enumerate}
 		\item Recall $\cref{const_typical_volume_3}(d,\epsilon),\Cref{const_typical_volume_2}(d,\epsilon)$ in Theorem \ref{thm_1.2}. For any $\epsilon>0$ and $M\ge 1$, 
 		\begin{equation}\label{use1.25}
	\mathbb{P}_{d,\mathrm{IIC}}\big(\cref{const_typical_volume_3}M^{(\frac{d}{2}+1)\boxdot  4}  \le  \mathscr{V}_{M}^{\ge 0} \le \Cref{const_typical_volume_2}M^{(\frac{d}{2}+1)\boxdot   4} \big)\ge 1-\epsilon.
\end{equation}

 		\item Recall $\cref{const_volume_sharp2}(d,\lambda)$ in Theorem \ref{thm_1.3}. For any $\lambda\ge 1$ and $M\ge 1$,  
 		\begin{equation}
			\mathbb{P}_{d,\mathrm{IIC}}\big(\mathscr{V}_{M}^{\ge 0}\ge \lambda M^{(\frac{d}{2}+1)\boxdot  4}  \big) \ge \cref{const_volume_sharp2}.
	\end{equation}

 		\item  Recall $\cref{const_iic_two_point3}(d), \Cref{const_iic_two_point2}(d)$ in Theorem \ref{thm_1.4}. For any $M\ge 1$ and $y\in \partial B(M)$, 
 		\begin{equation}\label{use1.27}
\cref{const_iic_two_point3} M^{-[(\frac{d}{2}-1)\boxdot  (d-4)]} \le 	\mathbb{P}_{d,\mathrm{IIC}}\big( \bm{0}\xleftrightarrow{\ge 0} y \big) \le \Cref{const_iic_two_point2} M^{-[(\frac{d}{2}-1)\boxdot  (d-4)]}. 
\end{equation}
 		
 	\end{enumerate}
 \end{corollary}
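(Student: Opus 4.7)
The plan is to derive the corollary as a limiting consequence of Theorems \ref{thm_1.2}, \ref{thm_1.3}, and \ref{thm_1.4} by transferring the uniform-in-$N$ bounds to the IIC via the convergence established in Theorem \ref{thm_iic}. By that theorem, $\widehat{\mathbb{P}}_{d,N}\to \mathbb{P}_{d,\mathrm{IIC}}$ as $N\to\infty$ on increasing cylinder events, and by a standard monotone class argument the convergence extends to any event measurable with respect to the restriction $\{\widetilde{\phi}_v\}_{v\in \widetilde{B}(R)}$ for any fixed $R$. In particular, ``local'' versions of connectivity and volume events---those in which the cluster of $\bm{0}$ is computed using only paths confined to $\widetilde{B}(R)$---are continuity points for the limit.

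Write $\mathcal{C}^R(\bm{0})$ for the connected component of $\bm{0}$ in $\widetilde{E}^{\ge 0}\cap \widetilde{B}(R)$, and note that $\mathcal{C}^R(\bm{0})\uparrow \mathcal{C}^{\ge 0}(\bm{0})$ monotonically as $R\to\infty$, since any continuous path in $\widetilde{E}^{\ge 0}$ is contained in some $\widetilde{B}(R_0)$. Thus $\mathrm{vol}(\mathcal{C}^R(\bm{0})\cap B(M))\uparrow \mathcal{V}_M^{\ge 0}$ and, for fixed $y$, $\{\bm{0},y\in \mathcal{C}^R(\bm{0})\}\uparrow \{\bm{0}\xleftrightarrow{\ge 0}y\}$. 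Combining monotone convergence of probability with the convergence on local events, I would express each IIC probability of interest as a double limit $\lim_{R\to\infty}\lim_{N\to\infty}\widehat{\mathbb{P}}_{d,N}(\text{local version})$. In the ``easy directions''---namely the upper tail in \eqref{use1.25} and the upper bound in \eqref{use1.27}---the monotone inclusion of the local event in the global one forces $\widehat{\mathbb{P}}_{d,N}(\text{local}) \le \widehat{\mathbb{P}}_{d,N}(\text{global})$, so the uniform bounds from Theorems \ref{thm_1.2} and \ref{thm_1.4} transfer straightforwardly after taking $N\to\infty$ and then $R\to\infty$.

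The main obstacle will be the reverse direction, needed for the lower tail in \eqref{use1.25}, the lower bound in part $(2)$, and the lower bound in \eqref{use1.27}. Here the natural monotone approximation gives the wrong inequality, and one must quantify a ``truncation error'' such as $\widehat{\mathbb{P}}_{d,N}\bigl(\bm{0}\xleftrightarrow{\ge 0}y,\ y\notin \mathcal{C}^R(\bm{0})\bigr)$ and show it is negligible compared to $\cref{const_iic_two_point3}M^{-[(\frac{d}{2}-1)\boxdot (d-4)]}$, uniformly in $N\ge R$, provided $R$ is chosen sufficiently large as a function of $M$ alone. On this truncation event both $\bm{0}$ and $y$ must be connected to $\partial B(R)$ inside $\widetilde{E}^{\ge 0}$; applying the quasi-multiplicativity estimate \eqref{QM_ineq_1} to decompose the connection across $\partial B(R)$, and using the one-arm and crossing estimates \eqref{one_arm_low}--\eqref{crossing_high} together with the lower bound on $\theta_d(N)$ that removes the conditioning, one obtains a bound that decays polynomially in $R/M$ and is uniform in $N$. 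Choosing $R$ large enough then renders the truncation error a small fraction of the target lower bound and completes the transfer. The same scheme, summed over $y\in B(M)$, handles the volume-truncation discrepancy $\mathcal{V}_M^{\ge 0}-\mathrm{vol}(\mathcal{C}^R(\bm{0})\cap B(M))$ for the lower-tail cases of parts $(1)$ and $(2)$, so the three statements of the corollary follow in a uniform manner.
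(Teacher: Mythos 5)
Your argument has a genuine gap at its foundation. You claim that a standard monotone class argument extends the convergence $\widehat{\mathbb{P}}_{d,N}\to\mathbb{P}_{d,\mathrm{IIC}}$ from increasing cylinder events to every event measurable with respect to $\{\widetilde{\phi}_v\}_{v\in\widetilde{B}(R)}$. That step is false in general: convergence on a generating $\pi$-system pins down the \emph{limit} uniquely, but the class $\{\mathsf{E}:\widehat{\mathbb{P}}_{d,N}(\mathsf{E})\to\mathbb{P}_{d,\mathrm{IIC}}(\mathsf{E})\}$ is not closed under increasing limits, so it is not a Dynkin system and the $\pi$-$\lambda$ theorem does not apply. (The usual counterexample: discrete uniform measures on $\{k/n\}$ converge to Lebesgue on intervals but not on $\mathbb{Q}$.) As a consequence your crucial identity $\mathbb{P}_{d,\mathrm{IIC}}(\text{local})=\lim_N\widehat{\mathbb{P}}_{d,N}(\text{local})$ is unjustified, and both your ``easy'' and ``hard'' directions collapse, since they all route through this identity. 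The truncation-error estimate you sketch can only help once convergence for the local events has been established by some other means (e.g.\ showing they are continuity sets of the limit), which you have not done.

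The reason the paper can treat the corollary as immediate is that Theorem~\ref{thm_iic} is built precisely so that the convergence is proved not just for cylinder events $\cap_j\{\widetilde{\phi}_{x_j}\ge h_j\}$ but for the \emph{admissible events} of Definition~\ref{def_admissible} and, more to the point, for the partition events $\widehat{\mathsf{F}}_h^+(\overbar{\gamma}_y)\cap\widehat{\mathsf{F}}_h^-(\overbar{\gamma}_z)$ used in the proof, which prescribe exactly which of a fixed finite set of points are connected by $\widetilde{E}^{\ge 0}$. By the inclusion--exclusion decomposition there, $\widehat{\mathbb{P}}_{d,N}(\bm{0}\xleftrightarrow{\ge 0}x)$ converges for every fixed $x$, and since $\mathcal{V}_M^{\ge 0}=\sum_{x\in B(M)\cap\mathbb{Z}^d}\mathbbm{1}_{\bm{0}\xleftrightarrow{\ge 0}x}$ involves only finitely many such indicators, the laws of $\mathcal{V}_M^{\ge 0}$ under $\widehat{\mathbb{P}}_{d,N}$ converge to that of $\mathscr{V}_M^{\ge 0}$ under $\mathbb{P}_{d,\mathrm{IIC}}$ as well. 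The uniform-in-$N$ bounds of Theorems~\ref{thm_1.2}--\ref{thm_1.4} then pass to the limit directly, with no local approximation or truncation needed. Your proposal would need to either invoke this wider convergence class explicitly, or supply a proof that the local connectivity events are $\mathbb{P}_{d,\mathrm{IIC}}$-continuity sets; as written, neither is present.
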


\begin{remark}[extension to the critical dimension $d=6$]\label{remark_extension}
	Referring to Sections \ref{section_iic_twopoint} and \ref{section_order_typical_volume}, the proofs of Theorems \ref{thm_1.2} and \ref{thm_1.4} for low dimensions (i.e., $3\le d\le 5$) can be applied to the critical dimension $d=6$, yielding the following results: for $d=6$, there exist constants $C,C',C''>0$ such that for all sufficiently large $N\ge CM$, 
	\begin{equation}
		\widehat{\mathbb{P}}_{d,N}\big( M^{4-C'\varsigma(M)}  \le  \mathcal{V}_{M}^{\ge 0} \le  M^{4+C''\varsigma(M)} \big)\ge 1- o_M(1),
	\end{equation}
	\begin{equation}
	  M^{-4-C'  \varsigma(M)} \le 	\widehat{\mathbb{P}}_{d,N}\big( \bm{0}\xleftrightarrow{\ge 0} y \big) \le M^{-4+C'' \varsigma(M)},	\end{equation}	
	  where $\varsigma(M):=  \tfrac{\ln\ln(M)}{\ln^{1/2}(M)}\ll 1$. 
\end{remark}

\textbf{Related results on IICs and scaling limits.} For Type (3) IIC of Bernoulli percolation in high dimensions, \cite{kozma2009alexander} studied the random walk thereon, and computed its spectral dimension as well as its expected diameter and range, thereby confirming the mean-field behavior conjectured in \cite{alexander1982density}. Building on \cite{kozma2009alexander} and extending the techniques in \cite{cai2024high}, it was proved in \cite{ganguly2024ant} that for $d>20$, any sub-sequential limiting measure of (\ref{iic_type1}) or (\ref{iic_type3}) exhibits the same properties as shown in \cite{kozma2009alexander} (these properties are now confirmed to hold for the IIC, in light of Theorem \ref{thm_iic}). These results were recently extended to the entire mean-field region $d>6$ in \cite{ganguly2024critical}. Through an insightful analysis of a gauge-twisted Gaussian free field, \cite{lupu2022equivalence} proposed a rigorous conjecture called ``intensity doubling conjecture'' on the structure of the scaling limit of the critical cluster in high dimensions (i.e., $d>6$). This conjecture mainly suggests that in high dimensions the backbone (i.e., the homologically non-trivial part) of the scaling limit is distributed as a loop soup with intensity $1$, doubling the intensity of the loop soup $\widetilde{\mathcal{L}}_{1/2}$ before scaling. Notably, in the context of the FK-Ising model on $\mathbb{Z}^d$ with $d\ge 3$, the existence of Type (3) IIC (analogous to (\ref{iic_type3})) was established in \cite{panis2024incipient}.

\textbf{Statements about constants.} We use the letters $C$ and $c$ to represent constants that may change depending on the context. Moreover, constants with numerical labels like $C_1,C_2,c_1,c_2,...$ are fixed throughout the paper. The uppercase letter $C$ (with or without superscripts or subscripts) denotes large constants, while the lowercase $c$ represents small constants. Unless otherwise stated, a constant can only depend on the dimension $d$. If a constant depends on parameters other than $d$, all these parameters will be specified in parentheses.

\subsection{Organization of the paper}

In Section \ref{section_notation}, we fix some necessary notations and review some relevant results. The proof of Theorem \ref{thm_iic} is presented in Section \ref{section_exi_IIC}. Subsequently, in Section \ref{section_iic_twopoint} we establish Theorem \ref{thm_1.4}. Finally, Theorems \ref{thm_1.2} and \ref{thm_1.3} are confirmed in Section \ref{section_order_typical_volume}.

\section{Preliminaries}\label{section_notation}

In this section, we gather some necessary notations and useful results to simplify the exposition in the later proofs.

\subsection{Brownian motion on $\widetilde{\mathbb{Z}}^d$}

We now define the Brownian motion on $\widetilde{\mathbb{Z}}^d$. Within each interval $I_e$, the Brownian motion $\widetilde{S}_{\cdot}$ behaves as a standard one-dimensional Brownian motion. When reaching any lattice point $x \in \widetilde{\mathbb{Z}}^d$, the process $\widetilde{S}_{\cdot}$ uniformly chooses one of the neighboring intervals $\{I_{\{x, y\}}\}_{y \sim x}$ and initiates a Brownian excursion from $x$  along that interval. When this excursion arrives at an adjacent point $y\sim x$, the process resumes at $y$ and continues in the same manner. Clearly, the projection of $\big\{\widetilde{S}_t\big\}_{t \ge 0}$ onto $\mathbb{Z}^d$ is distributed as a continuous-time simple random walk on $\mathbb{Z}^d$. For any $v\in \widetilde{\mathbb{Z}}^d$, we denote by $\widetilde{\mathbb{P}}_v$ the law of $\big\{\widetilde{S}_t\big\}_{t \ge 0}$ with the starting point $v$, and we denote by $ \widetilde{\mathbb{E}}_v$ the expectation under $\widetilde{\mathbb{P}}_v$.

\textbf{Hitting times.} For any $D \subset \widetilde{\mathbb{Z}}^d$, the hitting time $\tau_D$ is the first time when $\{\widetilde{S}_t\}_{t \ge 0}$ enters $D$, i.e., $\tau_D := \inf\{ t \ge 0 :\widetilde{S}_t \in D \}$ (where we set $\inf\emptyset = \infty$ for completeness). For $v \in \widetilde{\mathbb{Z}}^d$, we abbreviate $\tau_{\{v\}}$ as $\tau_v$ for notational convenience.

\textbf{P.S.:} Unless otherwise specified, we use the symbol $D$ (possibly with superscripts or subscripts) to represent a subset of $\widetilde{\mathbb{Z}}^d$ consisting of finitely many compact connected components.

\textbf{Green's function.} For any $D\subset \widetilde{\mathbb{Z}}^d$, the Green's function for $D$ is defined as
\begin{equation}
	\widetilde{G}_D(v,w):=\int_{0}^{\infty} \Big\{\widetilde{q}_t(v,w) - \widetilde{\mathbb{E}}_v \big[ \widetilde{q}_{t-\tau_{D}}(\widetilde{S}_{\tau_D},w)\cdot \mathbbm{1}_{\tau_D<t}  \big]\Big\}dt,\ \ \forall v, w\in \widetilde{\mathbb{Z}}^d.
	\end{equation}
	It is well-known that (see \cite[Section 3]{lupu2016loop}) $\widetilde{G}_D(\cdot, \cdot)$ is finite, symmetric and continuous. Note that $\widetilde{G}_D(v,w):=0$ if $\{w,v\}\cap D\neq \emptyset$, and that $\widetilde{G}_D(v, w)$ is decreasing in $D$. When $D=\emptyset$, we omit the subscript and denote $\widetilde{G}(\cdot ,\cdot ):=\widetilde{G}_{\emptyset}(\cdot ,\cdot )$. Moreover, 
	\begin{equation}\label{bound_green}
	\widetilde{G}(v ,w) \asymp (\|v-w\|+1)^{2-d}, \ \ \forall v,w\in \widetilde{\mathbb{Z}}^d,
\end{equation}
	where $\|v -w\|$ denotes the graph distance between $v$ and $w$ on $\widetilde{\mathbb{Z}}^d$.

\textbf{Boundary excursion kernel.} For any $D\subset \widetilde{\mathbb{Z}}^d$, we denote by $\mathbb{K}_D(\cdot,\cdot )$ the boundary excursion kernel for $D$. I.e., 
\begin{equation}\label{def_KD}
	\mathbb{K}_D(v,w) := \lim\limits_{\epsilon \downarrow  0} (2\epsilon)^{-1}\sum\nolimits_{v'\in \widetilde{\mathbb{Z}}^d: \|v'-v\|=\epsilon} \widetilde{\mathbb{P}}_{v'} (\tau_{D}=\tau_{w}<\infty), \ \ \forall v,w\in \widetilde{\partial} D.
\end{equation}
Here the boundary of $D$ is defined as 
$$\widetilde{\partial} D:=\{v\in \widetilde{\mathbb{Z}}^d: \inf\nolimits_{w\in D}\|w-v\|=\inf\nolimits_{w\in \widetilde{\mathbb{Z}}^d\setminus D}\|w-v\| =0\}. $$
By the time-reversal invariance property of Brownian motion, one has the symmetric property that $\mathbb{K}_D(v,w)=\mathbb{K}_D(w,v)$ for all $v,w\in \widetilde{\partial} D$.

\textbf{Equilibrium measure and capacity.} For any $D\subset \widetilde{\mathbb{Z}}^d$ and $v\in \widetilde{\partial} D$, the equilibrium measure for $D$ at $v$ is defined by 
\begin{equation}\label{defQDv}
	\mathbb{Q}_D(v):= \lim\limits_{N \to \infty} \sum\nolimits_{w\in \partial B(N)}\mathbb{K}_{D\cup \partial B(N)}(v,w).
\end{equation}
Provided with (\ref{defQDv}), the capacity of $D$ is defined by 
\begin{equation}
	\mathrm{cap}(D):= \sum\nolimits_{v\in \widetilde{\partial} D}\mathbb{Q}_D(v).
\end{equation}
Here the sum is well-defined because $\widetilde{\partial} D$ is countable. In addition, the capacity $\mathrm{cap}(\cdot)$ is subadditive, meaning that for any $D_1,D_2\subset \widetilde{\mathbb{Z}}^d$, one has 
\begin{equation}\label{2.8}
	\mathrm{cap}(D_1\cup D_2)\le \mathrm{cap}(D_1)+\mathrm{cap}(D_2).
\end{equation} 
Moreover, the capacity of $\widetilde{B}(N)$ is known to be of order $N^{d-2}$. For any $D\subset \widetilde{\mathbb{Z}}^d$ and $w\in \widetilde{\mathbb{Z}}^d\setminus D$, by the last-exit decomposition (see e.g., \cite[Section 8.2]{morters2010brownian}), one has 
\begin{equation}\label{ineq_2.15}
	\widetilde{\mathbb{P}}_w\big(\tau_{D}<\infty\big) = \sum\nolimits_{v\in  \widetilde{\partial} D} \widetilde{G}(w,v) \mathbb{Q}_D(v). 
\end{equation}
Combined with (\ref{bound_green}), it implies that for $D\subset \widetilde{B}(N)$ and $w\in [\widetilde{B}(2N)]^c$ with $N\ge 1$, 
\begin{equation}\label{new2.20}
		\widetilde{\mathbb{P}}_w(\tau_D<\infty)\asymp |w|^{2-d}\mathrm{cap}(D).
	\end{equation}

\subsection{Properties of the GFF}\label{submit_section2.2}
Recall from the sentence below (\ref{QM_ineq_1}) that $\mathbb{P}^D$ denotes the law of the GFF with zero boundary conditions on $D$. Let $\mathbb{E}^D$ denote the expectation under $\mathbb{P}^D$. The covariance of $\{\widetilde{\phi}_v\}_{v\in \widetilde{\mathbb{Z}}^d\setminus D}\sim \mathbb{P}^D$ is given by 
 \begin{equation}
	\mathbb{E}^D\big[\widetilde{\phi}_{v_1}\widetilde{\phi}_{v_2}\big]= \widetilde{G}_{D}(v_1,v_2),\ \ \forall v_1,v_2\in \widetilde{\mathbb{Z}}^d\setminus D.
\end{equation}

\textbf{Harmonic average.} 
For any $D \subset \widetilde{\mathbb{Z}}^d $, suppose that all GFF values on $D$ are given. For any $v\in \widetilde{\mathbb{Z}}^d$, by treating values on $D$ as boundary conditions, we define the harmonic average at $v$ by 
	\begin{equation}\label{def_Hv}
		\mathcal{H}_v(D):=\left\{\begin{array}{ll}
			0   &\   \text{if}\ v\in D^{\circ}; \\
			\sum\nolimits_{w\in \widetilde{\partial} D} \widetilde{\mathbb{P}}_v\big(\tau_{D}=\tau_w<\infty \big) \widetilde{\phi}_{w}  &\  \text{otherwise}.
		\end{array}
		\right.
	\end{equation} 
Here $D^{\circ}:=\{v\in \widetilde{\mathbb{Z}}^d:\inf\nolimits_{w\in \widetilde{\mathbb{Z}}^d\setminus D} \|w-v\|>0 \}$ represents the interior of $D$. For any $v\in (D^{\circ})^c$, by the strong Markov property of $\widetilde{\phi}_\cdot$ (see e.g., \cite[Theorem 8]{ding2020percolation}), $\mathcal{H}_v(D)$ exactly equals the conditional expectation of $\widetilde{\phi}_v$ given all GFF values on $D$.

The next lemma shows that the harmonic average captures the conditional probability for a faraway point to be connected to $\bm{0}$ by $\widetilde{E}^{\ge 0}$. For any $m>0$, we denote by $\mathcal{B}(m):=\{y\in \mathbb{Z}^d:|y|\le m\}$ the Euclidean ball centered at $\bm{0}$ with radius $m$.



\begin{lemma}\label{newlemma3.1}
	For any $d\ge 3$, $n\ge 1$ and $D\subset \widetilde{B}(n)$, suppose that all values of $\widetilde{\phi}_{\cdot}$ on $D$ are given and are all non-negative. Then for any $x\in \mathbb{Z}^d\setminus B(2dn)$, the conditional probability of $\big\{D \xleftrightarrow{\ge 0} x\big\}$ is of the same order as $\big(  |x|^{2-d}n^{d-2}\overline{\mathcal{H}}_n(D) \big)\land 1$, where $\overline{\mathcal{H}}_n(D):=|\partial \mathcal{B}(dn)|^{-1}\sum\nolimits_{y\in \partial \mathcal{B}(dn)}\mathcal{H}_{y}(D)$.
\end{lemma}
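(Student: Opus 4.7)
The principal tool is the spatial Markov decomposition of the GFF: conditional on $\widetilde{\phi}_D$, the field on $\widetilde{\mathbb{Z}}^d \setminus D$ can be written as $\widetilde{\phi}_v = \widetilde{\phi}_v^D + \mathcal{H}_v(D)$, where $\widetilde{\phi}^D \sim \mathbb{P}^D$ is independent of $\widetilde{\phi}_D$ and $\mathcal{H}_{\cdot}(D)$ is the (deterministic, non-negative) harmonic extension of the conditioned boundary data. Since $\widetilde{\phi}_D \ge 0$, the event $\{D \xleftrightarrow{\ge 0} x\}$ is equivalent to the existence of a path from $\widetilde{\partial} D$ to $x$ in the random set $\{v : \widetilde{\phi}_v^D \ge -\mathcal{H}_v(D)\}$. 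The harmonic profile $\mathcal{H}_{\cdot}(D)$ thus plays the role of a position-dependent negative offset of the threshold, and the claim amounts to showing that the conditional connection probability depends linearly on the spherical average $\overline{\mathcal{H}}_n(D)$ in the small regime and saturates at $1$ in the large regime.

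For the upper bound I would use that, since $D \subset \widetilde{B}(n)$ and $x \notin B(2dn)$, any qualifying path must cross the sphere $\partial \mathcal{B}(dn)$. Taking a union bound over possible crossing points $y \in \partial \mathcal{B}(dn)$ and applying the Markov property at $\partial \mathcal{B}(dn)$ decouples the ``inside'' event $\{D \xleftrightarrow{\ge 0} y\}$ (localized in $\widetilde{B}(dn)$) from the ``outside'' event $\{y \xleftrightarrow{\ge 0} x\}$. The outside event contributes, uniformly for typical non-negative boundary values of $\widetilde{\phi}_y$, a factor $\lesssim |x-y|^{2-d} \asymp |x|^{2-d}$ via the two-point asymptotic \eqref{two_point}. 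For the inside contribution, I would exploit the conditional Gaussian law of $\widetilde{\phi}_y$ (with mean $\mathcal{H}_y(D)$) together with an estimate on the probability of a non-negative path from $D$ to $y$ inside $\widetilde{B}(dn)$; summing over $y \in \partial \mathcal{B}(dn)$ (of cardinality $\asymp n^{d-1}$) produces the factor $n^{d-2} \overline{\mathcal{H}}_n(D)$, with the cap at $1$ automatic since the left-hand side is a probability.

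The matching lower bound follows from the Paley--Zygmund inequality applied to the number $N^{\star}$ of boundary points $y \in \partial \mathcal{B}(dn)$ that simultaneously satisfy $\{D \xleftrightarrow{\ge 0} y\}$ inside $\widetilde{B}(dn)$ and $\{y \xleftrightarrow{\ge 0} x\}$ outside. The first moment of $N^{\star}$ is of the predicted order $|x|^{2-d} n^{d-2} \overline{\mathcal{H}}_n(D)$ by the same Markov splitting and \eqref{two_point}; the second moment requires a refined two-point estimate in the outside region together with the quasi-multiplicativity \eqref{QM_ineq_1} inside, ensuring that contributions from distinct boundary points do not over-concentrate. The main obstacle lies in obtaining matching one- and two-point estimates in the presence of the non-trivial offset $\mathcal{H}_{\cdot}(D)$: a naive Gaussian tail computation on $\{\widetilde{\phi}_y \ge 0\}$ alone would produce a spurious $1/2$ baseline that destroys the linear-in-$\overline{\mathcal{H}}_n(D)$ scaling, so one must instead exploit the genuine path-connection event inside $\widetilde{B}(dn)$ and a careful control of the outside conditional two-point function to extract the correct factor.
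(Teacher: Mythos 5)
Your proposal takes a genuinely different route from the paper, and it contains a real gap in the upper bound that is not cosmetic.

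The paper's proof is much more direct than your crossing-point scheme. It starts from the exact algebraic identity of \cite[Equation (18)]{lupu2018random}, which for the metric-graph GFF expresses the conditional connection probability as a single Gaussian expectation,
\begin{equation*}
\mathfrak{X} = \mathbb{E}\big[ \big(1- e^{-2\widetilde{\phi}_x\sum_{v\in \widetilde{\partial} D } \mathbb{K}_{D \cup \{x\}}(x,v) \cdot \widetilde{\phi}_v } \big)\cdot \mathbbm{1}_{\widetilde{\phi}_x\ge 0} \big],
\end{equation*}
with $\widetilde{\phi}_x\sim N(a_x,\sigma^2)$ and $a_x=\mathcal{H}_x(D)$. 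After estimating the kernel, the problem collapses to a one-dimensional Gaussian integral, handled in two cases to give $\mathfrak{X}\asymp a_x\land 1$; one then relates $a_x$ to $\overline{\mathcal{H}}_n(D)$ by the strong Markov property and Harnack, obtaining $a_x\asymp |x|^{2-d}n^{d-2}\overline{\mathcal{H}}_n(D)$. No intermediate union bound, no second-moment method, and no restriction on $d$.

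The concrete gap in your upper bound is the loss of a factor of $n$ from the union bound over crossing points. Applying (a version of) the same lemma at scale $n$, the ``inside'' probability $\mathbb{P}(D\xleftrightarrow{\ge 0}y \text{ in } \widetilde{B}(dn)\mid \widetilde{\phi}_D)$ is $\lesssim \mathcal{H}_y(D)\land 1$ for each $y\in\partial\mathcal{B}(dn)$, while the ``outside'' probability is $\asymp |x|^{2-d}$. Summing over the $\asymp n^{d-1}$ boundary points then yields $\lesssim |x|^{2-d}\,n^{d-1}\,\overline{\mathcal{H}}_n(D)$, which exceeds the target $|x|^{2-d}\,n^{d-2}\,\overline{\mathcal{H}}_n(D)$ by a full power of $n$. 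This is the usual over-count of a union bound at an intermediate sphere: conditionally on the event, the cluster hits $\asymp n$ boundary points, and the naive sum does not cancel that multiplicity. Your statement that ``summing over $y\in\partial\mathcal{B}(dn)$ produces the factor $n^{d-2}\overline{\mathcal{H}}_n(D)$'' therefore implicitly assumes a boundary suppression of order $n^{-1}$ for each inside term, but no such estimate is given or evident. (The paper avoids this entirely: conditioning on $\widetilde{\phi}_D$ and evaluating $\mathcal{H}_x(D)$ already aggregates the inside information correctly through the harmonic extension.) Separately, your second-moment lower bound leans on quasi-multiplicativity \eqref{QM_ineq_1}, which is unavailable for $d=6$, whereas Lemma \ref{newlemma3.1} must hold for all $d\ge 3$; and you yourself flag the ``main obstacle'' (controlling the conditional one- and two-point functions with the offset) as unresolved, so the proposal is incomplete even on its own terms.
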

\begin{proof}
	According to \cite[Equation (18)]{lupu2018random} (where $C_A^{\mathrm{eff}}$ is equivalent to $\mathbb{K}_{A}$ defined in (\ref{def_KD})), the conditional probability of $D \xleftrightarrow{\ge 0} x$ equals to 
	\begin{equation}\label{newest_3.4}
	\begin{split}
\mathfrak{X} := 	\mathbb{E}\big[  \big(1- e^{-2\widetilde{\phi}_x\sum_{v\in \widetilde{\partial} D } \mathbb{K}_{D \cup \{x\}}(x,v) \cdot  \widetilde{\phi}_v  } \big)\cdot   \mathbbm{1}_{\widetilde{\phi}_x\ge 0} \big],
 \end{split}
\end{equation}
where $\{\widetilde{\phi}_v\}_{v\in \widetilde{\partial} D}$ are deterministic, and the law of $\widetilde{\phi}_x$ is given by $N(a_x, \sigma^2)$ with $a_x:= \mathcal{H}_x(D)$ and $\sigma^2:=\widetilde{G}_D(x,x)\asymp 1$. Moreover, it was established in the proof of \cite[Lemma 2.6]{cai2024one} (more precisely, see the equality below \cite[Equation (2.19)]{cai2024one}) that 
\begin{equation}\label{newest_3.5}
	\mathbb{K}_{D \cup \{x\}}(x,v)  = \tfrac{1}{2d}\sum\nolimits_{x'\in \mathbb{Z}^d:x'\sim x} \widetilde{\mathbb{P}}_{x'}\big(\tau_{D  \cup \{x\}} =\tau_{v}<\infty\big). 
\end{equation}
In addition, for each $x'\sim x$, by \cite[Lemma 2.1]{inpreparation} and the Harnack's inequality (see e.g., \cite[Theorem 6.3.9]{lawler2010random}), we have 
\begin{equation}\label{newest_3.6}
	\widetilde{\mathbb{P}}_{x'}\big(\tau_{D \cup \{x\}} =\tau_{v}<\infty\big) \asymp  \widetilde{\mathbb{P}}_{x'}\big(\tau_{D} =\tau_{v}<\infty\big) \asymp  \widetilde{\mathbb{P}}_{x}\big(\tau_{D } =\tau_{v}<\infty\big).
\end{equation}
Plugging (\ref{newest_3.5}) and (\ref{newest_3.6}) into (\ref{newest_3.4}), we obtain that $\mathbb{I}(c_\diamond)	\le \mathfrak{X} \le  \mathbb{I}(C_\diamond)$ for some $C_\diamond,c_\diamond>0$, where the function $\mathbb{I}(\cdot)$ is defined as  
	\begin{equation}
\mathbb{I}(b):= \int_{0}^{\infty}  \frac{1}{\sqrt{2\pi \sigma^2}}e^{-\frac{(t-a_x)^2}{2\sigma^2}}\big(1-e^{-ba_xt} \big)  dt. 
\end{equation}


Next, we estimate $\mathbb{I}(b)$ for $b\in \{C_\diamond,c_\diamond\}$ separately in two cases. 

	\noindent \textbf{Case 1:} when $0\le a_x\le  \frac{1}{10}$. We decompose $\mathbb{I}(b)$ as follows:
\begin{equation}\label{newineq.3.11}
	\begin{split}
		\mathbb{I}(b)= &  \int_{0}^{\frac{1}{a_x}}   \frac{1}{\sqrt{2\pi \sigma^2}}e^{-\frac{(t-a_x)^2}{2\sigma^2}}  \big(1-e^{-ba_xt} \big)  dt\\
		&+\int_{\frac{1}{a_x}}^{\infty}   \frac{1}{\sqrt{2\pi \sigma^2}}e^{-\frac{(t-a_x)^2}{2\sigma^2}}  \big(1-e^{-ba_xt} \big)  dt \\
		\asymp  &  a_x \int_{0}^{\frac{1}{a_x}}   \frac{t}{\sqrt{2\pi \sigma^2}}e^{-\frac{(t-a_x)^2}{2\sigma^2}}  dt  + \int_{\frac{1}{a_x}}^{\infty}   \frac{1}{\sqrt{2\pi \sigma^2}}e^{-\frac{(t-a_x)^2}{2\sigma^2}}dt\\
		= & a_x \Big(  \int_{-a_x}^{\frac{1}{a_x}-a_x}   \frac{t}{\sqrt{2\pi \sigma^2}}e^{-\frac{t^2}{2\sigma^2}}  dt + a_x  \int_{-a_x}^{\frac{1}{a_x}-a_x}   \frac{1}{\sqrt{2\pi \sigma^2}}e^{-\frac{t^2}{2\sigma^2}}   dt \Big)\\
		& +\int_{\frac{1}{a_x}-a_x}^{\infty}   \frac{1}{\sqrt{2\pi \sigma^2}}e^{-\frac{t^2}{2\sigma^2}}dt,
	\end{split}
\end{equation}
where the last equality follows from a simple change of variable in calculus. In this case, since $\frac{1}{a_x}-a_x>1$, one has 
\begin{equation}\label{newineq_3.12}
	\int_{-a_x}^{\frac{1}{a_x}-a_x}   \frac{t}{\sqrt{2\pi \sigma^2}}e^{-\frac{t^2}{2\sigma^2}}  dt  \asymp 1
\end{equation}
and hence, $ \mathbb{I}(b)\gtrsim a_x$ (by (\ref{newineq.3.11})). Meanwhile, by the tail estimate of the normal distribution (see e.g., \cite[Lemma 2.11]{cai2024one}), we have 
\begin{equation}
\int_{\frac{1}{a_x}-a_x}^{\infty}   \frac{1}{\sqrt{2\pi \sigma^2}}e^{-\frac{t^2}{2\sigma^2}}dt \lesssim  e^{-c(\frac{1}{a_x}-a_x )^2} \overset{0\le a_x\le  \frac{1}{10}}{\lesssim} a_x.
\end{equation}
In addition, since $0\le a_x\le  \frac{1}{10}$, we have
\begin{equation}\label{newineq_3.14}
	a_x  \int_{-a_x}^{\frac{1}{a_x}-a_x}   \frac{1}{\sqrt{2\pi \sigma^2}}e^{-\frac{t^2}{2\sigma^2}}   dt \le \frac{1}{10}. 
\end{equation}
 By plugging (\ref{newineq_3.12})--(\ref{newineq_3.14}) into (\ref{newineq.3.11}), we get $ \mathbb{I}(b)\lesssim  a_x$, and thus obtain $ \mathbb{I}(b)\asymp a_x$.

\noindent\textbf{Case 2:} when $a_x >  \frac{1}{10}$. Since $1-e^{-ba_xt}\gtrsim 1$ for all $t\ge 1$ and $a_x >  \frac{1}{10}$, one has 
\begin{equation}
	\begin{split}
			\mathbb{I}(b) \gtrsim   \int_{1}^{\infty} \frac{1}{\sqrt{2\pi \sigma^2}} e^{-\frac{(t-a_x)^2}{2\sigma^2}} dt = \int_{1-a_x}^{\infty} \frac{1}{\sqrt{2\pi \sigma^2}} e^{-\frac{t^2}{2\sigma^2}} dt  \asymp 1. 
	\end{split}
\end{equation}
Combined with the trivial bound $\mathbb{I}(b) \le 1$, it yields that $\mathbb{I}(b)\asymp 1$.

To sum up, we obtain that 
\begin{equation}\label{newest_3.11}
\mathfrak{X} \asymp  a_x\land 1. 
\end{equation}
Furthermore, by the strong Markov property one has 
\begin{equation}
\min_{y\in \partial \mathcal{B}(dn)} \mathcal{H}_{y}(D) \le  \frac{a_x}{\widetilde{\mathbb{P}}_x\big( \tau_{\mathcal{B}(dn)}<\infty \big) } \le  \max_{y\in \partial \mathcal{B}(dn)} \mathcal{H}_{y}(D),
\end{equation}
where both sides are of the same order as $\overline{\mathcal{H}}_n(D)$ by the Harnack's inequality. Thus, by $\widetilde{\mathbb{P}}_x\big( \tau_{\mathcal{B}(dn)}<\infty \big) \asymp  |x|^{2-d} n^{d-2}$ (which follows from (\ref{new2.20})), we get 
\begin{equation}
	a_x   \overset{}{\asymp} |x|^{2-d} n^{d-2} \overline{\mathcal{H}}_n(D). 
\end{equation}
Combined with (\ref{newest_3.4}) and (\ref{newest_3.11}), it concludes the proof of this lemma.
\end{proof}

\textbf{Decoupling inequality.} A useful tool called the \textit{decoupling inequality} is frequently used in the study of GFF level-sets (see \cite{drewitz2018sign, drewitz2014chemical,popov2015decoupling, rodriguez2013phase}), particularly in analyzing the correlation between increasing events. Specifically, by applying the same proofs as in \cite[Corollary 1.3 and Proposition 1.4]{popov2015decoupling} (which focus on the discrete GFF), we have the following lemma. Recall the notation $\widetilde{B}(\cdot)$ in (\ref{def_continuous_box}). We denote $\widetilde{B}_x(N):= x+ \widetilde{B}(N)$ for $x\in \mathbb{Z}^d$ and $N>0$. 
\begin{lemma}\label{lemma_decoupling_ineq}
	For any $d\ge 3$, there exist $C,c>0$ such that for any $\delta>0$, $N\ge 1$, $x\in \partial B(4N)$, and any increasing functions $f_1:\{\widetilde{\phi}_v\}_{v\in \widetilde{B}(N)}\to [0,1]$ and $f_2:\{\widetilde{\phi}_v\}_{v\in \widetilde{B}_x(N)}\to [0,1]$, 
		\begin{equation}
		\mathbb{E}\big[f_1(\widetilde{\phi}_{\cdot})f_2(\widetilde{\phi}_{\cdot}) \big]\le \mathbb{E}\big[f_1(\widetilde{\phi}_{\cdot})\big]\mathbb{E}\big[f_2(\widetilde{\phi}_{\cdot}+\delta) \big]+ Ce^{-c\delta^2N^{d-2}}.
	\end{equation}

\end{lemma}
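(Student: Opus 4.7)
The plan is to adapt the Popov--Ráth decoupling argument for the discrete GFF to the metric-graph setting. The strategy has three ingredients: a Markov decomposition isolating the correlation between the two boxes into a harmonic part on the boundary of a separating region, a Gaussian concentration bound for that harmonic part, and an elementary coupling step to restore the full GFF $\widetilde{\phi}$ on the right-hand side.

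First, I would fix $B = \widetilde{B}_x(2N)$. Since $x \in \partial B(4N)$, one has $\widetilde{B}_x(N) \subset B^\circ$ at graph distance $\ge N$ from $\widetilde{\partial} B$, while $\widetilde{B}(N) \subset B^c$. The strong Markov property of $\widetilde{\phi}_{\cdot}$ (cf.~\cite[Theorem 8]{ding2020percolation}) yields the decomposition $\widetilde{\phi}_v = \widetilde{\phi}^B_v + \psi_v$ for $v \in B^\circ$, where $\widetilde{\phi}^B$ is a zero-boundary GFF on $B$ independent of the $\sigma$-algebra $\mathcal{F}_{B^c}$ generated by $\widetilde{\phi}|_{B^c}$, and $\psi_v := \mathcal{H}_v(\widetilde{\partial} B)$ is $\mathcal{F}_{B^c}$-measurable. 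Since $f_1$ is $\mathcal{F}_{B^c}$-measurable, conditioning on $\mathcal{F}_{B^c}$ yields
$$\mathbb{E}\big[f_1(\widetilde{\phi}_{\cdot}) f_2(\widetilde{\phi}_{\cdot})\big] = \mathbb{E}\big[f_1(\widetilde{\phi}_{\cdot}) \cdot \Psi(\psi)\big], \qquad \Psi(h) := \mathbb{E}\big[f_2(\widetilde{\phi}^B + h)\big],$$
and the functional $\Psi$ is monotone because $f_2$ is increasing. On the good event $G_\delta := \{\sup_{v \in \widetilde{B}_x(N)} |\psi_v| \le \delta\}$, monotonicity gives $\Psi(\psi) \le \Psi(\delta) = \mathbb{E}[f_2(\widetilde{\phi}^B + \delta)]$, so using $f_1 \le 1$ I obtain
$$\mathbb{E}\big[f_1 f_2\big] \le \mathbb{E}[f_1] \cdot \mathbb{E}\big[f_2(\widetilde{\phi}^B + \delta)\big] + \mathbb{P}(G_\delta^c).$$

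To bound $\mathbb{P}(G_\delta^c)$, I would use the last-exit formula together with (\ref{bound_green}) to show $\mathrm{Var}(\psi_v) = \widetilde{G}(v,v) - \widetilde{G}_B(v,v) \lesssim N^{2-d}$ uniformly for $v \in \widetilde{B}_x(N)$, and then apply the Borell--TIS concentration inequality to the centered Gaussian field $\psi$ on $\widetilde{B}_x(N)$ to obtain $\mathbb{P}(G_\delta^c) \le C e^{-c\delta^2 N^{d-2}}$ (the range $\delta \lesssim N^{-(d-2)/2}$ being trivial after adjusting $C$). To upgrade $\widetilde{\phi}^B + \delta$ to $\widetilde{\phi} + 2\delta$, I use the coupling $\widetilde{\phi} = \widetilde{\phi}^B + \psi$ on $B^\circ$ with $\widetilde{\phi}^B$ and $\psi$ independent (which follows since $\mathrm{Cov}(\widetilde{\phi}^B_v, \psi_w) = 0$): on $G_\delta$ one has $\widetilde{\phi}^B + \delta = \widetilde{\phi} + \delta - \psi \le \widetilde{\phi} + 2\delta$ on $\widetilde{B}_x(N)$, so by monotonicity of $f_2$ and independence
$$\mathbb{E}\big[f_2(\widetilde{\phi} + 2\delta)\big] \ge \mathbb{E}\big[f_2(\widetilde{\phi}^B + \delta)\mathbbm{1}_{G_\delta}\big] = \mathbb{E}\big[f_2(\widetilde{\phi}^B + \delta)\big] \cdot \mathbb{P}(G_\delta),$$
which rearranges to $\mathbb{E}[f_2(\widetilde{\phi}^B + \delta)] \le \mathbb{E}[f_2(\widetilde{\phi} + 2\delta)] + \mathbb{P}(G_\delta^c)$. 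Combining the two displays and then rescaling $\delta \mapsto \delta/2$ absorbs the factor $2$ into the constant $c$ and gives the stated inequality.

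The main difficulty I anticipate is the Borell--TIS step, since $\widetilde{B}_x(N)$ is uncountable and one needs $\psi$ to have sample-path continuity with a sub-Gaussian supremum. I expect to reduce this to the finite-dimensional case by noting that on each edge $I_e \subset B^\circ$ the function $\psi$ is harmonic (hence affine) between the two endpoint values, so its maximum over $\widetilde{B}_x(N)$ is attained on the finite vertex set $\widetilde{B}_x(N) \cap \mathbb{Z}^d$; a standard chaining argument on that finite set together with the uniform variance bound $N^{2-d}$ then gives both $\mathbb{E}[\sup_{\widetilde{B}_x(N)} |\psi|] \to 0$ and the desired sub-Gaussian tail, precisely as in the discrete treatment of \cite[Corollary 1.3 and Proposition 1.4]{popov2015decoupling}.
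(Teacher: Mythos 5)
Your overall approach — condition on $\mathcal{F}_{B^c}$ for a separating box $B=\widetilde{B}_x(2N)$, decompose $\widetilde{\phi}=\widetilde{\phi}^B+\psi$ via the strong Markov property, exploit monotonicity of $\Psi$, then couple back to the unshifted field and rescale $\delta$ — is exactly the Popov--R\'ath scheme the paper invokes, and the bookkeeping in your monotonicity and coupling steps is correct. (Minor notational point: with the paper's conventions the zero-boundary covariance should be written $\widetilde{G}_{B^c}$ or $\widetilde{G}_{\widetilde{\partial}B}$, not $\widetilde{G}_B$, since the subscript denotes the killing set.)

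The one place where your argument, as written, does not close is the concentration estimate
$\mathbb{P}(G_\delta^c)\le Ce^{-c\delta^2 N^{d-2}}$. You invoke Borell--TIS plus ``a standard chaining argument on that finite set together with the \emph{uniform} variance bound $N^{2-d}$.'' But with only the pointwise bound $\mathrm{Var}(\psi_v)\lesssim N^{2-d}$ and $\asymp N^d$ lattice points, chaining reduces to a union bound and yields $\mathbb{P}(G_\delta^c)\lesssim N^d e^{-c_1\delta^2 N^{d-2}}$; the polynomial prefactor $N^d$ cannot be absorbed into the exponential with constants uniform in $(\delta,N)$ — for $\delta^2 N^{d-2}\lesssim \log N$ the claimed inequality fails, while $\mathbb{P}(G_\delta^c)$ may still be of order $1$. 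Equivalently, the uniform bound only gives $\mathbb{E}\big[\sup_{\widetilde{B}_x(N)}|\psi|\big]\lesssim N^{(2-d)/2}\sqrt{\log N}$, and the spurious $\sqrt{\log N}$ survives into the final estimate.

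To get the stated clean exponential you also need an \emph{increment} (gradient) estimate: for $v,w\in\widetilde{B}_x(N)$,
\begin{equation*}
\mathrm{Var}(\psi_v-\psi_w)\ \lesssim\ \Big(\frac{\|v-w\|}{N}\Big)^2 N^{2-d},
\end{equation*}
which follows from the $1/N$-decay of the gradient of the Poisson kernel (or discrete Harnack plus interpolation) since $\psi$ is harmonic on $B^\circ$ and $\widetilde{B}_x(N)$ sits at graph distance $\ge N$ from $\widetilde{\partial}B$. With this, Dudley's bound gives $\mathbb{E}\big[\sup_{\widetilde{B}_x(N)}|\psi|\big]\lesssim N^{(2-d)/2}$ with no logarithmic loss, and Borell--TIS then delivers $\mathbb{P}(G_\delta^c)\le Ce^{-c\delta^2 N^{d-2}}$ after absorbing the small-$\delta$ regime $\delta\lesssim N^{(2-d)/2}$ into $C$, as you indicate. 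This increment estimate is exactly what Popov--R\'ath use in their Proposition~1.4; you should either state it explicitly or replace the phrase ``uniform variance bound'' with ``increment variance bound of the harmonic part,'' as the former alone is insufficient.
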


\textbf{Percolation probability.} For the percolation probability in the supercritical regime, it has been proved in \cite[Theorem 3]{ding2020percolation} that for any $h\ge 0$, 
\begin{equation}\label{usenew2.23}
	\mathbb{P}\big(\bm{0}\xleftrightarrow{\ge -h} \infty \big) = \mathbb{E}\big[\mathrm{sign}(\widetilde{\phi}_{\bm{0}}+ h )\big]\asymp h\land 1,
\end{equation}
where $\mathrm{sign}(a):=\mathbbm{1}_{a\ge 0}-\mathbbm{1}_{a< 0}$. Moreover, for any $v\in\widetilde{\mathbb{Z}}^d$ and any increasing event $\mathsf{F}$, by the uniqueness of the infinite cluster (which can be obtained using \cite[Theorem 1]{gandolfi1992uniqueness}; see e.g., \cite[Lemma 5.3]{lupu2016loop}), we have 
 \begin{equation}\label{use2.37}
 \begin{split}
 	\mathbb{P}\big( \mathsf{F}, \bm{0}\xleftrightarrow{\ge -h} v \big) \ge & \mathbb{P}\big(\mathsf{F}, \bm{0}\xleftrightarrow{\ge -h}\infty  , v \xleftrightarrow{\ge -h}  \infty \big)\\
 \ge 	& \mathbb{P}\big(\mathsf{F}, \bm{0}\xleftrightarrow{\ge -h} \infty \big)	\mathbb{P}\big( \bm{0}\xleftrightarrow{\ge -h} \infty \big),
 \end{split}
 \end{equation}
 where the last inequality follows from the FKG inequality and the translation invariance of $\widetilde{\mathbb{Z}}^d$. Taking $\mathsf{F}$ as the sure event $\Omega$ in (\ref{use2.37}) and using (\ref{usenew2.23}), we have 
\begin{equation}\label{finish2.15}
		\mathbb{P}\big( \bm{0}\xleftrightarrow{\ge -h} v \big) \gtrsim (h\land 1)^2, \ \ v\in \widetilde{\mathbb{Z}}^d. 
\end{equation}

\subsection{Loop soup and isomorphism theorem}\label{subsection_iso_thm}

Recall the definition of the loop soup at the beginning of Section \ref{section_intro}. We now cite the following two estimates on the loop measure for later use. For a loop $\widetilde{\ell}$, we denote by $\mathrm{ran}(\widetilde{\ell})$ the collection of points in $\widetilde{\mathbb{Z}}^d$ visited by $\widetilde{\ell}$ (where ran is the abbreviation of the word ``range'').

For any $D\subset \widetilde{\mathbb{Z}}^d$, we denote $\widetilde{\mu}^{ D}:=\widetilde{\mu} \circ \mathbbm{1}_{\mathrm{ran}(\widetilde{\ell}) \subset \widetilde{\mathbb{Z}}^d\setminus D}$. For any $\alpha>0$, let $\widetilde{\mathcal{L}}_\alpha^{D}$ be the Poisson point process with intensity measures $\widetilde{\mu}^{D}$. For any $v\in \widetilde{\mathbb{Z}}^d\setminus D$, let $\widehat{\mathcal{L}}^{D,v}_{1/2}$ denote the total local time at $v$ of loops in $\widetilde{\mathcal{L}}_{1/2}^{D}$. The powerful coupling presented in \cite[Proposition 2.1]{lupu2016loop} implies that for $\{\widetilde{\phi}_v\}_{v\in \widetilde{\mathbb{Z}}^d\setminus D} \sim  \mathbb{P}^{D}$, one has 
\begin{equation}\label{iso}
	\big\{ \widehat{\mathcal{L}}^{D,v}_{1/2} \big\}_{v\in \widetilde{\mathbb{Z}}^d\setminus D} \overset{\mathrm{d}}{=}  \big\{\tfrac{1}{2}\widetilde{\phi}_v^2 \big\}_{v\in \widetilde{\mathbb{Z}}^d\setminus D}. 
\end{equation}
We denote the union of ranges of loops in a point measure $\widetilde{\mathcal{L}}$ by $\cup \widetilde{\mathcal{L}}$. For simplicity, we abbreviate ``$\xleftrightarrow{\cup \widetilde{\mathcal{L}}_{1/2}^{D}}$'' as ``$\xleftrightarrow{(D)}$'' (note that adding parentheses to a set above the left-right arrow changes its meaning compared to having no parentheses). Moreover, when $D=\emptyset$, we abbreviate ``$\xleftrightarrow{\cup \widetilde{\mathcal{L}}_{1/2}}$'' as ``$\xleftrightarrow{}$''.


\textbf{Isomorphism theorem involving random interlacements.} The model of random interlacements (denoted by $\mathcal{I}^u$, where $u>0$ represents the intensity of $\mathcal{I}^u$) was first introduced in \cite{sznitman2010vacant} as a Poisson point process composed of double-infinite trajectories of simple random walks on $\mathbb{Z}^d$ for $d\ge 3$. A detailed introduction on this model can be found in \cite{drewitz2014introduction}. Referring to \cite{lupu2016loop}, random interlacements can be generalized to the metric graph $\widetilde{\mathbb{Z}}^d$ by replacing simple random walks on $\mathbb{Z}^d$ with Brownian motions on $\widetilde{\mathbb{Z}}^d$. We denote this generalized model by $\widetilde{\mathcal{I}}^u$. Next, we list some basic properties of $\widetilde{\mathcal{I}}^u$ as follows. 
\begin{enumerate}
	\item  For any $u>0$, since every trajectory in $\widetilde{\mathcal{I}}^u$ has infinite range, we know that $\widetilde{\mathcal{I}}^u$ a.s. percolates. Moreover, the union of ranges of all trajectories in $\widetilde{\mathcal{I}}^u$ (denoted by $\cup \widetilde{\mathcal{I}}^u$) is connected.

	\item  For any $u>0$ and $D\subset \widetilde{\mathbb{Z}}^d$, one has 
	\begin{equation}\label{2.30}
		\mathbb{P}\big( D \cap (\cup \widetilde{\mathcal{I}}^u)= \emptyset \big)= e^{-u\cdot \mathrm{cap}(D)}. 
	\end{equation}

	\item  For any $N\ge 1$, $x\in \mathbb{Z}^d$ with $|x|\ge CN$, since the number of trajectories in $\widetilde{\mathcal{I}}^u$ intersecting $\widetilde{B}(N)$ is a Poisson random variable whose mean is of order $uN^{d-2}$ and since each trajectory hits $\widetilde{B}_x(N)$ with probability at most of order $|x|^{2-d}N^{d-2}$ (by (\ref{new2.20})), we have
	\begin{equation}\label{newadd_2.26}
		\mathbb{P}\big(\exists \eta\in \widetilde{\mathcal{I}}^u\ \text{intersecting both}\ \widetilde{B}(N)\ \text{and}\ \widetilde{B}_x(N) \big)\lesssim u|x|^{2-d}N^{2d-4}. 
	\end{equation}

\end{enumerate}
For any $v\in \widetilde{\mathbb{Z}}^d$, we denote by $\widehat{\mathcal{I}}^{u}_v$ the total local time at $v$ of trajectories in $\widetilde{\mathcal{I}}^u$. By \cite[Propositions 2.1 and 6.3]{lupu2016loop}, there exists a coupling between $\widetilde{\mathcal{I}}^u$, $\widetilde{\mathcal{L}}_{1/2}$ and $\{\widetilde{\phi}_v\}_{v\in \widetilde{\mathbb{Z}}^d}$ (where $\widetilde{\mathcal{I}}^u$ and $\widetilde{\mathcal{L}}_{1/2}$ are independent) such that for any $h\ge 0$, 
\begin{equation}
	\widehat{\mathcal{L}}_{1/2}^{v}+ \widehat{\mathcal{I}}^{\frac{1}{2}h^2}_v = \tfrac{1}{2}(\widetilde{\phi}_v+h)^2, \ \ \forall v\in \widetilde{\mathbb{Z}}^d.	\end{equation}
	Referring to \cite[Section 6]{lupu2016loop}, under this coupling, the infinite cluster consisting of loops in $\widetilde{\mathcal{L}}_{1/2}$ and trajectories in $\widetilde{\mathcal{I}}^{\frac{1}{2}h^2}$ (denoted by $\mathcal{C}^{[h]}_{\infty}$) is exactly the unique infinite cluster of $\widetilde{E}^{\ge -h}$ (to preserve the convention $h \geq 0$ in the remainder of this paper, we write $\widetilde{E}^{\ge -h}$ with $h \geq 0$ rather than $\widetilde{E}^{\ge h}$ with $h \leq 0$). In addition, by \cite[Theorem 2.4]{sznitman2016coupling} (see also \cite[Theorem 3.9]{drewitz2022cluster} for an extension to transient graphs), the sign of $\widetilde{\phi}_\cdot+h$ on each loop cluster disjoint from $\cup \widetilde{\mathcal{I}}^{\frac{1}{2}h^2}$ is constant and independent of others, and takes ``$+$'' (or ``$-$'') with probability $\frac{1}{2}$.

For any $h>0$ and $v\in \widetilde{\mathbb{Z}}^d$, we denote $\xleftrightarrow{(\cup \widetilde{\mathcal{L}}_{1/2})\cup (\cup \widetilde{\mathcal{I}}^{\frac{1}{2}h^2})}$ by $\xleftrightarrow{[h]} $, and define the cluster $\mathcal{C}^{[h]}(v):=\{w\in \widetilde{\mathbb{Z}}^d: w\xleftrightarrow{[h]} v\}$. For any $A_1,A_2\subset \widetilde{\mathbb{Z}}^d$, we denote by $A_1 \xleftrightarrow{[h]_\infty} A_2$ the event that the infinite cluster $\mathcal{C}^{[h]}_{\infty}$ intersects both $A_1$ and $A_2$, and denote by $A_1 \xleftrightarrow{[h]^c_{\infty}} A_2$ the event that $A_1$ and $A_2$ are connected by some loop cluster disjoint from $\mathcal{C}^{[h]}_{\infty}$. By this isomorphism theorem, we have   
\begin{equation}\label{232plus}
	\mathbb{P}(\bm{0}\xleftrightarrow{[h]}\infty)= \mathbb{P}(\bm{0}\xleftrightarrow{\ge -h}\infty)  \overset{(\ref*{usenew2.23})}{\asymp} h\land 1 . 
\end{equation}
For any $v\in \widetilde{\mathbb{Z}}^d$ and increasing event $\mathsf{F}$, by the same reasoning as in (\ref{use2.37}), one has 
\begin{equation}\label{newsubmit_223}
\begin{split}
	\mathbb{P}\big(\mathsf{F}, \bm{0}\xleftrightarrow{[h]}v, \bm{0}\xleftrightarrow{[h]}\infty\big)= &	\mathbb{P}\big(\mathsf{F}, \bm{0}\xleftrightarrow{[h]}\infty , v\xleftrightarrow{[h]}\infty\big) \\
	\overset{}{\ge } &	\mathbb{P}\big(\mathsf{F}, \bm{0}\xleftrightarrow{[h]}\infty  \big)	\mathbb{P}\big(\bm{0} \xleftrightarrow{[h]}\infty\big). 
\end{split}
\end{equation}
By taking $\mathsf{F} = \Omega$ in (\ref{newsubmit_223}), we have 
\begin{equation}\label{newsubmit_224}
	\mathbb{P}\big(  \bm{0}\xleftrightarrow{[h]}v, \bm{0}\xleftrightarrow{[h]}\infty\big) \ge \big[\mathbb{P}\big(\bm{0} \xleftrightarrow{[h]}\infty\big) \big]^2 \overset{(\ref*{232plus})}{\asymp }  (h\land 1)^2. 
\end{equation}


\subsection{Cluster decomposition}\label{subsection_cluster_decompose}
For any $m\ge n\ge 0$, we denote by 
\begin{equation}\label{def_L[n,m]}
	\widetilde{\mathcal{L}}_{1/2}[n,m]:=\widetilde{\mathcal{L}}_{1/2}\cdot \mathbbm{1}_{\mathrm{ran}(\widetilde{\ell})\cap \partial B(n)\neq \emptyset,\mathrm{ran}(\widetilde{\ell})\cap \partial B(m)\neq \emptyset}
\end{equation}
the collection of loops in $\widetilde{\mathcal{L}}_{1/2}$ crossing the annulus $B(m)\setminus B(n)$. Through a decomposition argument on loop clusters, it was proved in \cite[Lemma 5.1]{inpreparation} that the scenario with the occurrence of a large crossing loop only contributes a small fraction to the connecting probability. Subsequently, we review a special case of \cite[Lemma 5.1]{inpreparation}, which suffices in this paper.

\begin{lemma}\label{lemma_new_decomposition}
For any $d\ge 3$, there exist $C,C'>0$ such that for any $N\ge 1$, $M\ge CN$, $A  \subset \widetilde{B}(\frac{N}{10C'})$ and $A',D\subset [\widetilde{B}(10C'M)]^c$, 
\begin{equation} \label{ineq_new_decompose}
	\begin{split}
		&\mathbb{P}\big(  A \xleftrightarrow{(D)}A', \widetilde{\mathcal{L}}_{1/2}^{D}[N,M]\neq 0 \big)\\
		\lesssim  & \big(\frac{N}{M}\big)^{d-2}\mathbb{P}\big( A \xleftrightarrow{(D)}\partial B(\tfrac{N}{C'}) \big)\mathbb{P}\big(A'   \xleftrightarrow{(D)} \partial B(C'M) \big).
	\end{split}
	\end{equation}
\end{lemma}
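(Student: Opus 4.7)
The plan is to apply the Campbell--Mecke formula to isolate one crossing loop from the Poisson point process $\widetilde{\mathcal{L}}^D_{1/2}$ and then exploit the spatial independence of loops whose ranges lie in disjoint regions.

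First I would dominate $\mathbb{P}\bigl(A \xleftrightarrow{(D)} A',\, \widetilde{\mathcal{L}}^D_{1/2}[N,M] \neq 0\bigr)$ by $\mathbb{E}\bigl[|\widetilde{\mathcal{L}}^D_{1/2}[N,M]| \cdot \mathbbm{1}_{A \xleftrightarrow{(D)} A'}\bigr]$ via Markov, and then apply the Mecke/Palm formula for Poisson point processes to rewrite this as $\tfrac{1}{2}\int \mathbb{P}\bigl(A \leftrightarrow A' \text{ in } \widetilde{\mathcal{L}}^D_{1/2} \cup \{\widetilde{\ell}\}\bigr)\, \widetilde{\mu}^D(d\widetilde{\ell})$, where the integration is restricted to loops whose range meets both $\partial B(N)$ and $\partial B(M)$. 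For each such $\widetilde{\ell}$, inserting it into the soup produces $\{A \leftrightarrow A'\}$ only through either a direct connection $A \xleftrightarrow{(D)} A'$ already present in the original soup (whose probability is negligible given $M \ge CN$ and can be absorbed), or through the simultaneous occurrence of $A \leftrightarrow \mathrm{ran}(\widetilde{\ell})$ and $A' \leftrightarrow \mathrm{ran}(\widetilde{\ell})$ in the original soup.

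Second, I would decompose $\mathrm{ran}(\widetilde{\ell})$ into an inner portion containing its hit of $\partial B(N)$ and an outer portion containing its hit of $\partial B(M)$. Since $A \subset \widetilde{B}(N/(10C'))$, any realization of $\{A \leftrightarrow \mathrm{ran}(\widetilde{\ell})\}$ in the original soup forces $A$'s cluster to exit $B(N/C')$, so this event is contained in $\{A \xleftrightarrow{(D)} \partial B(N/C')\}$; the symmetric statement gives $\{A' \leftrightarrow \mathrm{ran}(\widetilde{\ell})\} \subset \{A' \xleftrightarrow{(D)} \partial B(C'M)\}$. To factor the joint probability I would invoke the Poisson restriction property: loops in $\widetilde{\mathcal{L}}^D_{1/2}$ whose range lies in $\widetilde{B}(N/C')$ are independent of those whose range lies in $[\widetilde{B}(C'M)]^c$. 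Loops whose range straddles these two regions are either themselves additional elements of $\widetilde{\mathcal{L}}^D_{1/2}[N,M]$ (so that their presence yields only a lower-order contribution by a second-moment argument on the crossing-loop count) or are confined to the intermediate annulus, and can be attributed to either the inner or the outer connection via monotonicity.

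Third, I would compute the total $\widetilde{\mu}^D$-mass of crossing loops. Using the boundary excursion kernel $\mathbb{K}_{\cdot}$ together with the hitting-probability estimate $\widetilde{\mathbb{P}}_{v}(\tau_{\partial B(M)} < \infty) \asymp (|v|/M)^{d-2}$ for $v \in \partial B(N)$ (which follows from (2.15) combined with $\mathrm{cap}(\widetilde{B}(M)) \asymp M^{d-2}$), consistent with scale invariance of the Brownian loop measure in the continuum limit, this total mass is of order $(N/M)^{d-2}$. Combining this prefactor with the factored half-connection probabilities yields the claimed bound. The main obstacle is the factoring step: isolating the two half-connections so that Poisson independence applies cleanly and handling loops in the intermediate annulus that could contribute to both halves simultaneously, which is precisely where the hypothesis $M \ge CN$ and the tuning of the constant $C'$ provide the needed scale separation.
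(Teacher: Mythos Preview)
The paper does not prove this lemma; it is quoted as a special case of \cite[Lemma~5.1]{inpreparation}, described only as a ``decomposition argument on loop clusters''. So there is no in-paper proof to compare against line by line, but your Mecke-based route is a genuinely different starting point from the cluster-exploration flavour that description suggests.

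Your overall architecture (Markov on the loop count, Mecke/Palm to isolate one crossing loop, then the mass estimate $\widetilde{\mu}^D(\text{crossing loops})\lesssim (N/M)^{d-2}$) is sound, and you correctly identify the factoring of the two half-connections as the crux. But the resolution you sketch does not close the gap. First, the sentence ``any realization of $\{A\leftrightarrow\mathrm{ran}(\widetilde{\ell})\}$ in the original soup forces $A$'s cluster to exit $B(N/C')$'' is false: the isolated loop $\widetilde{\ell}$ is only required to hit $\partial B(N)$ and $\partial B(M)$, and nothing prevents $\mathrm{ran}(\widetilde{\ell})$ from dipping inside $\widetilde{B}(N/C')$ (even into $A$ itself), in which case $A$'s original cluster need not exit $B(N/C')$ at all. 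Handling this requires a further decomposition over how deep $\widetilde{\ell}$ penetrates, and the scales where $\widetilde{\ell}$ reaches $A$ cannot be dismissed as lower order without already knowing a bound of the target type. Second, and more seriously, the Poisson restriction property you invoke does not factor $\mathbb{P}\big(A\xleftrightarrow{(D)}\mathrm{ran}(\widetilde{\ell}),\,A'\xleftrightarrow{(D)}\mathrm{ran}(\widetilde{\ell})\big)$: both events are increasing in the \emph{same} soup and may share loops (a single original loop can participate in both connections), so neither spatial independence nor a naive BKR application yields the product. Your remarks about ``second-moment on the crossing-loop count'' and ``attributing intermediate loops via monotonicity'' are suggestive but do not constitute an argument. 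A working route here would be to explore the cluster of $A'$ first, condition on it, and then use that the remaining soup is an independent loop soup on the complement---this is presumably the ``cluster decomposition'' that \cite{inpreparation} carries out, and it is exactly what replaces the factoring step you are missing.
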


We also cite \cite[Corollary 1.10]{inpreparation} in the following lemma.

 \begin{lemma}\label{lemma_boxtobox}
For any $d\ge 3$ with $d\neq 6$, there exists $c>0$ such that for any $N\ge 1$, $A\subset [\widetilde{B}(N)]^c$, $0<M_1\le M_2\le  cN^{1\boxdot \frac{2}{d-4}}$ and $D\subset  [\widetilde{B}(N)]^c \cup \widetilde{B}(\frac{1}{2}M_1)$, 
		\begin{equation}\label{ineq2_compare_boundtoset}
  	\frac{\mathbb{P}^D\big(A\xleftrightarrow{\ge 0}  B(M_1)\big)}{ \mathbb{P}^D\big(A\xleftrightarrow{\ge 0} B(M_2)\big)}  	\asymp \big(\frac{M_1}{M_2}\big)^{(\frac{d}{2}-1)\boxdot (d-4)}.
	\end{equation}
 \end{lemma}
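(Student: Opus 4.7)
The plan is to derive this lemma by a double application of the quasi-multiplicativity (QM) inequality (\ref{QM_ineq_1}). By the hypothesis on $D$, write $D = D_1 \cup D_2$ with $D_1 := D \cap \widetilde{B}(M_1/2)$ and $D_2 := D \cap [\widetilde{B}(N)]^c$; these are disjoint and exhaust $D$. The scale assumption $M_2 \le cN^{1\boxdot 2/(d-4)}$ is precisely what allows one to choose an intermediate QM scale $K$ satisfying both $M_2 \le cK^{1\boxdot 2/(d-4)}$ and $N \ge CK^{1\boxdot(d-4)/2}$. Applying (\ref{QM_ineq_1}) at scale $K$ with $A_1 = B(M_i)$ and $A_2 = A$ yields
\begin{equation*}
\mathbb{P}^D\bigl(A \xleftrightarrow{\ge 0} B(M_i)\bigr) \asymp K^{0\boxdot(6-d)}\,\mathbb{P}^{D_1}\bigl(B(M_i) \xleftrightarrow{\ge 0} \partial B(K)\bigr)\,\mathbb{P}^{D_2}\bigl(A \xleftrightarrow{\ge 0} \partial B(K)\bigr).
\end{equation*}
Taking the ratio for $i = 1, 2$, both the $A$-dependent outer factor and the correction $K^{0\boxdot(6-d)}$ cancel, leaving the inner arm ratio
\begin{equation*}
\mathcal{R} := \frac{\mathbb{P}^{D_1}\bigl(B(M_1) \xleftrightarrow{\ge 0} \partial B(K)\bigr)}{\mathbb{P}^{D_1}\bigl(B(M_2) \xleftrightarrow{\ge 0} \partial B(K)\bigr)}.
\end{equation*}

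Next I would apply (\ref{QM_ineq_1}) once more, now at the intermediate scale $M_2$ with $A_1 = B(M_i)$ and $A_2 = \partial B(K)$: this factors each probability in $\mathcal{R}$ into a common outer piece $\mathbb{P}(\partial B(M_2) \xleftrightarrow{\ge 0} \partial B(K))$ (which cancels) times an inner piece. Because $\mathbb{P}^{D_1}(B(M_2) \xleftrightarrow{\ge 0} \partial B(M_2))$ is trivially of constant order, this reduces $\mathcal{R}$ to $\mathbb{P}^{D_1}(B(M_1) \xleftrightarrow{\ge 0} \partial B(M_2))$. Combining the crossing estimates (\ref{crossing_low})--(\ref{crossing_high}) with the one-arm lower bound $\mathbb{P}(B(M_1) \xleftrightarrow{\ge 0} \partial B(M_2)) \gtrsim \theta_d(M_2)$ from (\ref{one_arm_low})--(\ref{one_arm_high}), and taking the maximum over the two regimes, the right-hand side has the claimed order $(M_1/M_2)^{(d/2-1)\boxdot(d-4)}$.

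The main technical obstacle will be ensuring that a single QM scale $K$ can be chosen to satisfy both the inner and outer constraints simultaneously in the supercritical dimensions $d > 6$, where the admissible window $[(M_2/c)^{(d-4)/2}, (N/C)^{2/(d-4)}]$ is narrow and depends delicately on the precise form of the hypothesis $M_2 \le cN^{2/(d-4)}$; moreover, the second application of QM at scale $M_2$ requires the analogous relation $M_1 \le cM_2^{2/(d-4)}$, and the residual case $M_1 > cM_2^{2/(d-4)}$ in high dimensions would need a further iteration at a still smaller intermediate scale. The Dirichlet conditioning by $D_1 \subset \widetilde{B}(M_1/2)$ is absorbed without extra work thanks to the generality of (\ref{QM_ineq_1}), which allows arbitrary conditioning sets inside the inner region.
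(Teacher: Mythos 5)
The paper does not actually prove Lemma~\ref{lemma_boxtobox}: the text preceding the statement says ``We also cite \cite[Corollary 1.10]{inpreparation} in the following lemma,'' so the result is imported verbatim from the companion paper and no argument is given here. There is therefore no internal proof to compare against, but your proposal, read on its own terms, has two concrete gaps.

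First, the final reduction does not produce the stated exponent. After your two QM applications you reduce $\mathcal{R}$ to $\mathbb{P}^{D_1}(B(M_1)\xleftrightarrow{\ge 0}\partial B(M_2))$ and then invoke (\ref{crossing_low})--(\ref{crossing_high}) ``with the one-arm lower bound, taking the maximum over the two regimes.'' With (\ref{crossing_low}) as written, i.e.\ $\rho_d(M_1,M_2)\asymp (M_1/M_2)^{d/2+1}$, this gives $\max\bigl((M_1/M_2)^{d/2+1},\,M_2^{1-d/2}\bigr)$, and for $1\ll M_1\ll M_2$ both of these are strictly smaller than $(M_1/M_2)^{d/2-1}$, so the ``max of two regimes'' does not produce the claimed order. (There is an internal inconsistency: since $\rho_d(1,N)\ge\theta_d(N)\asymp N^{1-d/2}$ by inclusion while (\ref{crossing_low}) would give $\rho_d(1,N)\asymp N^{-(d/2+1)}$, the displayed exponent must be a typo for $d/2-1$. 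If you repair it, the crossing estimate alone gives the right order and the one-arm/max bookkeeping is unnecessary; but as you have written it, the step is false.) For $d>6$, (\ref{crossing_high}) is only a $\lesssim$ bound, so it cannot on its own yield the two-sided $\asymp$ required by the lemma; you would need a matching lower bound for $\rho_d$, which the paper never states.

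Second, the scale bookkeeping in $d>6$ is not a minor technicality but the heart of the matter, and your proposal leaves it open. For the first QM at scale $K$ you need simultaneously $M_2\le cK^{2/(d-4)}$ and $N\ge CK^{(d-4)/2}$, i.e.\ $K\in\bigl[(M_2/c)^{(d-4)/2},\,(N/C)^{2/(d-4)}\bigr]$. The hypothesis $M_2\le cN^{2/(d-4)}$ guarantees $(M_2/c)^{(d-4)/2}\lesssim N$ but not $(M_2/c)^{(d-4)/2}\lesssim N^{2/(d-4)}$, so the window is generically empty when $M_2$ is within a constant factor of its allowed maximum. The same asymmetry ($cK^{2/(d-4)}\ll K \ll CK^{(d-4)/2}$) blocks the second QM at scale $M_2$: applying (\ref{QM_ineq_1}) with $A_1=B(M_2)$ at scale $M_2$ would require $M_2\le cM_2^{2/(d-4)}$, forcing $M_2=O(1)$. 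You flag both issues and say a further iteration would be needed, but that iteration is precisely the substance of the companion paper's Corollary~1.10, and without sketching how the scales nest one cannot claim the lemma follows. For $3\le d\le 5$ the scale windows are benign and the overall strategy (QM to isolate the inner factor, absorbing-boundary comparison, one crossing estimate) is the right one, though the second QM is unnecessary --- one can compare $\mathbb{P}^{D_1}(B(M_i)\xleftrightarrow{\ge 0}\partial B(K))$ directly to $\rho_d(M_i,K)$ via the absorbing-boundary bound the paper uses elsewhere (cf.\ the step quoting \cite[Proposition~1.8]{inpreparation} in the proof of Lemma~\ref{lemma_Ato_x_boundary}).
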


We record a useful application of these two lemmas as follows. For any $M\ge CN^{1\boxdot \frac{d-4}{2}}$, $A\subset \widetilde{B}(cN^{1\boxdot \frac{2}{d-4}})$ and $A',D\subset [\widetilde{B}(C'M^{1\boxdot \frac{d-4}{2} })]^c$, we have 
 \begin{equation}\label{2.26}
	\begin{split}
	&\mathbb{P}\big(  A\xleftrightarrow{(D)}A', \widetilde{\mathcal{L}}_{1/2}^{D}[N,M]\neq 0 \big)\\ 
\overset{(\ref*{ineq_new_decompose})}{\lesssim} & \big(\frac{N}{M}\big)^{d-2}\mathbb{P}\big( A \xleftrightarrow{(D)}\partial B(c'N) \big) \mathbb{P}\big(A'  \xleftrightarrow{(D)}  \partial B(C''M) \big)\\
	\overset{(\ref*{iso}),(\ref*{ineq2_compare_boundtoset})}{\lesssim} & \big(\frac{N}{M}\big)^{d-2-[(\frac{d}{2}-1)\boxdot (d-4)]} \mathbb{P}^D\big( A \xleftrightarrow{\ge 0}\partial B(c'N) \big) \mathbb{P}^D\big(A'  \xleftrightarrow{\ge 0}  \partial B(c'N) \big)  \\
	\overset{(\ref*{QM_ineq_1}),(\ref*{iso})}{\lesssim} & N^{(\frac{d}{2}-1)\boxdot (d-4)}M^{-[(\frac{d}{2}-1)\boxdot 2]} \mathbb{P}\big(  A\xleftrightarrow{(D)}A'  \big). 
	\end{split}
\end{equation}

For any disjoint subsets $A_0,A_1,...,A_k\subset \widetilde{\mathbb{Z}}^d$, we denote 
\begin{equation}\label{addnew2.30}
	\{A_0\xleftrightarrow{\cdots} A_1,...,A_k\}:= \cap_{1\le i\le k}\{A_0\xleftrightarrow{\cdots} A_i\}.
\end{equation}

\begin{lemma}\label{lemma_Ato_x_boundary}
For any $3\le d\le 5$, there exists $\Cl\label{const_lemma_Ato_x_boundary1}>0$ such that for any $n\ge 1$, $A\subset \widetilde{B}(\Cref{const_lemma_Ato_x_boundary1}^{-1}n)$, $D\subset \widetilde{B}(\Cref{const_lemma_Ato_x_boundary1}^{-1}n)\cup [\widetilde{B}(\Cref{const_lemma_Ato_x_boundary1}n)]^c$, $x\in B(2n)\setminus B(\frac{1}{2}n)$ and $a \in [4,\frac{1}{2}\Cref{const_lemma_Ato_x_boundary1}]$, 
	\begin{equation}
		\mathbb{P}^D\big( A\xleftrightarrow{\ge 0 } x,\partial B(a n)\big)\lesssim (an)^{-\frac{d}{2}+1}\mathbb{P}^D\big( A\xleftrightarrow{\ge 0}  \partial B(n)\big).
	\end{equation}
\end{lemma}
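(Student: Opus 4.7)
Since $a\in[4,\tfrac{1}{2}\Cref{const_lemma_Ato_x_boundary1}]$ ranges over a bounded interval, $(an)^{-\frac{d}{2}+1}\asymp n^{-\frac{d}{2}+1}$ up to multiplicative constants depending only on $d$ and $\Cref{const_lemma_Ato_x_boundary1}$, so it is enough to establish
\begin{equation*}
\mathbb{P}^{D}\big(A\xleftrightarrow{\ge 0}x,\partial B(an)\big)\lesssim n^{-\frac{d}{2}+1}\,\mathbb{P}^{D}\big(A\xleftrightarrow{\ge 0}\partial B(n)\big).
\end{equation*}
My plan is to drop the outer constraint $A\xleftrightarrow{\ge 0}\partial B(an)$ by monotonicity, and then apply the quasi-multiplicativity (\ref{QM_ineq_1}) to factor both $\mathbb{P}^{D}(A\xleftrightarrow{\ge 0}x)$ and $\mathbb{P}^{D}(A\xleftrightarrow{\ge 0}\partial B(n))$ as products sharing a common first factor, so that dividing the two expressions isolates a one-arm probability from $x$.

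Concretely, choose $N=n/C_{*}$ for a large constant $C_{*}=C_{*}(d)$ (later fixed depending on the QM constants $c,C$ from (\ref{QM_ineq_1})), and take $\Cref{const_lemma_Ato_x_boundary1}=\Cref{const_lemma_Ato_x_boundary1}(d)$ large enough that (i) $\widetilde{B}(\Cref{const_lemma_Ato_x_boundary1}^{-1}n)\subset \widetilde{B}(cN)$ and (ii) each of $\{x\}$, $\partial B(n)$, and $[\widetilde{B}(\Cref{const_lemma_Ato_x_boundary1}n)]^{c}$ lies in $[\widetilde{B}(CN)]^{c}$ (for $\{x\}$, use $|x|\ge n/2$ and $C_{*}\ge 2C$). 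Decompose $D=D_{1}\sqcup D_{2}$ with $D_{1}:=D\cap \widetilde{B}(\Cref{const_lemma_Ato_x_boundary1}^{-1}n)$ and $D_{2}:=D\cap [\widetilde{B}(\Cref{const_lemma_Ato_x_boundary1}n)]^{c}$. Two applications of (\ref{QM_ineq_1}) --- with $(A_{1},A_{2})=(A,\{x\})$ and $(A_{1},A_{2})=(A,\partial B(n))$ --- yield
\begin{align*}
\mathbb{P}^{D}(A\xleftrightarrow{\ge 0}x)&\asymp \mathbb{P}^{D_{1}}(A\xleftrightarrow{\ge 0}\partial B(N))\cdot \mathbb{P}^{D_{2}}(x\xleftrightarrow{\ge 0}\partial B(N)),\\
\mathbb{P}^{D}(A\xleftrightarrow{\ge 0}\partial B(n))&\asymp \mathbb{P}^{D_{1}}(A\xleftrightarrow{\ge 0}\partial B(N))\cdot \mathbb{P}^{D_{2}}(\partial B(n)\xleftrightarrow{\ge 0}\partial B(N)).
\end{align*}

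Dividing cancels the factor $\mathbb{P}^{D_{1}}(A\xleftrightarrow{\ge 0}\partial B(N))$, reducing the problem to bounding the ratio $\mathbb{P}^{D_{2}}(x\xleftrightarrow{\ge 0}\partial B(N))/\mathbb{P}^{D_{2}}(\partial B(n)\xleftrightarrow{\ge 0}\partial B(N))\lesssim n^{-\frac{d}{2}+1}$. The denominator is of constant order: applying (\ref{QM_ineq_1}) to $\theta_{d}(n)=\mathbb{P}(\bm{0}\xleftrightarrow{\ge 0}\partial B(n))$ gives $\mathbb{P}^{D_{2}}(\partial B(n)\xleftrightarrow{\ge 0}\partial B(N))\asymp \theta_{d}(n)/\theta_{d}(N)\asymp 1$, since $N/n=1/C_{*}$ is a fixed constant and $\theta_{d}$ satisfies (\ref{one_arm_low}). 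For the numerator, by choosing $C_{*}$ large enough the Euclidean distance from $x$ to $\partial B(N)$ is at least $n/4$, so every path from $x$ to $\partial B(N)$ in $\widetilde{E}^{\ge 0}$ must exit the metric-graph ball $\widetilde{B}_{x}(n/4):=x+\widetilde{B}(n/4)$; thus $\{x\xleftrightarrow{\ge 0}\partial B(N)\}\subseteq \{x\xleftrightarrow{\ge 0}\widetilde{\mathbb{Z}}^{d}\setminus \widetilde{B}_{x}(n/4)\}$, which by translation invariance and (\ref{one_arm_low}) has probability $\asymp \theta_{d}(n/4)\asymp n^{-\frac{d}{2}+1}$. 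The far conditioning $D_{2}\subset [\widetilde{B}(\Cref{const_lemma_Ato_x_boundary1}n)]^{c}$ modifies this bound by at most a constant factor, since $D_{2}$ is at distance $\gtrsim \Cref{const_lemma_Ato_x_boundary1}n$ from $\widetilde{B}_{x}(n/4)$.

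Combining these bounds gives $\mathbb{P}^{D}(A\xleftrightarrow{\ge 0}x)\lesssim n^{-\frac{d}{2}+1}\mathbb{P}^{D}(A\xleftrightarrow{\ge 0}\partial B(n))$, and the lemma follows by the first-paragraph reduction. The main technical point is bookkeeping: $\Cref{const_lemma_Ato_x_boundary1}$ must be fixed large in terms of the QM constants $c,C$ (through $C_{*}$) so that the decomposition $D=D_{1}\sqcup D_{2}$ is compatible with both applications of quasi-multiplicativity, and the comparison between $\mathbb{P}^{D_{2}}$ and $\mathbb{P}$ at the local scale $n$ must be justified from the separation $\Cref{const_lemma_Ato_x_boundary1}n\gg n$.
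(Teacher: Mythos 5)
Your first reduction — treating $(an)^{-\frac{d}{2}+1}$ and $n^{-\frac{d}{2}+1}$ as interchangeable because $a\le\frac{1}{2}\Cref{const_lemma_Ato_x_boundary1}$ — is where the proposal goes wrong in spirit. You yourself note the resulting constant depends on $\Cref{const_lemma_Ato_x_boundary1}$; it scales like $a^{\frac{d}{2}-1}\lesssim\Cref{const_lemma_Ato_x_boundary1}^{\frac{d}{2}-1}$. The $(an)^{-\frac{d}{2}+1}$ factor in the lemma is not cosmetic: it is there precisely so that the bound improves uniformly as $a$ grows over its entire allowed range, with a constant that does \emph{not} deteriorate as $\Cref{const_lemma_Ato_x_boundary1}$ is taken large. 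This matters downstream. In the proof of the claim (\ref{finaluse_claim525}), the lemma is invoked with $a=d^{-2}\lambda$, and afterwards $\lambda=\Cref{const_lemma_prepare_volume2}$ is chosen large (necessarily $\le\frac{1}{2}d^{2}\Cref{const_lemma_Ato_x_boundary1}$) so that $\lambda^{-\frac{d}{2}+1}$ beats a fixed constant. If the implicit constant in the lemma were allowed to grow like $\Cref{const_lemma_Ato_x_boundary1}^{\frac{d}{2}-1}$, then $\Cref{const_lemma_Ato_x_boundary1}^{\frac{d}{2}-1}\cdot\lambda^{-\frac{d}{2}+1}$ stays bounded below as both are scaled up together, and that argument collapses. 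So the constant must be uniform in $a$ (equivalently, uniform in the eventual choice of $\Cref{const_lemma_Ato_x_boundary1}$ once it exceeds a fixed threshold), and your proof does not deliver this.

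The underlying structural issue is that you drop the constraint $A\xleftrightarrow{\ge 0}\partial B(an)$ entirely in the very first step and never recover the $a$-dependence. The paper keeps it: after translating to the loop soup, it applies the companion paper's decomposition lemma (the analogue of the decomposition used in Lemma \ref{lemma_new_decomposition}) to factor $\mathbb{P}(A\xleftrightarrow{(D)}x,\partial B(an))$ into a piece for the cluster of $A$ at scale $\asymp n$ (which matches $\mathbb{P}(A\xleftrightarrow{(D)}\partial B(n))$ up to constants, by Lemma \ref{lemma_boxtobox} and a companion-paper boundary-absorption estimate) times a piece for the pair $(x,\partial B(an))$. That second piece is then bounded by $(an)^{-\frac{d}{2}+1}$ either via the one-arm from $x$ to distance $\frac{1}{2}an$ (when $x$ connects directly to $\partial B(an)$), or via the product $\theta_d(\frac{n}{2})\,\rho_d(C_*^{-1}n,an)$ using (\ref{one_arm_low}) and (\ref{crossing_low}) — in each case with constants independent of $a$. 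Your QM-ratio argument for $\mathbb{P}^{D}(A\xleftrightarrow{\ge 0}x)\lesssim n^{-\frac{d}{2}+1}\mathbb{P}^{D}(A\xleftrightarrow{\ge 0}\partial B(n))$ is itself fine (this is essentially the paper's (\ref{finaluse523}) and it is derived cleanly from quasi-multiplicativity), but it can never see the extra $a^{-\frac{d}{2}+1}$ gain, because that gain comes entirely from the discarded constraint. If you want an alternative to the decomposition-lemma route, you should at least keep the event $\{x\xleftrightarrow{\ge 0}\partial B(an)\}$ alive and feed it into the $\mathbb{P}^{D_2}$ factor, e.g. bounding $\mathbb{P}^{D_2}(x\xleftrightarrow{\ge 0}\partial B(N),\partial B(an))$ by a one-arm at scale $an$ rather than at scale $n$.
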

\begin{proof}
	By the isomorphism theorem, it suffices to show 
	 \begin{equation}\label{finaluse243}
		\mathbb{P}\big( A\xleftrightarrow{(D) } x,\partial B(a n)\big)\lesssim (an)^{-\frac{d}{2}+1}\mathbb{P}\big( A\xleftrightarrow{(D)}  \partial B(n)\big).
	\end{equation}
	Applying \cite[Lemma 5.2]{inpreparation}, the left-hand side of (\ref{finaluse243}) is bounded from above by 
	\begin{equation}\label{finaluse244}
	\begin{split}
			C\mathbb{P}\big( A\xleftrightarrow{(D_1) }& \partial B(C_*^{-2}n)\big) \Big[\mathbb{P}\big(x\xleftrightarrow{(D_2)} \partial B(C_*^{-1}n)  , \partial B(a n)  \big)\\
		&\ \ \ \ \ \ \ \ \ \ \  + \mathbb{P}\big(x\xleftrightarrow{(D_2)} \partial B(C_*^{-1}n)  \big)\mathbb{P}\big(\partial B(a n)  \xleftrightarrow{(D_2) } \partial B(C_*^{-1}n)  \big)  \Big]. 
	\end{split}
	\end{equation}
	Here $C_*>0$ is some constant (where we require $\Cref{const_lemma_Ato_x_boundary1}\ge 10C_*^5$), $D_1=D\cap \widetilde{B}(\Cref{const_lemma_Ato_x_boundary1}^{-1}n)$ and $D_2= D\cap [\widetilde{B}(\Cref{const_lemma_Ato_x_boundary1}n)]^c$. Next, we estimate the terms in (\ref{finaluse244}) one by one.

	For $\mathbb{P}\big( A\xleftrightarrow{(D_1) } \partial B(C_*^{-2}n)\big)$, by Lemma \ref{lemma_boxtobox}, we know that it is proportional to $\mathbb{P}\big( A\xleftrightarrow{(D_1) } \partial B(n)\big)$. Moreover, referring to \cite[Proposition 1.8]{inpreparation}, the addition of an absorbing boundary $D_2$ alters the connecting probability between $A$ and $\partial B(n)$ by only a constant factor. Combining these two estimates, we get 
	\begin{equation}
		\mathbb{P}\big( A\xleftrightarrow{(D_1) } \partial B(C_*^{-2}n)\big) \asymp \mathbb{P}\big( A\xleftrightarrow{(D) } \partial B(n)\big). 
	\end{equation}
 Now we estimate the probability of $\{x\xleftrightarrow{(D_2)} \partial B(C_*^{-1}n)  , \partial B(a n)  \}$. Noting that this event implies $\{ x\xleftrightarrow{}\partial B_x(\frac{1}{2}a n) \}$, one has 
	\begin{equation}
		\mathbb{P}\big(x\xleftrightarrow{(D_2)} \partial B(C_*^{-1}n)  , \partial B(a n)  \big) \le \mathbb{P}\big(x\xleftrightarrow{}\partial B_x(\tfrac{1}{2}a n)  \big) \overset{(\ref*{iso}),(\ref*{one_arm_low})}{\lesssim } (an)^{-\frac{d}{2}+1}. 
	\end{equation} 
 Furthermore, since $\widetilde{\mathcal{L}}_{1/2}^{D_2}\le \widetilde{\mathcal{L}}_{1/2}$, the remaining term in (\ref{finaluse244}) is upper-bounded by 
 \begin{equation}\label{finaluse247}
 \begin{split}
 		& \mathbb{P}\big(x\xleftrightarrow{ } \partial B(C_*^{-1}n)  \big)\mathbb{P}\big(\partial B(a n)  \xleftrightarrow{ } \partial B(C_*^{-1}n)  \big)\\
 	\overset{\widetilde{B}_x(\frac{1}{2}n)\subset [\widetilde{B}(C_*^{-1}n)]^c,(\ref*{iso})}{\lesssim }   & \theta_d(\tfrac{1}{2}n )\rho_d(C_*^{-1}n,an ) \overset{(\ref*{one_arm_low}),(\ref*{crossing_low})}{\lesssim } (an)^{-\frac{d}{2}+1}. 
 \end{split}
 \end{equation} 
 Putting (\ref{finaluse244})--(\ref{finaluse247}) together, we obtain (\ref{finaluse243}) and thus complete the proof.
\end{proof}

We conclude this section by citing several special cases of \cite[Lemma 5.3]{inpreparation}.

\begin{lemma} \label{corollary_2.9}
For any $3\le d\le 5$, there exist $C,c>0$ such that the following hold.
\begin{enumerate}
	\item  For any $N\ge 1$, $A\subset \widetilde{B}(N)$, $M\ge CN$, $x,y\in \partial B(M)$ and $D\subset \widetilde{B}(N)\cup [\widetilde{B}(2M)]^c$,
\begin{equation}\label{final_addnew2.33}
	\mathbb{P}\big(A \xleftrightarrow{\ge 0} x,y \big)\lesssim    |x-y|^{-\frac{d}{2}+1} \mathbb{P}\big(A \xleftrightarrow{\ge 0} x  \big). 
\end{equation}
	Especially, by taking $A=\{\bm{0}\}$ and $D=\emptyset$, one has
	\begin{equation}\label{addnew2.33}
		\mathbb{P}\big(\bm{0}\xleftrightarrow{\ge 0} x,y \big)\lesssim    |x-y|^{-\frac{d}{2}+1}M^{2-d}. 	
		\end{equation}

	\item For any $N\ge 1$ and $x\in B(cN)$, 
\begin{equation}\label{addnew2.34}
	\mathbb{P}\big(\partial B(N) \xleftrightarrow{\ge 0} \bm{0} ,   x\big)\lesssim    |x|^{-\frac{d}{2}+1}\theta_d(N). 
\end{equation}
	
\end{enumerate}
\end{lemma}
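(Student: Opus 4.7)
Via the isomorphism theorem (Section~\ref{subsection_iso_thm}), $\mathbb{P}(A\xleftrightarrow{\ge 0}\cdots)=\tfrac12\mathbb{P}(A\xleftrightarrow{(D)}\cdots)$, so both bounds reduce to their loop-cluster analogues. The heuristic content of the statement is that, conditional on $A\xleftrightarrow{(D)}x$ (for Part~(1), resp.\ $\partial B(N)\xleftrightarrow{}\bm 0$ for Part~(2)), the probability that $y$ is additionally in the loop cluster of $x$ is governed by the one-arm probability $\theta_d(r)\asymp r^{1-d/2}$ at the local scale $r=|x-y|$ (resp.\ $r=|x|$): the conditioning already ensures a ``large'' cluster through $x$, so the only additional cost is to reach the specific point $y$ from within this cluster, which is a one-arm event at scale $r$.

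To turn this heuristic into a proof, I adapt the mesoscopic decomposition used in the proof of Lemma~\ref{lemma_Ato_x_boundary} (based on \cite[Lemma~5.2]{inpreparation}). For Part~(1), translate coordinates so that $x$ sits at the center of an annulus at scales $r_0=r/C_*$ and $r_1=(r/2)\wedge(M/2)$, with $C_*$ a large constant. The event $\{A\xleftrightarrow{(D)}x,y\}$ is split into an ``outer'' event supported outside $\widetilde B_x(r_1)$ (carrying the connection to $A$) and a ``local'' event supported inside $\widetilde B_x(r_1)$ (carrying the connection between $x$ and $y$). The local event requires both the one-arm $\{x\xleftrightarrow{}\partial\widetilde B_x(r_0)\}$ and a connection reaching $y$; either the two are realized by the same loop cluster (giving a bound $\mathbb P(x\xleftrightarrow{}\partial\widetilde B_x(r_0),y)\lesssim r^{1-d/2}$ by (\ref{crossing_low}) and Lemma~\ref{lemma_new_decomposition}), or by separate arms (giving a bound $\mathbb P(x\xleftrightarrow{}\partial\widetilde B_x(r_0))\cdot\mathbb P(y\xleftrightarrow{}\partial\widetilde B_x(r_0))\lesssim r^{2-d}\ll r^{1-d/2}$ via (\ref{one_arm_low}) and (\ref{two_point})). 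The outer factor is then compared to $\mathbb P(A\xleftrightarrow{(D)}x)$ via quasi-multiplicativity~(\ref{QM_ineq_1}) at the intermediate scale $r_1$ centered at $x$, together with Lemma~\ref{lemma_boxtobox}. The special case $A=\{\bm 0\},D=\emptyset$ then follows by substituting the two-point formula~(\ref{two_point}).

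Part~(2) follows by the analogous decomposition around $\bm 0$ with $r=|x|$ and $r_0=r/C_*$: the inner arm $\{\bm 0\xleftrightarrow{}\partial\widetilde B(r_0)\}$ contributes $\lesssim|x|^{1-d/2}$, while the outer connection $\{\partial B(N)\xleftrightarrow{}\partial\widetilde B(r_0)\}$ contributes $\lesssim\theta_d(N)$ via Lemma~\ref{lemma_boxtobox} (comparing boundary-to-boundary probabilities at scales $r_0$ and $N$).

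\textbf{Main obstacle.} The delicate step is that a naive separation of outer and inner factors, followed by quasi-multiplicativity used to convert $\mathbb P(A\xleftrightarrow{(D)}\partial\widetilde B_x(r_1))$ back into $\mathbb P(A\xleftrightarrow{(D)}x)$, incurs an extra factor $\theta_d(r_1)^{-1}\asymp r^{d/2-1}$ that would cancel the local factor $r^{1-d/2}$ and produce only the trivial bound. The resolution, mirroring \cite[Lemma~5.2]{inpreparation}, is to run the decomposition in a refined form in which the inner event is tied to the connection reaching $x$ specifically (not merely $\partial\widetilde B_x(r_0)$), so that the ``two-arm'' structure of the local event near $x$---namely, that the cluster of $x$ must extend both to the inner boundary and to $y$---supplies the correct additional factor $r^{1-d/2}$. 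A secondary subtlety in Part~(1) is the edge case $r\asymp M$, where $r_1$ truncates to $M/2$ rather than $r/2$; one must check that $A,D\subset[\widetilde B_x(r_1)]^c$ remains valid and that the exponent arithmetic still yields $r^{1-d/2}\mathbb P(A\xleftrightarrow{(D)}x)$ in this regime.
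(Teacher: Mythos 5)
The paper does not prove this lemma; it is cited verbatim as a special case of \cite[Lemma~5.3]{inpreparation}, so there is no in-paper proof to compare against. I therefore assess your sketch on its own terms, and I believe it has genuine gaps.

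The central problem is that your exponent bookkeeping does not close, and your proposed fix is too vague to count as a resolution. You correctly identify the obstacle: converting the outer factor $\mathbb{P}(A \xleftrightarrow{(D)} \partial \widetilde{B}_x(r_1))$ back to $\mathbb{P}(A \xleftrightarrow{(D)} x)$ via quasi-multiplicativity at scale $r_1 \asymp r$ costs a factor $\theta_d(r_1)^{-1} \asymp r^{d/2-1}$. To land on $r^{1-d/2}\mathbb{P}(A\xleftrightarrow{}x)$ you therefore need the local factor to be $\lesssim r^{2-d}$, \emph{not} $\lesssim r^{1-d/2}$ as you write. With the $r^{1-d/2}$ bound you actually claim, the product gives only the trivial $\mathbb{P}(A\xleftrightarrow{}x)$. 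Your one-sentence ``resolution'' (``run the decomposition in a refined form in which the inner event is tied to the connection reaching $x$ specifically'') is precisely where all the content must live, and it is not carried out; as written it is a hope, not a step. Relatedly, the local estimate itself is misstated: since $y \notin \widetilde{B}_x(r_0)$ (indeed $y \notin \widetilde{B}_x(r_1)$, so calling the connection to $y$ ``supported inside $\widetilde{B}_x(r_1)$'' is inconsistent), the event $\{x \xleftrightarrow{} \partial\widetilde{B}_x(r_0), y\}$ is simply $\{x\xleftrightarrow{}y\}$, whose probability is $\asymp r^{2-d}$ by the two-point function~(\ref{two_point}); neither~(\ref{crossing_low}) nor Lemma~\ref{lemma_new_decomposition} is needed or does what you ascribe to it. So the sharper bound is available — but you neither invoke it nor realize that it is what the bookkeeping requires. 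You would also need to justify the factorization itself (the ``local'' connection $x\xleftrightarrow{}y$ and the ``outer'' connection $A\xleftrightarrow{}\partial\widetilde{B}_x(r_1)$ can share loops, and when $A$ is disconnected $x$ and $y$ need not lie in a common cluster at all); this requires a decoupling step of the type used in~\cite[Lemma~5.2]{inpreparation} and in Lemma~\ref{lemma_new_decomposition}, which you gesture at but do not set up.

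Part~(2) has the same defect plus an outright misstatement. You assert that the outer connection $\{\partial B(N)\xleftrightarrow{}\partial\widetilde{B}(r_0)\}$ ``contributes $\lesssim\theta_d(N)$ via Lemma~\ref{lemma_boxtobox}.'' In fact Lemma~\ref{lemma_boxtobox} gives $\mathbb{P}(\partial B(N)\xleftrightarrow{}\widetilde{B}(r_0)) \asymp r_0^{d/2-1}\theta_d(N)$, which is \emph{larger} than $\theta_d(N)$ (and can fail to be $\lesssim\theta_d(N)$ altogether when $r_0\asymp N$, which is allowed since $x\in B(cN)$). The naive product inner $\times$ outer then again collapses to $\theta_d(N)$ instead of $|x|^{1-d/2}\theta_d(N)$. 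The missing ingredient is the same: a local two-arm/two-point refinement near $\bm 0$ that supplies the extra $|x|^{1-d/2}$, which you do not construct.

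In short, the high-level architecture (mesoscopic annuli around $x$, quasi-multiplicativity, plus one-arm/two-point input) is reasonable and in the spirit of Lemma~\ref{lemma_Ato_x_boundary}, but the concrete decomposition is not set up, the quantitative local estimate is wrong in a direction that matters, the arithmetic consequently does not yield the stated exponent, and the acknowledged ``main obstacle'' is not actually resolved.
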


\section{Existence and equivalence of IICs}\label{section_exi_IIC}

This section is dedicated to proving Theorem \ref{thm_iic}. To achieve this, the key is to establish the uniform convergence stated in the subsequent proposition. Its precise formulation requires the following definition.

\begin{definition}\label{def_admissible}
	We say a family of events $\{\mathsf{A}_h\}_{h\ge 0}$ is admissible if there exist $k\in \mathbb{N}^+$, $\{a_1,...,a_k\}\subset (0,\infty)$, $\{v_1,...,v_k\}\subset   \widetilde{\mathbb{Z}}^d$ and disjoint subsets $\gamma_1,...,\gamma_l\subset \{v_1,...,v_k\}$ such that $\mathsf{A}_h=\mathsf{A}_h^{(\mathrm{i})}(v_1,...,v_k;a_1,...,a_k)\cap \mathsf{A}_h^{(\mathrm{ii})}(\gamma_1,...,\gamma_l)$ for all $h\ge 0$, where $\mathsf{A}_h^{(\mathrm{i})}(\cdot )$ and $\mathsf{A}_h^{(\mathrm{ii})}(\cdot)$ are defined as
\begin{equation}\label{v2_3.1}
	\mathsf{A}^{(\mathrm{i})}(v_1,...,v_k;a_1,...,a_k):=\cap_{1\le i\le k} \big\{ \widehat{\mathcal{L}}_{1/2}^{v_i}+ \widehat{\mathcal{I}}^{\frac{1}{2}h^2}_{v_i} \ge a_i\big\},
\end{equation}
\begin{equation}\label{v2_3.2}
	\begin{split}
		\mathsf{A}^{(\mathrm{ii})}(\gamma_1,...,\gamma_l ):= & \cap_{1\le j\le l}\cap_{v,v'\in \gamma_j}\big\{ v \xleftrightarrow{[h]} v'\big\}.
	\end{split}
\end{equation}
\end{definition}

For any $h\ge 0$, $\mathsf{A}_h$ is increasing in both loops and interlacements. Moreover, by $\{\widehat{\mathcal{L}}_{1/2}^{v_i}+ \widehat{\mathcal{I}}^{\frac{1}{2}h^2}_{v_i} \ge a_i \}\supset \{\widehat{\mathcal{L}}_{1/2}^{v_i} \ge a_i \}$, $\{ v \xleftrightarrow{[h]} v' \}\supset \{ v \xleftrightarrow{} v'\big\}$ and the FKG inequality,
 \begin{equation}\label{def_c_star}
	\begin{split}
		\mathbb{P}\big( \mathsf{A}_h \big) \ge  c_{\star}=&c_{\star}(v_1,...,v_k;a_1,...,a_k;\gamma_1,...,\gamma_l  )   \\
		:=& \prod\nolimits_{1\le i\le k}\mathbb{P}\big(\widehat{\mathcal{L}}_{1/2}^{v_i} \ge a_i \big)  \prod\nolimits_{1\le j\le l} \prod\nolimits_{v,v'\in \gamma_j} \mathbb{P}\big(v \xleftrightarrow{} v' \big)>0.
			\end{split}
\end{equation}
Now we present the key proposition for proving Theorem \ref{thm_iic}.

\begin{proposition}\label{lemma_converge_loop}
	For $d\ge 3$ with $d\neq 6$ and for any admissible  $\{\mathsf{A}_h\}_{h\ge 0}$, there exists $\cl\label{const_iic_loop}(d,\{\mathsf{A}_h\}_{h\ge 0})>0$ such that the following hold:
	\begin{enumerate}
		\item $\big\{\mathbb{P}\big(\mathsf{A}_h\mid \bm{0}\xleftrightarrow{[h]}\partial B(N)\big)\big\}_{0\le h\le \cref{const_iic_loop}}$ uniformly converges as $N\to \infty$;

		\item $\big\{\mathbb{P}\big(\mathsf{A}_h\mid \bm{0}\xleftrightarrow{[h]}x\big)\big\}_{0\le h\le \cref{const_iic_loop}}$ uniformly converges as $x\to \infty$ (where $x\in \mathbb{Z}^d$);

		\item $\big\{\mathbb{P}\big(\mathsf{A}_h\mid \mathrm{cap}(\mathcal{C}^{[h]}(\bm{0}))\ge T \big)\big\}_{0\le h\le \cref{const_iic_loop}}$ uniformly converges as $T\to \infty$.

	\end{enumerate}
	
\end{proposition}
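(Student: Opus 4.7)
The plan is to adapt the framework of Basu and Sapozhnikov~\cite{basu2017kesten}, replacing the Bernoulli quasi-multiplicativity by the GFF quasi-multiplicativity~(\ref{QM_ineq_1}) from the companion paper, and working throughout in the loop-soup/interlacement picture of Section~\ref{subsection_iso_thm} (since admissible events are naturally expressed there via~(\ref{v2_3.1})--(\ref{v2_3.2})). Given an admissible $\{\mathsf{A}_h\}_{h\ge 0}$, fix $R$ large enough that $v_1,\ldots,v_k\in\widetilde{B}(R)$; then $\mathsf{A}_h$ is measurable with respect to the loops and interlacement trajectories meeting $\widetilde{B}(R)$ together with the local cluster $\mathcal{C}^{[h]}(\bm{0})\cap\widetilde{B}(R)$. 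The constant $c_{\ref{const_iic_loop}}$ will be taken small enough that the interlacement intensity $h^2/2$ is uniformly tiny; via the isomorphism $\widehat{\mathcal{L}}^v_{1/2}+\widehat{\mathcal{I}}^{h^2/2}_v=\tfrac{1}{2}(\widetilde{\phi}_v+h)^2$, $[h]$-connectivity reduces to percolation in the super-level set $\widetilde{E}^{\ge -h}$, and~(\ref{QM_ineq_1}) transfers to this shifted setting with constants that stay bounded uniformly in $h\in[0,c_{\ref{const_iic_loop}}]$.

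The core of the argument is a Cauchy criterion for part~(1). For $N_2>N_1\gg K\gg R$, I would condition on the ``$K$-pivotal configuration'' $\mathscr{B}$, consisting of $\mathcal{C}^{[h]}(\bm{0})\cap\widetilde{B}(K)$ together with all loops and interlacement trajectories meeting $\widetilde{B}(K)$. Translating the continuation $\mathscr{B}\leftrightarrow\partial B(N_i)$ to the GFF side and applying quasi-multiplicativity yields
\begin{equation*}
    \mathbb{P}\bigl(\bm{0}\xleftrightarrow{[h]}\partial B(N_i)\bigm|\mathscr{B}\bigr) \asymp \theta_d(N_i)\cdot \Psi_K(\mathscr{B}),
\end{equation*}
with $\Psi_K(\mathscr{B})$ depending on $\mathscr{B}$, $K$ and $h$ but not on $N_i$. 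Dividing by $\mathbb{P}(\bm{0}\xleftrightarrow{[h]}\partial B(N_i))$, the common $\theta_d(N_i)$-factor cancels, and $\mathbb{P}(\mathsf{A}_h\mid\bm{0}\xleftrightarrow{[h]}\partial B(N_i))$ agrees with a purely $K$-local measure $\mu_K(\mathsf{A}_h)$ up to an error $\delta(K)\to 0$. Scenarios where a loop crosses the annulus $B(R)\setminus B(K)$, which would couple the $\widetilde{B}(R)$-local event $\mathsf{A}_h$ with the faraway continuation and break the factorization, are controlled by Lemma~\ref{lemma_new_decomposition} and the estimate~(\ref{2.26}), contributing an extra $(R/K)^{c}$-type factor. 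The resulting Cauchy property is uniform in $h$, which proves part~(1).

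Parts~(2) and~(3) follow by variations on the same decomposition. For part~(2), $\partial B(N_i)$ is replaced by a lattice point $x$ and~(\ref{QM_ineq_1}) is applied with $A_2=\{x\}$, giving the same $K$-local factorization with $\theta_d(N_i)$ replaced by the two-point function~(\ref{two_point}); the $x\to\infty$ error is handled by the same Lemma~\ref{lemma_new_decomposition}/(\ref{2.26}) control. For part~(3), the event $\{\mathrm{cap}(\mathcal{C}^{[h]}(\bm{0}))\ge T\}$ is probed by an independent auxiliary interlacement via~(\ref{2.30}), converting it into a weighted mixture of part~(1)-type events at effective scale $T^{1/(d-2)}$; here the precise asymptotic~(\ref{iic_type4new}) (and not merely its up-to-constant version) is required for the weighting to converge as $T\to\infty$. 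Equivalence of the three resulting IIC measures follows because each probing method produces the same $K$-local limit $\mu_K$; the Type~(2) limit from Theorem~\ref{thm_iic} (as $h\downarrow 0$) is then obtained by interchanging the $N,x,T\to\infty$ and $h\downarrow 0$ limits, which is legitimate by the uniformity in $h$.

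The main obstacle is maintaining uniformity in $h\in[0,c_{\ref{const_iic_loop}}]$, since~(\ref{QM_ineq_1}) is proved only at the critical level $h=0$. Via the isomorphism, $[h]$-connectivity is equivalent to percolation of the shifted field $\widetilde{\phi}+h$, so one must verify that a constant additive shift of magnitude at most $c_{\ref{const_iic_loop}}$ preserves the quasi-multiplicativity bounds; a short inspection of the proof of~(\ref{QM_ineq_1}) in~\cite{inpreparation} should confirm this robustness. Residual correlations between well-separated annuli induced by long interlacement trajectories are controlled by Lemma~\ref{lemma_decoupling_ineq} combined with~(\ref{newadd_2.26}), yielding stretched-exponential error terms $Ce^{-cK^{d-2}}$ that more than absorb the polynomial QM errors. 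The restriction $d\ne 6$ enters only through the QM correction factor $N^{6-d}$ for $d>6$, but this is a deterministic factor that cancels identically in every ratio appearing in the argument and therefore does not affect uniform convergence.
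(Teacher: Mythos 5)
Your proposal captures the high-level shape of the argument — decomposing across scales, using quasi-multiplicativity (\ref{QM_ineq_1}), controlling crossing loops via Lemma~\ref{lemma_new_decomposition} and (\ref{2.26}), and treating the capacity conditioning via a probing argument — but it has a central gap that means the argument as written does not close.

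\textbf{The Cauchy criterion does not follow from quasi-multiplicativity alone.} You write that after conditioning on the $K$-pivotal configuration $\mathscr{B}$ and applying (\ref{QM_ineq_1}), ``the common $\theta_d(N_i)$-factor cancels, and $\mathbb{P}(\mathsf{A}_h\mid\bm{0}\xleftrightarrow{[h]}\partial B(N_i))$ agrees with a purely $K$-local measure $\mu_K(\mathsf{A}_h)$ up to an error $\delta(K)\to 0$.'' But (\ref{QM_ineq_1}) only gives $\mathbb{P}(\mathscr{B}\leftrightarrow\partial B(N_i)\mid\mathscr{B})\asymp\theta_d(N_i)\Psi_K(\mathscr{B})$ with implicit multiplicative constants that do \emph{not} go to $1$ as $K\to\infty$. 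Taking the ratio over $\mathsf{A}_h$ therefore only yields $\mathbb{P}(\mathsf{A}_h\mid\bm{0}\xleftrightarrow{[h]}\partial B(N_i))\asymp \mu_K(\mathsf{A}_h)$, a bounded-ratio statement, not an approximate equality with an error vanishing in $K$. Getting convergence requires genuinely iterating this: the paper introduces a chain of $K$ concentric annuli with independent sub-clusters $\widehat{\mathfrak{C}}_0,\ldots,\widehat{\mathfrak{C}}_K$, and invokes Hopf's contraction theorem (Lemma~\ref{lemma_hopf}) to show the oscillation of the density contracts by a factor $\frac{\kappa-1}{\kappa+1}<1$ at each step. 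It is only after $K-1$ applications that the oscillation is $\le\epsilon$. The quasi-multiplicativity (\ref{QM_ineq_1}) enters precisely as the verification of the Birkhoff contraction hypothesis (\ref{kappa}) for the transfer kernel $\mathbb{P}(D_k\leftrightarrow D_{k+1})$, not as something that directly gives the Cauchy estimate. Your single-scale decomposition at level $K$ has no mechanism for the error to go to zero.

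\textbf{The treatment of $h>0$ is not what is needed.} You propose to ``verify that a constant additive shift of magnitude at most $c_{\ref*{const_iic_loop}}$ preserves the quasi-multiplicativity bounds'' by inspecting the proof of (\ref{QM_ineq_1}). The paper avoids ever needing such a statement. It splits into two regimes. For $h\ge\exp(-C_\ddagger e^{\epsilon^{-2}})$, it compares $\mathsf{H}_h^\diamond$ directly with $\{\bm{0}\xleftrightarrow{[h]}\infty\}$ via Lemma~\ref{lemma_compare_hdiamond} (the discrepancy is $\lesssim e^{-ch^2M^{1/d}}$, which beats $(h\wedge1)^2$ for $\diamond\ge M_\ddagger$), and for $\diamond=\mathbf{x}$ additionally uses a decoupling step (\ref{newadd_3.48})--(\ref{newadd_3.52}). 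For $h<\exp(-C_\ddagger e^{\epsilon^{-2}})$, the crux of Lemma~\ref{lemma3.4} (via (\ref{renew3.35})) is that the interlacements miss $\widetilde{B}(m^{d^2})$ with high probability, so the inner annuli $\widehat{\mathfrak{C}}_0,\ldots,\widehat{\mathfrak{C}}_{K-1}$ are sampled from the pure loop soup $\widetilde{\mathcal{L}}_{1/2}$ at $h=0$; only the outermost $\widehat{\mathfrak{C}}_K$ sees $h$. Consequently (\ref{QM_ineq_1}) is only ever applied at $h=0$, and no shifted QM is required. Your route would require a nontrivial re-proof in the companion paper, which the paper's argument is designed specifically to avoid.

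\textbf{Minor but real: the $d=6$ exclusion.} You claim $d\ne6$ ``enters only through the QM correction factor $N^{6-d}$ for $d>6$, but this is a deterministic factor that cancels identically in every ratio.'' The cancellation of $N^{6-d}$ in Hopf's contraction is indeed true, but that is not why $d=6$ is excluded: as stated after Theorem~\ref{thm_iic}, for $d=6$ the quasi-multiplicativity (\ref{QM_ineq_1}) is simply not established (the companion paper's bounds differ by $N^{o(1)}$). Without a two-sided QM with uniform constants, the Birkhoff contraction hypothesis (\ref{kappa}) fails, and the whole argument breaks.

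In summary, the proposal is missing the Hopf contraction step that turns bounded ratios into convergence, and it misidentifies both how $h>0$ is handled and why $d=6$ is excluded.
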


The proof of Proposition \ref{lemma_converge_loop} will be presented in Section \ref{proof_prop_converge_loop}. In the next subsection, we establish the existence and equivalence of Types (1)--(4) IICs assuming Proposition \ref{lemma_converge_loop} holds.

\subsection{Proof of Theorem \ref{thm_iic} assuming Proposition \ref{lemma_converge_loop}}

For brevity, for any $h\ge 0$, we denote the events
\begin{equation}\label{notation_H}
	\mathsf{H}^{\mathbf{N}}_h=\mathsf{H}^{\mathbf{N}}_h(N):=\big\{\bm{0}\xleftrightarrow{[h]}\partial B(N)\big\}, 
\end{equation}
\begin{equation}
\mathsf{H}^{\mathbf{x}}_h= 	\mathsf{H}^{\mathbf{x}}_h(x):=\big\{\bm{0}\xleftrightarrow{[h]}x\big\}, 
\end{equation}
\begin{equation}\label{notation_H3}
	\mathsf{H}^{\mathbf{T}}_h= \mathsf{H}^{\mathbf{T}}_h(T):=\big\{\mathrm{cap}(\mathcal{C}^{[h]}(\bm{0}))\ge T \big)\big\}. 
\end{equation}
We also denote by $\widetilde{\mathsf{H}}^{\mathbf{N}}_h$, $\widetilde{\mathsf{H}}^{\mathbf{x}}_h$ and $\widetilde{\mathsf{H}}^{\mathbf{T}}_h$ the analogues of $\mathsf{H}^{\mathbf{N}}_h$, $\mathsf{H}^{\mathbf{x}}_h$ and $\mathsf{H}^{\mathbf{T}}_h$ obtained by replacing $\xleftrightarrow{[h]}$ and $\mathcal{C}^{[h]}(\bm{0})$ with $\xleftrightarrow{\ge -h}$ and $\mathcal{C}^{\ge -h}(\bm{0})$ respectively. Note that here we choose the mathbf font for $\mathbf{N}, \mathbf{x}, \mathbf{T}$ in superscripts to indicate the types of IICs, but not to mean actual math quantities $N, x, T$. In addition, we use $\diamond \to \infty$ to represent $N\to \infty$, $x\to \infty$, and $T\to \infty$ for $\diamond=\mathbf{N}, \mathbf{x}, \mathbf{T}$ respectively.

Assuming Proposition \ref{lemma_converge_loop}, for any $\diamond\in \{\mathbf{N},\mathbf{x},\mathbf{T}\}$ and any admissible $\{\mathsf{A}_h\}_{h\ge 0}$, the limits $\{f_h^{\diamond}(\mathsf{A}_h):= \lim\limits_{\diamond \to \infty}\mathbb{P}\big(\mathsf{A}_h \mid \mathsf{H}_h^\diamond \big) \}_{0\le h\le \cref{const_iic_loop}} $ exist and are continuous with respect to $h$ on $[0,\cref{const_iic_loop}]$. When $h=0$, $f_0^{\mathbf{N}}(\mathsf{A}_0)$, $f_0^{\mathbf{x}}(\mathsf{A}_0)$ and $f_0^{\mathbf{T}}(\mathsf{A}_0)$ correspond to Type (1), Type (3) and Type (4) IICs respectively for the loop soup $\widetilde{\mathcal{L}}_{1/2}$ (recalling (\ref{iic_type1}), (\ref{iic_type3}) and (\ref{iic_type4})). When $h>0$, for $\diamond\in \{ \mathbf{N},\mathbf{T}\}$, since the event $\mathsf{H}_h^\diamond$ converges to $\bm{0}\xleftrightarrow{[h]}\infty$ (which happens with positive probability) as $\diamond \to \infty$, one has \begin{equation}\label{3.8}
	f_h^{\diamond}(\mathsf{A}_h)= \mathbb{P}\big(\mathsf{A}_h\mid \bm{0}\xleftrightarrow{[h]}\infty\big), \ \ \forall\ \diamond \in \{\mathbf{N},\mathbf{T}\},0<h\le \cref{const_iic_loop}. 
\end{equation}
Meanwhile, by the continuity of $f_h^{\diamond}(\mathsf{A}_h)$, we have 
\begin{equation}\label{3.9}
	f_0^{\diamond}(\mathsf{A}_0)= \lim_{h\downarrow 0} f_h^{\diamond}(\mathsf{A}_h),\ \forall\ \diamond  \in \{\mathbf{N},\mathbf{x},\mathbf{T}\}. 
\end{equation}
Note that $\lim\limits_{h\downarrow 0}  \mathbb{P}\big(\mathsf{A}_h\mid \bm{0}\xleftrightarrow{[h]}\infty\big)$ corresponds to Type (2) IIC for $\widetilde{\mathcal{L}}_{1/2}$ (recall (\ref{iic_type2})). Thus, by (\ref{3.8}) and (\ref{3.9}), we know that Types (1), (2) and (4) IICs for $\widetilde{\mathcal{L}}_{1/2}$ are equivalent. Before discussing Type (3) IIC, we extend these results to the case of GFF level-sets as follows.

\begin{proof}[Proof of the existence and equivalence of (\ref{iic_type1}), (\ref{iic_type2}) and (\ref{iic_type4})] 
We arbitrarily take $m\in \mathbb{N}^+$, $n\in \mathbb{N}$, $\bm{0}\in \{y_1,...,y_m\}\subset \widetilde{\mathbb{Z}}^d$, $\{z_1,...,z_n \}\subset \widetilde{\mathbb{Z}}^d$, $\{a_1,...,a_m\}\subset [0,\infty)\cup \{-\infty\}$ and $\{b_1,...,b_n\}\subset (0,\infty)$. Similar to (\ref{v2_3.1}), we denote  
\begin{equation*}
	\mathsf{F}^{+}=\mathsf{F}^{+}(y_1,...,y_m;a_1,...,a_m):=\cap_{1\le i\le m}\big\{ \widetilde{\phi}_{y_i}\ge a_i\big\},  
	\end{equation*}
\begin{equation*}
	\mathsf{F}^{-}=\mathsf{F}^{-}(z_1,...,z_n;b_1,...,b_n):=\cap_{ 1\le i\le n}\big\{ \widetilde{\phi}_{z_i}< -b_i\big\},
	\end{equation*}
	and $\mathsf{F}:=\mathsf{F}^{+}\cap \mathsf{F}^{-}$. Here $\widetilde{\phi}_v\ge -\infty$ means that there is no requirement on the value of $\widetilde{\phi}_{v}$. Note that any increasing cylinder event $\{\widetilde{\phi}_{v_i}\ge h_i,\forall 1\le i\le k\}$ can be equivalently written as the difference of two cylinder events with the same form as $\mathsf{F}$ (e.g., $\{\widetilde{\phi}_{v}\ge a,\widetilde{\phi}_{w}\ge -b  \}=\{\widetilde{\phi}_{v}\ge a \}\setminus \{\widetilde{\phi}_{v}\ge a,\widetilde{\phi}_{w}<  -b \}$ for $a\ge 0,b>0$). Thus, it suffices to prove the existence and equivalence of the limits of the conditional probabilities of $\mathsf{F}$ under the probability measures in (\ref{iic_type1}), (\ref{iic_type2}) and (\ref{iic_type4}).

	Let $c_\dagger:=\min\{b_1,...,b_n\}>0$ and let $h\in [0,c_\dagger)$ (where we set $c_\dagger=\min \emptyset = \infty$ when $n=0$). We denote by $\Upsilon_y$ (resp. $\Upsilon_z$) the collection of partitions of $\{y_1,...,y_m \}$ (resp. $\{z_1,...,z_n\}$). Recall that a partition of a set is a collection of disjoint subsets whose union is the entire set. For each $\overbar{\gamma}_y \in \Upsilon_y$ (resp. $\overbar{\gamma}_z \in \Upsilon_z$), we denote by $\widehat{\mathsf{F}}_h^+(\overbar{\gamma}_y)$ (resp. $\widehat{\mathsf{F}}_h^-(\overbar{\gamma}_z)$) the event that $\mathsf{F}^{+}$ (resp. $\mathsf{F}^-$) happens and that two points $y_i, y_j$ (resp. $z_i, z_j$) are connected by $\widetilde{E}^{\ge -h}$ (resp. $\widetilde{E}^{\le -h}:=\{v\in \widetilde{\mathbb{Z}}^d: \widetilde{\phi}_v\le -h\}$) if and only if they are in the same element of $\overbar{\gamma}_y$ (resp. $\overbar{\gamma}_z$). Note that $\widehat{\mathsf{F}}^+_h(\overbar{\gamma}_y)$ (resp. $\widehat{\mathsf{F}}_h^-(\overbar{\gamma}_z)$) with different $\overbar{\gamma}_y$ (resp. $\overbar{\gamma}_z$) are disjoint. In addition, since $0\le h< c_\dagger$ (which ensures that no $y_i$ and $z_j$ can be connected by $\widetilde{E}^{\ge -h}$ or $\widetilde{E}^{\le -h}$), one has 
	\begin{equation}\label{v2_3.8}
	\mathsf{F}  =    \cup_{\overbar{\gamma}_y \in \Upsilon_y,\overbar{\gamma}_z \in \Upsilon_z} \big(  \widehat{\mathsf{F}}_{h}^+(\overbar{\gamma}_y)\cap \widehat{\mathsf{F}}_h^{-}(\overbar{\gamma}_z) \big) .	\end{equation}
	Note that the right-hand side of (\ref{v2_3.8}) is a disjoint union. Meanwhile, for any $\overbar{\gamma}_y \in \Upsilon_y$ and $\overbar{\gamma}_z \in \Upsilon_z$, we denote by $\widehat{\mathsf{A}}_h(\overbar{\gamma}_y,\overbar{\gamma}_z) $ the event that $$\mathsf{A}_h^{(\mathrm{i})}(y_1,...,y_m,z_1,...,z_n;\tfrac{1}{2}[(h+a_1)\vee 0]^2,...,\tfrac{1}{2}[(h+a_m)\vee 0]^2,\tfrac{1}{2}(h-b_1)^2,...,\tfrac{1}{2}(h-b_n)^2)$$ happens (recall $\mathsf{A}_h^{(\mathrm{i})}(\cdot)$ in (\ref{v2_3.1})) and that two points $y_i, y_j$ (resp. $z_i, z_j$) are connected by $(\cup \widetilde{\mathcal{L}}_{1/2})\cup (\cup \widetilde{\mathcal{I}}^{\frac{1}{2}h^2})$ if and only if they are in the same element of $\overbar{\gamma}_y$ (resp. $\overbar{\gamma}_z$). Note that by the inclusion-exclusion principle, the probability of $\widehat{\mathsf{A}}_h(\overbar{\gamma}_y,\overbar{\gamma}_z)$ can be written as a combination of additions and subtractions involving probabilities of admissible events (recalling Definition \ref{def_admissible}). Combined with Proposition \ref{lemma_converge_loop}, it implies that for any $\diamond\in \{\mathbf{N},\mathbf{x},\mathbf{T}\}$, $\overbar{\gamma}_y \in \Upsilon_y$ and $\overbar{\gamma}_z \in \Upsilon_z$, the limits
\begin{equation}\label{submit_3.10}
	\big\{f_h^{\diamond}\big(\widehat{\mathsf{A}}_h(\overbar{\gamma}_y,\overbar{\gamma}_z)\big):= \lim\limits_{\diamond \to \infty}\mathbb{P}\big(\widehat{\mathsf{A}}_h(\overbar{\gamma}_y,\overbar{\gamma}_z) \mid \mathsf{H}_h^\diamond \big) \big\}_{0\le h\le c_\dagger\land \cref{const_iic_loop}}
\end{equation}
exist and satisfy the properties presented in (\ref{3.8}) and (\ref{3.9}). I.e., 
\begin{equation}\label{v2_3.9}
	f_h^{\diamond}\big(\widehat{\mathsf{A}}_h(\overbar{\gamma}_y,\overbar{\gamma}_z)\big)= \mathbb{P}\big(\widehat{\mathsf{A}}_h(\overbar{\gamma}_y,\overbar{\gamma}_z) \mid \bm{0}\xleftrightarrow{[h]}\infty\big), \ \ \forall\ \diamond \in \{\mathbf{N},\mathbf{T}\},0<h< c_\dagger\land \cref{const_iic_loop}; 
\end{equation}
\begin{equation}\label{v2_3.10}
	f_0^{\diamond}\big(\widehat{\mathsf{A}}_0(\overbar{\gamma}_y,\overbar{\gamma}_z)\big)= \lim\limits_{h \downarrow 0} f_h^{\diamond}\big(\widehat{\mathsf{A}}_h(\overbar{\gamma}_y,\overbar{\gamma}_z)\big),\ \forall\ \diamond  \in \{\mathbf{N},\mathbf{x},\mathbf{T}\}. 
\end{equation}

When $h=0$, by the isomorphism theorem we have (recall that $\bm{0} \in \{y_1, \ldots, y_m\}$)
\begin{equation}\label{3.13}
	\mathbb{P}\big( \widehat{\mathsf{F}}_0^{+}(\overbar{\gamma}_y) \cap \widehat{\mathsf{F}}_0^{-}(\overbar{\gamma}_z), \widetilde{\mathsf{H}}_0^\diamond \big) = 2^{-|\overbar{\gamma}_y|
	-|\overbar{\gamma}_z|} \mathbb{P}\big(\widehat{\mathsf{A}}_0(\overbar{\gamma}_y,\overbar{\gamma}_z), \mathsf{H}_0^\diamond \big).\end{equation}
Combined with $\mathbb{P}(\widetilde{\mathsf{H}}_0^\diamond)=\frac{1}{2}\mathbb{P}(\mathsf{H}_0^\diamond)$ (by the isomorphism theorem) and (\ref{v2_3.8}), it yields
\begin{equation}
	\begin{split}
		\mathbb{P}\big(\mathsf{F} \mid \widetilde{\mathsf{H}}_0^\diamond  \big) \overset{(\ref*{v2_3.8})}{=}& \sum\nolimits_{\overbar{\gamma}_y \in \Upsilon_y,\overbar{\gamma}_z \in \Upsilon_z} \mathbb{P}\big( \widehat{\mathsf{F}}_0^+(\overbar{\gamma}_y) \cap \widehat{\mathsf{F}}_0^-(\overbar{\gamma}_z) \mid  \widetilde{\mathsf{H}}_0^\diamond   \big)   \\
		\overset{(\ref*{3.13}),\mathbb{P}(\widetilde{\mathsf{H}}_0^\diamond)=\frac{1}{2}\mathbb{P}(\mathsf{H}_0^\diamond)}{=}& \sum\nolimits_{\overbar{\gamma}_y \in \Upsilon_y,\overbar{\gamma}_z \in \Upsilon_z}  2^{-|\overbar{\gamma}_y|
	-|\overbar{\gamma}_z|+1}  \mathbb{P}\big(\widehat{\mathsf{A}}_0(\overbar{\gamma}_y,\overbar{\gamma}_z)\mid    \mathsf{H}_0^\diamond  \big).
	\end{split}
\end{equation}
By taking the limits on both sides as $\diamond \to \infty$ and using (\ref{submit_3.10}), we obtain \begin{equation}\label{3.15}
	\begin{split}
		 \lim\limits_{\diamond \to \infty}\mathbb{P}\big(\mathsf{F} \mid \widetilde{\mathsf{H}}_0^\diamond  \big)=  \sum_{\overbar{\gamma}_y \in \Upsilon_y,\overbar{\gamma}_z \in \Upsilon_z}  2^{-|\overbar{\gamma}_y|
	-|\overbar{\gamma}_z|+1} f_0^{\diamond}\big(\widehat{\mathsf{A}}_0(\overbar{\gamma}_y,\overbar{\gamma}_z)\big), \ \forall \diamond  \in \{\mathbf{N},\mathbf{x},\mathbf{T}\}. 
	\end{split}
\end{equation}

When $h>0$, similar to (\ref{3.13}), we have 
\begin{equation}\label{3.16}
	\mathbb{P}\big( \widehat{\mathsf{F}}_h^{+}(\overbar{\gamma}_y) \cap \widehat{\mathsf{F}}_h^{-}(\overbar{\gamma}_z), \bm{0}\xleftrightarrow{\ge -h}\infty \big) = 2^{-|\overbar{\gamma}_y|
	-|\overbar{\gamma}_z|+1} \mathbb{P}\big(\widehat{\mathsf{A}}_h(\overbar{\gamma}_y,\overbar{\gamma}_z), \bm{0}\xleftrightarrow{[h]}\infty \big),
\end{equation}
where on the right-hand side there is an extra $+1$ term in the exponent (compared to (\ref{3.13})) since the sign of $\widetilde{\phi}_{\cdot}+h$ within the cluster $\mathcal{C}^{[h]}_{\infty}$ must be ``$+$'' (under the coupling presented in Section \ref{subsection_iso_thm}). Therefore, for $\diamond\in \{\mathbf{N},\mathbf{T}\}$, we have 
\begin{equation}
	\begin{split}
		\mathbb{P}\big(\mathsf{F} \mid \bm{0}\xleftrightarrow{\ge -h} \infty  \big)\overset{(\ref*{v2_3.8})}{=}& \sum\nolimits_{\overbar{\gamma}_y \in \Upsilon_y,\overbar{\gamma}_z \in \Upsilon_z} \mathbb{P}\big( \widehat{\mathsf{F}}_h^{+}(\overbar{\gamma}_y) \cap \widehat{\mathsf{F}}_h^{-}(\overbar{\gamma}_z)\mid \bm{0}\xleftrightarrow{\ge -h} \infty \big)   \\
		\overset{(\ref*{232plus}),(\ref*{3.16})}{=}& \sum\nolimits_{\overbar{\gamma}_y \in \Upsilon_y,\overbar{\gamma}_z \in \Upsilon_z}  2^{-|\overbar{\gamma}_y|
	-|\overbar{\gamma}_z|+1}  \mathbb{P}\big(\widehat{\mathsf{A}}_h(\overbar{\gamma}_y,\overbar{\gamma}_z)\mid  \bm{0}\xleftrightarrow{[h]} \infty \big)\\
	\overset{(\ref*{v2_3.9})}{=} & \sum\nolimits_{\overbar{\gamma}_y \in \Upsilon_y,\overbar{\gamma}_z \in \Upsilon_z}  2^{-|\overbar{\gamma}_y|
	-|\overbar{\gamma}_z|+1} f_h^{\diamond}\big(\widehat{\mathsf{A}}_h(\overbar{\gamma}_y,\overbar{\gamma}_z)\big).
	\end{split}
\end{equation}
By taking the limits on both sides as $h\downarrow 0$, we get (recall $\mathbb{P}_{d,\mathrm{IIC}}^{(2)}(\cdot)$ in (\ref{iic_type2}))
\begin{equation}\label{3.18}
\begin{split}
		 \mathbb{P}_{d,\mathrm{IIC}}^{(2)}(\mathsf{F})=&  \sum\nolimits_{\overbar{\gamma}_y \in \Upsilon_y,\overbar{\gamma}_z \in \Upsilon_z}  2^{-|\overbar{\gamma}_y|
	-|\overbar{\gamma}_z|+1} \lim\limits_{h\downarrow 0} f_h^{\diamond}\big(\widehat{\mathsf{A}}_h(\overbar{\gamma}_y,\overbar{\gamma}_z)\big)\\
	\overset{(\ref*{v2_3.10})}{=}&   \sum\nolimits_{\overbar{\gamma}_y \in \Upsilon_y,\overbar{\gamma}_z \in \Upsilon_z}  2^{-|\overbar{\gamma}_y|
	-|\overbar{\gamma}_z|+1} f_0^{\diamond}\big(\widehat{\mathsf{A}}_0(\overbar{\gamma}_y,\overbar{\gamma}_z)\big).
\end{split}
\end{equation}
Note that the left-hand side of (\ref{3.15}) for $\diamond=\mathbf{N}$ (resp. $\diamond=\mathbf{T}$) matches the definition of Type (1) (resp. Type (4)) IIC defined in (\ref{iic_type1}) (resp. (\ref{iic_type4})). Thus, by putting (\ref{3.15}) and (\ref{3.18}) together, we confirm the existence and equivalence of (\ref{iic_type1}), (\ref{iic_type2}) and (\ref{iic_type4}).
\end{proof}

In what follows, we show that Type (3) IIC defined in (\ref{iic_type3}) exists and is equivalent to Type (2) IIC defined in (\ref{iic_type2}). Note that the existence has been proved by (\ref{3.15}). To prove the equivalence, we continue to use the notations introduced above. By taking $\diamond = \mathbf{x}$ in (\ref{3.15}), we have 
\begin{equation}\label{new3.17}
	 \lim\limits_{x \to \infty}\mathbb{P}\big(\mathsf{F} \mid \bm{0}\xleftrightarrow{\ge 0} x  \big)=  \sum\nolimits_{\overbar{\gamma}_y \in \Upsilon_y,\overbar{\gamma}_z \in \Upsilon_z}  2^{-|\overbar{\gamma}_y|
	-|\overbar{\gamma}_z|+1} f_0^{\mathbf{x}}\big(\widehat{\mathsf{A}}_0(\overbar{\gamma}_y,\overbar{\gamma}_z)\big). 
\end{equation}
Now we consider the case $h>0$. For the same reason as in proving (\ref{3.16}), we have 
\begin{equation}\label{new3.18}
	\begin{split}
		&\mathbb{P}\big( \widehat{\mathsf{F}}_h^{+}(\overbar{\gamma}_y) \cap \widehat{\mathsf{F}}_h^{-}(\overbar{\gamma}_z), \bm{0}\xleftrightarrow{\ge -h}x  \big)\\
		=  &2^{-|\overbar{\gamma}_y|
	-|\overbar{\gamma}_z|+1} \mathbb{P}\big(\widehat{\mathsf{A}}_h(\overbar{\gamma}_y,\overbar{\gamma}_z), \bm{0}\xleftrightarrow{[h]}\infty,x\xleftrightarrow{[h]}\infty \big)\\
	&+ \mathbb{P}\big( \widehat{\mathsf{F}}_h^{+}(\overbar{\gamma}_y) \cap \widehat{\mathsf{F}}_h^{-}(\overbar{\gamma}_z), \bm{0}\xleftrightarrow{\ge -h}x , \{\bm{0} \xleftrightarrow{\ge -h} \infty \}^c \big).
	\end{split}
\end{equation}
For the first term on the right-hand side, we have 
\begin{equation}\label{new3.19}
	\begin{split}
		&\big| \mathbb{P}\big(\widehat{\mathsf{A}}_h(\overbar{\gamma}_y,\overbar{\gamma}_z), \bm{0}\xleftrightarrow{[h]}\infty,x\xleftrightarrow{[h]}\infty \big) - \mathbb{P}\big(\widehat{\mathsf{A}}_h(\overbar{\gamma}_y,\overbar{\gamma}_z), \bm{0}\xleftrightarrow{[h]} x \big)\big| \\
		\le &\mathbb{P}\big(  \bm{0}\xleftrightarrow{[h]^c_\infty}x  \big) \le \mathbb{P}\big(  \bm{0}\xleftrightarrow{}x  \big) \overset{ x\to \infty }{\to } 0. 
	\end{split}
\end{equation}
In addition, by the isomorphism theorem, the second term on the right-hand side of (\ref{new3.18}) is bounded from above by 
\begin{equation}\label{new3.20}
	\begin{split}
		\tfrac{1}{2}\mathbb{P}\big(  \bm{0}\xleftrightarrow{[h]^c_\infty}x  \big)\le \tfrac{1}{2}\mathbb{P}\big(  \bm{0}\xleftrightarrow{}x  \big) \overset{ x\to \infty }{\to } 0. 
	\end{split}
\end{equation}
Plugging (\ref{new3.19}) and (\ref{new3.20}) into (\ref{new3.18}), we get 
\begin{equation}\label{new3.21}
\begin{split}
		&\mathbb{P}\big( \widehat{\mathsf{F}}_h^{+}(\overbar{\gamma}_y) \cap \widehat{\mathsf{F}}_h^{-}(\overbar{\gamma}_z), \bm{0}\xleftrightarrow{\ge -h}x  \big)\\
	=&2^{-|\overbar{\gamma}_y|
	-|\overbar{\gamma}_z|+1} \mathbb{P}\big(\widehat{\mathsf{A}}_h(\overbar{\gamma}_y,\overbar{\gamma}_z), \bm{0}\xleftrightarrow{[h]} x  \big) +o_{x}(1).  
\end{split}
\end{equation}
Dividing both sides of (\ref{new3.21}) by $\mathbb{P}(\bm{0}\xleftrightarrow{\ge -h}x)$ (which is at least $ch^2$ according to (\ref{finish2.15})) and taking the limits as $x\to \infty$, we obtain 
\begin{equation}\label{submit3.24}
	\begin{split}
		& \lim\limits_{x\to \infty} \mathbb{P}\big( \widehat{\mathsf{F}}_h^{+}(\overbar{\gamma}_y) \cap \widehat{\mathsf{F}}_h^{-}(\overbar{\gamma}_z) \mid \bm{0}\xleftrightarrow{\ge -h}x  \big)\\
		=&\lim\limits_{x\to \infty} 2^{-|\overbar{\gamma}_y|
	-|\overbar{\gamma}_z|+1} \mathbb{P}\big(\widehat{\mathsf{A}}_h(\overbar{\gamma}_y,\overbar{\gamma}_z) \mid \bm{0}\xleftrightarrow{[h]} x\big)\cdot \frac{\mathbb{P}(\bm{0}\xleftrightarrow{[h]}x)}{\mathbb{P}(\bm{0}\xleftrightarrow{\ge -h}x)}\\
	\overset{(\ref*{submit_3.10})}{=}& \lim\limits_{x\to \infty} 2^{-|\overbar{\gamma}_y|
	-|\overbar{\gamma}_z|+1} f_h^{\mathbf{x}}\big(\widehat{\mathsf{A}}_h(\overbar{\gamma}_y,\overbar{\gamma}_z)\big),
		\end{split}
\end{equation}
where in the last line we also used $\mathbb{P}(\bm{0}\xleftrightarrow{\ge -h}x)\overset{}{=}\mathbb{P}(\bm{0}\xleftrightarrow{[h]}x)-\frac{1}{2}\mathbb{P}(\bm{0}\xleftrightarrow{[h]^c_\infty}x)= \mathbb{P}(\bm{0}\xleftrightarrow{[h]}x)+o_x(1)$ (by the isomorphism theorem). Combined with (\ref{v2_3.8}), it implies  
\begin{equation}\label{finish3.26}
\begin{split}
	 \lim\limits_{x \to \infty}\mathbb{P}\big(\mathsf{F} \mid \bm{0}\xleftrightarrow{\ge -h} x  \big)=  \sum\nolimits_{\overbar{\gamma}_y \in \Upsilon_y,\overbar{\gamma}_z \in \Upsilon_z }2^{-|\overbar{\gamma}_y|
	-|\overbar{\gamma}_z|+1} f_h^{\mathbf{x}}\big(\widehat{\mathsf{A}}_h(\overbar{\gamma}_y,\overbar{\gamma}_z)\big). 
\end{split}
\end{equation}
By taking the limits as $h\downarrow 0$ on both sides of (\ref{finish3.26}) (where by (\ref{v2_3.10}) the limit of the right-hand side equals to the right-hand side of (\ref{new3.17})), we derive 
\begin{equation}\label{new3.25}
\lim\limits_{h\downarrow 0}	\lim\limits_{x \to \infty}\mathbb{P}\big(\mathsf{F} \mid \bm{0}\xleftrightarrow{\ge -h} x  \big)= \lim\limits_{x \to \infty}\mathbb{P}\big(\mathsf{F} \mid \bm{0}\xleftrightarrow{\ge 0} x  \big). 
\end{equation}
To relate the left-hand side with Type (2) IIC, we need the following lemma. 
\begin{lemma}\label{lemma3.3}
For any $d\ge 3$, $h>0$ and any increasing cylinder event $\mathsf{F}$, 
		\begin{equation}
			\lim_{x\to \infty} \mathbb{P}\big( \mathsf{F},  \bm{0}\xleftrightarrow{\ge -h} x \big)  = 	\mathbb{P}\big( \mathsf{F},  \bm{0}\xleftrightarrow{\ge -h} \infty \big)  \mathbb{P}\big(\bm{0}\xleftrightarrow{\ge -h} \infty \big).
		\end{equation}
\end{lemma}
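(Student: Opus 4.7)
The plan is to match the lower bound already provided by (\ref{use2.37}) with an upper bound obtained via the decoupling inequality in Lemma~\ref{lemma_decoupling_ineq}. Taking $v=x$ in (\ref{use2.37}) directly yields $\mathbb{P}(\mathsf{F},\bm{0}\xleftrightarrow{\ge -h}x)\ge \mathbb{P}(\mathsf{F},\bm{0}\xleftrightarrow{\ge -h}\infty)\mathbb{P}(\bm{0}\xleftrightarrow{\ge -h}\infty)$ for every $x$, so the entire burden is to prove the reverse inequality after sending $|x|\to\infty$.

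For the upper bound, I would first split according to whether $\bm{0}$ percolates:
\begin{equation*}
	\mathbb{P}(\mathsf{F},\bm{0}\xleftrightarrow{\ge -h}x) = \mathbb{P}(\mathsf{F},\bm{0}\xleftrightarrow{\ge -h}x,\bm{0}\xleftrightarrow{\ge -h}\infty) + \mathbb{P}(\mathsf{F},\bm{0}\xleftrightarrow{\ge -h}x,\bm{0}\not\xleftrightarrow{\ge -h}\infty).
\end{equation*}
On $\{\bm{0}\not\xleftrightarrow{\ge -h}\infty\}$, the cluster $\mathcal{C}^{\ge -h}(\bm{0})$ is a.s.\ bounded in $\widetilde{\mathbb{Z}}^d$, so by dominated convergence the second term tends to zero as $|x|\to\infty$. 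For the first term, uniqueness of the infinite cluster (the same fact invoked just before (\ref{use2.37})) gives the identity $\{\bm{0}\xleftrightarrow{\ge -h}x\}\cap\{\bm{0}\xleftrightarrow{\ge -h}\infty\}=\{\bm{0}\xleftrightarrow{\ge -h}\infty\}\cap\{x\xleftrightarrow{\ge -h}\infty\}$, so it suffices to show
\begin{equation*}
	\mathbb{P}(\mathsf{F},\bm{0}\xleftrightarrow{\ge -h}\infty,x\xleftrightarrow{\ge -h}\infty)\longrightarrow \mathbb{P}(\mathsf{F},\bm{0}\xleftrightarrow{\ge -h}\infty)\,\mathbb{P}(\bm{0}\xleftrightarrow{\ge -h}\infty).
\end{equation*}

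To establish this asymptotic independence, I would pick $N_*$ so large that $\mathsf{F}$ is measurable with respect to $\{\widetilde{\phi}_v\}_{v\in\widetilde{B}(N_*)}$. For any $N'\ge N_*$ and $x\in\partial B(4N')$, the indicators of $\mathsf{F}\cap\{\bm{0}\xleftrightarrow{\ge -h}\partial B(N_*)\}$ and of $\{x\xleftrightarrow{\ge -h}\partial B_x(N_*)\}$ are increasing and supported in $\widetilde{B}(N')$ and $\widetilde{B}_x(N')$ respectively, so Lemma~\ref{lemma_decoupling_ineq} with parameter $\delta>0$ together with translation invariance yields
\begin{equation*}
\begin{split}
	&\mathbb{P}(\mathsf{F},\bm{0}\xleftrightarrow{\ge -h}\partial B(N_*),x\xleftrightarrow{\ge -h}\partial B_x(N_*))\\
	&\qquad\le \mathbb{P}(\mathsf{F},\bm{0}\xleftrightarrow{\ge -h}\partial B(N_*))\,\mathbb{P}(\bm{0}\xleftrightarrow{\ge -h-\delta}\partial B(N_*)) + Ce^{-c\delta^{2}(N')^{d-2}}.
	\end{split}
\end{equation*}
Upper bounding $\mathbb{P}(\mathsf{F},\bm{0}\xleftrightarrow{\ge -h}\infty,x\xleftrightarrow{\ge -h}\infty)$ by the left-hand side, the plan is to take limits in the order: first $|x|\to\infty$ (so $N'\to\infty$ with $N_*,\delta$ fixed, killing the error term), then $\delta\downarrow 0$, then $N_*\to\infty$. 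The first and last steps use only monotone convergence, noting that $\{\bm{0}\xleftrightarrow{\ge -h}\partial B(N_*)\}$ decreases to $\{\bm{0}\xleftrightarrow{\ge -h}\infty\}$.

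The main obstacle is the middle step, which requires the left-continuity $\mathbb{P}(\bm{0}\xleftrightarrow{\ge -h-\delta}\partial B(N_*))\to\mathbb{P}(\bm{0}\xleftrightarrow{\ge -h}\partial B(N_*))$ as $\delta\downarrow 0$. To handle it, I would combine the a.s.\ continuity of $\widetilde{\phi}$ on the compact set $\widetilde{B}(N_*)$ with the topological fact that a decreasing intersection of nonempty compact connected sets is itself compact and connected. Concretely, the connected component of $\widetilde{E}^{\ge -h-\delta}\cap\widetilde{B}(N_*)$ containing $\bm{0}$ forms a decreasing family of compact connected sets as $\delta\downarrow 0$, and their intersection sits inside $\widetilde{E}^{\ge -h}\cap\widetilde{B}(N_*)$; if every member meets $\partial B(N_*)$, then by compactness so does the intersection. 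This gives $\bigcap_{\delta>0}\{\bm{0}\xleftrightarrow{\ge -h-\delta}\partial B(N_*)\}=\{\bm{0}\xleftrightarrow{\ge -h}\partial B(N_*)\}$, from which the required continuity follows by monotone convergence, closing the argument.
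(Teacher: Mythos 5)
Your proof is correct, but it takes a genuinely different route. Both arguments start from the lower bound in (\ref{use2.37}) and both use the decoupling inequality of Lemma~\ref{lemma_decoupling_ineq} to factorize a two-arm-type event; the difference lies in how the truncation to finite boxes and the $\delta$-shift are removed. The paper chooses $N=|x|^{1/2}$ and $\delta=|x|^{-1/8}$, so that the decoupling error vanishes as $|x|\to\infty$, and then (a) replaces each finite-box connection probability by its infinite-cluster counterpart using the quantitative comparison estimate (\ref{apply_lemme_compare}), which relies on a capacity lower bound for crossing clusters together with the random-interlacement formula (\ref{2.30}), and (b) removes the $\delta$-shift using the continuity of $\mathbb{P}(\bm{0}\xleftrightarrow{\ge -h}\infty)$ coming from the explicit formula (\ref{usenew2.23}). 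You instead fix $N_*$ and $\delta$, send $|x|\to\infty$ first (killing the decoupling error), and then remove $\delta$ and $N_*$ by soft limits: left-continuity of $\delta\mapsto\mathbb{P}(\bm{0}\xleftrightarrow{\ge-h-\delta}\partial B(N_*))$ via a decreasing nested family of compact connected sets on the metric graph, and monotone convergence for $N_*\to\infty$. This avoids both Lemma~\ref{lemma_compare_hdiamond} and the explicit percolation-probability formula, so it is more elementary, at the cost of an extra iterated-limit step. Two small observations. First, your preliminary decomposition according to $\{\bm{0}\xleftrightarrow{\ge -h}\infty\}$ together with the uniqueness rewriting is valid but superfluous: $\{\bm{0}\xleftrightarrow{\ge -h}x\}$ already implies $\{\bm{0}\xleftrightarrow{\ge -h}\partial B(N_*)\}\cap\{x\xleftrightarrow{\ge -h}\partial B_x(N_*)\}$ once $|x|>2dN_*$, which is all the decoupling step needs. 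Second, to make the compactness argument literal one should realize the connection event as occurring inside a slightly larger box than $\widetilde{B}(N_*)$ (the paper's $\widetilde{B}(\cdot)$ omits some corner intervals), but this is cosmetic.
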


To prove Lemma \ref{lemma3.3}, we need the following comparison lemma, which is also useful for later proofs. For simplicity, we use $\diamond \ge M$ to denote $N\geq M$, $T\geq M$, and $|x| \geq M$ for $\diamond = \mathbf{N}, \mathbf{T}, \mathbf{x}$ respectively.

\begin{lemma}\label{lemma_compare_hdiamond}
	For any $d\ge 3$, there exist $C,c>0$ such that for any event $\mathsf{F}$, $h>0$, $M\ge 10$ and $\diamond \in \{\mathbf{N},\mathbf{x},\mathbf{T}\}$ with $\diamond\ge M$, 
	\begin{equation}
	0\le 	\mathbb{P}\big(\mathsf{F},  \mathsf{H}_h^{\diamond}  \big)- \mathbb{P}\big(\mathsf{F}, \mathsf{H}_h^{\diamond} ,\bm{0}\xleftrightarrow{[h]} \infty   \big)\le Ce^{-ch^2M^{\frac{1}{d}}}.
	\end{equation}
\end{lemma}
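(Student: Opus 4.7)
The plan is to observe that nonnegativity is trivial, and to upper bound $\mathbb{P}(\mathsf{F}, \mathsf{H}_h^\diamond, \bm{0}\not\xleftrightarrow{[h]}\infty)$; by monotonicity in $\mathsf{F}$ I may replace it by the sure event and reduce matters to showing that $\mathbb{P}(\mathsf{H}_h^\diamond, \bm{0}\not\xleftrightarrow{[h]}\infty) \le Ce^{-ch^2 M^{1/d}}$. The key structural input, under the coupling in Section \ref{subsection_iso_thm}, is that every trajectory of $\widetilde{\mathcal{I}}^{\frac{1}{2}h^2}$ has infinite range, and hence is contained in the (unique) infinite cluster $\mathcal{C}^{[h]}_\infty$. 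Consequently, on the event $\{\bm{0}\not\xleftrightarrow{[h]}\infty\}$ the finite cluster $\mathcal{C}^{[h]}(\bm{0})$ must be disjoint from $\cup\widetilde{\mathcal{I}}^{\frac{1}{2}h^2}$, so it equals the purely loop cluster $\mathcal{C}(\bm{0})$.

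Next, I condition on the loop soup $\widetilde{\mathcal{L}}_{1/2}$ (which is independent of $\widetilde{\mathcal{I}}^{\frac{1}{2}h^2}$) and apply (\ref{2.30}) to the random set $\mathcal{C}(\bm{0})$ to obtain
\begin{equation*}
\mathbb{P}\bigl(\mathsf{H}_h^\diamond, \bm{0}\not\xleftrightarrow{[h]}\infty\bigr) \;\le\; \mathbb{E}\bigl[\mathbbm{1}_{\widehat{\mathsf{H}}^\diamond}\, e^{-(h^2/2)\,\mathrm{cap}(\mathcal{C}(\bm{0}))}\bigr],
\end{equation*}
where $\widehat{\mathsf{H}}^\diamond$ is the ``loop-only'' version of $\mathsf{H}_h^\diamond$: $\mathcal{C}(\bm{0})\cap\partial B(N)\ne\emptyset$ for $\diamond=\mathbf{N}$, $x\in\mathcal{C}(\bm{0})$ for $\diamond=\mathbf{x}$, and $\mathrm{cap}(\mathcal{C}(\bm{0}))\ge T$ for $\diamond=\mathbf{T}$. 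It then suffices to establish the deterministic lower bound $\mathrm{cap}(\mathcal{C}(\bm{0})) \ge c M^{1/d}$ on $\widehat{\mathsf{H}}^\diamond$ whenever $\diamond\ge M$, since the outer expectation is bounded by $1$.

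For $\diamond=\mathbf{T}$ the lower bound is immediate since $\mathrm{cap}(\mathcal{C}(\bm{0})) \ge T \ge M \ge M^{1/d}$. For $\diamond\in\{\mathbf{N},\mathbf{x}\}$, the set $\mathcal{C}(\bm{0})$ is a connected subset of $\widetilde{\mathbb{Z}}^d$ containing $\bm{0}$ with diameter at least $M$, so I pick $K=\lfloor M^{1/d}\rfloor$ lattice points $w_1,\dots,w_K$ along a continuous path of length $\ge M$ inside $\mathcal{C}(\bm{0})$, spaced with mutual Euclidean distances $\ge L := cM^{(d-1)/d}$. Testing the variational identity $\mathrm{cap}(A)^{-1} = \inf_\nu \iint \widetilde{G}\,d\nu\,d\nu$ against the uniform probability measure on $\{w_1,\dots,w_K\}$, and using the Green-function bound (\ref{bound_green}) to control $\sum_{i\ne j}\widetilde{G}(w_i,w_j) \lesssim K^2 L^{2-d}$, I obtain
\begin{equation*}
\mathrm{cap}(\mathcal{C}(\bm{0})) \;\ge\; \mathrm{cap}(\{w_1,\dots,w_K\}) \;\gtrsim\; \frac{K^2}{K+K^2 L^{2-d}} \;\asymp\; K \;\asymp\; M^{1/d},
\end{equation*}
because $KL^{2-d} = M^{(3d-d^2-1)/d}$ has a strictly negative exponent for every $d\ge 3$. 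Combining this with the previous display yields the required bound $Ce^{-ch^2 M^{1/d}}$. The only real obstacle in this plan is the capacity lower bound for connected sets of large diameter; the remaining steps (dropping $\mathsf{F}$, the coupling argument identifying $\mathcal{C}^{[h]}(\bm{0})$ with $\mathcal{C}(\bm{0})$ off the infinite cluster, and the (\ref{2.30}) computation) are routine.
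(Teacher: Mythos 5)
Your proof reproduces the paper's structure — drop $\mathsf{F}$ by monotonicity, identify $\mathcal{C}^{[h]}(\bm{0})=\mathcal{C}(\bm{0})$ on the complement of $\{\bm{0}\xleftrightarrow{[h]}\infty\}$, condition on the loop soup and apply (\ref{2.30}), then lower-bound $\mathrm{cap}(\mathcal{C}(\bm{0}))$ deterministically on the relevant event — but substitutes your own capacity estimate for the one the paper cites. Where the paper invokes \cite[Lemma 14]{ding2020percolation}, which gives $\mathrm{cap}\gtrsim m/\ln m$ (for $d=3$; $\gtrsim m$ for $d\ge4$) for any connected set joining $\bm{0}$ to $\partial B(m)$ (applied with $m\asymp M$ for $\diamond\in\{\mathbf{N},\mathbf{x}\}$ and $m\asymp M^{1/(d-2)}$ for $\diamond=\mathbf{T}$), you test the energy-minimization dual $\mathrm{cap}(A)^{-1}=\inf_\nu\iint\widetilde{G}\,d\nu\,d\nu$ against the uniform measure on $K\asymp M^{1/d}$ points of $\mathcal{C}(\bm{0})$ mutually separated by $L\asymp M^{(d-1)/d}$; with (\ref{bound_green}) the off-diagonal contribution $\lesssim K^2L^{2-d}$ is dominated by the diagonal $\asymp K$ since $3d-d^2-1<0$ for $d\ge3$, giving $\mathrm{cap}\gtrsim K\asymp M^{1/d}$. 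This is weaker than the cited bound but matches exactly the rate $M^{1/d}$ claimed in the lemma and keeps the argument self-contained, which is a genuine simplification. Two phrasing fixes are needed: (i) what you actually use is that $\mathcal{C}(\bm{0})$ has Euclidean \emph{diameter} at least $M$ — it contains $\bm{0}$ together with a point of $\partial B(N)$ at distance $\ge M$ (or $x$ with $|x|\ge M$) — not that it contains a path of \emph{length} $\ge M$, which could be confined to a small ball; the $K$ well-separated points should then be produced by projecting onto a coordinate direction whose projection spans $\ge M/\sqrt d$ and slicing with $K$ equally spaced hyperplanes, taking one cluster point per slice; (ii) those points need not be lattice points, but this is harmless since (\ref{bound_green}) and the variational characterization hold for arbitrary points of $\widetilde{\mathbb{Z}}^d$, so you may simply drop the word ``lattice.''
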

\begin{proof}
	The inequality on the left-hand side holds trivially. For the right-hand side, note that $\mathsf{H}_h^{\diamond} \cap \{\bm{0}\xleftrightarrow{[h] } \infty\}^c\subset \mathsf{H}_0^{\diamond}\cap \{ \mathcal{C}(\bm{0})\cap (\cup \widetilde{\mathcal{I}}^{\frac{1}{2}h^2})=\emptyset\}$. Moreover, when $\diamond\ge  M$, $\mathsf{H}_0^{\diamond}$ implies $\{\bm{0}\xleftrightarrow{} \partial B(cM) \}$ when $\diamond \in \{\mathbf{N},\mathbf{x}\}$, and implies $\{\bm{0}\xleftrightarrow{} \partial B(cM^{\frac{1}{d-2}}) \}$ when $\diamond = \mathbf{T} $ (since $\mathrm{cap}(\widetilde{B}(cM^{\frac{1}{d-2}})) \le M$). Therefore, we have 
	\begin{equation}\label{finally_3.28}
		\begin{split}
			&\mathbb{P}\big(\mathsf{F},  \mathsf{H}_h^{\diamond}  \big)- \mathbb{P}\big(\mathsf{F}, \mathsf{H}_h^{\diamond} ,\bm{0}\xleftrightarrow{[h] } \infty   \big)\\
			\le &\mathbb{P}\big(\bm{0}\xleftrightarrow{} \partial B(cM^{\frac{1}{d-2}}), \mathcal{C}(\bm{0})\cap (\cup \widetilde{\mathcal{I}}^{\frac{1}{2}h^2})=\emptyset \big) \\
			\lesssim  &  e^{-ch^2M^{\frac{1}{d-2}}\ln^{-1}(M)}\lesssim  e^{-c'h^2M^{\frac{1}{d}} },
		\end{split}
	\end{equation}
	where the second transition follows from (\ref{2.30}) and the fact that the capacity of any connected set on $\widetilde{\mathbb{Z}}^d$ connecting $\bm{0}$ and $\partial B(m)$ is at least $c(\frac{m}{\ln(m)}\cdot \mathbbm{1}_{d=3}+ m\cdot \mathbbm{1}_{d\ge 4})$ (see e.g., \cite[Lemma 14]{ding2020percolation}). Now we complete the proof of this lemma.
	\end{proof}

	By applying the isomorphism theorem and using the same arguments as in the proof of (\ref{finally_3.28}), we also have 
	\begin{equation}\label{apply_lemme_compare}
		0\le 	\mathbb{P}\big(\mathsf{F},  \widetilde{\mathsf{H}}_h^{\diamond}  \big)- \mathbb{P}\big(\mathsf{F}, \widetilde{\mathsf{H}}_h^{\diamond} ,\bm{0}\xleftrightarrow{\ge -h} \infty   \big)\le Ce^{-ch^2M^{\frac{1}{d}}}.
	\end{equation}

	Now we are ready to prove Lemma \ref{lemma3.3}. 
\begin{proof}[Proof of Lemma \ref{lemma3.3}]
We first bound the probability $ \mathbb{P}\big( \mathsf{F},  \bm{0}\xleftrightarrow{\ge -h} x \big)$ from above. We denote $B_y(n):=y+ B(n)$ for $y\in \mathbb{Z}^d$ and $n>0$. By Lemma \ref{lemma_decoupling_ineq}, we have 	\begin{equation}\label{newadd3.28}
		\begin{split}
			&\mathbb{P}\big( \mathsf{F},  \bm{0}\xleftrightarrow{\ge -h} x \big) \\
			 \le  &\mathbb{P}\big( \mathsf{F},  \bm{0}\xleftrightarrow{\ge -h} \partial B( |x|^{1/2}), x\xleftrightarrow{\ge -h} \partial B_x( |x|^{1/2})  \big)\\
			\le &\mathbb{P}\big( \mathsf{F},  \bm{0}\xleftrightarrow{\ge -h} \partial B( |x|^{1/2})  \big)\mathbb{P}\big( x\xleftrightarrow{\ge -h- |x|^{-\frac{1}{8}}} \partial B_x(|x|^{1/2})  \big)
			+ Ce^{-c|x|^{\frac{2d-5}{4}}}.
		\end{split}
	\end{equation}
	Moreover, by (\ref{apply_lemme_compare}) with $\diamond=\mathbf{N}$ and $N=|x|^{1/2}$, we have 
	\begin{equation}
		\mathbb{P}\big( \mathsf{F},  \bm{0}\xleftrightarrow{\ge -h} \partial B( |x|^{1/2})  \big) \le \mathbb{P}\big( \mathsf{F},  \bm{0}\xleftrightarrow{\ge -h} \infty  \big) + Ce^{-ch^2|x|^{\frac{1}{2d}}}, 
	\end{equation}
		\begin{equation}\label{newfinish_3.34}
	\mathbb{P}\big( x\xleftrightarrow{\ge -h- |x|^{-\frac{1}{8}}} \partial B_x(|x|^{1/2})  \big) \le \mathbb{P}\big(  \bm{0}\xleftrightarrow{\ge -h- |x|^{-\frac{1}{8}}} \infty  \big) + Ce^{-ch^2|x|^{\frac{1}{2d}}}. 
	\end{equation}
	Furthermore, since $\mathbb{P}\big(  \bm{0}\xleftrightarrow{\ge -h} \infty  \big)$ is continuous in $h$ (by (\ref{usenew2.23})), one has 
	\begin{equation}\label{newfinish_3.34_5}
		\limsup_{x\to \infty} \mathbb{P}\big(  \bm{0}\xleftrightarrow{\ge -h- |x|^{-\frac{1}{8}}} \infty  \big) = \mathbb{P}\big(  \bm{0}\xleftrightarrow{\ge -h} \infty  \big). 
	\end{equation}
Putting (\ref{newadd3.28})--(\ref{newfinish_3.34_5}) together, we obtain 
	\begin{equation}\label{submit_334}
		\limsup_{x\to \infty}\mathbb{P}\big( \mathsf{F},  \bm{0}\xleftrightarrow{\ge -h} x  \big)
		 \le  \mathbb{P}\big( \mathsf{F},  \bm{0}\xleftrightarrow{\ge -h} \infty \big)\mathbb{P}\big( \bm{0}\xleftrightarrow{\ge -h} \infty \big).	\end{equation}
	Meanwhile, it follows from (\ref{use2.37}) that 
	\begin{equation}\label{submit_335}
		\liminf_{x\to \infty}\mathbb{P}\big( \mathsf{F},  \bm{0}\xleftrightarrow{\ge -h} x  \big)
		 \ge  \mathbb{P}\big( \mathsf{F},  \bm{0}\xleftrightarrow{\ge -h} \infty \big)\mathbb{P}\big( \bm{0}\xleftrightarrow{\ge -h} \infty \big).
	\end{equation}
	By (\ref{submit_334}) and (\ref{submit_335}), we conclude the proof of this lemma. 
	\end{proof}

For any $h>0$, it follows from Lemma \ref{lemma3.3} that 
\begin{equation}\label{new3.31}
	\begin{split}
		\lim\limits_{x\to \infty }\mathbb{P}\big( \mathsf{F} \mid  \bm{0}\xleftrightarrow{\ge -h} x \big)=&   \frac{\mathbb{P}\big( \mathsf{F},  \bm{0}\xleftrightarrow{\ge -h} \infty \big)  \mathbb{P}\big(\bm{0}\xleftrightarrow{\ge -h} \infty \big)}{\big[  \mathbb{P}\big(\bm{0}\xleftrightarrow{\ge -h} \infty \big)\big]^2 }
		=  \mathbb{P}\big( \mathsf{F} \mid \bm{0}\xleftrightarrow{\ge -h} \infty \big).
	\end{split}
\end{equation}
Plugging (\ref{new3.31}) into (\ref{new3.25}), we obtain that Type (3) IIC defined in (\ref{iic_type3}) is equivalent to Type (2) IIC defined in (\ref{iic_type2}). To sum up, we have established the existence and equivalence of the IICs in (\ref{iic_type1}), (\ref{iic_type2}), (\ref{iic_type3}) and (\ref{iic_type4}) assuming that Proposition \ref{lemma_converge_loop} holds.

Next, we show that $\mathscr{C}^{\ge 0}$ (defined in Theorem \ref{thm_iic}) is almost surely infinite. For any $M>0$, since $\mathcal{C}^{\ge 0}(\bm{0})\subset \widetilde{B}(M)$ is incompatible with the event $\bm{0}\xleftrightarrow{\ge 0} \partial B(N)$ when $N>M$, we have 
\begin{equation*}
 \mathbb{P}_{d,\mathrm{IIC}}(\mathscr{C}^{\ge 0}\subset \widetilde{B}(M) ) =  \lim\limits_{N\to \infty}	\mathbb{P}\big(\mathcal{C}^{\ge 0}(\bm{0})\subset \widetilde{B}(M) \mid \bm{0}\xleftrightarrow{\ge 0} \partial B(N) \big) =0,
\end{equation*}
which implies that $\mathscr{C}^{\ge 0}$ is almost surely infinite.

It remains to establish that $\mathscr{C}^{\ge 0}$ is almost surely one-ended. To this end, note that for any $h>0$, if $\mathcal{C}^{\ge -h}(\bm{0})$ is infinite and not one-ended, then there exist $N\in \mathbb{N}$ and $x_1\neq x_2\in D_N:=\{x\in \mathbb{Z}^d\setminus B(N): \exists x'\in B(N)\ \text{such that}\ x\sim x'\}$ such that $x_1$ and $x_2$ are contained in two distinct infinite clusters of $\mathcal{C}^{\ge -h}(\bm{0})\cap ( \cup_{\{x,x'\}\cap B(N)= \emptyset} I_{\{x,x'\}})$ (we denote this event by $\mathsf{F}_{h,N}(x_1,x_2)$). This observation implies that 
\begin{equation}\label{submit3.38}
	\begin{split}
		& \mathbb{P}_{d,\mathrm{IIC}}(\mathscr{C}^{\ge 0}\ \text{is not one-ended}) \\
\le & \sum\nolimits_{N\in \mathbb{N},x_1\neq x_2\in D_N} \limsup \limits_{h \downarrow 0} \mathbb{P}\big( \mathsf{F}_{h,N}(x_1,x_2) \mid \bm{0}\xleftrightarrow{\ge -h} \infty\big).
	\end{split}
\end{equation}
In what follows, we use a proof by contradiction to show that 
\begin{equation}\label{submit3.39}
	\mathbb{P}\big(\mathsf{F}_{h,N}(x_1,x_2) \big)=0 ,\ \forall h>0, N\in \mathbb{N}, x_1\neq x_2\in D_N. 
\end{equation}
Assume that $\mathbb{P}\big(\mathsf{F}_{h,N}(x_1,x_2) \big)\ge a$ for some positive number $a>0$. Consequently, there exists a sufficiently large $\lambda=\lambda(a,N)>0$ such that 
\begin{equation}\label{submit340}
\begin{split}
		\mathbb{P}\big(\widehat{\mathsf{F}}_{h,N}^\lambda(x_1,x_2) \big):=&\mathbb{P}\big(\mathsf{F}_{h,N}(x_1,x_2)\cap \{\max\nolimits_{y\in D_N} \widetilde{\phi}_y\le \lambda\} \big)  \\
		\ge  &\mathbb{P}\big(\mathsf{F}_{h,N}(x_1,x_2) \big) -\mathbb{P}\big( \exists y\in D_N\ \text{such that}\ \widetilde{\phi}_y> \lambda\big)   >\tfrac{1}{2}a.
\end{split}
	\end{equation}
 Note that $\widehat{\mathsf{F}}_{h,N}^\lambda(x_1,x_2)$ is measurable with respect to $\widehat{\mathcal{F}}_N$, i.e., the $\sigma$-field generated by $\widetilde{\phi}_v$ for $v\in \cup_{\{x,x'\}\cap B(N)=\emptyset}I_{\{x,x'\}}$. Moreover, when $\widehat{\mathsf{F}}_{h,N}^\lambda(x_1,x_2)$ happens, since the boundary conditions on $D_N$ are at most $\lambda$, we know that for any $y\in B(N)$ and $y'\sim y$, the conditional probability of $\{y\xleftrightarrow{\ge -h} y'\}^c$ given $\widehat{\mathcal{F}}_N$ is bounded from below by some $\zeta=\zeta(\lambda,N)\in (0,1)$. Thus, by the FKG inequality and (\ref{submit340}), we have
  \begin{equation}\label{submit_3.41}
 	\begin{split}
 		&\mathbb{P}\big(\widehat{\mathsf{F}}_{h,N}^\lambda(x_1,x_2) , \cap_{y\sim y'\in B(N)} \{y\xleftrightarrow{\ge -h} y'\}^c \big) \\
 		=&\mathbb{E}\big[ \mathbbm{1}_{\widehat{\mathsf{F}}_{h,N}^\lambda(x_1,x_2)}\cdot \mathbb{P}\big( \cap_{y\in B(N),y'\sim y} \{y\xleftrightarrow{\ge -h} y'\}^c \mid  \widehat{\mathcal{F}}_N \big) \big]\\
 		\overset{\text{(FKG)}}{\ge } & \mathbb{E}\big[ \mathbbm{1}_{\widehat{\mathsf{F}}_{h,N}^\lambda(x_1,x_2)}\cdot \prod\nolimits_{y\in B(N),y'\sim y} \mathbb{P}\big(\{y\xleftrightarrow{\ge -h} y'\}^c \mid  \widehat{\mathcal{F}}_N \big) \big] \\
 		\ge &\mathbb{P}\big( \widehat{\mathsf{F}}_{h,N}^\lambda(x_1,x_2) \big) \cdot  \zeta^{CN^{d}} 
 	\overset{(\ref*{submit340})}{\ge }   \tfrac{1}{2}a\zeta^{CN^{2d}} >0. 
 	\end{split}
 \end{equation}
 However, the event on the left-hand side of (\ref{submit_3.41}) implies the occurrence of two distinct infinite clusters in $\widetilde{E}^{\ge -h}$, which happens with probability $0$. This leads to a contradiction with (\ref{submit_3.41}). To sum up, we obtain (\ref{submit3.39}).

Plugging (\ref{submit3.39}) into (\ref{submit3.38}), we confirm that $\mathscr{C}^{\ge 0}$ is almost surely one-ended. In conclusion, we complete the proof of Theorem \ref{thm_iic} assuming Proposition \ref{lemma_converge_loop}.  
  \qed

\subsection{Approximation of events in IICs}

To facilitate the proof of Proposition \ref{lemma_converge_loop} (the precise motivation will be provided in the overview at the beginning of the next subsection), we need to approximate the probabilities of the events involved in the definition of IICs, namely, $\mathsf{A}_h \cap \mathsf{H}_h^{\diamond}$ for $\diamond\in\{\mathbf{N},\mathbf{x},\mathbf{T}\}$ (recall $\mathsf{A}_h$ in Definition \ref{def_admissible} and recall $\mathsf{H}_h^{\diamond}$ in (\ref{notation_H})--(\ref{notation_H3})). A key step (see Lemma \ref{lemma3.4}) is to show that given these events, the occurrence of a large crossing loop remains unlikely.

For $\diamond\in \{\mathbf{N},\mathbf{T}\}$, since $\mathsf{H}_h^{\diamond}\supset \{\bm{0}\xleftrightarrow{[h]} \infty \}$, one has
\begin{equation}\label{renew3.32}
	\mathbb{P}\big( \mathsf{H}_h^{\diamond}\big) \ge \mathbb{P}\big(\bm{0}\xleftrightarrow{[h]} \infty \big) \overset{(\ref*{232plus})}{=}  h\land 1.
\end{equation}
Moreover, for $\diamond=\mathbf{x}$, one has 
\begin{equation}\label{renew3.33}
 \mathbb{P}\big( \mathsf{H}_h^{\mathbf{x}}\big)  \ge \mathbb{P}\big(\bm{0}\xleftrightarrow{[h]}x , \bm{0} \xleftrightarrow{[h]} \infty \big) \overset{(\ref*{newsubmit_224})}{\gtrsim  } (h\land 1)^2.
\end{equation}
Recall $\widetilde{\mathcal{L}}_{1/2}[n,m]$ in (\ref{def_L[n,m]}). For brevity, we denote $A_{ \mathbf{N}}:= B(N)$ and $A_{ \mathbf{x}}:=\{x\}$.

\begin{lemma}\label{lemma3.4}
	For any $d\ge 3$ with $d\neq 6$, there exists $C>0$ such that for any $u\ge C$, $n\ge 1$, $m\ge un^{1\boxdot \frac{d-4}{2}}$, $h\in [0,m^{-d^{5}}]$, and any $\diamond\in \{\mathbf{N},\mathbf{x},\mathbf{T}\}$ with $\diamond \ge m^{d^5}$, 
		\begin{equation}\label{3.32}
	\mathbb{P}\big(\mathsf{H}_h^{\diamond}[n,m] \big):=	\mathbb{P}\big(\mathsf{H}_h^{\diamond} , \widetilde{ \mathcal{L}}_{1/2}[n,m]\neq \emptyset  \big)\lesssim u^{-1}\mathbb{P}\big(\mathsf{H}_h^{\diamond}  \big). 
	\end{equation} 
\end{lemma}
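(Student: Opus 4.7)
My plan is to reduce the event $\mathsf{H}_h^\diamond$ to a pure loop-soup connection event, then invoke the cluster decomposition estimate (\ref{2.26}) of the companion paper. Using $\mathcal{C}(\bm{0}) \subset \mathcal{C}^{[h]}(\bm{0})$, write
\[
\mathsf{H}_h^\diamond = \mathsf{H}_0^\diamond \cup (\mathsf{H}_h^\diamond \setminus \mathsf{H}_0^\diamond);
\]
the containment also yields $\mathbb{P}(\mathsf{H}_h^\diamond) \ge \mathbb{P}(\mathsf{H}_0^\diamond)$, which is the crucial inequality for passing from a numerator bound in terms of $\mathbb{P}(\mathsf{H}_0^\diamond)$ to the desired bound in terms of $\mathbb{P}(\mathsf{H}_h^\diamond)$.

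For the pure loop piece $\mathsf{H}_0^\diamond \cap \{\widetilde{\mathcal{L}}_{1/2}[n,m] \neq \emptyset\}$, I reduce each case of $\diamond$ to a connection event $\{\bm{0} \xleftrightarrow{} A'\}$ with $A'$ a far target. For $\diamond = \mathbf{N}$, take $A' = \partial B(N)$; for $\diamond = \mathbf{x}$, take $A' = \{x\}$; and for $\diamond = \mathbf{T}$, use the containment $\{\mathrm{cap}(\mathcal{C}(\bm{0})) \ge T\} \subset \{\bm{0} \xleftrightarrow{} \partial B(cT^{1/(d-2)})\}$, which follows from $\mathrm{cap}(\widetilde{B}(r)) \lesssim r^{d-2}$. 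In each case, since the target lies at distance at least $m^{d^5}$ (and, for $\diamond = \mathbf{T}$, at distance at least $m^{d^5/(d-2)}$), the hypothesis of (\ref{2.26}) is comfortably satisfied. Applying (\ref{2.26}) with $D = \emptyset$ yields
\[
\mathbb{P}(\mathsf{H}_0^\diamond, \widetilde{\mathcal{L}}_{1/2}[n,m] \neq \emptyset) \lesssim n^{(\frac{d}{2}-1) \boxdot (d-4)} \, m^{-[(\frac{d}{2}-1) \boxdot 2]} \, \mathbb{P}(\mathsf{H}_0^\diamond);
\]
using $m \ge un^{1 \boxdot \frac{d-4}{2}}$ bounds the ratio by $u^{-c(d)}$ with $c(d) > 0$, and enlarging the constant $C$ in the lemma if necessary improves this to $u^{-1}$.

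For the interlacement-extended piece $\mathsf{H}_h^\diamond \setminus \mathsf{H}_0^\diamond$, the loop cluster fails to reach the target but the full $[h]$-cluster does; hence some trajectory of $\widetilde{\mathcal{I}}^{h^2/2}$ must meet $\mathcal{C}(\bm{0})$. Conditioning on $\widetilde{\mathcal{L}}_{1/2}$ and using (\ref{2.30}) together with the independence of loops and interlacements gives
\[
\mathbb{P}\bigl(\mathsf{H}_h^\diamond \setminus \mathsf{H}_0^\diamond, \widetilde{\mathcal{L}}_{1/2}[n,m] \neq \emptyset\bigr) \le \tfrac{h^2}{2} \, \mathbb{E}\bigl[\mathrm{cap}(\mathcal{C}(\bm{0})) \, \mathbbm{1}\{\widetilde{\mathcal{L}}_{1/2}[n,m] \neq \emptyset\}\bigr].
\]
Since $\mathbb{E}[\mathrm{cap}(\mathcal{C}(\bm{0}))] = \infty$ by (\ref{iic_type4new}), I would truncate via $\{\mathrm{cap}(\mathcal{C}(\bm{0})) \le K\}$ and $\{> K\}$ and optimize in $K$, using $\mathbb{P}(\mathrm{cap}(\mathcal{C}(\bm{0})) > K) \asymp K^{-1/2}$. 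The extreme smallness $h^2 \le m^{-2d^5}$ makes this contribution negligible compared to $\mathbb{P}(\mathsf{H}_0^\diamond)$, which decays only polynomially in $m$ by the one-arm estimates (\ref{one_arm_low}), (\ref{one_arm_high}).

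The main obstacle will be the case $\diamond = \mathbf{T}$: while the capacity condition can be replaced by a connection event in one direction via $\mathrm{cap}(\widetilde{B}(r)) \asymp r^{d-2}$, matching scales so that the final bound comes out as $u^{-1}$ in the original parameters $n, m$ (rather than $T$) requires careful use of the sharp tail asymptotic $\mathbb{P}(\mathsf{H}_h^{\mathbf{T}}(T)) \asymp T^{-1/2}$ from (\ref{iic_type4new}) to relate the denominator to $T^{-1/2}$ and the numerator to scales governed by $m$.
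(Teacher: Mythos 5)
Your decomposition $\mathsf{H}_h^\diamond = \mathsf{H}_0^\diamond \cup (\mathsf{H}_h^\diamond \setminus \mathsf{H}_0^\diamond)$ is simpler than the paper's (which instead splits first on whether $\widetilde{\mathcal{I}}^{h^2/2}$ comes near $\bm{0}$, then on $\mathsf{H}_0^\diamond$), and your treatment of the pure loop piece via (\ref{2.26}) for $\diamond \in \{\mathbf{N},\mathbf{x}\}$ matches the paper's (\ref{renew3.41}). However, your interlacement piece has a genuine gap. From $\mathbb{P}(D\cap(\cup\widetilde{\mathcal{I}}^u)\neq\emptyset)=1-e^{-u\,\mathrm{cap}(D)}\le u\,\mathrm{cap}(D)$ and the density of $\mathrm{cap}(\mathcal{C}^{\ge 0}(\bm{0}))$ in (\ref{iic_type4new}), the truncated expectation $\mathbb{E}[\mathrm{cap}(\mathcal{C}(\bm{0}))\mathbbm{1}_{\mathrm{cap}>K}]$ is actually \emph{infinite} for every $K$ (the tail density is $\asymp t^{-3/2}$, so $\int_K^\infty t\cdot t^{-3/2}dt$ diverges); so the split you describe does not even give a finite bound. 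The correct inequality is $1-e^{-x}\le\min(x,1)$, which caps the capacity at $K_0=2/h^2$ and yields $\tfrac{h^2}{2}\mathbb{E}[\mathrm{cap}\,\mathbbm{1}_{\mathrm{cap}<K_0}]+\mathbb{P}(\mathrm{cap}>K_0)\asymp h^2 K_0^{1/2}+K_0^{-1/2}\asymp h$. Since $\mathbb{P}(\mathsf{H}_h^\diamond)\gtrsim h$ by (\ref{232plus}), this gives only $\lesssim\mathbb{P}(\mathsf{H}_h^\diamond)$, \emph{without} the required $u^{-1}$ factor. Extracting that factor demands a genuine correlation estimate showing that the loop-crossing event $\{\widetilde{\mathcal{L}}_{1/2}[n,m]\neq\emptyset\}$ makes the interlacement connection less likely; the paper achieves this with a geometric argument (Observations (a)–(c) and Cases 1–2 after (\ref{renew3.41})): on $(\mathsf{F}^\diamond)^c\cap(\mathsf{H}_0^\diamond)^c$ the cluster $\mathcal{C}(\bm{0})$ must reach $\partial B(m^{d^2})$ to touch the interlacements, and this long reach together with a crossing of the annulus $[n,m]$ is handled by BKR and quasi-multiplicativity. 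This is the main idea your proposal is missing.

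There is a second genuine gap, which you flag but do not close: for $\diamond=\mathbf{T}$ and $d>6$, replacing $\mathsf{H}_0^{\mathbf{T}}$ by $\{\bm{0}\xleftrightarrow{}\partial B(cT^{1/(d-2)})\}$ and then applying (\ref{2.26}) gives an upper bound proportional to $\theta_d(T^{1/(d-2)})\asymp T^{-2/(d-2)}$. But $\mathbb{P}(\mathsf{H}_h^{\mathbf{T}})\asymp\max(h,T^{-1/2})$ (by (\ref{iic_type4new}) and (\ref{232plus})), and since $2/(d-2)<1/2$ for $d>6$, one has $T^{-2/(d-2)}\gg T^{-1/2}$ as $T\to\infty$, so your bound fails whenever $T$ is much larger than the scales governed by $m$. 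This is not a matter of ``careful use of the tail asymptotic'' — it requires a different argument. The paper's fix (see (\ref{use_241})–(\ref{newuse3.46})) splits on whether $\mathcal{C}^{\ge 0}(\bm{0})$ escapes $\widetilde{B}(T^{1/4})$; in the bounded case it fixes a reference point $z\in\partial B(2dT^{1/4})$, relates the capacity condition to a Brownian hitting probability via (\ref{new2.20}), and then performs a union bound over lattice points $x$ in the cluster, invoking the already-proved $\diamond=\mathbf{x}$ case of the lemma for each $x$. Your proposal contains no analogue of this step.
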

\begin{proof}
	We claim that for any $\diamond\in \{\mathbf{N},\mathbf{x},\mathbf{T}\}$, 
\begin{equation}\label{renew3.35}
	\mathbb{P}\big( \mathsf{F}^{\diamond}  \big) := \mathbb{P}\big(\mathsf{H}_h^{\diamond}, B(m^{d^2})\cap (\cup \widetilde{\mathcal{I}}^{\frac{1}{2}h^2})\neq \emptyset  \big)\lesssim  u^{-1}\mathbb{P}\big(\mathsf{H}_h^{\diamond}  \big).\end{equation}
	 We demonstrate this claim separately for $\diamond \in \{ \mathbf{N},\mathbf{T}\}$ and $\diamond =\mathbf{x}$.

	 \textbf{When $\diamond \in \{ \mathbf{N},\mathbf{T}\}$.} By (\ref{2.30}), (\ref{renew3.32}) and $h\le m^{-d^5}$, we have 
	 \begin{equation}\label{renew3.36}
	 		\mathbb{P}\big(  \mathsf{F}^{\diamond } \big) \lesssim h^2m^{d^2(d-2)} \lesssim u^{-1 }\mathbb{P}\big(\mathsf{H}_h^{\diamond}  \big).	 \end{equation}

	 \textbf{When $\diamond =\mathbf{x}$.} By (\ref{one_arm_low}), (\ref{one_arm_high}), (\ref{2.30}) and (\ref{renew3.33}), one has 
\begin{equation}\label{renew3.37}
\begin{split}
	\mathbb{P}\big(  \mathsf{F}^{\mathbf{x}}, x\xleftrightarrow{} \partial B_x(m^{d^4}) \big)\le &\mathbb{P}\big(B(m^{d^2})\cap (\cup \widetilde{\mathcal{I}}^{\frac{1}{2}h^2})\neq \emptyset   \big)\cdot \theta_d(m^{d^4})\\
	\overset{(\ref*{one_arm_low}), (\ref*{one_arm_high}),(\ref*{2.30})}{\lesssim} &h^2m^{d^2(d-2)} \cdot m^{-d^4[(\frac{d}{2}-1)\boxdot 2] }\\
	\overset{}{\lesssim} & h^2 m^{-d^3}  \overset{(\ref*{renew3.33})}{\lesssim} u^{-1 }\mathbb{P}\big(\mathsf{H}_h^{\mathbf{x}}  \big).
\end{split}	
\end{equation}
Moreover, on $\mathsf{F}^{\mathbf{x}} \cap \{x\xleftrightarrow{} \partial B_x(m^{d^4})\}^c$, we know that $\{B(m^{d^2})\cap (\cup \widetilde{\mathcal{I}}^{\frac{1}{2}h^2})\neq \emptyset\}$ and $\{B_x(m^{d^4})\cap (\cup \widetilde{\mathcal{I}}^{\frac{1}{2}h^2})\neq \emptyset\}$ both happen. If these two events are certified by the same trajectory in $\widetilde{\mathcal{I}}^{\frac{1}{2}h^2}$, then by $B(m^{d^2})\subset B(m^{d^4})$ and (\ref{newadd_2.26}), this probability is bounded from above by  
	 \begin{equation}
	 	Ch^2 |x|^{2-d}\cdot m^{d^4(2d-4)} \overset{|x|\ge m^{d^5}}{\lesssim}  h^2m^{-d^3}  \overset{(\ref*{renew3.33})}{\lesssim} u^{-1 }\mathbb{P}\big(\mathsf{H}_h^{\mathbf{x}}  \big).
	 \end{equation}
 Otherwise, there are two different trajectories intersecting $B(m^{d^2})$ and $ B_x(m^{d^4})$ respectively. By (\ref{2.30}), this happens with probability at most
	 \begin{equation}\label{renew3.39}
	 	Ch^2m^{d^2(d-2)}\cdot h^2m^{d^4(d-2)} \overset{h\le m^{-d^5}}{\lesssim} h^2m^{-d^5}  \overset{(\ref*{renew3.33})}{\lesssim} u^{-1 }\mathbb{P}\big(\mathsf{H}_h^{\mathbf{x}}  \big).
	 \end{equation}
	Combining (\ref{renew3.37})--(\ref{renew3.39}), we get 
	 \begin{equation}\label{renew3.40}
	 	\mathbb{P}\big(  \mathsf{F}^{\mathbf{x}} \big)   \lesssim u^{-1 }\mathbb{P}\big(\mathsf{H}_h^{\mathbf{x}}  \big). 
	 \end{equation}
   By (\ref{renew3.36}) and (\ref{renew3.40}), we confirm the claim in (\ref{renew3.35}).

With (\ref{renew3.35}) at hand, we next prove (\ref{3.32}) for $\diamond \in \{\mathbf{N},\mathbf{x}\}$. It follows from (\ref{2.26}) and $\mathsf{H}_0^{\diamond}\subset \mathsf{H}_h^{\diamond}$ that 
	\begin{equation}\label{renew3.41}
	\begin{split}
				 \mathbb{P}\big(\mathsf{H}_0^{\diamond}[n,m]  \big) \lesssim  & n^{(\frac{d}{2}-1)\boxdot (d-4)}m^{-[(\frac{d}{2}-1)\boxdot 2]}\mathbb{P}\big(\mathsf{H}_0^{\diamond}  \big)
				 \lesssim u^{-1} \mathbb{P}\big(\mathsf{H}_h^{\diamond}  \big).
				 \end{split}
	\end{equation}
		 On $\{\mathsf{H}_h^{\diamond} , \widetilde{ \mathcal{L}}_{1/2}[n,m]\neq \emptyset\}\cap (\mathsf{F}^{\diamond})^c \cap (\mathsf{H}_0^{\diamond})^c$, we have the following observations:
		 \begin{enumerate}
		 	\item [(a)]   $\mathcal{C}(A_\diamond):= \{v\in \widetilde{\mathbb{Z}}^d: v \xleftrightarrow{} A_\diamond \}$ and $\mathcal{C}(\bm{0})$ are disjoint and both intersect $\widetilde{\mathcal{I}}^{\frac{1}{2}h^2}$;

		 	\item [(b)] $\cup \widetilde{\mathcal{I}}^{\frac{1}{2}h^2} \subset [\widetilde{B}(m^{d^2})]^c$ (this must happen on the event $\mathsf{H}_h^{\diamond}\cap  (\mathsf{F}^{\diamond})^c$);

		 	\item [(c)]   $\mathcal{C}(\bm{0})\cap \partial B(m^{d^2})\neq \emptyset$ (by Observations (a) and (b)).

		 \end{enumerate}
		 Building on these observations, we estimate the probability of $\{\mathsf{H}_h^{\diamond} , \widetilde{ \mathcal{L}}_{1/2}[n,m]\neq \emptyset\}\cap (\mathsf{F}^{\diamond})^c\cap (\mathsf{H}_0^{\diamond})^c$ separately in the following cases.

		 \textbf{Case 1:} when $\mathcal{C}(A_\diamond)\cap \partial B(Cm^{1\boxdot \frac{d-4}{2}})\neq \emptyset$ (i.e., $A_\diamond \xleftrightarrow{} B(Cm^{1\boxdot \frac{d-4}{2}})$). In this case, $\bm{0}\xleftrightarrow{} \partial B(m^{d^2})$ and $A_\diamond \xleftrightarrow{} B(Cm^{1\boxdot \frac{d-4}{2}})$ happen disjointly (since they are certified by two different clusters $\mathcal{C}(\bm{0})$ and $\mathcal{C}(A_\diamond)$ respectively; recalling Observations (a) and (c)), the probability for this case, according to the BKR inequality (see \cite[Corollary 3.4]{cai2024high}), is bounded from above by  
      \begin{equation}
      	\begin{split}
      		&\mathbb{P}\big(\bm{0}\xleftrightarrow{} \partial B(m^{d^2})\big) \mathbb{P}\big(A_\diamond \xleftrightarrow{} B(Cm^{1\boxdot \frac{d-4}{2}})\big) \\
      		= & \frac{\theta_d(m^{d^2})}{\theta_d(Cm^{ 1\boxdot  \frac{d-4}{2}})}\cdot  \mathbb{P}\big(\bm{0}\xleftrightarrow{} \partial B(Cm^{1\boxdot \frac{d-4}{2}})\big) \mathbb{P}\big(A_\diamond \xleftrightarrow{} B(Cm^{1\boxdot \frac{d-4}{2}})\big)\\
      		\overset{(\ref*{QM_ineq_1})}{\lesssim } &  \frac{\theta_d(m^{d^2})}{\theta_d(Cm^{1\boxdot \frac{d-4}{2}})}\cdot  m^{(1\boxdot \frac{d-4}{2})\cdot [0\boxdot (d-6) ]}  \mathbb{P}\big(\mathsf{H}_0^{\diamond} \big)\lesssim  u^{-1} \mathbb{P}\big(\mathsf{H}_h^{\diamond}  \big).
      		     	\end{split}
      \end{equation}

		\textbf{Case 2:} when $\mathcal{C}(A_\diamond)\cap \partial B(Cm^{1\boxdot \frac{d-4}{2}})= \emptyset$. Suppose that $\mathcal{C}(A_\diamond)$ and $\widetilde{\mathcal{I}}^{\frac{1}{2}h^2}$ are sampled. Then the following event (denoted by $\mathsf{G}$) happens: $\mathcal{C}(A_\diamond)$ and $\cup \widetilde{\mathcal{I}}^{\frac{1}{2}h^2}$ intersect each other (by Observation (a)), and are both contained in $[\widetilde{B}(Cm^{1\boxdot \frac{d-4}{2}})]^c$ (by the assumption $\mathcal{C}(A_\diamond)\cap \partial B(Cm^{1\boxdot \frac{d-4}{2}})= \emptyset$ and Observation (b)). Hence, $\mathcal{C}(A_\diamond)$ does not contain any loop in $\widetilde{\mathcal{L}}_{1/2}[n,m]$. Moreover, by Observation (a), we know that $\bm{0}$ is connected to $\cup \widetilde{\mathcal{I}}^{\frac{1}{2}h^2}$ by $\cup \widetilde{\mathcal{L}}_{1/2}$ without using any loop included in $\mathcal{C}(A_\diamond)$. To sum up, the probability of this case is at most 
	\begin{equation}\label{3.36}
		\begin{split}
			&\mathbb{E}\Big[\mathbbm{1}_{\mathsf{G}} \cdot \mathbb{P}\big( \bm{0}\xleftrightarrow{(\mathcal{C}(A_\diamond))} \cup \widetilde{\mathcal{I}}^{\frac{1}{2}h^2}, \widetilde{ \mathcal{L}}_{1/2}[n,m]\neq \emptyset   \mid \mathcal{C}(A_\diamond), \widetilde{\mathcal{I}}^{\frac{1}{2}h^2} \big)  \Big]\\
			\overset{ (\ref*{2.26}) }{\lesssim}  &n^{(\frac{d}{2}-1)\boxdot (d-4)}m^{-[(\frac{d}{2}-1) \boxdot 2]} \mathbb{E}\Big[\mathbbm{1}_{\mathsf{G}} \cdot \mathbb{P}\big( \bm{0}\xleftrightarrow{(\mathcal{C}(A_\diamond))} \cup \widetilde{\mathcal{I}}^{\frac{1}{2}h^2} \mid \mathcal{C}(A_\diamond), \widetilde{\mathcal{I}}^{\frac{1}{2}h^2} \big)  \Big]\\
			\lesssim  & u^{-1} \mathbb{P}\big(\mathsf{H}_h^{\diamond}  \big), 
		\end{split}
	\end{equation}	
	where in the first inequality we used $\mathsf{G}\subset \{ \mathcal{C}(A_\diamond)\cup (\cup \widetilde{\mathcal{I}}^{\frac{1}{2}h^2})\subset  [\widetilde{B}(Cm^{1\boxdot \frac{d-4}{2}})]^c\}$ to satisfy the conditions of (\ref{2.26}). Putting (\ref{renew3.35}) and (\ref{renew3.41})--(\ref{3.36}) together, we obtain (\ref{3.32}) for $\diamond \in \{ \mathbf{N},\mathbf{x}\}$.

Now we prove (\ref{3.32}) for $\diamond = \mathbf{T}$. By taking the limit in (\ref{3.32}) for $\diamond=\mathbf{N}$ as $N\to \infty$ and using $\{\bm{0}\xleftrightarrow{[h]} \infty\}\subset \mathsf{H}_h^{\mathbf{T}}$, we have  
\begin{equation}\label{newuse3.37}
	\mathbb{P}\big( \bm{0}\xleftrightarrow{[h]} \infty, \widetilde{ \mathcal{L}}_{1/2}[n,m]\neq \emptyset  \big)\lesssim  u^{-1}\mathbb{P}\big(\mathsf{H}_h^{\mathbf{T}} \big). 
\end{equation}
Therefore, since $\{\mathsf{H}_h^{\mathbf{T}} , \widetilde{ \mathcal{L}}_{1/2}[n,m]\neq \emptyset  \}\cap \{\bm{0}\xleftrightarrow{[h]} \infty \}^c\subset \mathsf{H}_0^{\mathbf{T}}[n,m]$, it suffices to prove 
\begin{equation}\label{submit355}
	\mathbb{P}\big(\mathsf{H}_0^{\mathbf{T}}[n,m] \big) \lesssim u^{-1}\mathbb{P}\big(\mathsf{H}_h^{\mathbf{T}} \big). 
\end{equation}
In what follows, we establish (\ref{submit355}) separately in the cases $3\le d\le 5$ and $d\ge 7$.

\textbf{When $3\le d\le 5$.} Noting that $\mathsf{H}_h^{\mathbf{T}} \subset \{\bm{0} \xleftrightarrow{[h]} \partial B(cT^{\frac{1}{d-2}})\}$ for some constant $c>0$ (since $\mathrm{cap}(B(cT^{\frac{1}{d-2}}))< T$), we have 
 \begin{equation}\label{3.37}
	\begin{split}
		\mathbb{P}\big(\mathsf{H}_0^{\mathbf{T}}[n,m]  \big)
		\le &\mathbb{P}\big( \bm{0}\xleftrightarrow{}\partial B(cT^{\frac{1}{d-2}})) , \widetilde{ \mathcal{L}}_{1/2}[n,m]\neq \emptyset  \big) \\
		\overset{(\ref*{3.32})\ \text{for}\ \diamond =\mathbf{N},N=cT^{\frac{1}{d-2}}}{\lesssim} &  u^{-1}\mathbb{P}\big( \bm{0}\xleftrightarrow{}\partial B(cT^{\frac{1}{d-2}}) \big)\\
		\overset{(\ref*{one_arm_low}),(\ref*{iic_type4new}),(\ref*{iso})}{\asymp } &u^{-1} \mathbb{P}(\mathsf{H}_0^{\mathbf{T}}) \overset{\mathsf{H}_0^{\mathbf{T}}\subset \mathsf{H}_h^{\mathbf{T}}}{\le }u^{-1} \mathbb{P}(\mathsf{H}_h^{\mathbf{T}}).			\end{split}
\end{equation}

\textbf{When $d\ge 7$.} By (\ref{crossing_high}) and the inclusion $\mathsf{H}_0^{\mathbf{T}}\cap \{\bm{0}\xleftrightarrow{\ge 0}\partial B(T^{\frac{1}{4}})\}^c \subset \{\mathrm{cap}(\mathcal{C}^{\ge 0}(\bm{0})\cap \widetilde{B}(T^{\frac{1}{4}}))\ge T\}$, we know that 
	\begin{equation}\label{use_241}
		\begin{split}
		\mathbb{P}\big(\mathsf{H}_0^{\mathbf{T}}[n,m] \big)\le & \mathbb{P}\big(\bm{0}\xleftrightarrow{} \partial B(T^{\frac{1}{4}}) , \widetilde{ \mathcal{L}}_{1/2}[n,m]\neq \emptyset  \big)	\\
				&+\mathbb{P}\big(\mathrm{cap}(\mathcal{C}^{\ge 0}(\bm{0})\cap \widetilde{B}(T^{\frac{1}{4}}))\ge T,\widetilde{ \mathcal{L}}_{1/2}[n,m]\neq \emptyset   \big).					\end{split}
	\end{equation}
	For the same reasons as in proving (\ref{3.37}), the first term on the right-hand side of (\ref{use_241}) satisfies that 
\begin{equation}\label{newuse3.41}
	\mathbb{P}\big(\bm{0}\xleftrightarrow{} \partial B(T^{\frac{1}{4}}) , \widetilde{ \mathcal{L}}_{1/2}[n,m]\neq \emptyset  \big) \lesssim  u^{-1} \mathbb{P}(\mathsf{H}_h^{\mathbf{T}}). 
\end{equation}
For the second term, we arbitrarily fix a point $z\in \partial B(2dT^{\frac{1}{4}})$. By (\ref{new2.20}), we know that arbitrarily given $\mathcal{C}^{\ge 0}(\bm{0})$ satisfying $\mathrm{cap}(\mathcal{C}^{\ge 0}(\bm{0})\cap \widetilde{B}(T^{\frac{1}{4}}))\ge T$, an independent Brownian motion with starting point $z$ will hit $\mathcal{C}^{\ge 0}(\bm{0})\cap \widetilde{B}(T^{\frac{1}{4}})$ with probability at least $cT^{\frac{2-d}{4}}$. This implies that 
\begin{equation}\label{3.41}
	\begin{split}
	&T^{\frac{2-d}{4}}\mathbb{P}\big(\mathrm{cap}(\mathcal{C}^{\ge 0}(\bm{0})\cap \widetilde{B}(T^{\frac{1}{4}}))\ge T,\widetilde{ \mathcal{L}}_{1/2}[n,m]\neq \emptyset   \big)\\
	\lesssim  & \mathbb{P}\times \widetilde{\mathbb{P}}_z\big(\tau_{\mathcal{C}^{\ge 0}(\bm{0})\cap \widetilde{B}(T^{\frac{1}{4}})}<\infty , \widetilde{ \mathcal{L}}_{1/2}[n,m]\neq \emptyset \big).
		\end{split}
\end{equation}
Moreover, the right-hand side of (\ref{3.41}) is at most 
\begin{equation}\label{3.42}
	\begin{split}
		&\mathbb{P}\times \widetilde{\mathbb{P}}_z\big(\tau_{\mathcal{C}^{\ge 0}(\bm{0})\cap [\widetilde{B}(T^{\frac{1}{4}})\setminus \widetilde{B}(Cm^{\frac{d-4}{2}})]}<\infty , \widetilde{ \mathcal{L}}_{1/2}[n,m]\neq \emptyset \big) \\
		&+ \widetilde{\mathbb{P}}_z\big(\tau_{\widetilde{B}(Cm^{\frac{d-4}{2}})}<\infty \big),	\end{split}
\end{equation}
where the second term, by (\ref{new2.20}) and $T\ge m^{2d^2}$, is at most 
\begin{equation}
	C'm^{\frac{d-4}{2}\cdot (d-2)}T^{\frac{2-d}{4}}\lesssim  u^{-1} T^{\frac{4-d}{4}}.  \end{equation}
In addition, applying the union bound, the first term in (\ref{3.42}) is at most 
\begin{equation}\label{3.44}
	\begin{split}
		 &\sum_{x\in B(T^{\frac{1}{4}})\setminus B(Cm^{\frac{d-4}{2}})} \mathbb{P}(\mathsf{H}_0^{\mathbf{x}}, \widetilde{ \mathcal{L}}_{1/2}[n,m]\neq \emptyset ) \cdot  \widetilde{\mathbb{P}}_z\big( \tau_{\widetilde{B}_x(1)}<\infty\big)\\
		\overset{(\ref*{3.32})\ \text{for}\ \diamond =\mathbf{x},(\ref*{new2.20})}{\lesssim} 	& u^{-1}T^{\frac{2-d}{4}} \sum_{x\in B(T^{\frac{1}{4}})\setminus B(Cm^{\frac{d-4}{2}})} |x|^{2-d} 
		\overset{}{\lesssim }   u^{-1} T^{\frac{4-d}{4}},
	\end{split}
\end{equation}
where in the last inequality we used the fact that for any $l\ge 1$ and $\alpha\neq d$ (for convenience, we set $0^{-a}:=1$ for $a>0$), 
\begin{equation}\label{newcite3.52}
\begin{split}
		\sum\nolimits_{x\in B(l)} |x|^{-\alpha} =& \sum\nolimits_{0\le j\le l}\sum\nolimits_{x\in \partial B(j)} |x|^{-\alpha}\\
		 \lesssim &  \sum\nolimits_{0\le j\le l} j^{d-1}\cdot j^{-\alpha} \lesssim l^{ (d-\alpha) \vee 0 }.\end{split}
\end{equation} 
Putting (\ref{3.41})--(\ref{3.44}) together, we obtain  
\begin{equation*}
	\begin{split}
		& \mathbb{P}\big(\mathrm{cap}(\mathcal{C}^{\ge 0}(\bm{0})\cap \widetilde{B}(T^{\frac{1}{4}}))\ge T,\widetilde{ \mathcal{L}}_{1/2}[n,m]\neq \emptyset   \big) \\
		\lesssim  & u^{-1}T^{-\frac{1}{2}}\overset{(\ref*{iic_type4new}),\mathsf{H}_0^{\mathbf{T}}\subset \mathsf{H}_h^{\mathbf{T}} }{\lesssim } u^{-1}  \mathbb{P}(\mathsf{H}_h^{\mathbf{T}}). 
	\end{split}
\end{equation*}
Combined with (\ref{use_241}) and (\ref{newuse3.41}), it implies that for any $d\ge 7$,
\begin{equation}\label{newuse3.46}
	\mathbb{P}\big(\mathsf{H}_0^{\mathbf{T}}[n,m]  \big)
		\lesssim  u^{-1} \mathbb{P}(\mathsf{H}_h^{\mathbf{T}}).
\end{equation}

By (\ref{3.37}) and (\ref{newuse3.46}), we obtain (\ref{submit355}) and thus prove (\ref{3.32}) for $\diamond = \mathbf{T}$. Now we complete the proof of this lemma.
\end{proof}

To simplify the exposition, we first record some notations as follows. 
	
\begin{itemize}

 \item  Recall Definition \ref{def_admissible}. Arbitrarily take admissible events $\{\mathsf{A}_h\}_{h\ge 0}$ with $$\mathsf{A}_h=\mathsf{A}_h^{(\mathrm{i})}(v_1,...,v_j;a_1,...,a_j)\cap \mathsf{A}_h^{(\mathrm{ii})}(\gamma_1,...,\gamma_l),$$
where $j\in \mathbb{N}^+$, $\{a_1,...,a_j\}\subset (0,\infty)$, $\{v_1,...,v_j\}\subset   \widetilde{\mathbb{Z}}^d$, and subsets $\gamma_1,...,\gamma_l\subset \{v_1,...,v_j\}$ are disjoint.

	\item Let $n_0>0$ be a large number such that $v_1,...,v_j\in \widetilde{B}(n_0)$,  and let $\lambda >1$ be sufficiently large. For each $i\in \mathbb{N}^+$, we inductively define 
\begin{equation}\label{def_ni}
	n_{i}= n_{i}(n_0, \lambda):=\lambda^2 i^{4}n_{i-1}^{1\boxdot \frac{d-4}{2}}.
\end{equation}

    \item   Let $K$ be a large integer, and we denote 
\begin{equation}\label{def_m}
	m=m(n_0,\lambda, K):=n_{6K+10}.
\end{equation}
    We assume that $0\le h\le m^{-d^5}$ and $\diamond \ge m^{d^5}$ for all $\diamond\in \{\mathbf{N},\mathbf{x},\mathbf{T}\}$.

      \item  For any $h\ge 0$ and any integer $k\in [0,K]$, we define the following events: 
   \begin{equation}\label{def_F_diamond1}
   	\mathsf{F}_{k,h}^{\diamond, (1)}:=\mathsf{H}_h^{\diamond} \cap \big[ \big\{ \widetilde{\mathcal{L}}_{1/2}[n_{6k},n_{6k+1}]\neq 0\big\}\cup \big\{\widetilde{\mathcal{L}}_{1/2}[n_{6k+4},n_{6k+5}]\neq 0  \big\}\big],
   \end{equation}
    \begin{equation}\label{def_F_diamond2}
\begin{split}
		\mathsf{F}_{k,h}^{\diamond, (2)}:= &\big[  \mathsf{H}_h^{\diamond}  \circ \big\{ B(n_{6k+1})\xleftrightarrow{} \partial B(n_{6k+2})\big\}\big]\\
		&\cup \big[ \mathsf{H}_h^{\diamond} \circ \big\{ B(n_{6k+2})\xleftrightarrow{} \partial B(n_{6k+3})\big\}\big]. 
\end{split}
\end{equation}
Here $\mathsf{A}_1\circ \mathsf{A}_2$ represents the event that $\mathsf{A}_1$ and $\mathsf{A}_2$ both happen and are certified by disjoint collections of loops (the precise definition is provided in \cite[Section 3.3]{cai2024high}, which serves the purpose for applying BKR inequality). We also define 
\begin{equation}\label{def_event_F3}
	\mathsf{F}^{\diamond,(3)}_{h}:= \mathsf{H}_h^{\diamond} \cap \big\{\widetilde{B}(m)\cap (\cup \widetilde{\mathcal{I}}^{\frac{1}{2}h^2})\neq \emptyset \big\}. 
\end{equation}
We then denote
\begin{equation}
	\mathsf{G}_h^{\diamond}:= (\mathsf{A}_h\cap \mathsf{H}_h^{\diamond})\setminus \big(\cup_{0\le k\le K}\mathsf{F}^{\diamond,(1)}_{k,h}\cup  \mathsf{F}^{\diamond,(2)}_{k,h} \cup \mathsf{F}^{\diamond,(3)}_{h}  \big).
\end{equation}

\end{itemize}

It directly follows from Lemma \ref{lemma3.4} that 
\begin{equation}\label{use3.49}
	\mathbb{P}\big(\mathsf{F}^{\diamond,(1)}_{k,h} \big) \lesssim   \lambda^{-2}k^{-4}\mathbb{P}\big(\mathsf{H}_h^{\diamond}  \big). 
\end{equation}
Moreover, by the BKR inequality and the isomorphism theorem, one has 
\begin{equation}\label{finaluse3.50}
\begin{split}
	\mathbb{P}\big(\mathsf{F}^{\diamond,(2)}_{k,h} \big) \le  &2\big[\rho_d(n_{6k+1},n_{6k+2})+ \rho_d(n_{6k+2},n_{6k+3}) 
	\big]\mathbb{P}\big(\mathsf{H}_h^{\diamond}  \big) \\
	\overset{(\ref*{crossing_low}), (\ref*{crossing_high})}{\lesssim} & \lambda^{-1}k^{-2} \mathbb{P}\big(\mathsf{H}_h^{\diamond}  \big). 
\end{split}
\end{equation}
Also note that by (\ref{renew3.35}), we have 
\begin{equation}\label{use3.51}
	\mathbb{P}\big(\mathsf{F}^{\diamond,(3)}_{h} \big)\lesssim \lambda^{-1}k^{-2} \mathbb{P}\big(\mathsf{H}_h^{\diamond}  \big). 
\end{equation}
Combining (\ref{use3.49})--(\ref{use3.51}), we get 
\begin{equation}\label{use3.53}
	\mathbb{P}\big( \mathsf{G}_h^{\diamond}\big) \ge \mathbb{P}\big(\mathsf{A}_h, \mathsf{H}_h^{\diamond} \big)-  C\lambda^{-1}\mathbb{P}\big( \mathsf{H}_h^{\diamond}\big).
\end{equation}

For brevity, we denote $\mathbf{B}_k:=\widetilde{B}(n_{6k+3})\setminus \widetilde{B}(n_{6k+1})$ and $\widehat{\mathbf{B}}_k:=\widetilde{B}(n_{6k+4})\setminus \widetilde{B}(n_{6k})$ for $0\le k\le K-1$, and denote $\mathbf{B}_{K}:= \widetilde{\mathbb{Z}}^d\setminus \widetilde{B}(n_{6K+1})$ and $\widehat{\mathbf{B}}_{K}:= \widetilde{\mathbb{Z}}^d\setminus \widetilde{B}(n_{6K})$. For any $0\le k\le K$ and $h\ge 0$, we define the following exploration process inductively:
\begin{itemize}
	\item Step $0$: We set $\mathcal{C}_0=\partial B(n_{6k+2})$.
	
	\item Step $j$ ($j\ge 1$): Given $\mathcal{C}_{j-1}$, we define $\mathcal{C}_{j}$ as the union of $\mathcal{C}_{j-1}$ and the ranges of all loops in $\widetilde{\mathcal{L}}_{1/2}$ and trajectories in $\widetilde{\mathcal{I}}^{\frac{1}{2}h^2}$ that intersect $\mathcal{C}_{j-1}\cap \mathbf{B}_k$. If $\mathcal{C}_{j}=\mathcal{C}_{j-1}$, we stop the construction and define $\mathfrak{C}_{k,h}:=\mathcal{C}_{j}$; otherwise (i.e., $\mathcal{C}_{j-1}\subsetneq \mathcal{C}_{j}$), we proceed to the next step.

\end{itemize}

We denote by $\widehat{\mathfrak{C}}_{k,h}$ the analogue of $\mathfrak{C}_{k,h}$ obtained by the same construction but restricting the loops to $\widetilde{\mathcal{L}}_{1/2}\cdot \mathbbm{1}_{\mathrm{ran}(\widetilde{\ell})\subset \widehat{\mathbf{B}}_k}$. Moreover, if there is exactly one cluster in $\widehat{\mathfrak{C}}_{k,h}$ crossing $\mathbf{B}_k$, then we denote this cluster by $\widehat{\mathcal{C}}_{k,h}$; otherwise (i.e., if there is no crossing cluster or if there are multiple crossing clusters), we set $\widehat{\mathcal{C}}_{k,h}=\emptyset$ for completeness. For brevity, we denote $\widehat{\mathfrak{C}}_{k}:=\widehat{\mathfrak{C}}_{k,0}$ and $\widehat{\mathcal{C}}_{k}=\widehat{\mathcal{C}}_{k,0}$ for $0\le k\le K-1$, and denote $\widehat{\mathfrak{C}}_{K}:=\widehat{\mathfrak{C}}_{K,h}$ and $\widehat{\mathcal{C}}_{K}=\widehat{\mathcal{C}}_{K,h}$ (here we consider the clusters $\{ \widehat{\mathfrak{C}}_{k}\}_{0\le k\le K-1}$ without $\widetilde{\mathcal{I}}^{\frac{1}{2}h^2}$ because on the event $\mathsf{F}^{\diamond,(3)}_{h}$, no trajectory in $\widetilde{\mathcal{I}}^{\frac{1}{2}h^2}$ intersects $\cup_{0\le k\le K-1} \widehat{\mathbf{B}}_k$). Note that $\{\widehat{\mathfrak{C}}_{k}\}_{0\le k\le K}$ are independent since $\widehat{\mathfrak{C}}_{k}\subset \widehat{\mathbf{B}}_k$ for all $0\le k\le K$ and $\{\widehat{\mathbf{B}}_k\}_{0\le k\le K}$ are disjoint. For any $0\le k\le K$, let $\widecheck{\mathfrak{C}}_{k}:= \widehat{\mathfrak{C}}_k\cap \mathbf{B}_k$. For convenience, we set $\widehat{\mathcal{C}}_{-1}:=\{\bm{0}\}$ and $\widecheck{\mathfrak{C}}_{-1}:=\emptyset$. An illustration is provided in Figure \ref{fig1}.

According to the construction, on $\mathsf{G}_h^{\diamond}$, the following events happen: 
\begin{enumerate}
	\item[(a)]   $\mathfrak{C}_{k,h}=\widehat{\mathfrak{C}}_{k }$ for all $0\le k\le K$. Thanks to the definition of $\mathsf{F}_{k,h}^{\diamond, (1)}$ (see (\ref{def_F_diamond1})), on the event $\mathsf{G}_h^{\diamond}$, no loop crosses $B(n_{6k+1})\setminus B(n_{6k})$ or $B(n_{6k+5})\setminus B(n_{6k+4})$.

	\item[(b)] For each $0\le k\le K$, there is exactly one cluster in $\mathfrak{C}_{k,h}$ crossing $\mathbf{B}_k$, which exactly equals $\widehat{\mathcal{C}}_{k}$. This is because $\mathsf{F}_{k,h}^{\diamond, (2)}$ (see (\ref{def_F_diamond2})) does not occur on $\mathsf{G}_h^{\diamond}$.

	\item[(c)] For $-1\le k\le K-1$, $\widehat{\mathcal{C}}_{k}$ and $\widehat{\mathcal{C}}_{k+1}$ are connected by the loops in $\widetilde{\mathcal{L}}_{1/2}$ disjoint from $\widecheck{\mathfrak{C}}_k\cup \widecheck{\mathfrak{C}}_{k+1}$. This follows from Events (a) and (b).

	\item[(d)]  The event $\mathsf{A}_h$ is certified by $\widehat{\mathfrak{C}}_0$ and by loops of $\widetilde{\mathcal{L}}_{1/2}$ that are disjoint from $\widecheck{\mathfrak{C}}_0$ and contained in $\widehat{\mathbf{B}}_0$ (we denote the collection of these loops by $\mathfrak{L}_0$). This is because on $\mathsf{G}_h^{\diamond}$, the event $\mathsf{F}_{0,h}^{\diamond,(1)}\cap \mathsf{F}_{h}^{\diamond,(3)}$ does not occur, implying that the set $\{v_1,...,v_j\}\subset \widetilde{B}(n_0)$ is disjoint from $\cup \widetilde{\mathcal{I}}^{\frac{1}{2}h^2}\cup \widehat{\mathfrak{C}}_{0}$, and thus  $\{\widehat{\mathcal{L}}_{1/2}^{v_i}+ \widehat{\mathcal{I}}^{\frac{1}{2}h^2}_{v_i}\}_{1\le i \le j}$ depend only on $\mathfrak{L}_0$; in addition, for any $v_{i_1},v_{i_2}$ contained in one of $\gamma_1,...,\gamma_l$, $\{v_{i_1} \xleftrightarrow{[h]} v_{i_2}\}$ happens if and only if $v_{i_1}$ and $v_{i_2}$ are connected by $(\cup \mathfrak{L}_0) \cup \widehat{\mathfrak{C}}_0$.

	\item[(e)] When $\diamond \in \{\mathbf{N},\mathbf{x}\}$, $\widehat{\mathcal{C}}_K\cap A_\diamond\neq \emptyset$; when $\diamond=\mathbf{T}$, $\mathrm{cap}(\widehat{\mathcal{C}}_K)\ge T-Cm^{d-2}$ (this is because by (\ref{2.8}), $\mathcal{C}^{\ge 0}(\bm{0}) \subset \widehat{\mathcal{C}}_K\cup \widetilde{B}(m)$ and $\mathrm{cap}(\widetilde{B}(m))\le Cm^{d-2}$, one has $\mathrm{cap}(\widehat{\mathcal{C}}_K)\ge \mathrm{cap}(\mathcal{C}^{\ge 0}(\bm{0}))-Cm^{d-2}$). We denote this event by $\widehat{\mathsf{H}}_{h,K}^{\diamond}$

\end{enumerate}

	\textbf{P.S.:} (1) The loops used to connect $\widehat{\mathcal{C}}_{k}$ and $\widehat{\mathcal{C}}_{k+1}$ (in Event (c)) are contained in $\widetilde{\mathbf{B}}_k^+$ (where $\widetilde{\mathbf{B}}_{-1}^+:=\widetilde{B}(n_{2})$ and $\widetilde{\mathbf{B}}_k^+:=\widetilde{B}(n_{6k+8})\setminus \widetilde{B}(n_{6k+2})$ for $k\ge 0$), and thus for different $k$ these different collections of loops are independent of each other (since $\{\widetilde{\mathbf{B}}_k^+\}_{-1\le k\le K-1}$ are disjoint); (2) Conditioned on $\cap_{0\le k\le K} \{\widehat{\mathfrak{C}}_{k }=D_k\} $ (for some $D_0,...,D_k\subset \widetilde{\mathbb{Z}}^d$), the distribution of loops that are not used in the construction of $\{\widehat{\mathfrak{C}}_{k}\}_{0\le k\le K}$ is given by $\widetilde{\mathcal{L}}_{1/2}^{\cup_{0\le k\le K} (D_k\cap \mathbf{B}_k)}$.

\begin{figure}[h]
	\centering
	\includegraphics[width=0.8\textwidth]{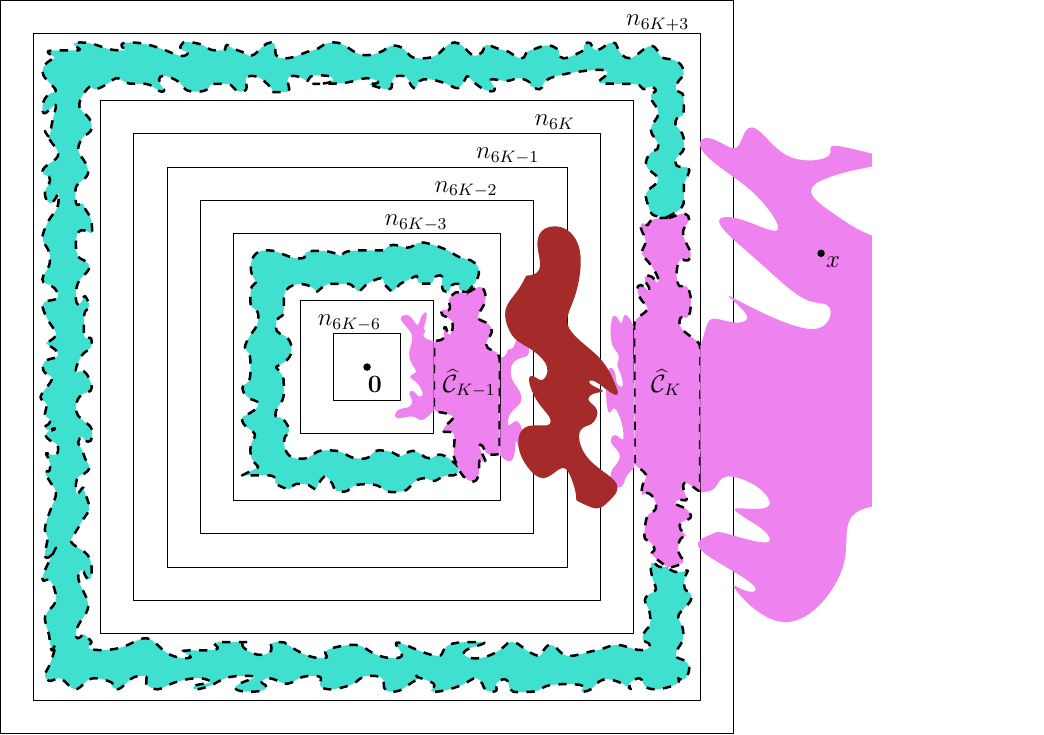}
	\caption{In this illustration, we consider the case $\diamond=\mathbf{x}$. The two pink regions represent $\widehat{\mathcal{C}}_{K-1}$ and $\widehat{\mathcal{C}}_{K}$ respectively, where $\widehat{\mathcal{C}}_{K}$ intersects $x$. The union of the green and pink regions inside $B(n_{6K-2})$ (resp. outside $B(n_{6K})$) represents $\widehat{\mathfrak{C}}_{K-1}$ (resp. $\widehat{\mathfrak{C}}_{K}$). The two regions surrounded by dashed curves represent $\widecheck{\mathfrak{C}}_{K-1}$ and $\widecheck{\mathfrak{C}}_{K}$ respectively. Given $\widehat{\mathfrak{C}}_{K-1}$ and $\widehat{\mathfrak{C}}_{K-1}$, the dashed curves serve as absorbing boundaries for loops. The red region is a loop cluster that connects $\widehat{\mathcal{C}}_{K-1}$ and $\widehat{\mathcal{C}}_{K}$, and consists of loops disjoint from $\widecheck{\mathfrak{C}}_{K-1}\cup \widecheck{\mathfrak{C}}_{K}$.
	 }\label{fig1}
\end{figure}

Consequently, we obtain that 
\begin{equation}\label{use3.55}
	\mathbb{P}\big( 	\mathsf{G}_h^{\diamond}\big)\le \mathbb{I}( \mathsf{A}_h , n_0,\lambda,K,\diamond), 
\end{equation}
where the integral $\mathbb{I}(\cdot )$ is defined as 
\begin{equation*}
 	\int_{\widehat{\mathcal{C}}_k\neq \emptyset,\forall 0\le k\le K; \widehat{\mathsf{H}}_{h,K}^{\diamond}}  \mathbb{P}\big(\mathsf{A}_h, \bm{0} \xleftrightarrow{(\widecheck{D}_{0})}D_0  \big) \prod_{k=0}^{K-1} \mathbb{P}\big(D_k \xleftrightarrow{(\widecheck{D}_k\cup \widecheck{D}_{k+1})}D_{k+1}  \big)d\widehat{\mathfrak{p}}_0(D_0)\cdots d\widehat{\mathfrak{p}}_{K}(D_k).
\end{equation*}
Here, $\{\widehat{\mathfrak{p}}_k\}_{0\le k\le K}$ are the probability measures of $\{\widehat{\mathfrak{C}}_k\}_{0\le k\le K}$, and $\widecheck{D}_{k}:=D_k\cap \mathbf{B}_k $. Note that the occurrence of $\{ \mathsf{A}_h, \bm{0} \xleftrightarrow{(\widecheck{\mathfrak{C}}_{0})}\widehat{\mathfrak{C}}_0\}$, $\cap_{0\le k\le K-1}\{\widehat{\mathfrak{C}}_k \xleftrightarrow{(\widecheck{\mathfrak{C}}_k\cup \widecheck{\mathfrak{C}}_{k+1})} \widehat{\mathfrak{C}}_{k+1}  \}$ and $\widehat{\mathsf{H}}_{h,K}^{\diamond}$ implies $\mathsf{A}_h \cap \mathsf{H}_h^{\diamond}$ when $\diamond\in \{\mathbf{N},\mathbf{x}\}$, and implies $\mathsf{A}_h\cap \{\mathrm{cap}(\mathcal{C}^{[h]}(\bm{0}))\ge T-Cm^d\}$ when $\diamond= \mathbf{T}$. Furthermore, when $\diamond= \mathbf{T}$, since $\mathbb{P}\big(\mathrm{cap}(\mathcal{C}^{[h]}(\bm{0}))\ge a\big)= \mathbb{P}(\bm{0}\xleftrightarrow{[h]}\infty)+\mathbb{P}\big(\mathrm{cap}(\mathcal{C}(\bm{0}))\ge a, \mathcal{C}(\bm{0})\cap (\cup \widetilde{\mathcal{I}}^{\frac{1}{2}h^2}) =\emptyset \big)$, one has
 \begin{equation*}
 	\begin{split}
 		0\le & \mathbb{P}\big(\mathrm{cap}(\mathcal{C}^{[h]}(\bm{0}))\ge T-Cm^d\big) - \mathbb{P}\big(\mathsf{H}^{\mathbf{T}}_{h} \big)\\
 		=& \mathbb{P}\big(T-Cm^d \le \mathrm{cap}(\mathcal{C}(\bm{0}))\le T , \mathcal{C}(\bm{0})\cap (\cup \widetilde{\mathcal{I}}^{\frac{1}{2}h^2}) =\emptyset  \big)\\
 		\le & \mathbb{P}\big(T-Cm^d \le \mathrm{cap}(\mathcal{C}(\bm{0}))\le T  \big)\\
 			\overset{(\ref*{iic_type4new})}{\lesssim } &  m^dT^{-1} \mathbb{P}(\mathsf{H}^{\mathbf{T}}_{0}) \overset{}{\lesssim }  \lambda^{-1} \mathbb{P}(\mathsf{H}^{\mathbf{T}}_{h}),
 	\end{split}
 \end{equation*}
 where in the last inequality we used $\mathsf{H}^{\mathbf{T}}_{0}\subset \mathsf{H}^{\mathbf{T}}_{h}$, $T \ge m^{d^5}$ and $m\ge \lambda$ (recall (\ref{def_ni})). In conclusion, we have 
\begin{equation}\label{finaluse_375}
	\mathbb{I}(\{\mathsf{A}_h\}_{h\ge 0}, n_0,\lambda,K,\diamond) \le \mathbb{P}\big( \mathsf{A}_h \cap \mathsf{H}_h^{\diamond}\big)+  \mathbbm{1}_{\diamond=\mathbf{T}}\cdot  C\lambda^{-1}  \mathbb{P}(\mathsf{H}^{\diamond}_{h}).
\end{equation}
Combined with (\ref{use3.53}) and (\ref{use3.55}), it yields an approximation of $\mathsf{A}_h \cap \mathsf{H}_h^{\diamond}$, as incorporated in the next lemma (where we keep using the aforementioned notations).

\begin{lemma}\label{lemma_approx_theta}
	For any $d\ge 3$ with $d\neq 6$, $\delta>0$ and any admissible $\{\mathsf{A}_h\}_{h\ge 0}$, there exists $\Cl\label{const_approx_theta}(d,\{\mathsf{A}_h\}_{h\ge 0})>0$ such that for any $n_0,K\ge \Cref{const_approx_theta}$, $0\le h\le m^{-d^5}$, $\lambda\ge \Cref{const_approx_theta}\delta^{-1}$, and any $\diamond\in \{\mathbf{N},\mathbf{x},\mathbf{T}\}$ with $\diamond\ge m^{d^5}$, 
	 	\begin{equation}
		\big|\mathbb{I}(\mathsf{A}_h, n_0,\lambda,K,\diamond) - \mathbb{P}\big( \mathsf{A}_h \cap \mathsf{H}_h^{\diamond}\big)  \big| \le \delta \cdot \mathbb{P}( \mathsf{H}_h^{\diamond}).	\end{equation}

	\end{lemma}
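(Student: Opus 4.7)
The plan is to observe that essentially all the work has already been carried out in the paragraphs preceding the lemma, and the statement reduces to assembling the three bounds (\ref{use3.53}), (\ref{use3.55}), and (\ref{finaluse_375}) into a two-sided estimate, followed by choosing $\lambda$ large enough to absorb the $\lambda^{-1}$ error into $\delta$. So the proof is short: it is a sandwich argument around the intermediate quantity $\mathbb{P}(\mathsf{G}_h^{\diamond})$.

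First, I would set up the lower bound. By (\ref{use3.55}), $\mathbb{I}(\mathsf{A}_h,n_0,\lambda,K,\diamond) \ge \mathbb{P}(\mathsf{G}_h^{\diamond})$, and by (\ref{use3.53}),
\begin{equation*}
	\mathbb{P}(\mathsf{G}_h^{\diamond}) \ge \mathbb{P}\bigl(\mathsf{A}_h\cap \mathsf{H}_h^{\diamond}\bigr) - C\lambda^{-1}\mathbb{P}(\mathsf{H}_h^{\diamond}).
\end{equation*}
Chaining these gives $\mathbb{I}(\mathsf{A}_h,n_0,\lambda,K,\diamond) \ge \mathbb{P}\bigl(\mathsf{A}_h\cap \mathsf{H}_h^{\diamond}\bigr) - C\lambda^{-1}\mathbb{P}(\mathsf{H}_h^{\diamond})$. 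Recall that (\ref{use3.53}) comes from Lemma \ref{lemma3.4} together with (\ref{crossing_low})--(\ref{crossing_high}) and (\ref{renew3.35}), whose combined application bounds $\mathbb{P}(\mathsf{F}_{k,h}^{\diamond,(1)}) + \mathbb{P}(\mathsf{F}_{k,h}^{\diamond,(2)}) + \mathbb{P}(\mathsf{F}^{\diamond,(3)}_h)$ by a summable-in-$k$ series of order $\lambda^{-1}k^{-2}\mathbb{P}(\mathsf{H}_h^{\diamond})$, so the $\lambda^{-1}$ factor is exactly the budget we need.

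Second, I would establish the upper bound, which is precisely (\ref{finaluse_375}):
\begin{equation*}
	\mathbb{I}(\mathsf{A}_h,n_0,\lambda,K,\diamond) \le \mathbb{P}\bigl(\mathsf{A}_h\cap \mathsf{H}_h^{\diamond}\bigr) + \mathbbm{1}_{\diamond=\mathbf{T}}\cdot C\lambda^{-1}\mathbb{P}(\mathsf{H}_h^{\diamond}).
\end{equation*}
For $\diamond\in\{\mathbf{N},\mathbf{x}\}$, every realization of $(\widehat{\mathfrak{C}}_0,\dots,\widehat{\mathfrak{C}}_K)$ integrated against in $\mathbb{I}$ already implies $\mathsf{A}_h\cap\mathsf{H}_h^{\diamond}$ by Events (a)--(e), so the error term vanishes. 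For $\diamond=\mathbf{T}$ the integrand only forces $\mathrm{cap}(\mathcal{C}^{[h]}(\bm{0}))\ge T-Cm^{d-2}$, and the small capacity window $[T-Cm^{d-2},T]$ is controlled using (\ref{iic_type4new}) together with $T\ge m^{d^5}$ and $m\ge \lambda$, producing the $C\lambda^{-1}\mathbb{P}(\mathsf{H}_h^{\mathbf{T}})$ slack.

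Finally, combining the two directions yields
\begin{equation*}
	\bigl| \mathbb{I}(\mathsf{A}_h,n_0,\lambda,K,\diamond) - \mathbb{P}\bigl(\mathsf{A}_h\cap \mathsf{H}_h^{\diamond}\bigr)\bigr| \le C_\star \lambda^{-1}\mathbb{P}(\mathsf{H}_h^{\diamond})
\end{equation*}
for some constant $C_\star = C_\star(d,\{\mathsf{A}_h\}_{h\ge 0})$; here the dependence on $\{\mathsf{A}_h\}$ enters only via $n_0$ and the scale of the initial test set $\{v_1,\dots,v_j\}$, and the constants $C,C'$ from Lemma \ref{lemma3.4}, (\ref{crossing_low})--(\ref{crossing_high}), and (\ref{renew3.35}) are absolute in $d$. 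Setting $\Cref{const_approx_theta}:=C_\star$ and taking $\lambda \ge \Cref{const_approx_theta}\delta^{-1}$ closes the argument. There is no real obstacle here beyond bookkeeping the constants carefully; the genuine content (the cluster-decomposition into $\{\widehat{\mathfrak{C}}_k\}$, the removal of crossing loops, and the absence of percolating interlacement trajectories through $\widetilde{B}(m)$) was already packaged into (\ref{use3.53})--(\ref{finaluse_375}).
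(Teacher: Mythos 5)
Your proof is correct and follows exactly the argument the paper intends: chain the lower bound $\mathbb{I} \ge \mathbb{P}(\mathsf{G}_h^{\diamond}) \ge \mathbb{P}(\mathsf{A}_h\cap\mathsf{H}_h^{\diamond}) - C\lambda^{-1}\mathbb{P}(\mathsf{H}_h^{\diamond})$ from (\ref{use3.55}) and (\ref{use3.53}) with the upper bound (\ref{finaluse_375}), then absorb the $C\lambda^{-1}$ error into $\delta$ by taking $\lambda\ge\Cref{const_approx_theta}\delta^{-1}$ with $\Cref{const_approx_theta}$ chosen to dominate both $C$ and the spatial extent of the test points $\{v_1,\dots,v_j\}$. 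This is precisely how the paper packages the lemma (the text immediately preceding it explicitly states the lemma is the combination of these three displays), so there is nothing new to compare.
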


\subsection{Proof of Proposition \ref{lemma_converge_loop}}\label{proof_prop_converge_loop}

Before delving into the details, we provide an overview of the proof of Proposition \ref{lemma_converge_loop}. The uniform convergence for $h\in [0,c]$ (as stated in Proposition \ref{lemma_converge_loop}) is established separately based on whether $h$ exceeds a certain threshold. When $h$ exceeds this threshold, the large cluster $\mathcal{C}^{[h]}(\bm{0})$ typically contains the random interlacements $\widetilde{\mathcal{I}}^{\frac{1}{2}h^2}$. Consequently, the probabilities of $\mathsf{A}_h\cap \mathsf{H}_h^{\diamond}$ and $\mathsf{H}_h^{\diamond}$ are approximately equal to those of their intersections with $\{\bm{0}\xleftrightarrow{[h]} \infty \}$ (see Lemma \ref{lemma_compare_hdiamond}), implying that $\mathbb{P}\big(\mathsf{A}_h\mid \mathsf{H}_h^{\diamond}\big)\to \mathbb{P}\big(\mathsf{A}_h\mid \bm{0}\xleftrightarrow{[h]} \infty\big)$ as $\diamond \to \infty$ with a uniform rate (ensured by the aforementioned threshold).

 When $h$ is below the threshold, the random interlacements $\widetilde{\mathcal{I}}^{\frac{1}{2}h^2}$ typically do not intersect a box around $\bm{0}$ with diameter depending on this threshold (through (\ref{2.30})). Thus, with high probability, the configuration of $\mathcal{C}^{[h]}(\bm{0})$ near $\bm{0}$ is determined by the loop soup $\widetilde{\mathcal{L}}_{1/2}$. Inspired by \cite{basu2017kesten}, we intend to construct the loop cluster near $\bm{0}$ by first sampling sub-clusters contained in disjoint annuli and then connecting them sequentially (as shown in the last subsection). In the case of Bernoulli percolation, these sub-clusters are automatically independent, which aligns perfectly with the conditions required for an analysis using Hopf's contraction theorem (see \cite[Theorem 1]{hopf1963inequality}). However, due to the large loops in $\widetilde{\mathcal{L}}_{1/2}$, this independence no longer holds in our case. To this end, we need to show that the occurrence of a large loop is rare on the event $\mathsf{H}_h^{\diamond}$ (as shown by Lemma \ref{lemma3.4}). Based on the aforementioned construction for the loop cluster near $\bm{0}$, Hopf's contraction theorem shows that the oscillation of the density decays exponentially with the number of sub-clusters (which exceeds a uniform bound determined by the diameter of the region disjoint from $\widetilde{\mathcal{I}}^{\frac{1}{2}h^2}$), thereby deriving the uniform convergence of $\mathbb{P}\big(\mathsf{A}_h\mid \mathsf{H}_h^{\diamond}\big)$ as $\diamond \to \infty$.

We next carry out the proof details. It suffices to show that for any $d\ge 3$ with $d\neq 6$ and any admissible $\{\mathsf{A}_h\}_{h\ge 0}$, there exist $C_\ddagger(d,\{\mathsf{A}_h\}_{h\ge 0})>0,c_\ddagger(d,\{\mathsf{A}_h\}_{h\ge 0})\in (0,1)$ such that for any $\diamond\in\{\mathbf{N},\mathbf{x},\mathbf{T}\}$, there is $\zeta(d,\{\mathsf{A}_h\}_{h\ge 0},\diamond)\in [0,1]$ such that 
\begin{equation}\label{new3.39}
		\big| \mathbb{P}\big(\mathsf{A}_h \mid \mathsf{H}_h^{\diamond} \big)- \zeta \big| \le \epsilon
	\end{equation} 
	holds for all $\epsilon\in (0,e^{-100d^2})$, $\diamond \ge M_\ddagger:=\mathrm{exp}(C_\ddagger e^{\epsilon^{-10}})$ and $h\in [0,c_\ddagger]$. As illuminated in the preceding heuristic discussion, we prove this separately for two cases: $h\ge \mathrm{exp}(-C_\ddagger e^{\epsilon^{-2}})$ and  $0\le h< \mathrm{exp}(-C_\ddagger e^{\epsilon^{-2}})$.

	  \textbf{Case 1:} $h\ge \mathrm{exp}(-C_\ddagger e^{\epsilon^{-2}})$. In this case, we take $\zeta=\mathbb{P}\big(\mathsf{A}_h \mid  \mathsf{H}_h^{\diamond,\infty} \big)$, where we denote $\mathsf{H}_h^{\diamond,\infty}:= \mathsf{H}_h^{\diamond }\cap \{\bm{0}\xleftrightarrow{[h]}\infty\}$. Note that $\mathsf{H}_h^{\diamond,\infty}=\{\bm{0}\xleftrightarrow{[h]}\infty \}$ when $\diamond \in \{\mathbf{N}, \mathbf{T}\}$. It follows from Lemma \ref{lemma_compare_hdiamond} that 
	  \begin{equation}\label{newfinish3.78}
	  	   \mathbb{P}\big(\mathsf{A}_h, \mathsf{H}_h^{\diamond} \big) -  \mathbb{P}\big(\mathsf{A}_h , \mathsf{H}_h^{\diamond,\infty} \big),   \mathbb{P}\big( \mathsf{H}_h^{\diamond} \big) -  \mathbb{P}\big(  \mathsf{H}_h^{\diamond,\infty} \big)\in \big[0, Ce^{-ch^2M_\ddagger^{\frac{1}{d}}} \big].  
	  \end{equation}
  Note that for any $a,a',b,b',\delta>0$ with $a-a',b-b'\in [0,\delta b']$ and $a'\le b'$, one has $\frac{a'}{b'} -\delta \le  \frac{a'}{b'}\cdot (1-\delta) \le \frac{a' }{b'-\delta  b'}  \le  \frac{a}{b}\le \frac{a'+\delta b'}{b'}= \frac{a'}{b'}+\delta$. I.e.,  
	  \begin{equation}\label{newfinish3.79}
	  	\big| \frac{a}{b} -\frac{a'}{b'}  \big| \le \delta. 
	  \end{equation}
	   In addition, by (\ref{newsubmit_224}) and $h\ge \mathrm{exp}(-C_\ddagger e^{\epsilon^{-2}})$, we have 
	    \begin{equation}\label{newuse_3.42}
	  	\begin{split}
 \mathbb{P}\big(\mathsf{H}_h^{\diamond,\infty} \big) \ge c'h^2\ge (\tfrac{1}{2}\epsilon )^{-1} \cdot  Ce^{-ch^2M_\ddagger^{\frac{1}{d}}}. 
	  	\end{split}
	  \end{equation}
	   Combining (\ref{newfinish3.78})--(\ref{newuse_3.42}), we get
	  	\begin{equation}\label{new3.43}
		\big|   \mathbb{P}\big(\mathsf{A}_h \mid  \mathsf{H}_h^{\diamond} \big) -   \mathbb{P}\big(\mathsf{A}_h \mid  \mathsf{H}_h^{\diamond,\infty} \big)\big|\le \tfrac{1}{2}\epsilon. 
			\end{equation}

	For $\diamond \in \{\mathbf{N},\mathbf{T}\}$, since $\mathsf{H}_h^{\diamond,\infty}=\{\bm{0}\xleftrightarrow{[h]}\infty \}$, we obtain (\ref{new3.39}) from (\ref{new3.43}), with $\zeta=\mathbb{P}\big(\mathsf{A}_h \mid   \bm{0}\xleftrightarrow{[h]} \infty \big)$.

	  For $\diamond = \mathbf{x}$, we aim to show that $\mathbb{P}\big(\mathsf{A}_h, \mathsf{H}_h^{\mathbf{x},\infty} \big)\approx\mathbb{P}\big(\mathsf{A}_h, \bm{0}\xleftrightarrow{[h]}\infty \big)\cdot \mathbb{P}( \bm{0}\xleftrightarrow{[h]}\infty )$. It follows from (\ref{newsubmit_223}) that
	  \begin{equation}\label{3.65}
	\mathbb{P}\big(\mathsf{A}_h, \mathsf{H}_h^{\mathbf{x},\infty} \big) \ge \mathbb{P}\big(\mathsf{A}_h, \bm{0}\xleftrightarrow{[h]}\infty \big)\cdot \mathbb{P}( \bm{0}\xleftrightarrow{[h]}\infty ).
	\end{equation}	
	We next control the approximation error when reversing (\ref{3.65}). Suppose that $\mathsf{A}_h=\mathsf{A}_h^{(\mathrm{i})}(v_1,...,v_k;a_1,...,a_k)\cap \mathsf{A}_h^{(\mathrm{ii})}(\gamma_1,...,\gamma_l)$ for all $h\ge 0$ (recalling Definition \ref{def_admissible}). In addition, by taking a sufficiently large $C_\ddagger$, we can ensure that $\{v_1,...,v_k\}\subset \widetilde{B}(M_\ddagger^{\frac{1}{10d}})$. We define a truncated version of $\mathsf{A}_h^{(\mathrm{ii})}$ as follows:
	  \begin{equation*}
	  	\overbar{\mathsf{A}}_h^{(\mathrm{ii})}:= \cap_{1\le j\le l}\cap_{v,v'\in \gamma_j}\Big\{ v \xleftrightarrow{\big( (\cup \widetilde{\mathcal{L}}_{1/2})\cup (\cup \widetilde{\mathcal{I}}^{\frac{1}{2}h^2}) \big) \cap \widetilde{B}(|x|^{1/4})} v' \Big \}. 
	  \end{equation*}
	  Let $\overbar{\mathsf{A}}_h := \mathsf{A}_h^{(\mathrm{i})}\cap \overbar{\mathsf{A}}_h^{(\mathrm{ii})}$. Note that $\overbar{\mathsf{A}}_h$ and $\{\bm{0}\xleftrightarrow{[h]}\partial B(|x|^{1/4}) \}$ only depend on loops (in $\widetilde{\mathcal{L}}_{1/2}$) and trajectories (in $\widetilde{\mathcal{I}}^{\frac{1}{2}h^2}$) intersecting $\widetilde{B}(|x|^{1/4})$. Meanwhile, the event $\{x\xleftrightarrow{[h]} \partial B_x(|x|^{1/4})\}$ only depends on loops and trajectories intersecting $\widetilde{B}_x(|x|^{1/4})$. Therefore, given that $\mathfrak{L}=\mathfrak{I}=\emptyset$, where $\mathfrak{L}$ (resp. $\mathfrak{I}$) is the collection of loops in $\widetilde{\mathcal{L}}_{1/2}$ (resp. trajectories in $\widetilde{\mathcal{I}}^{\frac{1}{2}h^2}$) intersecting both $\widetilde{B}(|x|^{1/4})$ and $\widetilde{B}_x(|x|^{1/4})$, the events $\overbar{\mathsf{A}}_h\cap \{\bm{0}\xleftrightarrow{[h]}\partial B(|x|^{1/4}) \}$ and $\{x\xleftrightarrow{[h]} \partial B_x(|x|^{1/4})\}$ are certified by disjoint collections of loops and trajectories. Consequently, we obtain
	  \begin{equation}\label{newadd_3.48}
	  	\begin{split}
	  		\mathbb{P}\big(\overbar{\mathsf{A}}_h ,  \mathsf{H}_h^{\mathbf{x},\infty} \big) \le &\mathbb{P}\big(\overbar{\mathsf{A}}_h ,  \bm{0}\xleftrightarrow{[h]}\partial B(|x|^{1/4}), x\xleftrightarrow{[h]} \partial B_x(|x|^{1/4}) \big)  \\
	  		\le &\mathbb{P}\big(\overbar{\mathsf{A}}_h ,  \bm{0}\xleftrightarrow{[h]}\partial B(|x|^{1/4}) \big)\mathbb{P}\big( x\xleftrightarrow{[h]} \partial B_x(|x|^{1/4}) \big)\\
	  		&+ \mathbb{P}\big(\mathfrak{L}\neq \emptyset\big)+\mathbb{P}\big(\mathfrak{I}\neq \emptyset\big).	 	  	\end{split}
	  \end{equation}
	 Moreover, it follows from Lemma \ref{lemma_compare_hdiamond} (with $\diamond=\mathbf{N}$ and $N=|x|^{1/4}\ge M_\ddagger^{1/4}$) that 
	 \begin{equation}\label{newadd_3.49}
	0\le 	\mathbb{P}\big(\overbar{\mathsf{A}}_h ,  \bm{0}\xleftrightarrow{[h]}\partial B(|x|^{1/4}) \big) - \mathbb{P}\big(\overbar{\mathsf{A}}_h ,  \bm{0}\xleftrightarrow{[h]}\infty \big) \lesssim  e^{-ch^2M_\ddagger^{\frac{1}{4d}}}, 
		 \end{equation} 
	  \begin{equation}
	  	  0\le 	\mathbb{P}\big(x\xleftrightarrow{[h]} \partial B_x(|x|^{1/4})  \big) - \mathbb{P}\big( \bm{0}\xleftrightarrow{[h]}\infty \big)  \lesssim e^{-ch^2M_\ddagger^{\frac{1}{4d}}}. 
	  \end{equation}
	  	  In addition, since each loop in $\mathfrak{L}$ intersects both $B(|x|^{1/4})$ and $\partial B(\frac{1}{2}|x|)$, it follows from \cite[Corollary 2.2]{cai2024high} that 
	  \begin{equation}
	  	 \mathbb{P}\big(\mathfrak{L}\neq \emptyset\big) \lesssim  |x|^{2-d}\cdot |x|^{\frac{1}{4}(d-2)} \lesssim  M_\ddagger^{\frac{3(2-d)}{4}}. 
	  \end{equation}
	  Meanwhile, by (\ref{newadd_2.26}) and $h<1$, we have 
	  \begin{equation}\label{newadd_3.52}
	  	\mathbb{P}\big(\mathfrak{I}\neq \emptyset\big) \lesssim h^2|x|^{2-d}\cdot |x|^{\frac{1}{4}(2d-4)} \lesssim  M_{\ddagger}^{\frac{2-d}{2}}. 
	  		  \end{equation} 
Combining (\ref{newadd_3.48})--(\ref{newadd_3.52}) with $\overbar{\mathsf{A}}_h\subset  \mathsf{A}_h$, we get 
	  \begin{equation}\label{newaddto_3.53}
	  \begin{split}
	  		\mathbb{P}\big(\overbar{\mathsf{A}}_h ,  \mathsf{H}_h^{\mathbf{x},\infty} \big) \le &\mathbb{P}\big(\mathsf{A}_h, \bm{0}\xleftrightarrow{[h]}\infty \big)\cdot \mathbb{P}( \bm{0} \xleftrightarrow{[h]}\infty )
	  		+C(e^{-ch^2M_{\ddagger}^{\frac{1}{4d}}}+ M_{\ddagger}^{\frac{2-d}{2}} ).
	  			  \end{split}
	  \end{equation}

    Next, we estimate the probability of $(\mathsf{A}_h \setminus  \overbar{\mathsf{A}}_h)\cap \mathsf{H}_h^{\mathbf{x},\infty}$. On the event $(\mathsf{A}_h \setminus  \overbar{\mathsf{A}}_h)\cap \mathsf{H}_h^{\mathbf{x},\infty}$, there exist $1\le i_1<i_2\le k$ such that $v_{i_1}$ and $v_{i_2}$ are contained in two distinct clusters $\mathcal{C}_{i_1}$ and $\mathcal{C}_{i_2}$ of $[(\cup \widetilde{\mathcal{L}}_{1/2})\cup (\cup \widetilde{\mathcal{I}}^{\frac{1}{2}h^2})]\cap \widetilde{B}(|x|^{1/4})$ that intersect $\partial B(|x|^{1/4})$ (we denote this event by $\mathsf{A}_{i_1,i_2}$). For $j\in \{1,2\}$, we denote the event 
    \begin{equation*}
    	\mathsf{D}_j:=\big\{ \mathcal{C}_{i_j} \cap  (\cup \widetilde{\mathcal{I}}^{\frac{1}{2}h^2}) =\emptyset  \big\}. 
    \end{equation*}
   For the same reason as in proving (\ref{finally_3.28}), since $\mathcal{C}_{i_j}$ intersects $\partial B(|x|^{1/4})$, we know that $\mathrm{cap}(\mathcal{C}_{i_j})\gtrsim |x|^{1/4}\ln^{-1}(|x|)\gtrsim M_{\ddagger}^{\frac{1}{4d}}$. Combined with (\ref{2.30}), it implies that  
     \begin{equation}\label{newuse3.74}
\mathbb{P}\big( \mathsf{D}_j\big)\le Ce^{-ch^2M_{\ddagger}^{\frac{1}{4d}}}, \ \ \forall j\in \{1,2\}. 
  	  \end{equation} 
 Moreover, on $\mathsf{A}_{i_1,i_2}\cap \mathsf{D}_1^c\cap \mathsf{D}_2^c$, $\mathcal{C}_{i_1}$ and $\mathcal{C}_{i_2}$ contain two disjoint clusters of $(\cup \widetilde{\mathcal{I}}^{\frac{1}{2}h^2})\cap  \widetilde{B}(|x|^{1/4})$ that intersect $\widetilde{B}(\tfrac{1}{2}|x|^{1/4})$. In fact, by the local uniqueness of the random interlacements (see \cite[Proposition 1]{rath2011transience}), the probability of the occurrence of such two clusters in $\widetilde{\mathcal{I}}^{\frac{1}{2}h^2}$ is at most $e^{-|x|^{c'}}\le e^{-M_\ddagger^{c'}}$. Combined with (\ref{newuse3.74}), it implies 
\begin{equation}\label{3.75}
	\begin{split}
		\mathbb{P}\big( (\mathsf{A}_h \setminus  \overbar{\mathsf{A}}_h)\cap \mathsf{H}_h^{x,\infty}\big) \le k^2\big(Ce^{-ch^2M_\ddagger^{\frac{1}{4d}}}+e^{-M_{\ddagger}^{c'}}\big).
	\end{split}
\end{equation}
In addition, by (\ref{232plus}) and the assumption that $h\ge \mathrm{exp}(-C_\ddagger e^{\epsilon^{-2}})$, we have 
\begin{equation}\label{newfinish3.93}
	C(e^{-ch^2M_{\ddagger}^{\frac{1}{4d}}}+ M_{\ddagger}^{\frac{2-d}{2}} )+k^2\big(Ce^{-ch^2M_\ddagger^{\frac{1}{4d}}}+e^{-M_\ddagger^{c'}}\big) \le \tfrac{1}{2}\epsilon \cdot \big[ \mathbb{P}( \bm{0}\xleftrightarrow{[h]}\infty ) \big]^2. 
\end{equation}
Putting (\ref{3.65}), (\ref{newaddto_3.53}), (\ref{3.75}) and (\ref{newfinish3.93}) together, we obtain 
\begin{equation}\label{3.76}
	  \mathbb{P}\big(\mathsf{A}_h, \mathsf{H}_h^{\mathbf{x},\infty} \big) -\mathbb{P}\big(\mathsf{A}_h, \bm{0}\xleftrightarrow{[h]}\infty \big)\cdot \mathbb{P}( \bm{0}\xleftrightarrow{[h]}\infty )\in \Big[0, \tfrac{1}{2}\epsilon \cdot \big[ \mathbb{P}( \bm{0}\xleftrightarrow{[h]}\infty ) \big]^2 \Big] .
\end{equation}
By taking $\mathsf{A}_h$ as the sure event $\Omega$, we also have 
\begin{equation}\label{3.77}
	  \mathbb{P}\big( \mathsf{H}_h^{\mathbf{x},\infty} \big)- \big[\mathbb{P}( \bm{0}\xleftrightarrow{[h]}\infty )\big]^2 \in \Big[0, \tfrac{1}{2}\epsilon \cdot \big[ \mathbb{P}( \bm{0}\xleftrightarrow{[h]}\infty ) \big]^2 \Big] .
\end{equation}
Combining (\ref{newfinish3.79}), (\ref{3.76}) and (\ref{3.77}), we get 
\begin{equation*}
	\big|   \mathbb{P}\big(\mathsf{A}_h \mid  \mathsf{H}_h^{\mathbf{x},\infty} \big) - \mathbb{P}\big(\mathsf{A}_h \mid \bm{0} \xleftrightarrow{[h]}\infty \big)    \big|\le \tfrac{1}{2}\epsilon, 
\end{equation*}
which together with (\ref{new3.43}) implies (\ref{new3.39}) for $\diamond =\mathbf{x}$, thereby completing the proof of (\ref{new3.39}) for Case 1.

\textbf{Case 2:} $0\le h < \mathrm{exp}(-C_\ddagger e^{\epsilon^{-2}})$. The subsequent proof is mainly inspired by \cite{basu2017kesten}. This approach was built on an application of Hopf's contraction theorem (see \cite[Theorem 1]{hopf1963inequality}). Our proof only needs the following special case of this theorem.

\begin{lemma}\label{lemma_hopf}
Let $\mathbf{X}$ and $\mathbf{Y}$ be two spaces, where $\mathbf{Y}$ is equipped with a $\sigma$-finite measure $\mu_{\mathbf{Y}}$. Let $\mathcal{K}:\mathbf{X} \times \mathbf{Y}\to (0,\infty)$ be a positive kernel. Define the linear operator $\mathcal{K}f(\cdot)$ as (where $f:\mathbf{Y}\to [0,\infty)$ is a function on $\mathbf{Y}$)
\begin{equation*}
	\mathcal{K}f(x):= \int_{\mathbf{Y}}\mathcal{K}(x,y)f(y)d\mu_{\mathbf{Y}}(y). 
\end{equation*}
If there exists $\kappa>0$ such that
\begin{equation}\label{kappa}
	\frac{\mathcal{K}(x,y)\mathcal{K}(x',y')}{\mathcal{K}(x,y')\mathcal{K}(x',y)}\le \kappa^2, \ \ \forall x,x'\in \mathbf{X}\ \text{and}\ y,y'\in \mathbf{Y},
\end{equation}
then for any two functions $f_1:\mathbf{Y}\to [0,\infty)$ and $f_2:\mathbf{Y}\to (0,\infty)$, one has  
\begin{equation*}
\mathrm{osc}\big(\frac{\mathcal{K}f_1}{\mathcal{K}f_2}\big)	\le  \frac{\kappa-1}{\kappa+1}\cdot  \mathrm{osc}\big(\frac{f_1}{f_2}\big), 
\end{equation*}
	where $\mathrm{osc}(g)$ for $g:\mathbf{Z}\to [0,\infty)$ is defined as $\mathrm{osc}(g):=\sup_{z_1,z_2\in \mathbf{Z}} | g(z_1) - g(z_2) |$.
	
\end{lemma}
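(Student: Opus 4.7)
The plan is to represent the ratio $\mathcal{K}f_1/\mathcal{K}f_2$ pointwise as a weighted average of $h := f_1/f_2$, so that bounding its oscillation reduces to comparing two probability measures on $\mathbf{Y}$. Set $W(x) := \int_{\mathbf{Y}}\mathcal{K}(x,y)f_2(y)\,d\mu_{\mathbf{Y}}(y)$ and $w_x(y) := \mathcal{K}(x,y)f_2(y)/W(x)$, so that $w_x\,d\mu_{\mathbf{Y}}$ is a probability measure on $\mathbf{Y}$ and $(\mathcal{K}f_1/\mathcal{K}f_2)(x) = \int_{\mathbf{Y}} h(y)\,w_x(y)\,d\mu_{\mathbf{Y}}(y)$. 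For any $x, x' \in \mathbf{X}$, recentering $h$ at its midpoint $\bar h := \tfrac{1}{2}(\sup h + \inf h)$ and using $\int(w_x - w_{x'})\,d\mu_{\mathbf{Y}} = 0$, we obtain
\begin{equation*}
\bigl|(\mathcal{K}f_1/\mathcal{K}f_2)(x) - (\mathcal{K}f_1/\mathcal{K}f_2)(x')\bigr| \le \tfrac{1}{2}\mathrm{osc}(h)\cdot \|w_x - w_{x'}\|_{L^1(\mu_{\mathbf{Y}})}.
\end{equation*}
It therefore suffices to prove $\|w_x - w_{x'}\|_{L^1(\mu_{\mathbf{Y}})} \le 2(\kappa-1)/(\kappa+1)$ uniformly in $x, x'$.

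To this end, introduce the ratio $r(y) := \mathcal{K}(x,y)/\mathcal{K}(x',y)$ and observe that hypothesis (\ref{kappa}) is precisely $\sup_y r(y)/\inf_y r(y) \le \kappa^2$. Writing $a := \inf_y r$, $b := \sup_y r$, and $\bar r := W(x)/W(x')$, one has $\bar r \in [a, b]$ and $w_x(y) - w_{x'}(y) = w_{x'}(y)(r(y)/\bar r - 1)$. The identity $\int w_{x'}(y) r(y)\,d\mu_{\mathbf{Y}}(y) = \bar r$ says that the function $r/\bar r - 1$ has mean zero under the probability measure $w_{x'}\,d\mu_{\mathbf{Y}}$ and takes values in $[-A, B]$ with $A := 1 - a/\bar r$ and $B := b/\bar r - 1$.

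The final step is the elementary observation that any mean-zero random variable valued in $[-A, B]$ (with $A, B \ge 0$) satisfies $\mathbb{E}|X| \le 2AB/(A+B)$, the extremum being attained when mass concentrates at the two endpoints. Substituting yields $\|w_x - w_{x'}\|_{L^1(\mu_{\mathbf{Y}})} \le 2(\bar r - a)(b - \bar r)/(\bar r(b - a))$; a one-variable optimization in $\bar r \in [a,b]$ places the maximum at $\bar r = \sqrt{ab}$ with value $2(\sqrt b - \sqrt a)/(\sqrt b + \sqrt a)$, and the bound $\sqrt{b/a} \le \kappa$ then gives the claim $2(\kappa-1)/(\kappa+1)$. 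Chaining the three estimates produces the desired contraction.

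The only mildly delicate point is extracting the sharp constant: a cruder estimate would give the weaker coefficient $(\kappa^2-1)/(\kappa^2+1)$, and one must perform the optimization in $\bar r$ at the square-root level to recover $(\kappa-1)/(\kappa+1)$, which is the classical Birkhoff--Hopf rate $\tanh(\log\kappa/2)$ for contraction under Hilbert's projective metric.
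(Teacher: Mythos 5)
The paper does not prove this lemma; it invokes it as an external citation to Hopf (1963). Your proof is a correct, self-contained argument via total-variation mixing rather than the classical Birkhoff–Hopf route through Hilbert's projective metric. The three constituent steps all check out: (i) writing $(\mathcal{K}f_1/\mathcal{K}f_2)(x)=\int h\,w_x\,d\mu_{\mathbf{Y}}$ with $h=f_1/f_2$ and $w_x(y)=\mathcal{K}(x,y)f_2(y)/W(x)$, then recentering $h$ at its midrange to get the factor $\tfrac12\mathrm{osc}(h)\,\|w_x-w_{x'}\|_{L^1}$; (ii) the identity $w_x-w_{x'}=w_{x'}(r/\bar r-1)$ with $r=\mathcal{K}(x,\cdot)/\mathcal{K}(x',\cdot)$, $\bar r=W(x)/W(x')=\int r\,w_{x'}\,d\mu_{\mathbf{Y}}\in[\inf r,\sup r]$, and the hypothesis $\sup r/\inf r\le\kappa^2$; (iii) the elementary bound $\mathbb{E}|X|\le 2AB/(A+B)$ for mean-zero $X\in[-A,B]$ (indeed $\mathbb{E}[X^+]=\mathbb{E}[X^-]\le\min(Bp,A(1-p))\le AB/(A+B)$), followed by the one-variable maximization in $\bar r$, whose stationary point at $\bar r=\sqrt{ab}$ yields $2(\sqrt b-\sqrt a)/(\sqrt b+\sqrt a)\le 2(\kappa-1)/(\kappa+1)$. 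The only thing worth spelling out for completeness is that $0<\inf r\le\sup r<\infty$ follows directly from positivity of $\mathcal{K}$ together with the two-point ratio bound, so $a,b,\bar r$ are all finite and positive. The payoff of your route is that it avoids introducing the projective metric and delivers the sharp contraction coefficient $(\kappa-1)/(\kappa+1)=\tanh(\tfrac12\log\kappa)$ directly at the $\sqrt{\phantom{a}}$ level, whereas the naive $L^1$ bound $\|w_x-w_{x'}\|_{L^1}\le\sup|r/\bar r-1|+\dots$ would only give $(\kappa^2-1)/(\kappa^2+1)$; the projective-metric proof buys conceptual uniformity across all cones, but for this scalar kernel setting your version is both shorter and elementary.
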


Recall $c_{\star}$ and $\Cref{const_approx_theta}$ in (\ref{def_c_star}) and Lemma \ref{lemma_approx_theta} respectively, and require that $C_\ddagger\ge  c_{\star}^{-1}e^{10d\Cref{const_approx_theta}}$. We take $n_0=\lambda = C_\ddagger\epsilon^{-1}$ and $K=\lfloor C_\ddagger \ln^2(1/\epsilon) \rfloor $. By (\ref{def_ni}) we have 
\begin{equation*}
	n_i\le \lambda^2K^4n_{i-1}^{1\boxdot \frac{d-4}{2} }\le n_{i-1}^{2d}.
\end{equation*}
Therefore, by letting $C_\ddagger$ be sufficiently large, one has 
\begin{equation}
	m= n_{6K+10}\le n_0^{(2d)^{6K+10}} < \mathrm{exp}(C_\ddagger e^{\epsilon^{-1}}),\end{equation}
and hence, $0\le h< \mathrm{exp}(-C_\ddagger e^{\epsilon^{-2}}) \le m^{-d^5}$ and $M_\ddagger\ge m^{d^5}$. Recall the integral $\mathbb{I}(\cdot )$ below (\ref{use3.55}). Thus, by Lemma \ref{lemma_approx_theta} with $\delta=\tfrac{1}{10} \epsilon c_{\star}$, we have 
\begin{equation}\label{3.92}
\begin{split}
	&\big|\mathbb{I}( \mathsf{A}_h , n_0,\lambda,K,\diamond) - \mathbb{P}\big( \mathsf{A}_h \cap \mathsf{H}_h^{\diamond}\big)  \big|\\
	\le & \tfrac{1}{10} \epsilon  \cdot c_{\star} \mathbb{P}( \mathsf{H}_h^{\diamond})\overset{(\text{FKG}),(\ref*{def_c_star})}{\le }\tfrac{1}{10} \epsilon 	 \cdot  \mathbb{P}\big( \mathsf{A}_h \cap \mathsf{H}_h^{\diamond}\big).
\end{split}	
		\end{equation}
As a special case when $\mathsf A_h=\Omega$, we get that
\begin{equation}\label{3.93}
		\big|\mathbb{I}(\Omega, n_0,\lambda,K,\diamond) - \mathbb{P}\big(  \mathsf{H}_h^{\diamond}\big)  \big| \le \tfrac{1}{10} \epsilon  \cdot   \mathbb{P}( \mathsf{H}_h^{\diamond}).
		\end{equation}
Combining (\ref{3.92}) and (\ref{3.93}), we get 
\begin{equation}\label{renew3.95}
	\Big|\mathbb{P}\big( \mathsf{A}_h \mid  \mathsf{H}_h^{\diamond}\big) - \frac{\mathbb{I}(\mathsf{A}_h, n_0,\lambda,K,\diamond) }{\mathbb{I}(\Omega, n_0,\lambda,K,\diamond) } \Big| \le \frac{\epsilon}{2}.
\end{equation}

For brevity, we denote $\mathsf{A}^{[1]}_h=\mathsf{A}_h$ and $\mathsf{A}^{[2]}_h=\Omega$. For $j\in \{1,2\}$ and $1 \le J\le K$, we define the function $f^{[j]}_J(D_{J})$ as 
\begin{equation*}
\begin{split}
		\int_{\widehat{\mathcal{C}}_k\neq \emptyset,\forall 0\le k\le J-1;\widehat{\mathsf{H}}_{h,K}^{\diamond}}  &\mathbb{P}\big(\mathsf{A}_h^{[j]} ,\bm{0} \xleftrightarrow{(\widecheck{D}_{0})}D_{0} \big)\\
		&\cdot  \prod\nolimits_{0\le k\le J-1} \mathbb{P}\big(D_k \xleftrightarrow{(\widecheck{D}_k\cup \widecheck{D}_{k+1})}D_{k+1}  \big)d\widehat{\mathfrak{p}}_0(D_0)\cdots d\widehat{\mathfrak{p}}_{J-1}(D_{J-1}).
\end{split}
\end{equation*}
We consider $\mathbb{P}\big(D_k \xleftrightarrow{(\widecheck{D}_k\cup \widecheck{D}_{k+1})}D_{k+1}  \big)$ as a kernel with input $(D_k,D_{k+1})$, which by the quasi-multiplicativity in (\ref{QM_ineq_1}) satisfies the condition in (\ref{kappa}) with some $\kappa>0$ depending only on $d$ (note that on the event $\{\widehat{\mathcal{C}}_k\neq \emptyset,\widehat{\mathcal{C}}_{k+1}\neq \emptyset \}$, one has $\widehat{\mathfrak{C}}_k\subset \widetilde{B}(n_{6k+4})$ and $\widehat{\mathfrak{C}}_{k+1}\subset [\widetilde{B}(n_{6k+6})]^c$, so that the conditions required for (\ref{QM_ineq_1}) are satisfied here). Thus, applying Lemma \ref{lemma_hopf} for $K-1$ times, we have 
\begin{equation}\label{3.96}
	\begin{split}
		\mathrm{osc}\big(\frac{f^{[1]}_K}{f^{[2]}_K} \big)\le  \big( \frac{\kappa-1}{\kappa+1} \big)^{K-1} \mathrm{osc}\big(\frac{f^{[1]}_1}{f^{[2]}_1} \big) \le \big( \frac{\kappa-1}{\kappa+1} \big)^{K-1}< \frac{\epsilon}{2}.
	\end{split}
\end{equation}
where the second inequality follows from $\frac{f^{[1]}_1}{f^{[2]}_1} \in [0,1]$, and the last inequality is ensured by $K=\lfloor C_\ddagger \ln^2(1/\epsilon) \rfloor >\log_{\frac{\kappa+1}{\kappa-1}}(2\epsilon^{-1})+1$ (where the inequality holds as long as $C_\ddagger$ is sufficiently large). By (\ref{3.96}), there exists $\zeta\in [0,1]$ such that 
\begin{equation*}
	\Big|\frac{\mathbb{I}(\mathsf{A}_h, n_0,\lambda,K,\diamond) }{\mathbb{I}(\Omega, n_0,\lambda,K,\diamond) } -\zeta  \Big|< \frac{\epsilon}{2}. 
\end{equation*}
Combined with (\ref{renew3.95}), it confirms (\ref{new3.39}) in Case 2.

In conclusion, we now complete the proof of Proposition \ref{proof_prop_converge_loop} and consequently, establish Theorem \ref{thm_iic}.    \qed

\section{Two point function under conditioning of connectivity}\label{section_iic_twopoint}

This section focuses on the proof of Theorem \ref{thm_1.4}. Specifically, in Section \ref{section3.1_lower} we prove the lower bound in Theorem \ref{thm_1.4} by analyzing the harmonic average for a specific explored cluster. Subsequently, in Section \ref{section3.2_upper} we establish the upper bound from the perspective of the loop soup by using cluster decomposition arguments.

\subsection{Proof of the lower bound}\label{section3.1_lower}

For any $n\ge 1$, we define $\mathcal{C}^{\ge 0}_{n}$ as the positive cluster of $\bm{0}$ obtained by exploration restricted to $\widetilde{B}(n)$. I.e., 
\begin{equation}\label{def_3.1}
	\mathcal{C}^{\ge 0}_{n}:= \big\{v\in \widetilde{B}(n): v\xleftrightarrow{\widetilde{E}^{\ge 0}\cap \widetilde{B}(n)} \bm{0} \big\}.
\end{equation}
Based on $\mathcal{C}^{\ge 0}_{n}$, we denote the following harmonic average (recall $\mathcal{H}_{v}(\cdot)$ in (\ref{def_Hv})):
\begin{equation}\label{finaluse_def_3.1}
	\mathcal{H}^{\ge 0}_{n}:=|\partial \mathcal{B}(dn)|^{-1}\sum\nolimits_{y\in \partial \mathcal{B}(dn)}\mathcal{H}_{y}(\mathcal{C}^{\ge 0}_{n}).
\end{equation}
 Note that $\mathcal{H}^{\ge 0}_{n}>0$ if and only if $\bm{0}\xleftrightarrow{\ge 0} \partial B(n)$. For any $A\subset \mathbb{Z}^d\setminus B(2dn)$, since $\{\bm{0}\xleftrightarrow{\ge 0} x\}= \{\mathcal{C}^{\ge 0}_{n}\xleftrightarrow{\ge 0}x\}$ for all $x\in \mathbb{Z}^d\setminus B(2dn)$, we have 
 \begin{equation}\label{newineq_3.13}
\begin{split}
	\mathbb{E}\big[ \mathrm{vol}(\mathcal{C}(\bm{0})\cap A)\mid \mathcal{F}_{\mathcal{C}^{\ge 0}_{n}}  \big] =&  \sum\nolimits_{x\in A} \mathbb{P}\big(\mathcal{C}^{\ge 0}_{n}\xleftrightarrow{\ge 0} x\mid \mathcal{F}_{\mathcal{C}^{\ge 0}_{n}}  \big) \\
	\overset{\text{Lemma}\ \ref*{newlemma3.1}}{\asymp} &   \sum\nolimits_{x\in A}   \big(  |x|^{2-d}n^{d-2}\mathcal{H}^{\ge 0}_{n} \big)\land 1 \\
	\ge  &   \big( \mathcal{H}^{\ge 0}_{n}\land 1\big) n^{d-2} \sum\nolimits_{x\in A} |x|^{2-d}. 
\end{split}
\end{equation}
 In order to facilitate the application of the second moment method, we define 
\begin{equation}
	\mathbf{Q}(A):= \mathbb{E}\big[\big( \mathrm{vol}(\mathcal{C}(\bm{0})\cap A) \big)^2\big] = \sum\nolimits_{x,y\in A} \mathbb{P}\big( \bm{0} \xleftrightarrow{\ge 0} x,y \big). 
\end{equation}

\begin{lemma}\label{lemma_Q_BN}
	For any $d\ge 3$ with $d\neq 6$ and any $N\ge 1$, 
	\begin{equation}\label{ineq_lemma_QBN}
		\mathbf{Q}(\partial B(N))\lesssim N^{(\frac{d}{2}+1)\boxdot 4}. 
	\end{equation}
\end{lemma}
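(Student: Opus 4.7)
I will split the double sum $\mathbf{Q}(\partial B(N)) = \sum_{x, y \in \partial B(N)} \mathbb{P}(\bm{0} \xleftrightarrow{\ge 0} x, y)$ into a diagonal and an off-diagonal part, and then handle the two dimension regimes $3 \le d \le 5$ and $d > 6$ separately, in each case applying a two-point bound appropriate to the regime and a surface-sum computation on the discrete $(d-1)$-dimensional shell $\partial B(N)$. The diagonal contribution is only $\sum_{x \in \partial B(N)} \mathbb{P}(\bm{0} \xleftrightarrow{\ge 0} x) \asymp N^{d-1} \cdot N^{2-d} = N$ by (\ref{two_point}), which is easily absorbed into either target exponent.

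\textbf{Case $3 \le d \le 5$.} I plan to apply (\ref{addnew2.33}) with $M = N$ (the hypotheses hold for $N$ larger than a constant; small $N$ is trivial), which gives
\begin{equation*}
\mathbb{P}\bigl(\bm{0} \xleftrightarrow{\ge 0} x, y\bigr) \lesssim |x-y|^{-(d/2-1)} N^{2-d}, \qquad x, y \in \partial B(N).
\end{equation*}
A surface-sum estimate analogous to (\ref{newcite3.52}) but restricted to the $(d-1)$-dimensional shell yields, for each fixed $x \in \partial B(N)$, $\sum_{y \in \partial B(N)} |x-y|^{-(d/2-1)} \lesssim \sum_{r=1}^{CN} r^{d-2} \cdot r^{-(d/2-1)} \lesssim N^{d/2}$. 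Multiplying by $|\partial B(N)| \asymp N^{d-1}$ and the prefactor $N^{2-d}$ then gives $\mathbf{Q}(\partial B(N)) \lesssim N^{d-1} \cdot N^{d/2} \cdot N^{2-d} = N^{d/2+1}$.

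\textbf{Case $d > 6$.} In the mean-field regime I plan to invoke a tree-graph-type bound (derivable from BKR for loops, e.g.\ \cite[Corollary 3.4]{cai2024high}, together with the isomorphism theorem and the standard "last common point" decomposition of a doubly-connected event),
\begin{equation*}
\mathbb{P}\bigl(\bm{0} \xleftrightarrow{\ge 0} x, y\bigr) \lesssim \sum_{z \in \mathbb{Z}^d} \mathbb{P}\bigl(\bm{0} \xleftrightarrow{\ge 0} z\bigr) \mathbb{P}\bigl(z \xleftrightarrow{\ge 0} x\bigr) \mathbb{P}\bigl(z \xleftrightarrow{\ge 0} y\bigr),
\end{equation*}
and then substitute the two-point asymptotic $\mathbb{P}(u \xleftrightarrow{\ge 0} v) \asymp |u-v|^{2-d}$ from (\ref{two_point}). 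Exchanging the order of summation reduces the problem to bounding $\sum_{z} |z|^{2-d} S(z)^2$ with $S(z) := \sum_{x \in \partial B(N)} |z-x|^{2-d}$. By a standard surface-sum argument, $S(z) \lesssim N$ when $|z| \le 2N$ and $S(z) \lesssim N^{d-1} |z|^{2-d}$ when $|z| > 2N$. The near-field contribution is $N^2 \cdot \sum_{|z| \le 2N} |z|^{2-d} \asymp N^2 \cdot N^2 = N^4$ by (\ref{newcite3.52}), and the far-field contribution is $N^{2d-2} \cdot \sum_{|z| > 2N} |z|^{6-3d} \asymp N^{2d-2} \cdot N^{6-2d} = N^4$, the convergence of the latter lattice sum holding precisely because $d > 6$.

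\textbf{Main obstacle.} The delicate step is Case $d > 6$: one genuinely needs the tree-graph bound with Green's-function-type weights $|u-v|^{2-d}$, since any naive bound of the shape $\mathbb{P}(\bm{0} \xleftrightarrow{\ge 0} x, y) \lesssim |x-y|^{-\alpha} N^{2-d}$ with $\alpha < d-2$ would produce $\mathbf{Q}(\partial B(N)) \gtrsim N^{d-2}$ and overshoot the target $N^4$. Checking that the BKR-based derivation of the tree-graph inequality transfers cleanly to the metric-graph GFF level set (via the isomorphism and the identification of sign clusters with loop clusters) is the one technical point requiring care; all other steps are routine lattice sum calculations.
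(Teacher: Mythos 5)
For $3\le d\le 5$ your argument and the paper's are essentially the same: apply (\ref{addnew2.33}) with $M=N$, sum $|x-y|^{-(d/2-1)}$ over the shell to get $N^{d/2}$, and multiply by $|\partial B(N)|\asymp N^{d-1}$ and the prefactor $N^{2-d}$ to obtain $N^{d/2+1}$. That part is correct.

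For $d>6$ the paper does not give a new proof at all; it simply cites \cite[Lemma 4.1]{cai2024high} for $\mathbf{Q}(\partial B(N))\lesssim N^4$. You instead attempt a direct derivation starting from the tree-graph inequality
$\mathbb{P}(\bm{0}\xleftrightarrow{\ge 0}x,y)\lesssim\sum_z G(\bm{0},z)G(z,x)G(z,y)$,
and your subsequent surface-sum computation (both the near-field $N^2\cdot N^2$ and far-field $N^{2d-2}\cdot N^{6-2d}$ contributions, the latter using $d>6$) is correct. The gap is in the asserted starting bound: for the loop soup / GFF level set, BKR combined with the tree expansion of \cite[Lemma 3.5]{cai2024high} does \emph{not} directly give a one-vertex tree-graph bound. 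What it gives — and what (\ref{newineq_3.37}) in the paper uses for the two-arm/$y$-version of the same question — is a triple sum over $w_1,w_2,w_3$ with a ``loop vertex'' weight $|w_1-w_2|^{2-d}|w_2-w_3|^{2-d}|w_3-w_1|^{2-d}$ multiplying the three Green's-function legs, reflecting the fact that the branching is certified by a single loop hitting three small balls rather than a single lattice site. Reducing that six-Green's-function triple sum to the clean single-vertex form you wrote requires nontrivial lattice-sum manipulation relying on the triangle condition at $d>6$ (precisely the work packaged inside \cite[Lemma 4.1]{cai2024high}). You flag this as the ``one technical point requiring care'' but do not actually carry it out, so the high-dimensional half of your proof stands on an unjustified inequality. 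If you either cite \cite[Lemma 4.1]{cai2024high} for the $d>6$ case (as the paper does) or replace your tree-graph bound with the genuine triple-sum BKR output and redo the surface sums with the loop-vertex weight, the argument would close.
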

\begin{proof}
When $3\le d \le 5$, it follows from (\ref{addnew2.33}) that 
\begin{equation}\label{newineq_3.38}
	\begin{split}
		\mathbf{Q}(\partial B(N)) \lesssim &  \sum\nolimits_{x,y\in \partial B(N)}|x-y|^{-\frac{d}{2}+1} |x|^{2-d}\\
		\lesssim & |\partial B(N)| \cdot N^{2-d} \max_{x\in\partial B(N) }  \sum\nolimits_{ y\in \partial B(N)} |x-y|^{-\frac{d}{2}+1}\\
		\overset{}{\lesssim} & N\cdot N^{\frac{d}{2}}=N^{\frac{d}{2}+1},
	\end{split}
\end{equation}
where the inequality on the last line follows from a straightforward computation (see e.g., \cite[Inequality (4.4)]{cai2024high}). Meanwhile, when $d\ge 7$, it has been proved in \cite[Lemma 4.1]{cai2024high} that $\mathbf{Q}(\partial B(N)) \lesssim  N^4$. Now we complete the proof of this lemma. 
\end{proof}


We denote the conditional expectation of $\mathbf{Q}(A)$ given $\mathcal{F}_{\mathcal{C}^{\ge 0}_{n}}$ by 
\begin{equation}
	\mathcal{Q}^{\ge 0}_{n}(A):=  \mathbb{E}\big[\mathbf{Q}(A) \mid \mathcal{F}_{\mathcal{C}^{\ge 0}_{n}}  \big].
\end{equation}
Note that $\mathcal{Q}^{\ge 0}_{n}(A)>0$ if and only if $\bm{0}\xleftrightarrow{\ge 0} \partial B(n)$.

%
%
%


\begin{definition}
	For any $n\ge 1$, $A_1,...,A_k\subset \mathbb{Z}^d\setminus B(2dn)$ ($k\in \mathbb{N}^+$) and $a,b\ge 0$, we say $\mathcal{C}^{\ge 0}_{n}$ is $(a,b)$-good in $A_1,...,A_k$ if the following event happens:
	\begin{equation}\label{defKab}
		\mathsf{K}_{a,b}(n;A_1,...,A_k):=  \mathsf{H}_{a}(n) \cap \big[  \cap_{i=1}^{k}  \mathsf{Q}_b(n,A_i) \big],
	\end{equation}
	where the events $\mathsf{H}_{a}$ and $\mathsf{Q}_b$ are defined as 
	\begin{equation}\label{event_Ha}
		\mathsf{H}_{a}(n):=\big\{ 	\mathcal{H}^{\ge 0}_{n} \ge a n^{2-d} [\theta_d(n)]^{-1} \big\} ,  
	\end{equation}
	\begin{equation}\label{newQb}
		 \mathsf{Q}_b(n,A_i):=\big\{   \mathcal{Q}^{\ge 0}_{n}(A_i)\le b[\theta_d(n)]^{-1}  \mathbf{Q}(A_i)\big\}.
	\end{equation}
	\end{definition}

 For any $N\ge d^2n$, it follows from \cite[Lemma 2.8]{inpreparation} that
\begin{equation}\label{addnew_3.8}
	\mathbb{P}\big([\mathsf{H}_{a}(n)]^c, \bm{0}\xleftrightarrow{\ge 0} \partial B(N) \big) \lesssim a\cdot \theta_d(N). 
\end{equation}
Especially, by taking $N=d^2n$ in (\ref{addnew_3.8}), we have
\begin{equation}\label{lower_Ha}
	\begin{split}
		\mathbb{P}\big(\mathsf{H}_{a}(n) \big)\ge  & \mathbb{P}\big(\mathsf{H}_{a}(n), \bm{0}\xleftrightarrow{\ge 0} \partial B(d^2n) \big) \\
		=& \mathbb{P}\big(\bm{0}\xleftrightarrow{\ge 0} \partial B(d^2n) \big)- \mathbb{P}\big([\mathsf{H}_{a}(n)]^c, \bm{0}\xleftrightarrow{\ge 0} \partial B(d^2n) \big)\\
		\ge & (1-Ca)\theta_d(d^2n)\gtrsim \theta_d(n) 
			\end{split}
\end{equation}
for all sufficiently small $a>0$ (i.e., $0<a<c$ for some constant $c>0$). Moreover, for any $n\ge 1$ and  $A\subset \mathbb{Z}^d\setminus B(2dn)$, by (\ref{newQb}) one has 
\begin{equation}\label{finaluse_upper_Qb}
	\mathbf{Q}(A) \ge \mathbb{E}\big[ \mathcal{Q}^{\ge 0}_{n}(A) \mathbbm{1}_{[\mathsf{Q}_b(n,A)]^c} \big]  \ge b [\theta_d(n)]^{-1}   \mathbf{Q}(A) \cdot \mathbb{P}\big([\mathsf{Q}_b(n,A)]^c\big),
	\end{equation}
which implies that 
\begin{equation}\label{upper_Qb}
	\mathbb{P}\big([\mathsf{Q}_b(n,A)]^c\big) \le b^{-1} \theta_d(n). 
\end{equation}
Combining (\ref{lower_Ha}) and (\ref{upper_Qb}), we know that there exist $\cl\label{const_K_1}(d),\Cl\label{const_K_2}(d)>0$ such that for any $n\ge 1$, $k\in \mathbb{N}^+$ and $A_1,...,A_k\subset \mathbb{Z}^d\setminus B(2dn)$, 
\begin{equation}\label{newineq_3.24}
		\mathbb{P}\big(\mathsf{K}_{\cref{const_K_1},\Cref{const_K_2}k}(n;A_1,...,A_k) \big) \gtrsim  \theta_d(n). 
\end{equation}
For simplicity, we abbreviate $\mathsf{K}(n;A_1,...,A_k):= \mathsf{K}_{\cref{const_K_1},\Cref{const_K_2}k}(n;A_1,...,A_k)$.




Now we apply the second moment method to lower-bound $\mathrm{vol}(\mathcal{C}(\bm{0})\cap A_i)$ conditioned on $\mathsf{K}(n;A_1,...,A_k)$. To this end, we denote 
\begin{equation}\label{def_R_nA}
	\mathbf{R}(n,A):= [\theta_d(n) ]^{-1} \sum\nolimits_{x\in A} |x|^{2-d}, \ \ \forall n\ge 1,A\subset \mathbb{Z}^d.  
\end{equation}
On $\mathsf{K}(n;A_1,...,A_k)$, for each $1\leq i\leq k$, it follows from (\ref{newineq_3.13}) that
 \begin{equation}
		\mathbb{E}\big[ \mathrm{vol}(\mathcal{C}(\bm{0})\cap A_i)\mid \mathcal{F}_{\mathcal{C}^{\ge 0}_{n}}  \big] \gtrsim \mathbf{R}(n,A_i).  
\end{equation}
Thus, by the Paley-Zygmund inequality, we have
\begin{equation}\label{newineq3.27}
\begin{split}
	\mathbb{P}\big(\mathrm{vol}(\mathcal{C}^{\ge 0}(\bm{0})\cap A_i) \ge c\mathbf{R}(n,A_i) \mid \mathcal{F}_{\mathcal{C}^{\ge 0}_{n}} \big)
	\gtrsim   \frac{ \big[ \mathbf{R}(n,A_i) \big]^2 }{ \mathcal{Q}^{\ge 0}_{n}(A_i) }  \gtrsim \frac{ \big(\sum\limits_{x\in A} |x|^{2-d} \big)^2 }{  \theta_d(n) k \mathbf{Q}(A_i) }.
\end{split}
\end{equation}

With the preparations above, we are ready to establish the desired lower bound.

\begin{proof}[Proof of the lower bound in Theorem \ref{thm_1.4}]
 For any $M\ge 1$, $y\in \partial B(M)$ and $N\ge CM$, by taking $n=d^{-2}M$, $A_1=\{y\}$ and $A_2=\partial B(N)$ in (\ref{newineq_3.24}), one has 
 \begin{equation}\label{newineq3.28}
 	\begin{split}
 	\mathbb{P}(\mathsf{A}):= 		\mathbb{P}\big(\mathsf{K}(d^{-2}M;\{y\},\partial B(N)) \big) \gtrsim  \theta_d(d^{-2}M). 
 	\end{split}
 \end{equation}
Moreover, on the event $\mathsf{A}$, by (\ref{newineq3.27}) (and the simple fact that the volume of a set is lower-bounded by a positive number implies that this set is non-empty), we have 
\begin{equation}
	\begin{split}
		\mathbb{P}\big( \bm{0}\xleftrightarrow{\ge 0} y  \mid \mathcal{F}_{\mathcal{C}^{\ge 0}_{d^{-2}M}} \big) \gtrsim M^{2-d} [\theta_d(d^{-2}M)]^{-1}, 
	\end{split}
\end{equation}
\begin{equation}\label{newineq3.30}
	\begin{split}
		\mathbb{P}\big( \bm{0}\xleftrightarrow{\ge 0} \partial B(N)  \mid \mathcal{F}_{\mathcal{C}^{\ge 0}_{d^{-2}M}} \big)  \gtrsim \frac{N^2}{\theta_d(d^{-2}M)\mathbf{Q}(\partial B(N)) }. 
	\end{split}
\end{equation}
By applying the FKG inequality and using (\ref{newineq3.28})--(\ref{newineq3.30}), we have 
\begin{equation}\label{add3.37}
	\begin{split}
		&\mathbb{P}\big( \bm{0}\xleftrightarrow{\ge 0} y, \bm{0}\xleftrightarrow{\ge 0} \partial B(N)  \big)\\
		\ge & \mathbb{E}\Big[\mathbbm{1}_{\mathsf{A}}\cdot  \mathbb{P}\Big( \bm{0}\xleftrightarrow{\ge 0} y, \partial B(N)  \mid \mathcal{F}_{\mathcal{C}^{\ge 0}_{d^{-2}M}} \Big) \Big] \\
		\overset{(\mathrm{FKG})}{\ge } & \mathbb{E}\Big[\mathbbm{1}_{\mathsf{A}}\cdot  \mathbb{P}\Big( \bm{0}\xleftrightarrow{\ge 0} y  \mid \mathcal{F}_{\mathcal{C}^{\ge 0}_{d^{-2}M}} \Big) \cdot \mathbb{P}\Big( \bm{0}\xleftrightarrow{\ge 0}   \partial B(N)  \mid \mathcal{F}_{\mathcal{C}^{\ge 0}_{d^{-2}M}} \Big) \Big]\\
		\overset{(\ref*{newineq3.28})\text{--}(\ref*{newineq3.30})}{\gtrsim } & \frac{M^{2-d}N^2}{\theta_d(d^{-2}M)\mathbf{Q}(\partial B(N)) }\overset{(\ref*{one_arm_low}), (\ref*{one_arm_high}), \text{Lemma}\ \ref*{lemma_Q_BN}}{\gtrsim } M^{-[(\frac{d}{2}-1)\boxdot (d-4)]}\theta_d(N). \end{split}
\end{equation}
Dividing both sides by $\theta_d(N)$, we conclude the desired lower bound. 
\end{proof}


\subsection{Proof of the upper bound}\label{section3.2_upper}

For any $M\ge 1$, $y\in \partial B(M)$ and $N\ge CM$, note that $\{\bm{0}\xleftrightarrow{\ge 0} y, \partial B(N)\}\subset \{ \partial B(N) \xleftrightarrow{\ge 0 } \bm{0}, y \}$. Therefore, when $3\le d\le 5$, it directly follows from (\ref{addnew2.34}) that 
\begin{equation}
	\begin{split}
		\mathbb{P}\big( \bm{0}\xleftrightarrow{\ge 0} y ,\partial B(N)  \big) \le \mathbb{P}\big(  \partial B(N) \xleftrightarrow{\ge 0 } \bm{0}, y \big) \lesssim   M^{-\frac{d}{2}+1} \theta_d(N),
	\end{split}
\end{equation}
which implies the upper bound in Theorem \ref{thm_1.4}.

In what follows, we focus on the case when $d\ge 7$. By the isomorphism theorem, it suffices to prove that 
\begin{equation}\label{submit425}
	\mathbb{P}\big( \bm{0}\xleftrightarrow{ } y,\partial B(N)  \big) \lesssim M^{4-d}N^{-2}. 
\end{equation}
Let $\widetilde{\mathcal{C}}_*$ denote the cluster of $\cup \widetilde{\mathcal{L}}_{1/2}\cdot \mathbbm{1}_{\mathrm{ran}(\widetilde{\ell})\cap \mathbb{Z}^d \subset \mathcal{B}(N/2)}$ containing $\bm{0}$. We also denote by $\mathfrak{C}_*$ the union of loop clusters intersecting $\partial B(N)$ and composed of loops not included in $\widetilde{\mathcal{C}}_*$. On the event $ \{\bm{0}\xleftrightarrow{(\partial \mathcal{B}(N/2))} y \}\cap \{  \bm{0}\xleftrightarrow{ }  \partial B(N)  \}$, one has $\bm{0},y\in \widetilde{\mathcal{C}}_*$ and $\widetilde{\mathcal{C}}_*\cap \mathfrak{C}_* \neq \emptyset$. Therefore, by the tree expansion argument (see e.g., \cite[Lemma 3.5]{cai2024high}), there exists $\widetilde{\ell}\in \widetilde{\mathcal{L}}_{1/2}$ with $\mathrm{ran}(\widetilde{\ell})\cap \mathbb{Z}^d \subset \mathcal{B}(N/2)$ such that $\mathrm{ran}(\widetilde{\ell})\xleftrightarrow{} \bm{0}$, $\mathrm{ran}(\widetilde{\ell})\xleftrightarrow{} y$, and $\mathrm{ran}(\widetilde{\ell})\xleftrightarrow{} \mathfrak{C}_*$ (which implies $\mathrm{ran}(\widetilde{\ell}) \xleftrightarrow{} \partial B(N)$) happen disjointly. Thus, by the BKR inequality (see \cite[Corollary 3.4]{cai2024high}) and \cite[Lemma 2.3]{cai2024high}, we have 
\begin{equation}\label{newineq_3.37}
	\begin{split}
		&\mathbb{P}\big( \bm{0}\xleftrightarrow{(\partial \mathcal{B}(N/2))} y,\bm{0}\xleftrightarrow{ }  \partial B(N)  \big) \\
		\le &\sum_{w_1,w_2,w_3\in \mathcal{B}(N/2)}  \mathbb{P}\big(\exists \widetilde{\ell}\in \widetilde{\mathcal{L}}_{1/2}\ \text{with}\ \widetilde{B}_{w_i}(1)\cap  \mathrm{ran}(\widetilde{\ell})\neq \emptyset, \forall 1\le i\le 3  \big)\\
		&\ \ \ \ \ \ \ \ \ \ \ \ \ \ \ \ \ \ \cdot \mathbb{P}(\widetilde{B}_{w_1}(1)\xleftrightarrow{} \bm{0} ) \mathbb{P}(\widetilde{B}_{w_2}(1)\xleftrightarrow{} y ) \mathbb{P}(\widetilde{B}_{w_3}(1)\xleftrightarrow{} \partial B(N) ) \\
		\lesssim & \sum_{w_1,w_2,w_3\in \mathcal{B}(N/2)}  |w_1-w_2|^{2-d } |w_2-w_3|^{2-d } |w_3-w_1|^{2-d }\\
		&\ \ \ \ \ \ \ \ \ \ \ \ \ \ \ \ \ \ \cdot  |w_1|^{2-d}|w_2-y|^{2-d} N^{-2}\\
		\lesssim  &  M^{4-d}N^{-2}, 
	\end{split}
\end{equation}
where the last inequality follows from straightforward computations (e.g., by applying \cite[Inequality (4.11) and Lemma 4.3]{cai2024high} in turn).




Next, we estimate the probability of $\{\bm{0}\xleftrightarrow{(\partial \mathcal{B}(N/2))} y \}^c \cap \{  \bm{0}\xleftrightarrow{ } y , \partial B(N)   \} $. To this end, we employ the notation $\widetilde{\mathcal{C}}_*$ as defined in the previous paragraph. When $\{\bm{0}\xleftrightarrow{(\partial \mathcal{B}(N/2))} y \}^c \cap \{  \bm{0}\xleftrightarrow{ } y, \partial B(N)   \} $ happens, we know that $y\notin \widetilde{\mathcal{C}}_*$, and that there exists a loop $\widetilde{\ell}_*$ that intersects $\widetilde{\mathcal{C}}_*$ and can be connected to $y$ without using any loop included in $\widetilde{\mathcal{C}}_*$. Therefore, there exist $w_1\in \mathcal{B}(N/2), w_2 \in \mathbb{Z}^d$ such that $\widetilde{\ell}_*$ intersects $\widetilde{B}_{w_i}(1)$ for $i\in \{1,2\}$ and that $\{\widetilde{B}_{w_1}(1)\xleftrightarrow{(\partial \mathcal{B}(N/2))} \bm{0}\} \circ \{\widetilde{B}_{w_2}(1)\xleftrightarrow{} y \}$ happens without using $\widetilde{\ell}_*$. This further implies that there exists $w_3\in  \partial \mathcal{B}(N/2)$ such that $\widetilde{\ell}_*$ contains a sub-path starting from $w_3$ and hitting $\widetilde{B}_{w_1}(1)$ before $\partial \mathcal{B}(N/2+1)$. In addition, by \cite[Inequalities (6.21) and (6.22)]{inpreparation}, the total loop measure of such loops is at most 
\begin{equation}
	C|w_1-w_2|^{2-d}|w_2-w_3|^{2-d}|w_3-w_1|^{2-d} (\tfrac{N}{2}-|w_1|)^{-1}. 
\end{equation}
Meanwhile, it follows from \cite[Inequality (6.24)]{inpreparation} that 
\begin{equation}
	\mathbb{P}\big(\widetilde{B}_{w_1}(1)\xleftrightarrow{\partial \mathcal{B}(N/2)} \bm{0} \big)\lesssim (\tfrac{N}{2}-|w_1|)|w_1|^{1-d}. 
\end{equation}
To sum up, by the BKR inequality (see \cite[Corollary 3.4]{cai2024high}) we have 
\begin{equation}\label{newineq_3.40}
	\begin{split}
	&	\mathbb{P}\big(\{\bm{0}\xleftrightarrow{(\partial \mathcal{B}(N/2))} y \}^c \cap \{  \bm{0}\xleftrightarrow{ } y , \partial B(N)\} \big)\\
		\lesssim & \sum\nolimits_{i\in \{1,2,3\}} \mathbb{I}_i:= \sum\nolimits_{i\in \{1,2,3\}}\mathbb{S}(D_i^{(1)}\times D_i^{(2)}\times \partial \mathcal{B}(N/2)),
	\end{split}
\end{equation}
where $\mathbb{S}(\cdot )$ and $D_i^{(j)}$ for $i\in \{1,2,3\},j\in \{1,2\}$ are defined as
\begin{equation*}
	\mathbb{S}(\mathfrak{D}):= \sum_{(w_1,w_2,w_3)\in \mathfrak{D}}|w_1-w_2|^{2-d}|w_2-w_3|^{2-d}|w_3-w_1|^{2-d} |w_1|^{1-d}|w_2-y|^{2-d},
\end{equation*} 
\begin{equation*}
(D_i^{(1)},D_i^{(2)}):= 	\left\{
	\begin{array}{lr}
		\big(B(d^{-2}N), B_y(d^{-2}N) \big)  \ \ & \text{when}\ i=1;\\
		\big(\mathcal{B}(N/2)\setminus B(d^{-2}N),\mathbb{Z}^d\big) \ \ &\text{when}\ i=2;\\
		\big(B(d^{-2}N), [B_y(d^{-2}N)]^c\big)\ \ &\text{when}\ i=3.
	\end{array}
	\right.
\end{equation*}
Next, we estimate $\mathbb{I}_i$ for $i\in \{1,2,3\}$ separately.



 \textbf{For $\mathbb{I}_1$:} In this case, since $|w_i-w_3|\ge |w_3|-|w_1|\gtrsim N$ for $i\in \{1,2\}$, we have
 \begin{equation}\label{finish4.47}
 	\begin{split}
 		\mathbb{I}_1 \lesssim & N^{4-2d}|\partial \mathcal{B}(N/2)|\sum\nolimits_{(w_1,w_2)\in D_1^{(1)}\times D_1^{(2)}} |w_1-w_2|^{2-d}|w_1|^{1-d}|w_2-y|^{2-d}\\
 		\overset{ }{\lesssim} & N^{3-d} \sum\nolimits_{w_1\in B(d^{-2}N)}  |w_1-y|^{4-d}|w_1|^{1-d},
 		 	\end{split}
 \end{equation}
 where we used \cite[Lemma 4.3]{cai2024high} in the last inequality (which is nothing but a straightforward computation). Note that $|w_1-y|\gtrsim M$ for all $w_1\in B(cM)$. Therefore, when restricting the sum on the right-hand side of (\ref{finish4.47}) to $B(cM)$, we have
 \begin{equation}\label{finish4.48}
 \begin{split}
 		& \sum\nolimits_{w_1\in B(cM)}  |w_1-y|^{4-d}|w_1|^{1-d}\\
 		\lesssim  &M^{4-d}\sum\nolimits_{w_1\in B(cM)} |w_1|^{1-d}  \overset{(\ref*{newcite3.52})}{\lesssim } M^{5-d}. 
 \end{split}
 \end{equation}
 In addition, when restricting the same sum to $B(d^{-2}N)\setminus B(cM)$ (note that $|w_1|\gtrsim M$ for all $w_1\in B(d^{-2}N)\setminus B(cM)$), it follows from a straightforward computation (e.g., by applying \cite[(4.10)]{cai2024high}) that  
 \begin{equation}\label{finish4.49}
 \begin{split}
 	&\sum\nolimits_{w_1\in B(d^{-2}N)\setminus B(cM)}  |w_1-y|^{4-d}|w_1|^{1-d}\\
 	\lesssim 	&M^{-1}\sum\nolimits_{w_1\in B(d^{-2}N)\setminus B(cM)}  |w_1-y|^{4-d}|w_1|^{2-d}\lesssim M^{5-d}. 
 \end{split}
 \end{equation}
 Putting (\ref{finish4.47})--(\ref{finish4.49}) together, we get 
 \begin{equation}
 	\mathbb{I}_1 \lesssim  N^{3-d}M^{5-d}.  
 \end{equation}

 \textbf{For $\mathbb{I}_2$:} Since $|w_1|\gtrsim N$ for all $w_1\in [B(d^{-2}N)]^c$, one has 
 \begin{equation}
 	\begin{split}
 		\mathbb{I}_2 \lesssim & N^{1-d} \sum\nolimits_{(w_1,w_2,w_3)\in  D_2^{(1)}\times D_2^{(2)}\times \partial \mathcal{B}(N/2)} |w_1-w_2|^{2-d}|w_2-w_3|^{2-d}\\
 		&\cdot |w_3-w_1|^{2-d} |w_2-y|^{2-d}\\
 		\overset{}{\lesssim} &  N^{1-d} \sum\nolimits_{w_3\in \partial \mathcal{B}(N/2) }  |w_3-y|^{2-d} \\
 		\lesssim & N^{1-d} \cdot |\partial \mathcal{B}(N/2)|\cdot N^{2-d} \lesssim N^{2-d}, 
 	\end{split}
 \end{equation}
  where in the second inequality we used \cite[Inequality (4.11)]{cai2024high} (another straightforward computation).

 \textbf{For $\mathbb{I}_3$:} Noting that $|w_2-y|\gtrsim N$ for all $w_2\in [B_y(d^{-2}N)]^c$, we have 
 \begin{equation}\label{newineq_3.43}
 	\begin{split}
 			\mathbb{I}_3 \lesssim &  N^{2-d} \sum\nolimits_{(w_1,w_2,w_3)\in  D_3^{(1)}\times D_3^{(2)}\times \partial \mathcal{B}(N/2)} |w_1-w_2|^{2-d}|w_2-w_3|^{2-d}\\
 			&\cdot |w_3-w_1|^{2-d} |w_1|^{1-d}\\
 			\overset{ }{\lesssim} & N^{2-d} \sum\nolimits_{w_1\in B(d^{-2}N), w_3 \in  \partial \mathcal{B}(N/2)}  |w_3-w_1|^{6-2d} |w_1|^{1-d} \overset{ }{\lesssim} N^{2-d},
 			 	\end{split} 
 \end{equation}
 where for the inequalities in the last line we used \cite[Lemma 4.3]{cai2024high} and \cite[Lemma 2.21]{inpreparation} respectively (which are straightforward computations again).



Recall that $N\ge CM$. Thus, putting (\ref{newineq_3.40})--(\ref{newineq_3.43}) together, we obtain
\begin{equation}
\begin{split}
	&\mathbb{P}\big(\{\bm{0}\xleftrightarrow{(\partial \mathcal{B}(N/2))} y \}^c \cap \{  \bm{0}\xleftrightarrow{ } y,\partial B(N)\} \big) \\
\lesssim & N^{3-d}M^{5-d} + N^{2-d} \lesssim M^{4-d}N^{-2}. 
\end{split}	
\end{equation}
Combined with (\ref{newineq_3.37}), it implies (\ref{submit425}), and thus confirms Theorem \ref{thm_1.4}.   \qed

\section{Typical volumes of critical clusters}\label{section_order_typical_volume}


This section is mainly devoted to the proofs of Theorems \ref{thm_1.2} and \ref{thm_1.3}, whose details are provided in Sections \ref{subsection_proof_thm1.2} and \ref{subsection_proof_thm1.3} respectively.

\subsection{Proof of Theorem \ref{thm_1.2}}\label{subsection_proof_thm1.2}
Arbitrarily take $\epsilon>0$, $M\ge 1$ and $N\ge CM$. To establish Theorem \ref{thm_1.2}, it suffices to prove the following two inequalities:
\begin{equation}\label{newineq4.1}
	\widehat{\mathbb{P}}_{d,N}\big(  \mathcal{V}_{M}^{\ge 0} \ge \Cref{const_typical_volume_2}M^{(\frac{d}{2}+1)\boxdot 4} \big) \le \tfrac{1}{2}\epsilon, 
\end{equation} 
\begin{equation}\label{newineq4.2}
	\widehat{\mathbb{P}}_{d,N}\big(  \mathcal{V}_{M}^{\ge 0}\le \cref{const_typical_volume_3}M^{(\frac{d}{2}+1)\boxdot 4}  \big)\le \tfrac{1}{2}\epsilon,
\end{equation} 
where the constants $\Cref{const_typical_volume_2}(d,\epsilon),\cref{const_typical_volume_3}(d,\epsilon)>0$ will be determined later.

The first inequality (\ref{newineq4.1}) can be derived from the Markov's inequality and Theorem \ref{thm_1.4}. Precisely, we denote by $\widehat{\mathbb{E}}_{d,N}$ the expectation under $\widehat{\mathbb{P}}_{d,N}$. By the upper bound in Theorem \ref{thm_1.4}, we have 
\begin{equation}
	\begin{split}
		\widehat{\mathbb{E}}_{d,N}\big[ \mathcal{V}_{M}^{\ge 0}\big]\lesssim \sum\nolimits_{x\in B(M)} |x|^{-[(\frac{d}{2}-1)\boxdot (d-4)]} \overset{(\ref*{newcite3.52})}{\lesssim } M^{(\frac{d}{2}+1)\boxdot 4}. 
	\end{split}
\end{equation}
Thus, by taking a sufficiently large $\Cref{const_typical_volume_2}(d,\epsilon)$ and applying the Markov's inequality, we obtain (\ref{newineq4.1}): 
\begin{equation}\label{newineq4.4}
		\widehat{\mathbb{P}}_{d,N}\big(  \mathcal{V}_{M}^{\ge 0} \ge \Cref{const_typical_volume_2}M^{(\frac{d}{2}+1)\boxdot 4} \big)\le \tfrac{\widehat{\mathbb{E}}_{d,N}\big[ \mathcal{V}_{M}^{\ge 0}\big]}{\Cref{const_typical_volume_2}M^{(\frac{d}{2}+1)\boxdot 4}} \le \tfrac{1}{2}\epsilon. 
\end{equation}

Now we prove (\ref{newineq4.2}) separately for the cases $3\le d\le 5$ and $d\ge 7$. Without loss of generality, we assume that $\epsilon>0$ is sufficiently small and $M\ge e^{\epsilon^{-1}}$.

 \textbf{When $3\le d\le 5$.} Before delving into the details, we first provide some heuristics for this proof. Specifically, by the construction involved in the proof of Lemma \ref{lemma_approx_theta}, we know that the critical cluster connecting $\bm{0}$ and $\partial B(N)$ can be approximately sampled in the following two steps: 
 \begin{enumerate}
 
 	\item   Take a sequence of concentric annuli with increasing radii, and then sample the loop clusters crossing these annuli. On the event $\{\bm{0}\xleftrightarrow{} \partial B(N)\}$, for $3\le d\le 5$, if the outer radius of an annulus exceeds a large constant times its inner radius, then the cluster crossing this annulus is typically unique.

\textbf{P.S.:} For high-dimensional cases (i.e., $d\ge 7$), to ensure the uniqueness of the crossing cluster, according to (\ref{crossing_high}), the outer radius of the annulus must exceed the inner radius to the power of $\frac{d-4}{2}$ (which is strictly larger than $1$). Since the largest annulus must be contained in $B(N)\setminus B(cM)$ (to allow consideration of the connecting event between two clusters with distance $c'M$, which is necessary for obtaining a potentially large cluster volume; see the discussion below), this condition may be violated when $N$ is of the same order as $M$ (as allowed in Theorem \ref{thm_1.2}). This is why a different approach is needed for the cases when $d\ge 7$.

 \item   Connect these clusters sequentially using the loops not included in them.

 \end{enumerate}
Note that the connecting events for different pairs of neighboring clusters in Step (2) are independent since they are certified by disjoint collections of loops. Moreover, given these events, for every pair of neighboring clusters, there is a uniformly positive conditional probability that the sub-cluster connecting them has a significant volume (see Lemma \ref{finaluse_lemma_volume}). Thus, by considering sufficiently many annuli, we conclude that conditioned on hitting $\partial B(N)$, it is rare for the critical cluster $\mathcal{C}^{\ge 0}(\bm{0})$ to have a small volume.

 Building on the preceding heuristics, we now provide a detailed proof of (\ref{newineq4.2}) for $3\le d\le 5$. By the isomorphism theorem, it suffices to prove that 
 \begin{equation}\label{finaluse_5.5}
\mathbb{P}\big( \mathsf{H}^{\mathcal{V}} \big):=  	\mathbb{P} \big(  \mathcal{V}_{M} \le \cref{const_typical_volume_3}M^{\frac{d}{2}+1} , \bm{0}\xleftrightarrow{} \partial B(N) \big)\le \tfrac{1}{2}\epsilon\cdot \mathbb{P} \big(  \bm{0}\xleftrightarrow{} \partial B(N) \big),
 \end{equation}
 where $ \mathcal{V}_{M}:=\mathrm{vol}(\mathcal{C}(\bm{0})\cap B(M))$. Recall the notations $n_i=n_i(n_0,\lambda)$ and $m=m(n_0,\lambda,K)$ defined in (\ref{def_ni}) and (\ref{def_m}) respectively. Here we set $K=\lfloor \ln^{2}(1/\epsilon) \rfloor$ and require $m=M$. In addition, for each $j\in \{1,2\}$, we define $\mathsf{F}_{k}^{\mathcal{V},(j)}$ as the analogue of $\mathsf{F}_{k,h}^{\diamond,(j)}$ (see (\ref{def_F_diamond1}) and (\ref{def_F_diamond2})) obtained by replacing $\mathsf{H}_h^\diamond$ with $\mathsf{H}^{\mathcal{V}}$. Therefore, by $\mathsf{H}^{\mathcal{V}}\subset \mathsf{H}_0^{\mathbf{N}}=\{\bm{0} \xleftrightarrow{} \partial B(N)\}$, one has $\mathsf{F}_{k}^{\mathcal{V},(j)}\subset \mathsf{F}_{k,h}^{\mathbf{N},(j)}$ for all $j\in \{1,2\}$. Thus, it follows from (\ref{use3.49}) and (\ref{finaluse3.50}) that 
   \begin{equation}
 	\mathbb{P}\big(\mathsf{F}_{k}^{\mathcal{V},(1)}\big)   \overset{(\ref*{use3.49})}{\lesssim} \lambda^{-2} k^{-4} \mathbb{P}\big( \mathsf{H}_0^{\mathbf{N}} \big)\ \  \text{and}\ \ 	\mathbb{P}\big(\mathsf{F}_{k}^{\mathcal{V},(2)}\big) \overset{(\ref*{finaluse3.50})}{\lesssim} \lambda^{-1} k^{-2} \mathbb{P}\big( \mathsf{H}_0^{\mathbf{N}} \big). 
 	\end{equation}
 As a result, by taking $\lambda = C\epsilon^{-1}$ with a sufficiently large constant $C>0$, we know that in order to obtain (\ref{finaluse_5.5}), it suffices to show 
\begin{equation}\label{finaluse_5.7}
		\mathbb{P}\big( \mathsf{G}^{\mathcal{V}}\big) := \mathbb{P}\big( \mathsf{H}^{\mathcal{V}}  \setminus (\cup_{0\le k\le K} \mathsf{F}_{k}^{\mathcal{V},(1)}\cup \mathsf{F}_{k}^{\mathcal{V},(2)})\big)  \le  \tfrac{1}{4}\epsilon\cdot \mathbb{P} \big(  \bm{0}\xleftrightarrow{} \partial B(N) \big). 
\end{equation}

Recall the notations $\widehat{\mathfrak{C}}_k, \widehat{\mathcal{C}}_k$ and $\widecheck{\mathfrak{C}}_k$ for $0\le k\le K$ under the construction below (\ref{use3.53}). In addition, we denote $\widehat{\mathfrak{C}}_{-1}=\{\bm{0}\}$, $\widehat{\mathfrak{C}}_{K+1}=\partial B(N)$ and $\widecheck{\mathfrak{C}}_{-1}=\widecheck{\mathfrak{C}}_{K+1}=\emptyset$. For any $A,D\subset \widetilde{\mathbb{Z}}^d$ and $n\ge 2$, we denote 
\begin{equation}\label{finaluse_58}
	V(A,D,n):= \big\{x\in B(2n)\setminus B(\tfrac{1}{2}n): x\xleftrightarrow{(D)} A \big\}. 
\end{equation}
For $-1\le k\le K$, we define $V_k:=V(\widehat{\mathfrak{C}}_k ,\widecheck{\mathfrak{C}}_k\cup \widecheck{\mathfrak{C}}_{k+1} , n_{6k+5})$ and consider the event $\mathsf{V}_k:=\{ |V_k| \le  \cref{const_typical_volume_3}M^{\frac{d}{2}+1}\}$. By the same argument as in proving (\ref{use3.55}), we know that
 \begin{equation}\label{finaluse_59}
	\mathsf{G}^{\mathcal{V}} \subset  \big( \cap_{0\le k\le K}\{ \widehat{\mathcal{C}}_k\neq \emptyset \} \big)  \cap \big( \cap_{-1\le k\le K} \{  \widehat{\mathfrak{C}}_k\xleftrightarrow{(\widecheck{\mathfrak{C}}_k\cup \widecheck{\mathfrak{C}}_{k+1} )}  \widehat{\mathfrak{C}}_{k+1}\}\big) .
\end{equation}
In addition, the event on the right-hand side of (\ref{finaluse_59}) implies that $V_k\subset \mathcal{C}(\bm{0})$ for all $-1\le k\le K$, which further yields that $\mathsf{G}^{\mathcal{V}} \subset   \cap_{-1\le k\le K} \mathsf{V}_k$. Conseuqently, similar to (\ref{use3.55}), we obtain that 
\begin{equation}\label{finaluse_5.10}
	\mathbb{P}\big( \mathsf{G}^{\mathcal{V}}\big) \le \mathbb{I}^{\mathcal{V}}(d,M,N,\epsilon), 
\end{equation} 
where $\mathbb{I}^{\mathcal{V}}(\cdot )$ is defined as (let $D_{-1}=\{\bm{0}\}$, $D_{K+1}=\partial B(N)$ and $\widecheck{D}_{-1}=\widecheck{D}_{K+1}=\emptyset$)
\begin{equation*}
 	\int_{\widehat{\mathcal{C}}_k\neq \emptyset,\forall 0\le k\le K}   \prod_{k=-1}^{K} \mathbb{P}\big(\mathsf{V}_k, D_k \xleftrightarrow{(\widecheck{D}_k\cup \widecheck{D}_{k+1})}D_{k+1}  \big)d\widehat{\mathfrak{p}}_0(D_0)\cdots d\widehat{\mathfrak{p}}_{K}(D_k).
\end{equation*}

The following lemma establishes that the sub-cluster connecting the clusters in two neighboring annuli is likely to have a significant volume. 
\begin{lemma}\label{finaluse_lemma_volume}
	For any $3\le d\le 5$, there exist $\Cl\label{const_finaluse_lemma_volume1},\cl\label{const_finaluse_lemma_volume2},\cl\label{const_finaluse_lemma_volume3}>0$ such that for any $n\ge 1$,  $A_1\subset \widetilde{B}(\Cref{const_finaluse_lemma_volume1}^{-1}n)$, $A_2\in [\widetilde{B}(\Cref{const_finaluse_lemma_volume1}n)]^c$ and  $D\subset \widetilde{B}(\Cref{const_finaluse_lemma_volume1}^{-1}n)\cup [\widetilde{B}(\Cref{const_finaluse_lemma_volume1}n)]^c$, 
	\begin{equation}\label{ineq_finaluse_lemma_volume}
	\mathbb{P}\big(A_1 \xleftrightarrow{(D)} A_2, |V(A_1,D,n)|\ge \cref{const_finaluse_lemma_volume2} n^{\frac{d}{2}+1} \big) \ge \cref{const_finaluse_lemma_volume3} \mathbb{P}\big(A_1 \xleftrightarrow{(D)} A_2  \big). 	\end{equation}
\end{lemma}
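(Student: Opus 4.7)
\emph{Proof sketch.} The plan is to extend the harmonic-average, second-moment framework from Section~\ref{section3.1_lower} (which proved the lower bound in Theorem~\ref{thm_1.4}) by taking \emph{two} targets in the good event simultaneously: the far set $A_2$ (to produce the connection $A_1\xleftrightarrow{(D)} A_2$) and the intermediate annulus $A:=B(2n)\setminus B(n/2)$ (to produce the volume lower bound $|V(A_1,D,n)|\gtrsim n^{\frac{d}{2}+1}$). The intuition is that the same mechanism certifying $A_1\leftrightarrow A_2$ --- a cluster of $A_1$ whose harmonic average at scale $n$ is not too small --- automatically populates the annulus $A$ with volume of order $n^{\frac{d}{2}+1}$, which matches the typical volume of a critical one-arm cluster at scale $n$ predicted by (\ref{one_arm_low}) and (\ref{volume_low}).

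By the isomorphism theorem I translate to GFF level-sets. Set $n_*:=n/(4d)$, $D_1:=D\cap \widetilde{B}(\Cref{const_finaluse_lemma_volume1}^{-1}n)$, $D_2:=D\cap [\widetilde{B}(\Cref{const_finaluse_lemma_volume1}n)]^c$, and explore the positive cluster $\mathcal{C}_*:=\{v\in \widetilde{B}(n_*):v\xleftrightarrow{\widetilde{E}^{\ge 0}\cap \widetilde{B}(n_*)} A_1\}$, the analogue of $\mathcal{C}^{\ge 0}_n$ in (\ref{def_3.1}), together with its harmonic average $\overline{\mathcal{H}}_*$ on $\partial \mathcal{B}(dn_*)$ (cf.\ (\ref{finaluse_def_3.1})). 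Define the good event $\mathsf{K}$ as the analogue of (\ref{defKab}) with $\bm{0}$ replaced by $A_1$, $\theta_d(n)$ replaced by $p_1:=\mathbb{P}^{D_1}(A_1\xleftrightarrow{\ge 0}\partial B(n_*))$, and with two simultaneous conditional second-moment constraints of type (\ref{newQb}) imposed for the target sets $A_2$ and $A$; the arguments of (\ref{addnew_3.8})--(\ref{upper_Qb}) carry over and give $\mathbb{P}(\mathsf{K})\gtrsim p_1$. On $\mathsf{K}$, Lemma~\ref{newlemma3.1} combined with the Paley--Zygmund inequality (mirroring (\ref{newineq3.27})--(\ref{newineq3.30})) yields the two conditional lower bounds
\begin{align*}
\mathbb{P}\big(\mathcal{C}_*\xleftrightarrow{\ge 0} A_2\,\big|\,\mathcal{F}_{\mathcal{C}_*}\big) &\gtrsim \frac{\big(\sum_{x\in A_2}|x|^{2-d}\big)^2}{p_1\cdot \mathbf{Q}^{A_1,D}(A_2)},\\
\mathbb{P}\big(|V(A_1,D,n)|\ge \cref{const_finaluse_lemma_volume2}\,n^{\frac{d}{2}+1}\,\big|\,\mathcal{F}_{\mathcal{C}_*}\big) &\gtrsim 1,
\end{align*}
the second of which uses $\sum_{x\in A}|x|^{2-d}\asymp n^2$ together with the annulus version $\mathbf{Q}(A)\lesssim n^{\frac{d}{2}+3}$ of Lemma~\ref{lemma_Q_BN} (obtained from (\ref{addnew2.33}) by the same direct computation as in (\ref{newineq_3.38})). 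Both conditional events are increasing in the loops outside the exploration, so FKG lets me multiply them. Taking expectation against $\mathbbm{1}_{\mathsf{K}}$ and invoking quasi-multiplicativity (\ref{QM_ineq_1}) to write $\mathbb{P}(A_1\xleftrightarrow{(D)}A_2)\asymp p_1\cdot p_2$ with $p_2:=\mathbb{P}^{D_2}(A_2\xleftrightarrow{\ge 0}\partial B(n))$, the bound (\ref{ineq_finaluse_lemma_volume}) will follow provided
\begin{equation*}
\big(\textstyle\sum_{x\in A_2}|x|^{2-d}\big)^2\gtrsim p_2\cdot \mathbf{Q}^{A_1,D}(A_2),
\end{equation*}
where $\mathbf{Q}^{A_1,D}(A_2):=\sum_{x,y\in A_2}\mathbb{P}(A_1\xleftrightarrow{(D)}x,y)$. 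This is the natural analogue of Lemma~\ref{lemma_Q_BN} comparing the conditional second moment against the squared first moment and the outer one-arm probability.

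The main obstacle I anticipate is establishing the last displayed inequality --- a sharp upper bound on $\mathbf{Q}^{A_1,D}(A_2)$ uniformly in general $A_1\subset\widetilde{B}(\Cref{const_finaluse_lemma_volume1}^{-1}n)$, $D$, and $A_2\subset [\widetilde{B}(\Cref{const_finaluse_lemma_volume1}n)]^c$ --- together with extending the construction of $\mathsf{K}$ in Section~\ref{section3.1_lower} (which treats only $A_1=\{\bm{0}\}$ and $D=\emptyset$) to this generality. Both reductions rely on quasi-multiplicativity (\ref{QM_ineq_1}) and the cluster-decomposition estimates of Section~\ref{subsection_cluster_decompose}, but the bookkeeping required to track the absorbing boundary $D$ through the Paley--Zygmund step and through the harmonic-average lower bound is the most delicate part of the argument.
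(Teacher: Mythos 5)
Your high-level strategy --- explore a cluster at an inner scale, apply a conditional Paley--Zygmund argument for the volume in the annulus, and then FKG the volume event against the connection to $A_2$ --- is the right one in spirit, and indeed mirrors what the paper does via its auxiliary Lemma \ref{lemma_prepare_volume}. However, there is a genuine error in how you generalize the harmonic-average event from Section \ref{section3.1_lower}. You propose to define the good event $\mathsf{K}$ by replacing $\theta_d(n)$ with $p_1:=\mathbb{P}^{D_1}(A_1\xleftrightarrow{\ge 0}\partial B(n_*))$ in (\ref{event_Ha}), so that the threshold becomes $\overline{\mathcal{H}}_*\ge a\,n^{2-d}p_1^{-1}$. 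This is the natural substitution if you only want $\mathbb{P}(\mathsf{K})\gtrsim p_1$, but it is the \emph{wrong} threshold for the volume bound. On that event, Lemma \ref{newlemma3.1} gives a conditional first moment $\mathbb{E}[|V|\mid\mathcal{F}_{\mathcal{C}_*}]\asymp n^d\overline{\mathcal{H}}_*\gtrsim a\,n^2/p_1$, which equals $n^{\frac{d}{2}+1}$ only when $p_1\asymp\theta_d(n)\asymp n^{1-\frac{d}{2}}$ (the case $A_1=\{\bm 0\}$). For ``fat'' $A_1$ with $p_1\asymp 1$ --- a regime Lemma \ref{finaluse_lemma_volume} must cover, since it is applied in Section \ref{subsection_proof_thm1.2} to explored clusters $\widehat{\mathfrak{C}}_k$ of size comparable to the annulus --- your conditional first moment is only $\gtrsim n^2\ll n^{\frac{d}{2}+1}$, and neither the target threshold $c_2 n^{\frac{d}{2}+1}$ nor a constant-order conditional probability comes out of Paley--Zygmund. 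The correct normalization is a \emph{fixed} threshold $\overline{\mathcal{H}}_*\ge c_* n^{1-\frac{d}{2}}$, independent of $A_1$.

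The nontrivial content that makes this work is precisely what the paper imports from its companion paper via \cite[Lemma 6.1]{inpreparation} inside Lemma \ref{lemma_prepare_volume}: for general $A$ and $D$, the event $\{\mathcal{H}^{\mathrm{in}}_D(A,n)\ge c_* n^{1-\frac{d}{2}}\}$ (together with the absence of a negative-side crossing) still occurs with probability $\gtrsim\mathbb{P}^D(A\xleftrightarrow{\ge 0}\partial B(n))$. This is not a simple rescaling of the $\{\bm 0\}$-only estimates (\ref{addnew_3.8})--(\ref{upper_Qb}); it is a substantive statement about the harmonic average of a general explored cluster. You correctly anticipate that bounding $\mathbf{Q}^{A_1,D}(A_2)$ uniformly in $A_1,D$ is delicate (the paper handles the analogue via (\ref{final_addnew2.33}), \cite[Proposition 1.9]{inpreparation}, and Lemma \ref{lemma_boxtobox} in the proof of Lemma \ref{lemma_prepare_volume}), but the more fundamental missing ingredient is the fixed-threshold harmonic-average lower bound for general $A_1$. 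A secondary difference worth noting: the paper truncates the volume to $\widehat{V}^{\mathrm{in}}_{\Cref{const_lemma_prepare_volume2}}(A_1,n)$ so the volume event is measurable with respect to the inner exploration $\mathcal{C}^{\mathrm{in}}(A_1,\Cref{const_lemma_prepare_volume2}n)$, and then handles $A_2$ symmetrically through an independent outer exploration $\mathcal{C}^{\mathrm{out}}(A_2,\Cref{const_lemma_prepare_volume2}n)$ and the two-sided harmonic-average estimate \cite[Corollary 2.7]{inpreparation}; your single-exploration, conditional-FKG route avoids the truncation issue but instead requires a conditional lower bound on $\mathbb{P}(\mathcal{C}_*\leftrightarrow A_2\mid\mathcal{F}_{\mathcal{C}_*})$ for a general target set $A_2$, which has its own bookkeeping and is not spelled out.
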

 Before presenting its proof, we first show how to derive (\ref{finaluse_5.7}) from Lemma \ref{finaluse_lemma_volume}. In fact, recalling that we set $\lambda=C\epsilon^{-1}$ and $m=M$ in (\ref{def_ni}) and (\ref{def_m}), one has $n_0\ge  c_*(d,\epsilon)M$ for some constant $c_*(d,\epsilon)>0$. As a result, by taking $\cref{const_typical_volume_3}=\frac{1}{2}\cref{const_finaluse_lemma_volume2}  c_*^{\frac{d}{2}+1}$, we have $\cref{const_typical_volume_3}M^{\frac{d}{2}+1}\le \frac{1}{2}\cref{const_finaluse_lemma_volume2} n_0^{\frac{d}{2}+1}$. Thus, for each $-1\le k\le K$, by Lemma \ref{finaluse_lemma_volume} one has
 \begin{equation}\label{finaluse512}
 	\mathbb{P}\big(\mathsf{V}_k, D_k \xleftrightarrow{(\widecheck{D}_k\cup \widecheck{D}_{k+1})}D_{k+1}  \big) \le (1-\cref{const_finaluse_lemma_volume3} ) \mathbb{P}\big( D_k \xleftrightarrow{(\widecheck{D}_k\cup \widecheck{D}_{k+1})}D_{k+1}  \big). \end{equation}
 Plugging (\ref{finaluse512}) into (\ref{finaluse_5.10}), we obtain (\ref{finaluse_5.7}) as follows (recall $\mathbb{I}(\cdot)$ below (\ref{use3.55})): 
 \begin{equation}
 \begin{split}
 		\mathbb{P}\big( \mathsf{G}^{\mathcal{V}}\big) \le & (1-\cref{const_finaluse_lemma_volume3} )^{K+2}\mathbb{I}( \Omega , n_0,\lambda,K,\mathbf{N}) \\
 		\overset{K=\lfloor \ln^{2}(1/\epsilon) \rfloor, (\ref*{finaluse_375})}{\le }  &  \tfrac{1}{4}\epsilon\cdot \mathbb{P} \big(  \bm{0}\xleftrightarrow{} \partial B(N) \big). 
 \end{split}
 \end{equation}
 To sum up, we confirm (\ref{newineq4.2}) for $3\le d\le 5$ assuming Lemma \ref{finaluse_lemma_volume} holds.

  To establish Lemma \ref{finaluse_lemma_volume}, we need the following lemma as preparation. Before stating this lemma, we first introduce some notations as follows. For any $n\ge 1$ and $A\subset \widetilde{B}(d^{-4}n)$, we denote the cluster
  \begin{equation}\label{finaluse_514}
  	 \mathcal{C}^{\mathrm{in}}(A,n):=\big\{v\in \widetilde{B}(d^{-2}n): v\xleftrightarrow{\widetilde{E}^{\ge 0}\cap \widetilde{B}(d^{-2}n)} A\ \text{or}\ v\xleftrightarrow{\widetilde{E}^{\le 0}\cap \widetilde{B}(d^{-2}n)} A \big\}. 
  \end{equation}  
 Based on this cluster, we consider the harmonic average 
 \begin{equation}\label{finaluse_515}
 		\mathcal{H}^{\mathrm{in}}_D(A,n):=|\partial \mathcal{B}(d^{-1}n)|^{-1}\sum\nolimits_{y\in \partial \mathcal{B}(d^{-1}n)}\mathcal{H}_y(\mathcal{C}^{\mathrm{in}}(A,n) \cup D).
 \end{equation}
 Similar to $V(\cdot )$ in (\ref{finaluse_58}), we denote  
 \begin{equation}
 	V^{\mathrm{in}}(A,n):= \big\{x\in B(2n)\setminus B(\tfrac{1}{2}n): x\xleftrightarrow{\ge 0} A  \big\}, 
 \end{equation}
 \begin{equation}
	\widehat{V}^{\mathrm{in}}_{\lambda}(A,n):= \big\{x\in B(2n)\setminus B(\tfrac{1}{2}n): x\xleftrightarrow{\widetilde{E}^{\ge 0}\cap \widetilde{B}(d^{-2}\lambda n)} A  \big\}, \ \ \forall \lambda\ge d^4.
\end{equation}
 Note that $\widehat{V}^{\mathrm{in}}_{\lambda}(A,n)$ is measurable with respect to $\mathcal{F}_{\mathcal{C}^{\mathrm{in}}(A,\lambda n)}$.

 \begin{lemma}\label{lemma_prepare_volume}
 For any $3\le d\le 5$, there exist $\Cl\label{const_lemma_prepare_volume1},\Cl\label{const_lemma_prepare_volume2}, \cl\label{const_lemma_prepare_volume4},\cl\label{const_lemma_prepare_volume3},\cl\label{const_lemma_prepare_volume5}>0$ such that for any $n\ge 1$, $A\subset \widetilde{B}(\Cref{const_lemma_prepare_volume1}^{-1}n)$ and $D\subset \widetilde{B}(\Cref{const_lemma_prepare_volume1}^{-1}n)\cup [\widetilde{B}(\Cref{const_lemma_prepare_volume1} n)]^c$, 
 	\begin{equation}\label{finaluse518}
 	\begin{split}
 		& \mathbb{P}^D\big(  | \widehat{V}^{\mathrm{in}}_{\Cref{const_lemma_prepare_volume2}}(A,n)  | \ge \cref{const_lemma_prepare_volume4}n^{\frac{d}{2}+1},  \mathcal{H}_D^{\mathrm{in}}(A, \Cref{const_lemma_prepare_volume2} n)\ge \cref{const_lemma_prepare_volume3}n^{-\frac{d}{2}+1} , \{A\xleftrightarrow{\le 0} \partial B(d^{-2}n)\}^c \big)\\
 		  \gtrsim &  \mathbb{P}^D\big(A \xleftrightarrow{\ge 0} \partial B(n) \big). 
 	\end{split}
 	\end{equation}
 \end{lemma}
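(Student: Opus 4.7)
The plan is to construct the event on the left of (\ref{finaluse518}) as the intersection of three separately-likely events related to the positive cluster of $A$ explored inside $\widetilde{B}(d^{-2}\Cref{const_lemma_prepare_volume2} n)$, which I denote $\mathcal{C}^*$.

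The harmonic-average condition is handled via the general-$A$, general-$D$ extension of the bound (\ref{addnew_3.8}) from \cite[Lemma 2.8]{inpreparation} applied to $\mathcal{C}^{\mathrm{in}}(A, \Cref{const_lemma_prepare_volume2} n)$. Choosing $\cref{const_lemma_prepare_volume3}$ small enough ensures that the event $\{\mathcal{H}_D^{\mathrm{in}}(A, \Cref{const_lemma_prepare_volume2} n) < \cref{const_lemma_prepare_volume3} n^{-(d/2-1)}\} \cap \{A \xleftrightarrow{\ge 0} \partial B(n)\}$ contributes only a small fraction of $\mathbb{P}^D(A \xleftrightarrow{\ge 0} \partial B(n))$; here Lemma \ref{lemma_boxtobox} is used to compare connecting probabilities at scales $n$ and $d^{-2}\Cref{const_lemma_prepare_volume2} n$. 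For the volume condition, I condition on $\mathcal{F}_{\mathcal{C}^*}$ together with the good-harmonic-average event and apply Paley--Zygmund as in Section \ref{section3.1_lower}: Lemma \ref{newlemma3.1} yields $\mathbb{E}^D[|\widehat{V}^{\mathrm{in}}_{\Cref{const_lemma_prepare_volume2}}(A,n)| \mid \mathcal{F}_{\mathcal{C}^*}] \gtrsim n^{d/2+1}$ (the harmonic-average lower bound times the annular sum $\sum_{x \in B(2n) \setminus B(n/2)} |x|^{2-d} \asymp n^2$), while the conditional second moment is controlled by the $\mathbb{P}^D$-adapted analogue of Lemma \ref{lemma_Q_BN}, whose derivation relies on the quasi-multiplicativity (\ref{QM_ineq_1}).

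The main obstacle is incorporating the third condition $\mathsf{E}_3 := \{A \not\xleftrightarrow{\le 0} \partial B(d^{-2}n)\}$, which is not implied by the first two: a general $A$ can simultaneously support a positive crossing to $\partial B(n)$ (via one sign cluster) and a negative crossing to $\partial B(d^{-2}n)$ (via a different sign cluster through $A$). The plan is to use the loop-soup representation from Section \ref{subsection_iso_thm}: each loop cluster intersecting $A$ carries an independent fair coin for its sign-label, so simultaneous occurrence of $\{A \xleftrightarrow{\ge 0} \partial B(n)\}$ and $\mathsf{E}_3^c$ requires two distinct loop clusters through $A$ with opposite labels. Combining the cluster-decomposition estimate (\ref{2.26}) (via Lemma \ref{lemma_new_decomposition}) with the factor $\tfrac{1}{4}$ arising from the label independence, one obtains $\mathbb{P}^D(\{A \xleftrightarrow{\ge 0} \partial B(n)\} \cap \mathsf{E}_3^c) \le (1-c)\,\mathbb{P}^D(A \xleftrightarrow{\ge 0} \partial B(n))$ for some $c > 0$. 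A carefully tuned combination of the three estimates --- using FKG between the positive-crossing and good-harmonic-average events, and then subtracting off the negative-crossing contribution --- yields (\ref{finaluse518}).
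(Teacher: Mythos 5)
Your proposal correctly identifies the overall Paley--Zygmund skeleton, but there are two substantive gaps, one of which is exactly the place where the paper pulls in an external tool you did not anticipate.

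First, the way you propose to incorporate $\mathsf{E}_3=\{A\not\xleftrightarrow{\le 0}\partial B(d^{-2}n)\}$ does not close. You correctly observe that FKG cannot be applied directly, since $\mathsf{E}_3$ is decreasing and the other two events are increasing. Your fallback --- show $\mathbb{P}^D(\{A\xleftrightarrow{\ge 0}\partial B(n)\}\cap\mathsf{E}_3^c)\le(1-c)\mathbb{P}^D(A\xleftrightarrow{\ge 0}\partial B(n))$ via loop-soup sign-independence and then do a ``carefully tuned combination'' --- is not sufficient, because the implicit constant in the lower bound for $\mathbb{P}^D(\mathsf{A}_1\cap\mathsf{A}_2)$ (the volume and harmonic-average events) coming from your Paley--Zygmund step has no reason to exceed $1-c$; an inclusion--exclusion subtraction could easily yield a negative lower bound. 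The paper sidesteps this entirely by citing the companion paper's Lemma~6.1, which delivers the event $\mathsf{F}^{\mathrm{in}}:=\{\mathcal{H}^{\mathrm{in}}_D(A,n)\ge c_*n^{-d/2+1}\}\cap\mathsf{E}_3$ already bundled together with $\mathbb{P}^D(\mathsf{F}^{\mathrm{in}})\gtrsim\mathbb{P}^D(A\xleftrightarrow{\ge 0}\partial B(n))$ in a single stroke; the event $\mathsf{E}_3$ is never extracted or controlled separately, so the sign-cluster combinatorics you sketch (which is not implausible in spirit but is underdeveloped as written) are entirely avoided.

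Second, your outline does not address the scale mismatch that is the structural core of the paper's proof: the volume quantity $\widehat{V}^{\mathrm{in}}_{\Cref{const_lemma_prepare_volume2}}(A,n)$ lives at the annulus around radius $n$ and is $\mathcal{F}_{\mathcal{C}^{\mathrm{in}}(A,\Cref{const_lemma_prepare_volume2} n)}$-measurable, but the harmonic average in the conclusion is $\mathcal{H}^{\mathrm{in}}_D(A,\Cref{const_lemma_prepare_volume2} n)$ at the \emph{larger} scale. Getting a lower bound on the harmonic average at scale $\Cref{const_lemma_prepare_volume2} n$ from information at scale $n$ is not automatic. The paper handles this with a second Paley--Zygmund pass: it introduces the second-moment events $\mathsf{Q}^{\mathrm{in}}_{C_\ddagger}(1)$ and $\mathsf{Q}^{\mathrm{in}}_{C_\ddagger'}(\Cref{const_lemma_prepare_volume2})$ controlling the cluster volume at two scales, defines a conditional-probability event $\mathsf{J}^{\mathrm{in}}$ measurable in $\mathcal{F}_{\mathcal{C}^{\mathrm{in}}(A,\Cref{const_lemma_prepare_volume2} n)}$, shows via a nested-conditioning argument that $\mathsf{J}^{\mathrm{in}}$ is likely given $\mathcal{F}_{\mathcal{C}^{\mathrm{in}}(A,n)}$, and then deduces the harmonic-average bound at scale $\Cref{const_lemma_prepare_volume2} n$ from $\mathsf{J}^{\mathrm{in}}$ via Lemma~\ref{newlemma3.1}. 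It also needs a separate bound, via Lemma~\ref{lemma_Ato_x_boundary}, showing $\widecheck{V}^{\mathrm{in}}_{\Cref{const_lemma_prepare_volume2}}(A,n)$ (the part of the volume arising from paths escaping $\widetilde{B}(d^{-2}\Cref{const_lemma_prepare_volume2} n)$) is negligible, so that the truncated $\widehat{V}^{\mathrm{in}}_{\Cref{const_lemma_prepare_volume2}}(A,n)$ rather than the full $V^{\mathrm{in}}(A,n)$ carries the volume. Finally, the two good events (large $\widehat{V}^{\mathrm{in}}$, large $\mathcal{H}^{\mathrm{in}}_D$ at scale $\Cref{const_lemma_prepare_volume2}n$) are both increasing, so FKG does apply in the last step. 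None of this two-scale chaining appears in your proposal, and it is not a cosmetic detail: it is what makes the lemma usable in the iterative construction of Lemma~\ref{finaluse_lemma_volume}.
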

 \begin{proof}
 	 According to \cite[Lemma 6.1]{inpreparation}, it is known that when $\Cref{const_lemma_prepare_volume1}$ is sufficiently large, there exist $c_*>0$ such that 
 	  \begin{equation} \label{finaluse_5.14}
 \begin{split}
 	 	\mathbb{P}^{D}\big(\mathsf{F}^{\mathrm{in}} \big):= & \mathbb{P}^{D}\big(\mathcal{H}^{\mathrm{in}}_D(A,n)\ge c_* n^{-\frac{d}{2}+1}, \{A\xleftrightarrow{\le 0} \partial B(d^{-2}n)\}^c  \big) \\
 	 	\gtrsim   &  \mathbb{P}^{D}\big( A\xleftrightarrow{\ge 0} \partial B(n)  \big). 
 \end{split}
 \end{equation}
 Note that $\mathsf{F}^{\mathrm{in}}$ is measurable with respect to $\mathcal{F}_{ \mathcal{C}^{\mathrm{in}}(A,n)}$.

Similar to (\ref{newQb}), for any $\lambda \ge  1$ and $b>0$, we consider the event 
 \begin{equation*}\label{finaluse_5.15}
 	 \mathsf{Q}^{\mathrm{in}}_{b}(\lambda):= \big\{ \mathbb{E}^{D}\big[  	 |V^{\mathrm{in}}(A,\lambda n)  |^2 \mid \mathcal{F}_{ \mathcal{C}^{\mathrm{in}}(A,n)} \big] \le b(\lambda n)^{d+2}  \big\}.
 \end{equation*}
 As an extension of (\ref{upper_Qb}), we claim that 
 \begin{equation}\label{finaluse_claim521}
 	\mathbb{P}^D\big( [ \mathsf{Q}^{\mathrm{in}}_{b}(\lambda)]^c \big) \lesssim b^{-1} \mathbb{P}^D\big(A\xleftrightarrow{\ge 0}  \partial B(n) \big). 
 \end{equation}
 To see this, by applying the AM-QM inequality and using (\ref{final_addnew2.33}), we have 
 \begin{equation}\label{finaluse522}
 	\begin{split}
 		&\mathbb{E}^{D}\big[  	| V^{\mathrm{in}}(A,\lambda n) |^2  \big] \\
 		\overset{(\text{AM-QM})}{\lesssim }  & \lambda n \sum\nolimits_{\frac{1}{2}\lambda n\le n'\le 2\lambda n}  \mathbb{E}^D\big[  \big(\sum\nolimits_{x\in \partial B(n')}\mathbbm{1}_{x\xleftrightarrow{\ge 0}A} \big)^2 \big]\\
 		=& \lambda n \sum\nolimits_{\frac{1}{2}\lambda n\le n'\le 2\lambda n}  \sum\nolimits_{x,y\in \partial B(n')}\mathbb{P}^D\big(  A\xleftrightarrow{\ge 0} x,y \big) \\
 		\overset{(\ref*{final_addnew2.33})}{\lesssim} & \lambda n \sum\nolimits_{\frac{1}{2}\lambda n\le n'\le 2\lambda n}  \sum\nolimits_{x \in \partial B(n')}\mathbb{P}^D\big(  A\xleftrightarrow{\ge 0} x \big) \sum\nolimits_{y\in \partial B(n')}|x-y|^{-\frac{d}{2}+1}\\
 		\lesssim &  (\lambda n)^{\frac{d}{2}+1}  \sum\nolimits_{x \in B(2\lambda n)\setminus B(\frac{1}{2}\lambda n)} \mathbb{P}^D\big(  A\xleftrightarrow{\ge 0} x \big). 
 	\end{split}
 \end{equation}
 In addition, it follows from \cite[Proposition 1.9]{inpreparation} and Lemma \ref{lemma_boxtobox} that  
 \begin{equation}\label{finaluse523}
 	\mathbb{P}^D\big(  A\xleftrightarrow{\ge 0} x \big) \asymp (\lambda n)^{-\frac{d}{2}+1}\mathbb{P}^D\big(  A\xleftrightarrow{\ge 0} \partial B(n) \big), \ \ \forall  x \in B(2\lambda n)\setminus B(\tfrac{1}{2}\lambda n). 
 \end{equation}
 Plugging (\ref{finaluse523}) into (\ref{finaluse522}), we obtain 
 \begin{equation}
 	\mathbb{E}^{D}\big[  	| V^{\mathrm{in}}(A,\lambda n) |^2  \big] \lesssim  (\lambda n)^{d+2} \mathbb{P}^D\big(  A\xleftrightarrow{\ge 0} \partial B(n) \big). 
 \end{equation}
 As in (\ref{finaluse_upper_Qb}) and (\ref{upper_Qb}), this bound further implies the claim (\ref{finaluse_claim521}).

 Note that $V^{\mathrm{in}}(A,n) \subset 
   \widehat{V}^{\mathrm{in}}_{\lambda}(A,n) \cup   \widecheck{V}^{\mathrm{in}}_{\lambda}(A,n)$, where $\widecheck{V}^{\mathrm{in}}_{\lambda}(A,n)$ is defined as 
 \begin{equation}\label{newfinal_524}
 	\widecheck{V}^{\mathrm{in}}_{\lambda}(A,n):= \big\{x\in B(2n)\setminus B(\tfrac{1}{2}n): x\xleftrightarrow{\ge 0} A , \partial B(d^{-2}\lambda n) \big\}. 
 \end{equation}
For any $\lambda\ge d^4$ and $a_1,a_2>0$, we consider the event 
 \begin{equation}\label{def_widehat_V_a}
 	\widehat{\mathsf{V}}^{\mathrm{in}}_{a_1,a_2}(\lambda ):= \big\{ \mathbb{P}^D\big( |\widecheck{V}^{\mathrm{in}}_{\lambda}(A,n)| \ge a_1n^{\frac{d}{2}+1}  \mid \mathcal{F}_{ \mathcal{C}^{\mathrm{in}}(A,n)}\big) \ge a_2 \big\}. 
 \end{equation}
For the probability of this event, we claim that 
 \begin{equation}\label{finaluse_claim525}
 	\mathbb{P}^{D}\big( \widehat{\mathsf{V}}^{\mathrm{in}}_{a_1,a_2}(\lambda ) \big) \lesssim (a_1a_2)^{-1}\lambda^{-\frac{d}{2}+1} \mathbb{P}^D\big(A\xleftrightarrow{\ge 0}  \partial B(n) \big).  
 \end{equation}
  To achieve this, for any $x\in B(2n)\setminus B(\tfrac{1}{2}n)$, by $\big\{x\xleftrightarrow{\ge 0} A , \partial B(d^{-2}\lambda n)\big\}\subset \big\{A\xleftrightarrow{\ge 0} x , \partial B(d^{-2}\lambda n)\big\}$ and Lemma \ref{lemma_Ato_x_boundary}, we have  
 \begin{equation}
 \begin{split}
 	 	  \mathbb{P}^{D}\big(x\xleftrightarrow{\ge 0} A , \partial B(d^{-2}\lambda n) \big)  
 	\overset{(\ref*{iso})}{\lesssim   }   &	\mathbb{P}\big(A\xleftrightarrow{(D)} x , \partial B(d^{-2}\lambda n)\big)\\
 	\overset{\text{Lemma}\ \ref*{lemma_Ato_x_boundary},(\ref*{iso})}{\lesssim }   & (\lambda n)^{-\frac{d}{2}+1} \mathbb{P}^{D}\big(A \xleftrightarrow{\ge 0}  \partial B(   n) \big). 
 \end{split}
 \end{equation}
 Consequently, by summing over all $x\in B(2n)\setminus B(\tfrac{1}{2}n)$, one has 
 \begin{equation}\label{newfinal_528}
 	\mathbb{E}^{D}\big[ |\widecheck{V}^{\mathrm{in}}_{\lambda}(A,n)| \big] \lesssim \lambda^{-\frac{d}{2}+1} n^{\frac{d}{2}+1} \mathbb{P}^{D}\big(A \xleftrightarrow{\ge 0}  \partial B(   n) \big). 
 \end{equation}
 Meanwhile, it follows from the definition of $\widehat{\mathsf{V}}^{\mathrm{in}}_{a_1,a_2}(\lambda )$ in (\ref{def_widehat_V_a}) that 
 \begin{equation}
 	\begin{split}
 		\mathbb{E}^{D}\big[ | \widecheck{V}^{\mathrm{in}}_{\lambda}(A,n)| \big] \ge &\mathbb{E}^{D}\Big[ \mathbbm{1}_{\widehat{\mathsf{V}}^{\mathrm{in}}_{a_1,a_2}(\lambda )} \mathbb{E}\big[ |\widecheck{V}^{\mathrm{in}}_{\lambda}(A,n)| \mid  \mathcal{F}_{ \mathcal{C}^{\mathrm{in}}(A,n)} \big] \Big] \\
 		\gtrsim  & 	\mathbb{P}^{D}\big( \widehat{\mathsf{V}}^{\mathrm{in}}_{a_1,a_2}(\lambda ) \big)  \cdot a_1a_2n^{\frac{d}{2}+1}. 
 	\end{split}
 \end{equation}
 Combined with (\ref{newfinal_528}), it implies the claim (\ref{finaluse_claim525}).

 By taking a sufficiently large $C_\ddagger>0$ and using (\ref{finaluse_5.14}) and (\ref{finaluse_claim521}), one has  
 \begin{equation} \label{finaluse_530}
 \begin{split}
 	 \mathbb{P}^{D}\big( \mathsf{F}^{\mathrm{in}} \cap  \mathsf{Q}^{\mathrm{in}}_{C_\ddagger}(1)   \big)    
 			  \ge     c_{\vartriangle}  \mathbb{P}^D\big(A\xleftrightarrow{\ge 0}  \partial B(n) \big), 
 \end{split}
 \end{equation}
 for some constant $c_{\vartriangle}>0$. It follows from Lemma \ref{newlemma3.1} that on the event $\mathsf{F}^{\mathrm{in}}$,
\begin{equation}
 	\mathbb{E}^{D}\big[ | V^{\mathrm{in}}(A, n) | \mid \mathcal{F}_{ \mathcal{C}^{\mathrm{in}}(A,n)} \big] \gtrsim   n^{\frac{d}{2}+1}.
 \end{equation}
Therefore, on the event $\mathsf{F}^{\mathrm{in}} \cap  \mathsf{Q}^{\mathrm{in}}_{C_\ddagger}(1) $, by the Paley-Zygmund inequality, we know that there exist constants $c_\dagger, c_{\star}>0$ such that 
 \begin{equation}\label{finaluse5.32}
 	\begin{split}
 		\mathbb{P}^D\big( |V^{\mathrm{in}}(A,  n) | \ge c_\dagger n^{\frac{d}{2}+1}   \mid \mathcal{F}_{ \mathcal{C}^{\mathrm{in}}(A,n)} \big) \ge  c_{\star}. 
 	\end{split}
 \end{equation}
  Meanwhile, by (\ref{finaluse_claim525}), we can choose a sufficiently large $\Cref{const_lemma_prepare_volume2}>0$ such that 
  \begin{equation}
  	\mathbb{P}^{D}\big( \widehat{\mathsf{V}}^{\mathrm{in}}_{\frac{1}{2}c_\dagger,\frac{1}{2}c_{\star} }(\Cref{const_lemma_prepare_volume2} ) \big) \le \tfrac{1}{2}c_{\vartriangle} \mathbb{P}^D\big(A\xleftrightarrow{\ge 0}  \partial B(n) \big).  
  \end{equation} 
Combined with (\ref{finaluse_530}), it yields that 
\begin{equation}\label{finaluse_534}
	 \mathbb{P}^{D}\big( \mathsf{F}^{\mathrm{in}} \cap \mathsf{Q}^{\mathrm{in}}_{C_\ddagger}(1) \cap \big[\widehat{\mathsf{V}}^{\mathrm{in}}_{\frac{1}{2}c_\dagger,\frac{1}{2}c_{\star} }(\Cref{const_lemma_prepare_volume2})\big]^c  \big)    
 			  \ge   \tfrac{1}{2}  c_{\vartriangle}  \mathbb{P}^D\big(A\xleftrightarrow{\ge 0}  \partial B(n) \big).  
\end{equation}
Furthermore, when the event on the left-hand side of (\ref{finaluse_534}) occurs, by the inclusion $V^{\mathrm{in}}(A,n)\subset 
  \widehat{V}^{\mathrm{in}}_{\Cref{const_lemma_prepare_volume2}}(A,n)  \cup \widecheck{V}^{\mathrm{in}}_{\Cref{const_lemma_prepare_volume2}}(A,n)$, we have  
 \begin{equation}\label{finaluse_5.35}
 	\begin{split}
 		& \mathbb{P}^D\big( |\widehat{V}^{\mathrm{in}}_{\Cref{const_lemma_prepare_volume2}}(A,n) | \ge \tfrac{1}{2}c_\dagger n^{\frac{d}{2}+1}   \mid \mathcal{F}_{ \mathcal{C}^{\mathrm{in}}(A,n)} \big) \\
 		\ge & \mathbb{P}^D\big( | V^{\mathrm{in}}(A,n) | \ge  c_\dagger n^{\frac{d}{2}+1}   \mid \mathcal{F}_{ \mathcal{C}^{\mathrm{in}}(A,n)} \big) \\
 		&-\mathbb{P}^D\big( |\widecheck{V}^{\mathrm{in}}_{\Cref{const_lemma_prepare_volume2}}(A,n) | \ge \tfrac{1}{2}c_\dagger n^{\frac{d}{2}+1}   \mid \mathcal{F}_{ \mathcal{C}^{\mathrm{in}}(A,n)} \big)\\
 		\overset{ }{\ge } &  c_{\star} - \tfrac{1}{2}c_{\star}= \tfrac{1}{2}c_{\star},
 	\end{split}
 \end{equation}
  where in the last inequality we used (\ref{finaluse5.32}) and the definition of $\widehat{\mathsf{V}}^{\mathrm{in}}_{a_1,a_2}(\lambda)$ in (\ref{def_widehat_V_a}).

For the same reason as in proving (\ref{finaluse_530}), by taking a large $C_{\ddagger}'>0$, we have 
 \begin{equation}\label{finaluse5.36}
 \begin{split}
 		\mathbb{P}^{D}\big( \mathsf{K}^{\mathrm{in}} \big):= &\mathbb{P}^{D}\big( \mathsf{F}^{\mathrm{in}} \cap  \mathsf{Q}^{\mathrm{in}}_{C_\ddagger}(1) \cap  \big[\widehat{\mathsf{V}}^{\mathrm{in}}_{\frac{1}{2}c_\dagger,\frac{1}{2}c_{\star} }(\Cref{const_lemma_prepare_volume2})\big]^c  \cap  \mathsf{Q}^{\mathrm{in}}_{C_\ddagger'}(\Cref{const_lemma_prepare_volume2})  \big)   	\\
 			  \gtrsim   & \mathbb{P}^D\big(A\xleftrightarrow{\ge 0}  \partial B(n) \big). 
 \end{split}
 \end{equation}
In addition, similar to (\ref{finaluse5.32}), there exist $c_{\clubsuit},c_{\spadesuit}>0$ such that on $\mathsf{F}^{\mathrm{in}} \cap \mathsf{Q}^{\mathrm{in}}_{C_\ddagger'}(\Cref{const_lemma_prepare_volume2}) $, 
 \begin{equation}\label{finaluse_537}
 	\begin{split}
 		\mathbb{P}^D\big( |V^{\mathrm{in}}(A, \Cref{const_lemma_prepare_volume2} n) | \ge c_{\clubsuit} n^{\frac{d}{2}+1}   \mid \mathcal{F}_{ \mathcal{C}^{\mathrm{in}}(A,n)} \big)\ge c_{\spadesuit}. 
 	\end{split}
 \end{equation}
 Note that $\mathcal{F}_{ \mathcal{C}^{\mathrm{in}}(A, n)}\subset \mathcal{F}_{ \mathcal{C}^{\mathrm{in}}(A,\Cref{const_lemma_prepare_volume2} n)}$. We consider the event 
 \begin{equation}
 	\mathsf{J}^{\mathrm{in}}:= \big\{ \mathbb{P}^D\big( |V^{\mathrm{in}}(A, \Cref{const_lemma_prepare_volume2} n) | \ge c_{\clubsuit} n^{\frac{d}{2}+1}   \mid \mathcal{F}_{ \mathcal{C}^{\mathrm{in}}(A,\Cref{const_lemma_prepare_volume2}n)} \big) \ge \tfrac{1}{2} c_{\spadesuit} \big\}. 
 \end{equation} 
 In fact, (\ref{finaluse_537}) implies that on $\mathsf{F}^{\mathrm{in}} \cap [\mathsf{Q}^{\mathrm{in}}_{C_\ddagger'}(\Cref{const_lemma_prepare_volume2})]^c $, 
 \begin{equation}\label{finaluse5.39}
 	\mathbb{P}^D\big(	\mathsf{J}^{\mathrm{in}} \mid \mathcal{F}_{ \mathcal{C}^{\mathrm{in}}(A, n)} \big)\ge \tfrac{1}{2}c_{\spadesuit},   
 \end{equation}
where the ``implication'' can be derived from the following calculation:
 \begin{equation}
 	\begin{split}
 		& \mathbb{P}^D\big( |V^{\mathrm{in}}(A, \Cref{const_lemma_prepare_volume2} n) |\ge c_{\clubsuit} n^{\frac{d}{2}+1}   \mid \mathcal{F}_{ \mathcal{C}^{\mathrm{in}}(A,n)} \big)\\
 		\le & \mathbb{P}^D\big( |V^{\mathrm{in}}(A, \Cref{const_lemma_prepare_volume2} n) | \ge c_{\clubsuit} n^{\frac{d}{2}+1}, 	\mathsf{J}^{\mathrm{in}}   \mid \mathcal{F}_{ \mathcal{C}^{\mathrm{in}}(A,n)} \big)\\
 		& +\mathbb{P}^D\big( | V^{\mathrm{in}}(A, \Cref{const_lemma_prepare_volume2} n) | \ge c_{\clubsuit} n^{\frac{d}{2}+1},(\mathsf{J}^{\mathrm{in}} )^c  \mid \mathcal{F}_{ \mathcal{C}^{\mathrm{in}}(A,n)} \big)\\
 		\le & \mathbb{P}^D\big(	\mathsf{J}^{\mathrm{in}} \mid \mathcal{F}_{ \mathcal{C}^{\mathrm{in}}(A, n)} \big) + \tfrac{1}{2} c_{\spadesuit} . 
 	\end{split}
 \end{equation}
 One the event $\mathsf{J}^{\mathrm{in}}$, one has 
 \begin{equation}
 	\mathbb{E}^D\big[ | V^{\mathrm{in}}(A, \Cref{const_lemma_prepare_volume2} n) |   \mid  \mathcal{F}_{ \mathcal{C}^{\mathrm{in}}(A,\Cref{const_lemma_prepare_volume2} n)} \big] \gtrsim n^{\frac{d}{2}+1}, 
 \end{equation}
 which, by the fact that $\mathbb{E}^D\big[ |V^{\mathrm{in}}(A, \Cref{const_lemma_prepare_volume2} n) |   \mid  \mathcal{F}_{ \mathcal{C}^{\mathrm{in}}(A,\Cref{const_lemma_prepare_volume2} n)} \big]\asymp n^d\cdot  \mathcal{H}_D^{\mathrm{in}}(A, \Cref{const_lemma_prepare_volume2} n)$ (derived from Lemma \ref{newlemma3.1}), further implies that  
 \begin{equation}
	\mathcal{H}_D^{\mathrm{in}}(A, \Cref{const_lemma_prepare_volume2} n) \ge c_{\heartsuit}n^{-\frac{d}{2}+1} 
\end{equation}
 for some constant $c_{\heartsuit}>0$. Combined with (\ref{finaluse5.39}), it yields that on $\mathsf{F}^{\mathrm{in}} \cap [\mathsf{Q}^{\mathrm{in}}_{C_\ddagger'}(\Cref{const_lemma_prepare_volume2})]^c $, 
 \begin{equation}\label{finaluse5.43}
 		\mathbb{P}^D\big(	\mathcal{H}_D^{\mathrm{in}}(A, \Cref{const_lemma_prepare_volume2} n) \ge c_{\heartsuit}n^{-\frac{d}{2}+1}  \mid \mathcal{F}_{ \mathcal{C}^{\mathrm{in}}(A, n)} \big)\ge \tfrac{1}{2}c_{\spadesuit}.  
 \end{equation}

 To sum up, we know that on the event $ \mathsf{K}^{\mathrm{in}}$, 
\begin{equation*}
	\begin{split}
		&\mathbb{P}\big( | \widehat{V}^{\mathrm{in}}_{\Cref{const_lemma_prepare_volume2}}(A,n)| \ge \tfrac{1}{2}c_\dagger  n^{\frac{d}{2}+1},  \mathcal{H}_D^{\mathrm{in}}(A, \Cref{const_lemma_prepare_volume2} n)\ge c_{\heartsuit} n^{-\frac{d}{2}+1}   \mid \mathcal{F}_{ \mathcal{C}^{\mathrm{in}}(A, n)} \big) \\
		\overset{\text{(FKG)}}{\ge } & \mathbb{P}\big( |\widehat{V}^{\mathrm{in}}_{\Cref{const_lemma_prepare_volume2}}(A,n)| \ge \tfrac{1}{2}c_\dagger  n^{\frac{d}{2}+1}  \mid \mathcal{F}_{ \mathcal{C}^{\mathrm{in}}(A, n)} \big)\mathbb{P}\big(  \mathcal{H}_D^{\mathrm{in}}(A, \Cref{const_lemma_prepare_volume2} n)\ge c_{\heartsuit} n^{-\frac{d}{2}+1}   \mid \mathcal{F}_{ \mathcal{C}^{\mathrm{in}}(A, n)} \big)\\
		\overset{(\ref*{finaluse_5.35}), (\ref*{finaluse5.43})}{\gtrsim } & \tfrac{1}{4}c_{\star} c_{\spadesuit}.  
	\end{split}
\end{equation*} 
By taking the integrals on both sides on $\mathsf{K}^{\mathrm{in}}$ (note that $\mathsf{K}^{\mathrm{in}}\subset \{A\xleftrightarrow{\le 0} \partial B(d^{-2}n)\}^c$) and using (\ref{finaluse5.36}), we confirm the desired bound (\ref{finaluse518}).  
\end{proof}

 Now we are ready to establish Lemma \ref{finaluse_lemma_volume}.

 \begin{proof}[Proof of Lemma \ref{finaluse_lemma_volume}]
For (\ref{ineq_finaluse_lemma_volume}), by the isomorphism theorem, it suffices to show  \begin{equation}\label{final544}
	\mathbb{P}^{D}\big(A_1 \xleftrightarrow{\ge 0} A_2, |V^{\ge 0}(A_1,n)|\ge \cref{const_finaluse_lemma_volume2} n^{\frac{d}{2}+1} \big) \ge \cref{const_finaluse_lemma_volume3} \mathbb{P}^D\big(A_1 \xleftrightarrow{\ge 0} A_2  \big).
\end{equation}
Here $V^{\ge 0}(A_1,n):=  \big\{x\in B(2n)\setminus B(\tfrac{1}{2}n): x\xleftrightarrow{\ge 0} A_1 \big\}$.

 We denote by $\mathsf{F}_1$ the event on the left-hand side of (\ref{finaluse518}) with $A=A_1$, and thus by Lemma \ref{lemma_prepare_volume} we have
 \begin{equation}\label{finaluse_545}
 	\mathbb{P}^{D}(\mathsf{F}_1) \gtrsim \mathbb{P}^D\big(A \xleftrightarrow{\ge 0} \partial B(n) \big). 
 \end{equation}
Meanwhile, similar to (\ref{finaluse_514}) and (\ref{finaluse_515}), we define 
 	\begin{equation*}
  	\mathcal{C}^{\mathrm{out}}(A_2,m):=\big\{v\in [\widetilde{B}(10m)]^c: v\xleftrightarrow{\widetilde{E}^{\ge 0}\cap [\widetilde{B}(10m)]^c} A_2\ \text{or}\ v\xleftrightarrow{\widetilde{E}^{\le 0}\cap [\widetilde{B}(10m)]^c} A_2 \big\}, 
  	  \end{equation*}  
 \begin{equation*}
 		\mathcal{H}^{\mathrm{out}}_D(A_2,m):=|\partial \mathcal{B}(5m)|^{-1}\sum\nolimits_{y\in \partial \mathcal{B}(5m)}\mathcal{H}_y(\mathcal{C}^{\mathrm{out}}(A_2,m) \cup D).  
\end{equation*}
Referring to \cite[Lemma 6.1]{inpreparation}, one has: for some constant $c_*>0$,  
\begin{equation}\label{finaluse_546}
\begin{split}
	\mathbb{P}^{D}\big(\mathsf{F}_2 \big):= & \mathbb{P}^{D}\big(\mathcal{H}^{\mathrm{out}}_D(A_2,\Cref{const_lemma_prepare_volume2} n)\ge c_* n^{-\frac{d}{2}+1}, \{A_2\xleftrightarrow{\le 0} \partial B(10\Cref{const_lemma_prepare_volume2} n)\}^c  \big) \\
 	 	\gtrsim   & \mathbb{P}^{D}\big( A_2\xleftrightarrow{\ge 0} \partial B( n)  \big). 
\end{split}
\end{equation}
 Note that $\mathsf{F}_1$ and $\mathsf{F}_2$ are both increasing events and are measurable with respect to $\mathcal{F}_{\mathcal{C}^{\mathrm{in}}(A_1 , \Cref{const_lemma_prepare_volume2}n)}$ and $\mathcal{F}_{\mathcal{C}^{\mathrm{out}}(A_2, \Cref{const_lemma_prepare_volume2}n)}$ respectively. By \cite[Corollary 2.7]{inpreparation}, we know that on $\mathsf{F}_1\cap \mathsf{F}_2$ (which implies $\{\mathcal{H}_D^{\mathrm{in}}(A, \Cref{const_lemma_prepare_volume2} n)\ge \cref{const_lemma_prepare_volume3}n^{-\frac{d}{2}+1}, \mathcal{H}^{\mathrm{out}}_D(A_2,\Cref{const_lemma_prepare_volume2}n)\ge c_* n^{-\frac{d}{2}+1} \}$), 
 \begin{equation}\label{finaluse_5.47}
 	\begin{split}
 		& \mathbb{P}^D\big( A_1\xleftrightarrow{\ge 0} A_2 \mid \mathcal{F}_{\mathcal{C}^{\mathrm{in}}(A_1 , \Cref{const_lemma_prepare_volume2}n)\cup \mathcal{C}^{\mathrm{out}}(A_2, \Cref{const_lemma_prepare_volume2} n)}   \big)\\
 		=& \mathbb{P}^D\big( \mathcal{C}^{\mathrm{in}}(A_1 , \Cref{const_lemma_prepare_volume2}n) \xleftrightarrow{\ge 0} \mathcal{C}^{\mathrm{out}}(A_2,  \Cref{const_lemma_prepare_volume2} n) \mid \mathcal{F}_{\mathcal{C}^{\mathrm{in}}(A_1 , \Cref{const_lemma_prepare_volume2}n)\cup \mathcal{C}^{\mathrm{out}}(A_2, \Cref{const_lemma_prepare_volume2}n)}   \big) \\
 		\gtrsim & \big[   n^{d-2}\cdot \mathcal{H}_D^{\mathrm{in}}(A_1, \Cref{const_lemma_prepare_volume2} n) \cdot \mathcal{H}^{\mathrm{out}}_D(A_2,\Cref{const_lemma_prepare_volume2}n) \big] \land 1 \gtrsim 1. 
 	\end{split}
 \end{equation}
Thus, since $\mathsf{F}_1\subset \big\{| \widehat{V}^{\mathrm{in}}_{\Cref{const_lemma_prepare_volume2}}(A_1,n)|  \ge \cref{const_lemma_prepare_volume4}n^{\frac{d}{2}+1} \big\} \subset \big\{ | V^{\ge 0}(A_1,n)|  \ge \cref{const_lemma_prepare_volume4}n^{\frac{d}{2}+1} \big\} $, we have 
\begin{equation}
	\begin{split}
		& \mathbb{P}^{D}\big(A_1 \xleftrightarrow{\ge 0} A_2, |V^{\ge 0}(A_1,n)|\ge \cref{const_lemma_prepare_volume4} n^{\frac{d}{2}+1} \big)  \\
		\ge & \mathbb{E}^{D} \big[\mathbbm{1}_{ \mathsf{F}_1\cap  \mathsf{F}_2}\cdot  \mathbb{P}^{D} \big(A_1 \xleftrightarrow{\ge 0} A_2\mid \mathcal{F}_{\mathcal{C}^{\mathrm{in}}(A_1 , \Cref{const_lemma_prepare_volume2}n)\cup \mathcal{C}^{\mathrm{out}}(A_2, \Cref{const_lemma_prepare_volume2}n)}  \big) \big]  \\
		\overset{ (\ref*{finaluse_5.47})}{\gtrsim } & \mathbb{P}^{D}\big(\mathsf{F}_1 \cap \mathsf{F}_2 \big)\overset{ \text{(FKG)}}{\gtrsim }   \mathbb{P}^{D}\big(\mathsf{F}_1   \big)    \mathbb{P}^{D}\big(  \mathsf{F}_2 \big)  \\
		\overset{ (\ref*{finaluse_545}),(\ref*{finaluse_546})}{\gtrsim }   &  \mathbb{P}^D\big(A_1 \xleftrightarrow{\ge 0} \partial B(n) \big)\mathbb{P}^D\big(A_2\xleftrightarrow{\ge 0} \partial B(n) \big) \overset{(\ref*{QM_ineq_1})}{\asymp}\mathbb{P}^D\big(A_1 \xleftrightarrow{\ge 0} A_2\big). 
			\end{split}
\end{equation}
 Now we obtain (\ref{final544}) with $\cref{const_finaluse_lemma_volume2}=\cref{const_lemma_prepare_volume4}$, thereby concluding the proof of this lemma. 
 \end{proof}

 \textbf{When $d\ge 7$.} Compared to the low-dimensional cases $3\le d\le 5$, the proof in this case is more straightforward because for the critical cluster in high dimensions, the typical order of its volume matches that of its capacity (which is not the case in low dimensions), and in addition, the capacity can be conveniently analyzed through the exploration martingale argument (see \cite{lupu2018random,ding2020percolation}). Specifically, the proof of (\ref{newineq4.2}) for $d\ge 7$ consists of the following two steps: 
 \begin{enumerate}
 	\item  Arbitrarily take $n\ll N$ and explore the cluster $\mathcal{C}^{\ge 0}(\bm{0})$, stopping upon hitting $\partial B(n)$. We then show that conditioned on $\{\bm{0}\xleftrightarrow{\ge 0} \partial B(N)\}$, with high probability the harmonic average on $\partial B(Cn)$ after sampling this sub-cluster is of order $n^{4-d}$.

 	\item  Let $n_1\ll n_2\ll N$. By the aforementioned arguments, we know that with high probability the harmonic averages considered in Step (1) for $n=n_1$ and $n=n_2$ differ significantly. The exploration martingale argument then shows that this difference typically leads to a substantial growth in capacity during the exploration from $\partial B(n_1)$ to $\partial B(n_2)$. Applying the subadditivity of capacity (see (\ref{2.8})), this growth in capacity provides a lower bound on the volume increment.


 	\textbf{P.S.:} These two steps also apply to the low-dimensional cases $3\le d\le 5$, showing that it is rare for a large critical cluster to have a small capacity.

 \end{enumerate}

 To implement Step (1), we present the following lemma. Recall $\mathcal{C}_n^{\ge 0}$ and $\mathcal{H}_n^{\ge 0}$ in (\ref{def_3.1}) and (\ref{finaluse_def_3.1}) respectively. Also recall that $\widehat{\mathbb{P}}_{d,N}(\cdot)=\widehat{\mathbb{P}}_{d,N}\big(\cdot \mid \bm{0}\xleftrightarrow{\ge 0} \partial B(N)\big)$.

\begin{lemma}\label{lemma_prepare_highd}
	For any $d\ge 7$ and $\lambda>2$, there exist $\Cl\label{const_lemma_prepare_highd1}(d,\lambda),\Cl\label{const_lemma_prepare_highd2}(d) >0$ such that for any $n\ge \Cref{const_lemma_prepare_highd1}$ and $N\ge 10dn$, 
		\begin{equation}
		\widehat{\mathbb{P}}_{d,N}\big(  \mathcal{H}_n^{\ge 0} \notin [\lambda^{-1} n^{4-d} ,\lambda n^{4-d} ]     \big) \le \Cref{const_lemma_prepare_highd2} \lambda^{-1}. 
	\end{equation}
\end{lemma}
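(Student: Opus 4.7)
The statement packages a lower and an upper tail bound on $\mathcal{H}_n^{\ge 0}$ under $\widehat{\mathbb{P}}_{d,N}$; I plan to prove the two tails separately.

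For the lower tail $\widehat{\mathbb{P}}_{d,N}(\mathcal{H}_n^{\ge 0}<\lambda^{-1}n^{4-d})\lesssim \lambda^{-1}$, the plan is to invoke the estimate (\ref{addnew_3.8}) directly. For $d\ge 7$, the one-arm asymptotic (\ref{one_arm_high}) gives $[\theta_d(n)]^{-1}\asymp n^2$, so the threshold in the definition (\ref{event_Ha}) of the event $\mathsf{H}_a(n)$ is of order $a\,n^{4-d}$. Thus, for a suitable constant $c=c(d)>0$, the event $\{\mathcal{H}_n^{\ge 0}<\lambda^{-1}n^{4-d}\}$ is contained in $[\mathsf{H}_{c\lambda^{-1}}(n)]^c$; applying (\ref{addnew_3.8}) with $a=c\lambda^{-1}$ and dividing by $\theta_d(N)$ yields the claim.

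For the upper tail $\widehat{\mathbb{P}}_{d,N}(\mathcal{H}_n^{\ge 0}>\lambda n^{4-d})\lesssim \lambda^{-1}$, the plan is to establish a matching upper-tail analogue of (\ref{addnew_3.8}), namely that for all $A\ge 2$,
\[
\mathbb{P}\big(\mathcal{H}_n^{\ge 0}>An^{4-d},\;\bm{0}\xleftrightarrow{\ge 0}\partial B(N)\big)\;\lesssim\; A^{-1}\theta_d(N),
\]
and then divide by $\theta_d(N)$. Equivalently, by layer-cake integration it suffices to show the first-moment bound $\mathbb{E}[\mathcal{H}_n^{\ge 0}\mathbbm{1}_{\bm{0}\xleftrightarrow{\ge 0}\partial B(N)}]\lesssim n^{4-d}\theta_d(N)$ and then apply Markov. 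Since $\mathcal{H}_n^{\ge 0}$ is $\mathcal{F}_{\mathcal{C}_n^{\ge 0}}$-measurable, the tower property recasts the left-hand side as $\mathbb{E}[\mathcal{H}_n^{\ge 0}\cdot\mathbb{P}(\mathcal{C}_n^{\ge 0}\xleftrightarrow{\ge 0}\partial B(N)\mid \mathcal{F}_{\mathcal{C}_n^{\ge 0}})]$. The key input is then a quantitative bound on the conditional extension probability that scales linearly in $\mathcal{H}_n^{\ge 0}$ in the large-$\mathcal{H}$ regime and is capped by the one-arm scale $\theta_d(N)\asymp N^{-2}$ in the typical regime. Such an estimate is the $L^1$-type companion of (\ref{addnew_3.8}) and should be derivable by extending the pointwise Lemma \ref{newlemma3.1} from a single point $x$ to the full sphere $\partial B(N)$, using either the BKR/tree-expansion techniques already employed in the high-dimensional argument of Section \ref{section3.2_upper} or the capacity-martingale arguments from \cite{ding2020percolation}. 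Plugging this bound in and combining with a Lemma \ref{lemma_Q_BN}-style second-moment control on $\mathcal{H}_n^{\ge 0}\mathbbm{1}_{\bm{0}\xleftrightarrow{\ge 0}\partial B(n)}$ will complete the proof.

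The main obstacle is precisely this extension of Lemma \ref{newlemma3.1} to $\partial B(N)$: a naive union bound over points of $\partial B(N)$ is wasteful, because for $d\ge 7$ the critical cluster conditioned on hitting $\partial B(N)$ typically meets it at many points (as is already visible from the two-point estimate Theorem \ref{thm_1.4}), so one must carefully account for these correlations to obtain the correct $\theta_d(N)$ cap and to avoid overcounting. A natural route is to decompose the connecting event by fixing the first hitting location of $\partial B(N)$ and applying a BKR/tree-expansion decomposition (as in the proof of (\ref{submit425})) in the GFF exterior to the explored cluster $D$, together with harmonic-measure estimates. Once the conditional extension estimate is in place, the remaining analysis consists of moment computations parallel to those in Section \ref{section3.1_lower}.
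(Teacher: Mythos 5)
Your lower-tail argument matches the paper exactly: both reduce to the conditional extension probability from \cite[Lemma 2.8]{inpreparation} (the source of (\ref{addnew_3.8})), together with $[\theta_d(n)]^{-1}\asymp n^2$ for $d\ge 7$. No issues there.

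Your upper-tail plan is a genuinely different route, and it is where you over-engineer. The paper's proof is much shorter and sidesteps everything you flag as an obstacle: it fixes a single point $y\in \partial B(d^2 n)$ and uses Theorem \ref{thm_1.4}, which has already been proved, to get $\mathbb{P}(\bm{0}\xleftrightarrow{\ge 0} y,\partial B(N))\lesssim n^{4-d}\theta_d(N)$. Then, on the event $\{\mathcal{H}_n^{\ge 0}\ge \lambda n^{4-d}\}$, Lemma \ref{newlemma3.1} gives $\mathbb{P}(\mathcal{C}_n^{\ge 0}\xleftrightarrow{\ge 0}y\mid\mathcal{F}_{\mathcal{C}_n^{\ge 0}})\gtrsim \lambda n^{4-d}$, and the conditional FKG inequality decouples this from the one-arm event, yielding
\begin{equation*}
\mathbb{P}\big(\bm{0}\xleftrightarrow{\ge 0} y,\partial B(N)\big)\gtrsim \lambda n^{4-d}\,\mathbb{P}\big(\mathcal{H}_n^{\ge 0}\ge \lambda n^{4-d},\bm{0}\xleftrightarrow{\ge 0}\partial B(N)\big),
\end{equation*}
from which the upper tail drops out by division. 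This needs no moment bounds whatsoever. By contrast, your Markov/first-moment route needs two extra inputs. The first, an $L^1$ companion of Lemma \ref{newlemma3.1} for $\partial B(N)$, is not actually a gap: it is exactly \cite[Lemma 2.8]{inpreparation}, which the paper already quotes (in (\ref{addnew_3.8}) and (\ref{finaluse550})); you should recognize this rather than treating it as the ``main obstacle.'' The second input, a second-moment control of the form $\mathbb{E}[(\mathcal{H}_n^{\ge 0})^2]\lesssim n^{6-2d}$, is a genuine gap: you gesture at ``Lemma \ref{lemma_Q_BN}-style'' control but Lemma \ref{lemma_Q_BN} bounds $\mathbf{Q}(\partial B(N))$, not moments of $\mathcal{H}_n^{\ge 0}$, and the translation between them (via Lemma \ref{newlemma3.1} and Jensen/Cauchy--Schwarz, taking care of the $\land 1$ cap) is not written out and is not a one-liner. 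In short, your route can probably be made to work, but at the cost of an additional sub-lemma you leave unproved, whereas noticing that Theorem \ref{thm_1.4} is already at hand collapses the whole thing to five lines.
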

\begin{proof}
We first estimate $\widehat{\mathbb{P}}_{d,N}\big(   \mathcal{H}_n^{\ge 0}  \le \lambda^{-1} n^{4-d}   \big)$. Note that $$\{\bm{0}\xleftrightarrow{\ge 0} \partial B(N)\}\subset \{\bm{0}\xleftrightarrow{\ge 0} \partial B(n)\} = \{\mathcal{H}_n^{\ge 0}>0 \}.$$ Therefore, by applying \cite[Lemma 2.8]{inpreparation}, we have 
 \begin{equation}\label{finaluse550}
 	\begin{split}
 		& \mathbb{P}\big(  \mathcal{H}_n^{\ge 0}  \le \lambda^{-1} n^{4-d}  , \bm{0}\xleftrightarrow{\ge 0} \partial B(N) \big) \\
 		=& \mathbb{E}\big[ \mathbbm{1}_{0< \mathcal{H}_n^{\ge 0}  \le \lambda^{-1} n^{4-d}}  \mathbb{P}\big(\mathcal{C}_n^{\ge 0} \xleftrightarrow{\ge 0}  \partial B(N) \mid \mathcal{F}_{\mathcal{C}_n^{\ge 0}}\big) \big]\\
 		\lesssim &\mathbb{P}\big( \mathcal{H}_n^{\ge 0} >0\big) \cdot \big[  n^{d-2}\cdot  \lambda^{-1} n^{4-d} \cdot \theta_d(N) \big]  
 		\overset{(\ref*{one_arm_high})}{\lesssim } \lambda^{-1}\theta_d(N) . 
 	\end{split}
 \end{equation}

Now we estimate $ \widehat{\mathbb{P}}_{d,N}\big(   \mathcal{H}_n^{\ge 0}  \ge \lambda n^{4-d}   \big)$. Arbitrarily take $y\in \partial B(d^2n)$. It directly follows from Theorem \ref{thm_1.4} that 
\begin{equation}\label{finaluse551}
	\mathbb{P}\big( \bm{0} \xleftrightarrow{\ge 0 } y, \partial B(N)   \big) \lesssim n^{4-d} \theta_d(N). 
\end{equation}
Meanwhile, on the event $\{ \mathcal{H}_n^{\ge 0}\ge \lambda n^{4-d} \}$, we have 
\begin{equation}\label{finaluse552}
	\begin{split}
		& \mathbb{P}\big( \bm{0} \xleftrightarrow{\ge 0 } y,\partial B(N)  \mid \mathcal{F}_{\mathcal{C}_n^{\ge 0}}   \big)  \\
		=& \mathbb{P}\big( \mathcal{C}_n^{\ge 0} \xleftrightarrow{\ge 0 } y,\partial B(N)  \mid \mathcal{F}_{\mathcal{C}_n^{\ge 0}}   \big)   \\
		\overset{\text{(FKG)}}{\ge }  & \mathbb{P}\big( \mathcal{C}_n^{\ge 0} \xleftrightarrow{\ge 0 } y  \mid \mathcal{F}_{\mathcal{C}_n^{\ge 0}}   \big)  \mathbb{P}\big(  \mathcal{C}_n^{\ge 0}\xleftrightarrow{\ge 0 } \partial B(N)  \mid \mathcal{F}_{\mathcal{C}_n^{\ge 0}}   \big) \\
		\overset{\text{Lemma}\ \ref*{newlemma3.1}}{\gtrsim }  & \lambda n^{4-d}  \mathbb{P}\big(  \mathcal{C}_n^{\ge 0}\xleftrightarrow{\ge 0 } \partial B(N)  \mid \mathcal{F}_{\mathcal{C}_n^{\ge 0}}   \big), 
	\end{split}
\end{equation}
where in the last inequality we need $\lambda n^{4-d}\lesssim 1$ (ensured by $n\ge \Cref{const_lemma_prepare_highd1}$ with a sufficiently large $\Cref{const_lemma_prepare_highd1}(d,\lambda )>0$). By taking the integrals on both sides of (\ref{finaluse552}) on the event $\{ \mathcal{H}_n^{\ge 0}\ge \lambda n^{4-d} \}$, we get 
\begin{equation}
\begin{split}
	\mathbb{P}\big(\bm{0} \xleftrightarrow{\ge 0 } y,\partial B(N)   \big) \gtrsim \lambda n^{4-d} \mathbb{P}\big(\mathcal{H}_n^{\ge 0}\ge \lambda n^{4-d} ,  \bm{0} \xleftrightarrow{\ge 0 } \partial B(N)   \big).
\end{split}
	\end{equation}
Combined with (\ref{finaluse551}), it yields that 
\begin{equation}\label{finaluse554}
	\mathbb{P}\big(\mathcal{H}_n^{\ge 0}\ge \lambda n^{4-d} ,  \bm{0} \xleftrightarrow{\ge 0 } \partial B(N)   \big) \lesssim \lambda^{-1}\theta_d(N). 
\end{equation}

By (\ref{finaluse550}) and (\ref{finaluse554}), we complete the proof of this lemma.
\end{proof}

For any $\epsilon>0$, we denote $C_*(d,\epsilon):=8\Cref{const_lemma_prepare_highd2} \epsilon^{-1}$. For any $M\ge 1$, let $\hat{M}:=\frac{M}{C_\dagger C_*}$, where $C_\dagger>0$ is a large constant to be determined later. For any $n\ge 1$, we define the event $\mathsf{H}^*_n:=\{C_*^{-1}n^{4-d} \le \mathcal{H}_n^{\ge 0}\le C_*  n^{4-d}\}$. By Lemma \ref{lemma_prepare_highd} we have 
\begin{equation}
	\widehat{\mathbb{P}}_{d,N}\big( (\mathsf{H}^*_M)^c\cup (\mathsf{H}^*_{\hat{M}})^c \big)\le \tfrac{\epsilon }{4}. 
\end{equation}
Thus, to prove (\ref{newineq4.2}), it remains to show (recalling that $\mathcal{V}_{M}^{\ge 0}=\mathrm{vol}(\mathcal{C}^{\ge 0}(\bm{0})\cap B(M))$)
\begin{equation}\label{finaluse5.56}
	\widehat{\mathbb{P}}_{d,N}\big( \mathcal{V}_{M}^{\ge 0}\le \cref{const_typical_volume_3}M^{4} ,  \mathsf{H}^*_M,  \mathsf{H}^*_{\hat{M}}  \big)\le \tfrac{\epsilon }{4}.  
\end{equation}

Now we employ the exploration martingale argument, as promised in Step (2). For any $t\ge 0$, we denote $\mathbf{B}_t:= \{ v\in \widetilde{B}(M): \inf_{w\in \widetilde{B}(\hat{M})} \|w-v \|\le t\}$ and 
\begin{equation}
	\widetilde{\mathcal{C}}_t:= \{v\in \mathbf{B}_t : \bm{0} \xleftrightarrow{\widetilde{E}^{\ge 0}\cap \mathbf{B}_t}  v \}. 
\end{equation}
Clearly, one has $\widetilde{\mathcal{C}}_0= \mathcal{C}_{\hat{M}}^{\ge 0}$ and $\widetilde{\mathcal{C}}_\infty = \mathcal{C}_{M}^{\ge 0}$. We arbitrarily fix a point $y\in \partial B(20dM)$. Since the $\sigma$-field $\mathcal{F}_{\widetilde{\mathcal{C}}_t}$ is increasing in $t$, we know that $\mathcal{M}_t:= \mathbb{E}\big[ \widetilde{\phi}_y\mid \mathcal{F}_{\widetilde{\mathcal{C}}_t}\big]$ for $t\ge 0$ is a martingale satisfying that $\mathcal{M}_0=\mathcal{H}_y(\mathcal{C}_{\hat{M}}^{\ge 0})$ and $\mathcal{M}_\infty=\mathcal{H}_y(\mathcal{C}_{M}^{\ge 0})$. We also denote the quadratic variation of this martingale by $\langle \mathcal{M} \rangle_t$ for $t\ge 0$. By \cite[Corollary 10]{ding2020percolation}, $\langle \mathcal{M} \rangle_t$ can be equivalently written as 
\begin{equation}
	\begin{split}
		   \widetilde{G}_{\widetilde{\mathcal{C}}_0}(y,y)- \widetilde{G}_{\widetilde{\mathcal{C}}_t}(y,y) 
		=& \sum\nolimits_{v\in \widetilde{\mathcal{C}}_t} \widetilde{\mathbb{P}}_y\big(\tau_{\widetilde{\mathcal{C}}_t} = \tau_v <\tau_{\widetilde{\mathcal{C}}_0}  \big)\widetilde{G}(v,y)\\
		\overset{(\ref*{bound_green} )}{\lesssim  } &   M^{2-d}\widetilde{\mathbb{P}}_y\big( \tau_{\widetilde{\mathcal{C}}_t} <\infty  \big)  \\
		\overset{(\ref*{new2.20})} {\lesssim  } & M^{4-2d} \mathrm{cap}\big(\widetilde{\mathcal{C}}_t \big)   \overset{ \widetilde{\mathcal{C}}_t\subset \mathcal{C}^{\ge 0}(\bm{0})\cap B(M),(\ref*{2.8}) } {\lesssim  }  M^{4-2d} \mathcal{V}_{M}^{\ge 0} .
	\end{split}
\end{equation}
 Therefore, there exists $C_\dagger >0$ such that 
 \begin{equation}\label{final5.59}
 	\big\{ \mathcal{V}_{M}^{\ge 0}\le  \cref{const_typical_volume_3}M^{ 4} \big\} \subset \big\{ \langle \mathcal{M} \rangle_\infty \le C_\dagger\cref{const_typical_volume_3}M^{ 8-2d}  \big\}. 
 	 \end{equation}
Moreover, similar to \cite[Theorem 11]{ding2020percolation} (which is derived from the martingale representation theorem), one has that (letting $T_t:=\inf\{s\ge 0:  \langle \mathcal{M} \rangle_s >t \}$)
\begin{equation}\label{timechange}
	W_t:= \left\{ \begin{aligned}
		&\mathcal{M}_{T_t}-\mathcal{M}_0, \ \ \ \ \   \forall 0\le t<  \langle \mathcal{M} \rangle_\infty  \\ 
		&\mathcal{M}_{\infty}-\mathcal{M}_0,\ \ \ \ \  \forall t\ge   \langle \mathcal{M} \rangle_\infty 
	\end{aligned}\right.
\end{equation}
is a standard Brownian motion stopped at $ \langle \mathcal{M} \rangle_\infty $.

 In what follows, we show that for a sufficiently small $\cref{const_typical_volume_3}(d,\epsilon)>0$, 
 \begin{equation}\label{finaluse5.60}
  \mathbb{P}\big( \langle \mathcal{M} \rangle_\infty \le C_\dagger\cref{const_typical_volume_3}M^{ 4}  ,  \mathsf{H}^*_M, \mathsf{H}^*_{\hat{M}}, \bm{0}\xleftrightarrow{\ge 0} \partial B(N)   \big)\le \tfrac{\epsilon }{4}\cdot \theta_d(N). 
 \end{equation}
 To achieve this, since $\{ \langle \mathcal{M} \rangle_\infty \le C_\dagger\cref{const_typical_volume_3}M^{ 4}\}$, $\mathsf{H}^*_M$ and $\mathsf{H}^*_{\hat{M}}$ are all measurable with respect to $\mathcal{F}_{\mathcal{C}_{M}^{\ge 0}}$, the left-hand side of (\ref{finaluse5.60}) equals to 
 \begin{equation}\label{finaluse562}
 	\begin{split}
 	& 	\mathbb{E}\big[\mathbbm{1}_{\langle \mathcal{M} \rangle_\infty \le C_\dagger\cref{const_typical_volume_3}M^{ 4}  ,  \mathsf{H}^*_M, \mathsf{H}^*_{\hat{M}}}  \mathbb{P}\big( \mathcal{C}_{M}^{\ge 0} \xleftrightarrow{\ge 0} \partial B(N) \mid \mathcal{F}_{\mathcal{C}_{M}^{\ge 0}} \big) \big] \\
 		\overset{}{\lesssim} & \mathbb{P}\big( \langle \mathcal{M} \rangle_\infty \le C_\dagger\cref{const_typical_volume_3}M^{ 4}  ,  \mathsf{H}^*_M, \mathsf{H}^*_{\hat{M}}\big)  \cdot C_*M^{2} \theta_d(N),
 	\end{split}
 \end{equation}
where in the last line we used \cite[Lemma 2.8]{inpreparation}. Next, we estimate the probability $\mathbb{P}\big( \langle \mathcal{M} \rangle_\infty \le C_\dagger\cref{const_typical_volume_3}M^{ 4}  ,  \mathsf{H}^*_M, \mathsf{H}^*_{\hat{M}}\big)$. When $\mathsf{H}^*_M\cap \mathsf{H}^*_{\hat{M}}$ occurs, one has 
\begin{equation}
	\mathcal{M}_0=\mathcal{H}_y(\mathcal{C}_{\hat{M}}^{\ge 0}) \overset{(\ref*{new2.20})}{\asymp} \big( \tfrac{\hat{M}}{M}\big)^{d-2} \mathcal{H}^{\ge 0}_{\hat{M}} \le \big( \tfrac{\hat{M}}{M}\big)^{d-2}\cdot C_*\hat{M}^{4-d}= C_\dagger^{-2} C_*^{-1}M^{4-d}, 
\end{equation}
\begin{equation}
	\mathcal{M}_\infty=\mathcal{H}_y(\mathcal{C}_{M}^{\ge 0})\asymp \mathcal{H}^{\ge 0}_{M} \ge C_*^{-1}M^{4-d}.  
\end{equation}
Therefore, by taking a sufficiently large $C_\dagger>0$, we have 
\begin{equation}\label{finaluse_565}
	\mathcal{M}_\infty - \mathcal{M}_0 \ge c_\ddagger C_*^{-1}M^{4-d}, 
\end{equation} 
for some constant $c_\ddagger>0$. Conditioned on $\mathcal{F}_{\mathcal{C}_{\hat{M}}^{\ge 0}}$ and given the occurrence of $ \mathsf{H}^*_{\hat{M}}$, when $\mathcal{M}_\infty - \mathcal{M}_0 \gtrsim C_*^{-1}M^{4-d}$ and $\langle \mathcal{M} \rangle_\infty \le C_\dagger\cref{const_typical_volume_3}M^{ 4}$ both happen, we know that the Brownian motion $W_t$ in (\ref{timechange}) must hit $c_\ddagger C_*^{-1}M^{4-d}$ before time $C_\dagger\cref{const_typical_volume_3}M^{ 8-2d}$, which, by the reflection principle, occurs with probability $2\mathbb{P}\big(\mathbf{X} \ge \tfrac{c_\ddagger C_*^{-1}}{\sqrt{C_\dagger\cref{const_typical_volume_3}}} \big)$ (here $\mathbf{X}\sim N(0,1)$). Thus, by taking a sufficiently small $\cref{const_typical_volume_3}(d,\epsilon)>0$ such that $2\mathbb{P}\big(\mathbf{X} \ge \tfrac{c_\ddagger C_*^{-1}}{\sqrt{C_\dagger\cref{const_typical_volume_3}}} \big)\le e^{-\epsilon^{-1}}$, we have that on the event $\mathsf{H}^*_{\hat{M}}$, 
\begin{equation}
\begin{split}
	&\mathbb{P}\big(\langle \mathcal{M} \rangle_\infty \le C_\dagger\cref{const_typical_volume_3}M^{ 4}  ,\mathsf{H}^*_M  \mid \mathcal{F}_{\mathcal{C}_{\hat{M}}^{\ge 0}} \big) \\
	\overset{(\ref*{finaluse_565})}{\le } &\mathbb{P}\big(\langle \mathcal{M} \rangle_\infty \le C_\dagger\cref{const_typical_volume_3}M^{ 4}  ,\mathcal{M}_\infty - \mathcal{M}_0 \ge c_\ddagger C_*^{-1}M^{4-d}  \mid \mathcal{F}_{\mathcal{C}_{\hat{M}}^{\ge 0}} \big)\le e^{-\epsilon^{-1}}, 
	\end{split}
\end{equation}
which further implies (note that $\mathsf{H}^*_{\hat{M}} \subset \{\bm{0}\xleftrightarrow{\ge 0}\partial B(\hat{M})\}$)
\begin{equation}\label{finaluse567}
	\begin{split}
		& \mathbb{P}\big(\langle \mathcal{M} \rangle_\infty \le C_\dagger\cref{const_typical_volume_3}M^{ 4}  ,\mathsf{H}^*_M, \mathsf{H}^*_{\hat{M}} \big)  
		\le  e^{-\epsilon^{-1}} \mathbb{P}\big( \mathsf{H}^*_{\hat{M}}  \big)  \overset{(\ref*{one_arm_high})}{\lesssim   }  e^{-\epsilon^{-1}}\hat{M}^{-2}. 
			\end{split}
\end{equation}
Plugging (\ref{finaluse567}) into (\ref{finaluse562}), we obtain that the left-hand side of (\ref{finaluse5.60}) is at most 
\begin{equation}
	Ce^{-\epsilon^{-1}} C_*^3C_\dagger^2\cdot   \theta_d(N) < \tfrac{\epsilon}{4}\cdot  \theta_d(N) 
\end{equation}  
for all sufficiently small $\epsilon>0$, thereby establishing (\ref{finaluse5.60}).

By combining (\ref{final5.59}) and (\ref{finaluse5.60}), we get (\ref{finaluse5.56}) and thus confirm (\ref{newineq4.2}) for $d\ge 7$. In conclusion, we have completed the proof of Theorem \ref{thm_1.2}.  \qed

\subsection{Proof of Theorem \ref{thm_1.3}}\label{subsection_proof_thm1.3}

Generally speaking, in order for the critical cluster to achieve a large volume, our strategy is to force the GFF value at $\bm{0}$ to be large (still of the constant order, occurring with uniform probability). Technically, our proof mainly relies on the analysis of explored clusters introduced in Section \ref{section3.1_lower}, which enables us to apply the FKG inequality to derive a lower bound on the probability that the critical cluster has both large volume and large diameter.

Without loss of generality, we assume that $\lambda>1$ is sufficiently large and $M\ge e^{-\lambda^{10}}$. Before presenting the proof, we first record some notations as follows. 
\begin{itemize}

    \item  We denote the volume $\widecheck{\mathcal{V}}= \widecheck{\mathcal{V}}(M):= \mathrm{vol}(\mathcal{C}^{\ge 0}(\bm{0})\cap [B(M)\setminus B(\tfrac{M}{2})])$.

    \item  We denote the event $\widecheck{\mathsf{V}}=\widecheck{\mathsf{V}}(M,\lambda):= \{\widecheck{\mathcal{V}} \ge \lambda M^{(\frac{d}{2}+1)\boxdot 4} \}$.

	\item Let $m:=\frac{M}{10d}$. We say $\mathcal{C}^{\ge 0}_{m}$ is nice if $\mathbb{P}\big(\widecheck{\mathsf{V}}  \mid \mathcal{F}_{\mathcal{C}^{\ge 0}_{m}}\big)\ge e^{-\lambda^5}$.

\end{itemize}

The following properties of a nice $\mathcal{C}^{\ge 0}_{m}$ are crucial for this proof. 

\begin{lemma}\label{Lemma_nice_4.1}
For any $d\ge 3$ with $d\neq 6$, the following hold.
\begin{enumerate}
	\item $\mathbb{P}(\mathcal{C}^{\ge 0}_{m}\ \text{is nice})\gtrsim  e^{-c\lambda^4}\theta_d(M)$;

	\item If $\mathcal{C}^{\ge 0}_{m}$ is nice, then $\mathcal{H}^{\ge 0}_m \gtrsim     e^{-\lambda^5}M^{-[(\frac{d}{2}-1)\boxdot (d-4)]}$.

\end{enumerate}
	
\end{lemma}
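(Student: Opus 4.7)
This is a two-line consequence of Markov's inequality and Lemma \ref{newlemma3.1}. Since $\mathcal{C}^{\ge 0}_{m}$ being nice means $\mathbb{P}(\widecheck{\mathsf{V}}\mid\mathcal{F}_{\mathcal{C}^{\ge 0}_{m}})\ge e^{-\lambda^5}$, I will apply Markov's inequality to obtain
\begin{equation*}
\mathbb{E}[\widecheck{\mathcal{V}}\mid\mathcal{F}_{\mathcal{C}^{\ge 0}_{m}}]\ge e^{-\lambda^5}\lambda M^{(\frac{d}{2}+1)\boxdot 4},
\end{equation*}
and in the other direction I will apply the upper bound of Lemma \ref{newlemma3.1} to each $x\in B(M)\setminus B(M/2)$ and sum, yielding
\begin{equation*}
\mathbb{E}[\widecheck{\mathcal{V}}\mid\mathcal{F}_{\mathcal{C}^{\ge 0}_{m}}]\lesssim\mathcal{H}^{\ge 0}_{m}\cdot m^{d-2}\sum_{x\in B(M)\setminus B(M/2)}|x|^{2-d}\asymp\mathcal{H}^{\ge 0}_{m}\cdot M^d.
\end{equation*}
Dividing the two and using $\lambda\ge 1$ will give the desired lower bound $\mathcal{H}^{\ge 0}_{m}\gtrsim e^{-\lambda^5}M^{(\frac{d}{2}+1)\boxdot 4 - d}=e^{-\lambda^5}M^{-[(\frac{d}{2}-1)\boxdot (d-4)]}$.

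\textbf{Plan for Part 1.} The strategy is a conditional second-moment argument on a specifically engineered $\mathcal{F}_{\mathcal{C}^{\ge 0}_{m}}$-measurable event $\mathsf{E}^*$. I will take $\mathsf{E}^*$ to be the intersection of (a) the \emph{boosted harmonic-average} event $\{\mathcal{H}^{\ge 0}_{m}\ge C\lambda\cdot M^{-[(\frac{d}{2}-1)\boxdot (d-4)]}\}$, and (b) the second-moment control event $\{\mathcal{Q}^{\ge 0}_{m}(B(M)\setminus B(M/2))\le C'\lambda^{10}[\theta_d(m)]^{-1}\mathbf{Q}(B(M)\setminus B(M/2))\}$. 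An annular analogue of Lemma \ref{lemma_Q_BN}, proved via (\ref{addnew2.33}) for $3\le d\le 5$ and via the tree-graph bound of \cite[Lemma 3.5]{cai2024high} for $d\ge 7$, will deliver $\mathbf{Q}(B(M)\setminus B(M/2))\lesssim M^{[(\frac{d}{2}+1)\boxdot 4]+2}$. On $\mathsf{E}^*$, Lemma \ref{newlemma3.1} upgrades (a) to $\mathbb{E}[\widecheck{\mathcal{V}}\mid\mathcal{F}_{\mathcal{C}^{\ge 0}_{m}}]\asymp\lambda M^{(\frac{d}{2}+1)\boxdot 4}$, and the Paley--Zygmund inequality applied conditional on $\mathcal{F}_{\mathcal{C}^{\ge 0}_{m}}$ will give
\begin{equation*}
\mathbb{P}(\widecheck{\mathsf{V}}\mid\mathcal{F}_{\mathcal{C}^{\ge 0}_{m}})\gtrsim \frac{\lambda^2 M^{2[(\frac{d}{2}+1)\boxdot 4]}}{C'\lambda^{10}[\theta_d(m)]^{-1}\mathbf{Q}(B(M)\setminus B(M/2))}\gtrsim \lambda^{-8}\ge e^{-\lambda^5},
\end{equation*}
where the last inequality holds since $\lambda$ is assumed sufficiently large. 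Hence $\mathcal{C}^{\ge 0}_{m}$ will be nice on $\mathsf{E}^*$.

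The main obstacle will be establishing $\mathbb{P}(\mathsf{E}^*)\gtrsim e^{-c\lambda^4}\theta_d(M)$. Condition (b) alone costs only a factor $\lambda^{-10}\theta_d(m)$ off the base probability by a direct adaptation of (\ref{upper_Qb}), so it suffices to prove the lower-tail estimate $\mathbb{P}(\mathcal{H}^{\ge 0}_{m}\ge C\lambda\cdot M^{-[(\frac{d}{2}-1)\boxdot (d-4)]})\gtrsim \lambda^{-C_*}\theta_d(M)$ for some fixed $C_*>0$, which (for $\lambda$ large) is much weaker than the stated $e^{-c\lambda^4}\theta_d(M)$; the exponent $e^{-c\lambda^4}$ should be read as a loose, convenient bound that absorbs any polynomial or stretched-exponential cost in the construction. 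The matching upper bound (\ref{finaluse554}) already shows that this tail cannot exceed $\lambda^{-1}\theta_d(M)$ in order, so a polynomial-in-$\lambda$ matching lower bound is \emph{a priori} plausible. To prove it, the plan is to build $\mathsf{E}^*$ geometrically via a nested-annuli scheme in the spirit of Section \ref{proof_prop_converge_loop}, concatenating boosted versions of the $(a,b)$-good sub-clusters of Section \ref{section3.1_lower} at each scale via FKG- and Hopf-type comparisons, so that the total bias of $\mathcal{H}^{\ge 0}_{m}$ by a factor $\lambda$ is transferred through bounded Radon--Nikodym factors at each scale; a parallel alternative is to first condition on an atypically large value of $\widetilde{\phi}_{\bm{0}}$ (paying a Gaussian factor $e^{-c\lambda^2/G(0,0)}$) and then propagate this boost out to scale $m$ via harmonic extension and the isomorphism theorem.
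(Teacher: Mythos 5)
Part 2 of your plan is correct and is essentially the paper's argument: on the nice event one bounds $\mathbb{E}[\widecheck{\mathcal{V}}\mid\mathcal{F}_{\mathcal{C}^{\ge 0}_{m}}]$ from below via the definition of niceness (together with $\lambda\ge 1$), and from above by $\asymp M^d\cdot(\mathcal{H}^{\ge 0}_m\land 1)$ via Lemma \ref{newlemma3.1}, then divides.

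For Part 1 your plan is a genuinely different route from the paper's, and it has a real gap: the entire weight of the argument rests on the lower-tail estimate
\[
\mathbb{P}\big(\mathcal{H}^{\ge 0}_{m}\ge C\lambda\, M^{-[(\frac{d}{2}-1)\boxdot (d-4)]}\big)\gtrsim \lambda^{-C_*}\theta_d(M),
\]
which you explicitly label ``the main obstacle'' and never prove. You offer two sketches — a nested-annuli/Hopf concatenation, or conditioning on a large value of $\widetilde{\phi}_{\bm{0}}$ and ``propagating the boost'' to $\mathcal{H}^{\ge 0}_m$ — but neither is carried out, and the second one is not obviously a shortcut: translating a constant-order boost at $\bm{0}$ into a multiplicative boost of the harmonic average of the explored cluster at scale $m$ is itself a nontrivial estimate on the conditional law of $\mathcal{C}^{\ge 0}_m$. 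The paper avoids this lower-tail problem entirely. It never constructs a boosted event $\mathsf{E}^*$ for $\mathcal{H}^{\ge 0}_m$ and never applies Paley--Zygmund conditionally on $\mathcal{F}_{\mathcal{C}^{\ge 0}_m}$. Instead it conditions only on $\{\widetilde{\phi}_{\bm{0}}\ge\lambda^2\}$, computes the conditional first and second moments of $\widecheck{\mathcal{V}}$ directly (using Lemma \ref{newlemma3.1} for the mean, and the annular second-moment bound $\mathbf{Q}(B(M)\setminus B(M/2))\lesssim M^{(\frac{d}{2}+3)\boxdot 6}$ together with $\mathbb{E}[\widecheck{\mathcal{V}}^2\mid\widetilde{\phi}_{\bm{0}}\ge\lambda^2]\le [\mathbb{P}(\widetilde{\phi}_{\bm{0}}\ge\lambda^2)]^{-1}\mathbf{Q}(\cdots)$ for the second moment), applies Paley--Zygmund \emph{unconditionally} to get $\mathbb{P}(\widecheck{\mathsf{V}})\gtrsim e^{-C'\lambda^4}\theta_d(M)$, and then deduces that $\mathcal{C}^{\ge 0}_m$ is nice with comparable probability via the one-line estimate $\mathbb{P}(\widecheck{\mathsf{V}},\,\mathcal{C}^{\ge 0}_m\ \text{not nice})\lesssim e^{-\lambda^5}\theta_d(M)$. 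So the intermediate object you would need to control (the conditional harmonic average on a boosted event) simply does not appear in the paper's proof, and you should either adopt the paper's unconditional moment computation or supply a proof of the lower-tail estimate you asserted.
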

\begin{proof}
	(1) On the one hand, by Lemma \ref{newlemma3.1} one has 
	\begin{equation}\label{use4.30}
		\begin{split}
			\mathbb{E}\big[ \widecheck{\mathcal{V}} \mid \widetilde{\phi}_{\bm{0}}\ge \lambda^2 \big] = & \sum\nolimits_{x\in B(M)\setminus B(\frac{M}{2})}  \mathbb{P}\big(x \xleftrightarrow{\ge 0} \bm{0}  \mid \widetilde{\phi}_{\bm{0}}\ge \lambda^2 \big)\\
			\gtrsim &\mathrm{vol}(B(M)\setminus B(\tfrac{M}{2}))\cdot M^{2-d}\lambda^2			\asymp  M^2\lambda^2. 
		\end{split}
	\end{equation}
	On the other hand, as in (\ref{finaluse522})), by applying the AM-QM inequality, we have 
	\begin{equation}
		\begin{split}
			\mathbf{Q}(B(M)\setminus B(\tfrac{M}{2}))\lesssim M\sum\nolimits_{\frac{M}{2}\le n\le M} \mathbf{Q}(\partial B(n)) \overset{\text{Lemma}\ \ref*{lemma_Q_BN}}{\lesssim }  M^{(\frac{d}{2}+3)\boxdot 6}.
		\end{split}
	\end{equation}
	Combined with the fact that $e^{-C\lambda^4}\le \mathbb{P}( \widetilde{\phi}_{\bm{0}}\ge \lambda^2)\le e^{-c\lambda^4}$ (see e.g., \cite[Proposition 2.1.2]{vershynin2018high}), it implies that 
	\begin{equation}\label{use4.31}
		\begin{split}
			\mathbb{E}\big[ \widecheck{\mathcal{V}}^2 \mid \widetilde{\phi}_{\bm{0}}\ge \lambda^2 \big] \le&  [ \mathbb{P}( \widetilde{\phi}_{\bm{0}}\ge \lambda^2)]^{-1} \mathbf{Q}(B(M)\setminus B(\tfrac{M}{2}))
			\lesssim e^{C\lambda^4}M^{(\frac{d}{2}+3)\boxdot 6}.
		\end{split}
	\end{equation}
	 Applying the Paley-Zygmund inequality and using (\ref{use4.30}) and (\ref{use4.31}), we get 
	\begin{equation}\label{use4.32}
	\begin{split}
		\mathbb{P}\big(\widecheck{\mathsf{V}} \big)\ge  &  \mathbb{P}\big(\widecheck{\mathsf{V}} \mid \widetilde{\phi}_{\bm{0}}\ge \lambda^2 \big)\cdot \mathbb{P}( \widetilde{\phi}_{\bm{0}}\ge \lambda^2) \\
		\gtrsim &  \frac{( M^2\lambda^2)^2}{e^{C\lambda^4}M^{(\frac{d}{2}+3)\boxdot 6}}\cdot e^{-c\lambda^4} \overset{(\ref*{one_arm_low}), (\ref*{one_arm_high})}{\gtrsim} e^{-C'\lambda^4}\theta_d(M). 
	\end{split}
		 	\end{equation}
		Meanwhile, since $\widecheck{\mathsf{V}} \subset \{\mathcal{H}^{\ge 0}_m>0 \}=\{\bm{0}\xleftrightarrow{\ge 0} \partial B(m) \}$, one has 
	\begin{equation}\label{use4.33}
	\begin{split}
		&\mathbb{P}\big(\widecheck{\mathsf{V}},\mathcal{C}^{\ge 0}_{m}\ \text{is not nice} \big) \\
		=&\mathbb{E}\big[ \mathbbm{1}_{\{\mathcal{C}^{\ge 0}_{m}\ \text{is not nice}\}\cap \{\mathcal{H}^{\ge 0}_m>0\}} \mathbb{P}\big(\widecheck{\mathsf{V}} \mid \mathcal{F}_{\mathcal{C}^{\ge 0}_{m}}\big)\big] \lesssim e^{-\lambda^5} \theta_d(M). 
	\end{split}	
	\end{equation}
	By (\ref{use4.32}) and (\ref{use4.33}), we obtain Item (1): 
	\begin{equation}
	\begin{split}
				\mathbb{P}(\mathcal{C}^{\ge 0}_{m}\ \text{is nice})\ge&  \mathbb{P}(\widecheck{\mathsf{V}}, \mathcal{C}^{\ge 0}_{m}\ \text{is nice})\\
				=&\mathbb{P}\big(\widecheck{\mathsf{V}} \big) - \mathbb{P}\big(\widecheck{\mathsf{V}},\mathcal{C}^{\ge 0}_{m}\ \text{is not nice} \big)  \ge e^{-C\lambda^4}\theta_d(M).
	\end{split}
	\end{equation}
		

	(2) On the event $\{\mathcal{C}^{\ge 0}_{m}\ \text{is nice}\}$, one has 
	\begin{equation}\label{use_4.35}
		\begin{split}
			\mathbb{E}\big[ \widecheck{\mathcal{V}} \mid \mathcal{F}_{\mathcal{C}^{\ge 0}_{m}}\big]  \ge \lambda M^{(\frac{d}{2}+1)\boxdot 4}  \mathbb{P}\big( \widecheck{\mathsf{V}} \mid \mathcal{F}_{\mathcal{C}^{\ge 0}_{m}}\big) \overset{ }{\ge}   e^{-\lambda^5} M^{(\frac{d}{2}+1)\boxdot 4}. 
		\end{split}
	\end{equation}
	Moreover, by Lemma \ref{newlemma3.1} we have 
	\begin{equation}
		\begin{split}
			\mathbb{E}\big[ \widecheck{\mathcal{V}} \mid \mathcal{F}_{\mathcal{C}^{\ge 0}_{m}}\big]  =&  \sum\nolimits_{x\in B(M)\setminus B(\frac{M}{2})} \mathbb{P}\big( x\xleftrightarrow{\ge 0} \bm{0}  \mid \mathcal{F}_{\mathcal{C}^{\ge 0}_{m}}\big)\asymp M^d\cdot (\mathcal{H}^{\ge 0}_m \land 1).
					\end{split}
	\end{equation}
	Combined with (\ref{use_4.35}), it yields that on $\{\mathcal{C}^{\ge 0}_{m}\ \text{is nice}\}$, 
	\begin{equation}
		\mathcal{H}^{\ge 0}_m \land 1\gtrsim  e^{-\lambda^5} M^{-[(\frac{d}{2}-1) \boxdot (d-4)]},
	\end{equation}
	thereby concluding the proof of Item (2) of this lemma.
\end{proof}

Recall $\mathsf{Q}_b(\cdot )$ in (\ref{newQb}), and assume that $\lambda$ is sufficiently large below. We define 
\begin{equation}
	\mathsf{G}:= \{\mathcal{C}^{\ge 0}_{m}\ \text{is nice} \}\cap  \mathsf{Q}_{e^{\lambda^5}}(m, \partial B(N)). 
\end{equation}
By Item (1) of Lemma \ref{Lemma_nice_4.1} and (\ref{upper_Qb}), we have  \begin{equation}\label{use4.39}
\mathbb{P}(\mathsf{G})\ge \mathbb{P}(\mathcal{C}^{\ge 0}_{m}\ \text{is nice} ) - \mathbb{P}\big(\big[\mathsf{Q}_{e^{\lambda^5}}(m, \partial B(N))\big]^c\big) \ge e^{-\lambda^{5} }\theta_d(M). 
\end{equation}
In addition, on the event $\mathsf{G}$, it follows from Item (2) of Lemma \ref{Lemma_nice_4.1} that 
\begin{equation}\label{use4.40}
	\begin{split}
	&	\mathbb{E}\big[ \mathrm{vol}(\mathcal{C}^{\ge 0}(\bm{0})\cap \partial B(N)) \mid \mathcal{F}_{\mathcal{C}^{\ge 0}_{m}}\big]\\
		\gtrsim  &\mathrm{vol}( \partial B(N))\cdot N^{2-d} M^{d-2}\cdot   e^{-\lambda^5}M^{-[(\frac{d}{2}-1)\boxdot (d-4)]}\asymp e^{-\lambda^5}NM^{(\frac{d}{2}-1)\boxdot 2}.
	\end{split}
\end{equation}
Thus, on $\mathsf{G}$, by the Paley-Zygmund inequality, one has 
\begin{equation}\label{use4.41}
	\begin{split}
		\mathbb{P}\big( \bm{0}\xleftrightarrow{\ge 0}\partial B(N) \mid \mathcal{F}_{\mathcal{C}^{\ge 0}_{m}} \big)\gtrsim \frac{(e^{-\lambda^5}NM^{(\frac{d}{2}-1)\boxdot 2})^2}{e^{\lambda^5}[\theta_d(M)]^{-1} \mathbf{Q}(\partial B(N))}  \overset{\text{Lemma}\ \ref*{lemma_Q_BN}}{\gtrsim }  e^{-3\lambda^5} \frac{\theta_d(N)}{\theta_d(M)},  
	\end{split}
\end{equation}
where in the first transition we used (\ref{use4.40}) and the fact that $\mathsf{G}\subset \mathsf{Q}_{e^{\lambda^5}}(m, \partial B(N))= \{ \mathcal{Q}^{\ge 0}_{m}(\partial B(N))\le e^{\lambda^5}[\theta_d(m)]^{-1}  \mathbf{Q}(\partial B(N))\}$ (recall (\ref{newQb})). Similar to (\ref{add3.37}), by applying the FKG inequality and using (\ref{use4.39}) and (\ref{use4.41}), we obtain
\begin{equation}\label{use4.42}
	\begin{split}
		&\mathbb{P}\big( \widecheck{\mathsf{V}}, \bm{0}\xleftrightarrow{\ge 0} \partial B(N)   \big)\\
		\ge &  \mathbb{E}\big[ \mathbbm{1}_{\mathsf{G}}\cdot \mathbb{P}\big( \widecheck{\mathsf{V}} \mid \mathcal{F}_{\mathcal{C}^{\ge 0}_{m}}\big)\cdot \mathbb{P}\big( \bm{0}\xleftrightarrow{\ge 0} \partial B(N)   \mid \mathcal{F}_{\mathcal{C}^{\ge 0}_{m}}\big)  \big]\\
		\gtrsim & e^{-\lambda^{5} }\theta_d(M)\cdot e^{-\lambda^5}\cdot e^{-3\lambda^5} \frac{\theta_d(N)}{\theta_d(M)} = e^{-5\lambda^5}\theta_d(N). 
	\end{split}
\end{equation}
Observing that $\widecheck{\mathsf{V}}\subset \{\mathcal{V}_{M}^{\ge 0}\ge \lambda M^{(\frac{d}{2}+1)\boxdot 4} \}$ and dividing both sides of (\ref{use4.42}) by $\theta_d(N)$, we conclude the proof of Theorem \ref{thm_1.3}.   \qed

\section*{Acknowledgments}

We would like to thank Romain Panis and Alexis Pr{\'e}vost for their help in improving the reference of this paper.

	\bibliographystyle{plain}
	\bibliography{ref}

\begin{thebibliography}{10}

\bibitem{alexander1982density}
S.~Alexander and R.~Orbach.
\newblock Density of states on fractals: ``fractons''.
\newblock {\em Journal de Physique Lettres}, 43(17):625--631, 1982.

\bibitem{basu2017kesten}
D.~Basu and A.~Sapozhnikov.
\newblock {Kesten's incipient infinite cluster and quasi-multiplicativity of crossing probabilities}.
\newblock {\em Electronic Communications in Probability}, 22:1--12, 2017.

\bibitem{cai2024high}
Z.~Cai and J.~Ding.
\newblock {One-arm exponent of critical level-set for metric graph Gaussian free field in high dimensions}.
\newblock {\em Probability Theory and Related Fields}, pages 1--86, 2024.

\bibitem{cai2024one}
Z.~Cai and J.~Ding.
\newblock {One-arm probabilities for metric graph Gaussian free fields below and at the critical dimension}.
\newblock {\em arXiv preprint arXiv:2406.02397}, 2024.

\bibitem{inpreparation}
Z.~Cai and J.~Ding.
\newblock {Quasi-multiplicativity and regularity for metric graph Gaussian free fields}.
\newblock {\em arXiv preprint arXiv:2412.05706}, 2024.

\bibitem{chang2024percolation}
Y.~Chang, H.~Du, and X.~Li.
\newblock Percolation threshold for metric graph loop soup.
\newblock {\em Bernoulli}, 30(4):3324--3333, 2024.

\bibitem{ding2020percolation}
J.~Ding and M.~Wirth.
\newblock Percolation for level-sets of gaussian free fields on metric graphs.
\newblock {\em The Annals of Probability}, 48(3):1411--1435, 2020.

\bibitem{drewitz2018sign}
A.~Drewitz, A.~Pr{\'e}vost, and P.-F. Rodriguez.
\newblock The sign clusters of the massless {G}aussian free field percolate on $\mathbb{Z}^d$, $d\ge 3$ (and more).
\newblock {\em Communications in Mathematical Physics}, 362:513--546, 2018.

\bibitem{drewitz2022cluster}
A.~Drewitz, A.~Pr{\'e}vost, and P.-F. Rodriguez.
\newblock Cluster capacity functionals and isomorphism theorems for {Gaussian} free fields.
\newblock {\em Probability Theory and Related Fields}, 183(1):255--313, 2022.

\bibitem{drewitz2023arm}
A.~Drewitz, A.~Pr{\'e}vost, and P.-F. Rodriguez.
\newblock {Arm exponent for the Gaussian free field on metric graphs in intermediate dimensions}.
\newblock {\em arXiv preprint arXiv:2312.10030}, 2023.

\bibitem{drewitz2023critical}
A.~Drewitz, A.~Pr{\'e}vost, and P.-F. Rodriguez.
\newblock Critical exponents for a percolation model on transient graphs.
\newblock {\em Inventiones mathematicae}, 232(1):229--299, 2023.

\bibitem{drewitz2024cluster}
A.~Drewitz, A.~Pr{\'e}vost, and P.-F. Rodriguez.
\newblock {Cluster volumes for the Gaussian free field on metric graphs}.
\newblock {\em arXiv preprint arXiv:2412.06772}, 2024.

\bibitem{drewitz2024critical}
A.~Drewitz, A.~Pr{\'e}vost, and P.-F. Rodriguez.
\newblock {Critical one-arm probability for the metric Gaussian free field in low dimensions}.
\newblock {\em arXiv preprint arXiv:2405.17417}, 2024.

\bibitem{drewitz2014introduction}
A.~Drewitz, B.~R{\'a}th, and A.~Sapozhnikov.
\newblock {\em An introduction to random interlacements}.
\newblock Springer, 2014.

\bibitem{drewitz2014chemical}
A.~Drewitz, B.~R{\'a}th, and A.~Sapozhnikov.
\newblock On chemical distances and shape theorems in percolation models with long-range correlations.
\newblock {\em Journal of Mathematical Physics}, 55(8), 2014.

\bibitem{fitzner2017mean}
R.~Fitzner and R.~van~der Hofstad.
\newblock Mean-field behavior for nearest-neighbor percolation in $d>10$.
\newblock {\em Electronic Journal of Probability}, 22:43, 2017.

\bibitem{gandolfi1992uniqueness}
A.~Gandolfi, M.~S. Keane, and C.~M. Newman.
\newblock Uniqueness of the infinite component in a random graph with applications to percolation and spin glasses.
\newblock {\em Probability Theory and Related Fields}, 92:511--527, 1992.

\bibitem{ganguly2024critical}
S.~Ganguly and K.~Jing.
\newblock Critical level set percolation for the {GFF} in $d>6$: comparison principles and some consequences.
\newblock {\em arXiv preprint arXiv:2412.17768}, 2024.

\bibitem{ganguly2024ant}
S.~Ganguly and K.~Nam.
\newblock {The ant on loops: Alexander-Orbach conjecture for the critical level set of the Gaussian free field}.
\newblock {\em arXiv preprint arXiv:2403.02318}, 2024.

\bibitem{hara1998incipient}
T.~Hara and G.~Slade.
\newblock The incipient infinite cluster in high-dimensional percolation.
\newblock {\em Electronic Research Announcements of the American Mathematical Society}, 4(8):48--55, 1998.

\bibitem{hara2000scaling1}
T.~Hara and G.~Slade.
\newblock {The scaling limit of the incipient infinite cluster in high-dimensional percolation. I. Critical exponents}.
\newblock {\em Journal of Statistical Physics}, 99:1075--1168, 2000.

\bibitem{hara2000scaling2}
T.~Hara and G.~Slade.
\newblock {The scaling limit of the incipient infinite cluster in high-dimensional percolation. II. Integrated super-Brownian excursion}.
\newblock {\em Journal of Mathematical Physics}, 41(3):1244--1293, 2000.

\bibitem{heydenreich2014high}
M.~Heydenreich, R.~van~der Hofstad, and T.~Hulshof.
\newblock High-dimensional incipient infinite clusters revisited.
\newblock {\em Journal of Statistical Physics}, 155:966--1025, 2014.

\bibitem{heydenreich2014random}
M.~Heydenreich, R.~van~der Hofstad, and T.~Hulshof.
\newblock {Random walk on the high-dimensional IIC}.
\newblock {\em Communications in Mathematical Physics}, 329:57--115, 2014.

\bibitem{hopf1963inequality}
E.~Hopf.
\newblock An inequality for positive linear integral operators.
\newblock {\em Journal of Mathematics and Mechanics}, 12(5):683--692, 1963.

\bibitem{kesten1986incipient}
H.~Kesten.
\newblock The incipient infinite cluster in two-dimensional percolation.
\newblock {\em Probability theory and related fields}, 73:369--394, 1986.

\bibitem{kozma2009alexander}
G.~Kozma and A.~Nachmias.
\newblock {The Alexander-Orbach conjecture holds in high dimensions}.
\newblock {\em Inventiones mathematicae}, 178(3):635--654, 2009.

\bibitem{lawler2010random}
G.~F. Lawler and V.~Limic.
\newblock {\em Random walk: a modern introduction}, volume 123.
\newblock Cambridge University Press, 2010.

\bibitem{lupu2016loop}
T.~Lupu.
\newblock From loop clusters and random interlacements to the free field.
\newblock {\em Annals of Probability}, 44(3):2117--2146, 2016.

\bibitem{lupu2022equivalence}
T.~Lupu.
\newblock {An equivalence between gauge-twisted and topologically conditioned scalar Gaussian free fields}.
\newblock {\em arXiv preprint arXiv:2209.07901}, 2022.

\bibitem{lupu2018random}
T.~Lupu and W.~Werner.
\newblock {The random pseudo-metric on a graph defined via the zero-set of the Gaussian free field on its metric graph}.
\newblock {\em Probability Theory and Related Fields}, 171:775--818, 2018.

\bibitem{morters2010brownian}
P.~M\"{o}rters and Y.~Peres.
\newblock {\em Brownian motion}, volume~30.
\newblock Cambridge University Press, 2010.

\bibitem{panis2024incipient}
R.~Panis.
\newblock The incipient infinite cluster of the {FK}-ising model in dimensions $d\ge 3$ and the susceptibility of the high-dimensional {I}sing model.
\newblock {\em arXiv preprint arXiv:2406.15243}, 2024.

\bibitem{popov2015decoupling}
S.~Popov and B.~R{\'a}th.
\newblock {On decoupling inequalities and percolation of excursion sets of the Gaussian free field}.
\newblock {\em Journal of Statistical Physics}, 159(2):312--320, 2015.

\bibitem{rath2011transience}
B.~Rath and A.~Sapozhnikov.
\newblock On the transience of random interlacements.
\newblock {\em Electronic Communications in Probability}, 16:379--391, 2011.

\bibitem{rodriguez2013phase}
P.-F. Rodriguez and A.-S. Sznitman.
\newblock Phase transition and level-set percolation for the {G}aussian free field.
\newblock {\em Communications in Mathematical Physics}, 320:571--601, 2013.

\bibitem{sznitman2010vacant}
A.-S. Sznitman.
\newblock Vacant set of random interlacements and percolation.
\newblock {\em Annals of mathematics}, pages 2039--2087, 2010.

\bibitem{sznitman2016coupling}
A.-S. Sznitman.
\newblock Coupling and an application to level-set percolation of the {Gaussian} free field.
\newblock {\em Electronic Journal of Probability}, 21:35, 2016.

\bibitem{10.1214/ECP.v20-3570}
W.~Cames van Batenburg.
\newblock {The dimension of the incipient infinite cluster}.
\newblock {\em Electronic Communications in Probability}, 20:1 -- 10, 2015.

\bibitem{van2004incipient}
R.~Van~der Hofstad and A.~A. J{\'a}rai.
\newblock The incipient infinite cluster for high-dimensional unoriented percolation.
\newblock {\em Journal of statistical physics}, 114(3):625--663, 2004.

\bibitem{vershynin2018high}
R.~Vershynin.
\newblock {\em High-dimensional probability: An introduction with applications in data science}, volume~47.
\newblock Cambridge university press, 2018.

\bibitem{werner2021clusters}
W.~Werner.
\newblock On clusters of {B}rownian loops in $d$ dimensions.
\newblock {\em In and Out of Equilibrium 3: Celebrating Vladas Sidoravicius}, pages 797--817, 2021.

\end{thebibliography}
	
\end{document}